\numberwithin{equation}{section}
\newcommand{\rr}{\mathbb{R}}
\newcommand{\lan}{\langle}
\newcommand{\ran}{\rangle}
\newcommand{\be}{\begin{eqnarray*}}
\newcommand{\bel}{\begin{eqnarray}}
\newcommand{\ee}{\end{eqnarray*}}
\newcommand{\eel}{\end{eqnarray}}
\newcommand{\ba}{\begin{aligned}}
\newcommand{\ea}{\end{aligned}}
\newcommand{\de}{\Delta}
\newcommand{\al}{\alpha}
\newcommand{\na}{\nabla}
\newcommand{\ep}{\epsilon}
\newcommand{\pa}{\partial}
\newcommand{\wh}{\widehat}
\newcommand{\wt}{\widetilde}
\newcommand{\uu}{\mathcal{U}}
\newcommand{\nq}{{\neq}}
\newcommand{\te}{{\zeta}}
\newcommand{\Z}{{\mathcal{Z}}}
\newcommand{\paz}{\partial_{z}}
\newcommand{\lf}{\left}
\newcommand{\rg}{\right}
\newcommand{\palt}{{\partial_x^i \Gamma_y^j\partial_z^k}}
\newcommand{\gt}{{\Gamma_{y;t}^{ijk}}}
\newcommand{\gts}{{\Gamma_{y;t_\star}^{ijk}}}
\newcommand{\CC}{C_{2,\infty}}
\newcommand{\BB}{\mathcal{B}}
\newcommand{\cz}{{\lan  \mathfrak {C}\ran}}
\newcommand{\nz}{{\lan n\ran}}
\newcommand{\cc}{{ \mathfrak {C}}}
\newcommand{\Mass}{{ \mathfrak {M}}}
\newcommand{\mf}{\mathfrak}
\newcommand{\myb}[1]{{#1}}
\newcommand{\myr}[1]{{#1}}
\newcommand{\myc}[1]{}
\newcommand{\FM}{\mathbb{F}^{t_r;M}_{G,Q}}
\newcommand{\FMZ}{\mathbb{H}^{M}_G}
\newcommand{\DD}{\mathbb{D}^{t_r;M}_{G,Q}}
\newcommand{\VV}{\mathbb{V}_{ijk}}
\newcommand{\mH}{\mathfrak{H}_{ijk}}
\newcommand{\mG}{\mathfrak{G}}
\newtheorem{theorem}{Theorem}
\newtheorem{lem}{Lemma}
\newtheorem{pro}{Proposition}
\newtheorem{rmk}{Remark}
\newtheorem{remark}{Remark}
\numberwithin{theorem}{section}
\numberwithin{cor}{section}
\numberwithin{pro}{section}
\numberwithin{rmk}{section}
\numberwithin{lem}{section}
\newcommand{\norm}[1]{\left\| #1\right\|}
\newcommand\Torus{{\mathbb T}}
\newcommand{\dss}{\displaystyle}
\title[Time-dependent Flows and Their Applications]{Time-dependent Flows and Their Applications in Parabolic-parabolic Patlak-Keller-Segel Systems\\ Part I: Alternating Flows}
\date{}
\author{Siming He}
\address{Department of Mathematics, 
University of South Carolina, Columbia, SC, 29208}
\email{
siming@mailbox.sc.edu
}
\begin{document}
\maketitle
\begin{center}\large\emph{Dedicated to Eitan Tadmor on occasion of his 70th birthday}\end{center}
\begin{abstract}
We consider the three-dimensional parabolic-parabolic Patlak-Keller-Segel equations (PKS) subject to ambient flows. Without the ambient fluid flow, the equation is super-critical in three-dimension and has finite-time blow-up solutions with arbitrarily small $L^1$-mass.  In this study, we show that a family of time-dependent alternating shear flows, inspired by the clever ideas of Tarek Elgindi \cite{Elgindi20}, can suppress the chemotactic blow-up in these systems.   
\end{abstract}

\setcounter{tocdepth}{1}{\small\tableofcontents}

\section{Introduction}

\myc{\color{blue}{\bf General Summary}:

1) Theorem 1.1 is the main theorem;

2) Based on Theorem 1.3, we wrote another paper on suppression of blow-up with shear flows;

3) Section 2 is the general ideas. 

4) For the linear theory, we first derive $\nu^{1/3}$-enhanced dissipation on the $L^2$ level. And then we upgrade it to the higher ``Gliding regularity'' norms (Right after (2.3)). The idea to prove the enhanced dissipation is standard. The main challenge in treating the higher regularity spaces is to control commutators of the form $\nu[\Gamma^n,\pa_{yy}]$. The key lemma is Lemma  \ref{lem:cm_L_trm}.

5) To upgrade the idea to treat nonlinear problem, we need the functional \eqref{F_M}. Same challenges appear when we handle commutator and nonlinear terms. The key lemmas are  Lemma  \ref{lem:cm_L_trm}, \ref{lem:est_nNL},   \ref{lem:est_ctrm}.

6) The alternating construction is like a rubric cube.

}
\myc{In the introduction, we can also include the recent progress on the alternating flow generated anomalous dissipation and mixing flow.}
In this paper, we consider the parabolic-parabolic Patlak-Keller-Segel systems (PKS) on the three-dimensional torus, which model the chemotaxis phenomena in fluid flows:
\begin{align}\label{ppPKS_basic}
\lf\{\begin{array}{cc}\ba
\pa_t n+&U_A(t)\cdot\na n+\na \cdot( n\na \cc)=\de n, \\
\pa_t  \mathfrak C+&U_A(t)\cdot\na  \mathfrak C=\de  \mathfrak C+n-\overline{n}, \\
n(t&=0)=n_{\mathrm{in}},\quad  \mathfrak C(t=0)=\cc_{\mathrm{in}}.\ea \end{array}\rg.
\end{align}
\ifx\textcolor{red}{Today Tarek tells me that the following shear flow (?)
$u(t,y_1,y_2)=(\cos(y_2+\log(1+t)),0)$ have enhanced dissipation estimate:
\begin{align}
\|\rho_{\neq}(t)\|_2\leq C_{ED}\|\rho_{\neq;\mathrm{in}}\|_2e^{-\delta \nu^{1/3}t}, \quad \forall {t\in[0,\infty)}
\end{align}
decay. Therefore, we might be able to use this time dependent shear flow on the $\Torus^3$ to derive the $8\pi$ critical mass. By alternating these flows, we might get a flow suppressing two dimension $(u(t,y),0,0)$ and $(0,u(t,x),0)$. But the gradient is also growing fast? Is it possible to redo Section \ref{Sec:Linfty_est} with alternating flow? }
\fi
Here $n,  \mathfrak C$ denote the cell density and the chemical density, respectively. The first equation describes the time evolution of the cell density, incorporating processes such as the chemical-triggered aggregation, diffusion, and fluid transportation. The second equation describes the dynamics of the chemical density. We subtract the spatial average of cell density ($\overline{n}$) to normalize the chemical density equation. This adjustment has no impact on the cell density dynamics, as only the chemical gradient influences the $n$-equation. Additionally, we assume that the initial chemical density possesses a zero spatial average, denoted as $\overline{\cc_{\mathrm{in}}} =0$. Both equations involve strong fluid advection, characterized by time-dependent fluid vector field $U_A(t)$ that is divergence-free, with an amplitude denoted as $A := \|U_A\|_{L^\infty}$. Throughout the paper, we employ the notation $(x,y,z)$ to represent points on the domain $\mathbb{T}^3=(-\pi,\pi]^3$.

It is worth mentioning that if the chemical reaches equilibrium in a much faster time scale than the fluid transportation and nonlinear aggregation, one can derive the following  important variants of the equations \eqref{ppPKS_basic}, which are called the advective parabolic-elliptic PKS equations:

\begin{align} \label{pe-PKS}
\pa_t n+ U_A (t )\cdot\na n+\na &\cdot(n\na \cc )=\de n,\quad -\de \cc=n, 
 \quad n(t=0 )=n_{\mathrm{in}}.
\end{align}  
It is also worth mentioning that another way to model the chemotaxis phenomena in the fluid flow is to couple the PKS equation with the Navier-Stokes equation, the Stokes equation, or other types of fluid equations. The literature on this topic is vast, we refer the interested readers to the following papers, \cite{Lorz10,Lorz12,LiuLorz11,DuanLorzMarkowich10,FrancescoLorzMarkowich10, Winkler12,TaoWinkler,ChaeKangLee13,
KozonoMiuraSugiyama,Tuval05} and the references therein.

If there is no ambient fluid flows, i.e., $U_A(t)\equiv 0$, the equation is the classical parabolic-parabolic PKS equation. The PKS equations were first derived by C. Patlak \cite{Patlak}, E. Keller, and L. Segel \cite{KS}. The literature on the analysis of the classical parabolic-parabolic PKS equations and their variants  is large, and we refer the interested readers to the papers \cite{CarrapatosoMischler17,EganaMischler16,CalvezCorrias,Schweyer14,Winkler13, BlanchetEJDE06,BlanchetCarrilloMasmoudi08, Biler95,CorriasEscobedoMatos14} and the references within. We summarize the results on the blow-up and global regularity result of the classical parabolic-parabolic PKS equations here. In two-dimension, the total mass of cells $\mathfrak{M}:=\|n\|_{L^1}$ characterizes the long time behavior of the solution. If the total mass is strictly less than $8\pi$,  V. Calvez and L. Corrias showed that the solutions are globally regular, \cite{CalvezCorrias}. On the other hand, if the total mass is large enough, singularities  form in a finite time, see, R. Schweyer \cite{Schweyer14}. In dimension three, the parabolic-parabolic PKS equations become supercritical. The total conserved mass $\mathfrak{M}$ becomes a supercritical quantity and is not enough to derive sufficient regularity control over the solutions. In the classical paper \cite{Winkler13}, M. Winkler showed that there exist solutions,  which have arbitrary small masses, blow up in a finite time. 

If the ambient fluid flow is present, the long time dynamics of the PKS systems change. In a series of work initiated by \cite{KiselevXu15}, it was shown that by introducing passive fluid  flow into the system, \emph{mixing} and \emph{fast-spreading} effects of the  fluid flow regularize the long time dynamics of the PKS equations. These works are mainly focusing on the parabolic-elliptic PKS equations \eqref{pe-PKS}. In the paper \cite{KiselevXu15}, A. Kiselev and X. Xu showed that if the ambient fluid flow is relaxation enhancing, which is introduced in the seminal paper \cite{CKRZ08}, and the magnitude $A$ of the flow is large enough, then potential chemotactic blow-up ceases to exist. Their argument was simplified and generalized in recent work by G. Iyer, X. Xu, and A. Zlato\v s \cite{IyerXuZlatos}. In the paper \cite{BedrossianHe16}, J. Bedrossian and the author proved that the suppression of the blow-up effect persists if the ambient fluid flow is the simple shear flow. The above works dwell on the \emph{mixing induced enhanced dissipation} properties of the passive scalar equations (advection-diffusion equations). On the other hand, in the paper \cite{HeTadmor172}, E. Tadmor and the author showed that \emph{fast-spreading effect} of the hyperbolic flow $(-x,y)$ also has the potential to suppress the blow-up of the parabolic-elliptic PKS systems \eqref{pe-PKS}. More interestingly, in a forthcoming paper, we also observe that shear flows in the infinite long channel $\mathbb{T}\times \rr$ have the fast-spreading effect. In the advective-reaction-diffusion equation literature, it is referred to as the quenching effect, which is closely related to L. H\"ormander's hypoellipticity \cite{Hormander67}. For further references, see, e.g., P. Constantin, A. Kiselev and L. Ryzhik \cite{CKR00} and A. Kiselev and Zlato\v s \cite{KiselevZlatos}. 

On the contrary, the study of the fluid flow-induced regularization effect in the parabolic-parabolic system \eqref{ppPKS_basic} is limited. In the work \cite{He}, the author showed that the strictly monotone shear flows could suppress the chemotactic blow-up in two-dimension. Later, L. Zeng, Z. Zhang, and R. Zi extended this result to coupled Patlak-Keller-Segel-Navier-Stokes systems (\cite{ZengZhangZi21}). In both of these works, an additional smallness assumption on the initial chemical gradient $\na\cc_{\text{in}}$ is employed. In a recent work \cite{DengShiWang24}, the authors are able to prove the suppression of blow-up through Couette flow on $\rr^3$. To understand the new challenges, we highlight the differences between the parabolic-elliptic regime \eqref{pe-PKS} and the parabolic-parabolic regime \eqref{ppPKS_basic}. It is enough to focus on the dynamics of the chemical gradient $\na \cc$, which determines the aggregation nonlinearity. In the parabolic-elliptic regime, since the chemical gradient is determined through an elliptic type relation $\na c=\na (-\de)^{-1}n$, the strong fluid advection has little impact on the aggregation nonlinearity. As a result, it is easy to invoke various regularization mechanisms from fluid mechanics to stabilize the system. On the other hand, the chemical density $\cc$ in the parabolic-parabolic regime is governed by an advection-diffusion type equation. A strong fluid advection can destabilize the dynamics by creating fast transient growth in the chemical gradient. This destabilizing effect rules out most of the regularization mechanisms applicable in the parabolic-elliptic regime. In the papers \cite{He, ZengZhangZi21}, the smallness in the initial chemical gradient is needed to compensate for this destabilizing effect.

In this work, we prove the suppression of chemotactic blow-up for the $3$-dimensional parabolic-parabolic PKS equations.
\begin{theorem}\label{thm_main}
Consider the solutions $(n,\cc)$ to the equation \eqref{ppPKS_basic} subject to smooth initial data $n_{\mathrm{in}}\in  C^\infty(\Torus^3)$, $\cc_{\mathrm{in}}\in  C^\infty(\Torus^3)$. There exists a family of time-dependent flows $U_A\in L_t^\infty C^\infty_{x,y,z}$ such that the solutions  are globally smooth on the time horizon $[0,\infty)$. 
\end{theorem}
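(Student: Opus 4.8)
The plan is to construct $U_A$ as an alternating sequence of shear flows and run a bootstrap argument on a carefully chosen energy functional. First I would build the flow $U_A(t)$ by partitioning $[0,\infty)$ into consecutive time windows; on each window the flow acts as a strong time-dependent shear in one of the three coordinate directions (so, roughly, $(u(t,y,z),0,0)$, then $(0,u(t,x,z),0)$, then $(0,0,u(t,x,y))$, cycling like the faces of a Rubik's cube), with amplitude $A$ chosen large and with the Elgindi-type logarithmic time-reparametrization inside each window so that each shear produces $\nu^{1/3}$-type enhanced dissipation on the modes that are non-constant in the sheared variable. The point of alternating is that a single shear only damps frequencies transverse to its streamlines; cycling through all three directions forces \emph{every} nonzero frequency mode of $n-\overline n$ and of $\cc$ to be hit by enhanced dissipation within one full cycle, yielding a net exponential decay of the projection onto nonzero modes of the full $3$D data.

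Next I would set up the function space. Because a strong advection creates fast transient growth of $\na\cc$, one cannot simply work in Sobolev spaces; instead I would use the ``gliding regularity'' norms adapted to the instantaneous shear (the commutator vector field $\Gamma$ in the notation set up in the excerpt), and the nonlinear functional $\mathbb{F}^{t_r;M}_{G,Q}$ alluded to in the summary. The core linear input is the enhanced-dissipation estimate on $L^2$ upgraded to these higher gliding-regularity norms, which relies on controlling the commutators $\nu[\Gamma^n,\pa_{yy}]$ via the key Lemma \ref{lem:cm_L_trm}. The mean-zero part of $n$ and $\na\cc$ then decays like $e^{-\delta\nu^{1/3}t}$ within each window and across windows (up to bounded transition losses at window endpoints, which must be summably small — this forces the window lengths and amplitude $A$ to be tuned together). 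Meanwhile the $x$-independent (or sheared-variable-independent) part of the system is, on each window, effectively a lower-dimensional PKS-type system that one controls by the subcritical theory, and alternating directions prevents any single low-dimensional projection from accumulating mass indefinitely.

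With the linear decay in hand, I would close the nonlinear estimate by a continuity/bootstrap argument: assume $\mathbb{F}^{t_r;M}_{G,Q}$ stays below a threshold on $[0,T]$, show the aggregation nonlinearity $\na\cdot(n\na\cc)$ and the chemical source $n-\overline n$ — estimated through Lemmas \ref{lem:est_nNL} and \ref{lem:est_ctrm} — are controlled by the enhanced-dissipation gain times small constants (small because of the $\nu^{1/3}$ factors and the large $A$), hence $\mathbb{F}$ is in fact non-increasing past the threshold, so $T=\infty$. Global smoothness of $(n,\cc)$ then follows from the uniform-in-time control of the gliding-regularity norms together with standard parabolic regularization, converting gliding bounds back to genuine $C^\infty_{x,y,z}$ bounds on each finite time interval.

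The main obstacle I expect is twofold. First, making the alternating construction actually work: at each switch of shear direction the gliding-regularity norm tied to the \emph{old} direction is not the one adapted to the \emph{new} direction, so one must quantify the norm-equivalence/transition cost and show it is both finite and, when summed over infinitely many windows, does not destroy the net exponential decay — this is the ``Rubik's cube'' bookkeeping and it constrains the relation between $A$, $\nu$, and the window schedule. Second, and more technically, controlling the commutators $\nu[\Gamma^n,\pa_{yy}]$ and the nonlinear contributions simultaneously in the gliding-regularity norm without losing the $\nu^{1/3}$ smallness that the whole scheme depends on; this is precisely why Lemma \ref{lem:cm_L_trm} (and its nonlinear companions) is singled out as the key lemma, and getting the powers of $\nu$ and the number of $\Gamma$-derivatives to balance is where the real work lies.
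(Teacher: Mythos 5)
Your skeleton coincides with the paper's (alternating Elgindi-type shears with the logarithmic shift and rewinding, gliding-regularity norms built on the commuting vector field, the commutator Lemma \ref{lem:cm_L_trm}, and a bootstrap on the functional $\mathbb{F}^{t_r;M}_{G,Q}$), but the single-bootstrap closure you describe does not work for general data, and this is precisely the difficulty the paper is organized around. In the parabolic-parabolic system the enhanced dissipation of $n_{\neq}$ is forced by the transiently growing passive-scalar part of the chemical remainder, $d_{\neq}^{t_r}=S_{t_r}^{t_r+\tau}\cc_{\neq}(t_r)$: its contribution to the $n_{\neq}$ equation integrates to $\frac{1}{A}\int_0^\infty \tau^2 e^{-\delta \tau/A^{1/3}}d\tau=O(1)$ times the size of $\cc_{\neq}(t_r)$, i.e. bounded but \emph{not} small, so your claim that the nonlinear and forcing terms come with "small constants because of the $\nu^{1/3}$ factors and large $A$" fails unless the chemical remainder is already small. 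This is exactly the smallness hypothesis \eqref{smll_c_nq} in Proposition \ref{Pro:main} and the obstruction recorded in Remark \ref{rmk:sml_c}. Consequently the paper runs a two-phase argument that your proposal omits: during the first full cycle (Theorem \ref{thm:alt_1}) no decay of $n_{\neq}$ is claimed at all — only boundedness, at the cost of three derivatives — and the cycle is used solely to damp $\cc$ on the three Fourier regions $R_1,R_2,R_3$ in succession (via Propositions \ref{pro:prp_FM}--\ref{Pro:2}, the double-average estimates of Theorem \ref{thm:avg_yz}, and the heat-semigroup Lemma \ref{Lem:gc0Lpest}), producing $\|\cc(T_{1\mathfrak a})\|_{H^{\mathbb{M}+1}}^2\lesssim A^{-1/3}$; only then (Theorem \ref{thm:alt_2}) is the decaying bootstrap of Proposition \ref{Pro:main} iterated window by window to get global exponential decay of $n-\overline{n}$ and $\cc$.

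A second genuine gap is your treatment of the averaged dynamics by "subcritical theory": Theorem \ref{thm_main} carries no mass constraint, the mass is a critical quantity for the two-dimensional averaged PKS system, and with arbitrary mass there is simply no subcritical control to invoke (that route is exactly the companion-paper setting with $\mathfrak{M}<8\pi|\Torus|$). The paper never needs it: each window has length $A^{1/3+\zeta}\ll A$, so the averaged quantities are only shown to stay bounded on a window (the aggregation acting through the averages lives on the time scale $O(A)$ and never gets to act), and after the first cycle the decay of the oscillating part together with the smallness of $\cc$ renders the averaged nonlinearity harmless for all later times. Two smaller points: the paper's building blocks are one-variable shears $(u_A(t,y),0,0)$, $(0,0,u_A(t,x))$, $(0,u_A(t,z),0)$, for which the commuting vector field $\Gamma$ and Lemma \ref{lem:cm_L_trm} are set up; your two-variable shears $(u(t,y,z),0,0)$ would need a different commutator structure. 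And the inter-window "gluing" loss you worry about is handled quantitatively through the condition \eqref{Glu_con} and the choice of $\te$ in \eqref{te_M_chc}, not merely by summability of transition costs.
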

\myc{Check the dependence of $A_0$, the amplitude depends on the profile. Is the phrasing here OK?}

Our basic building blocks for the flow $U_A$ are a family of time-dependent alternating shear flows. We extend this result to the time-dependent shear flow case in a companion paper. 
\ifx
\begin{theorem}\label{thm_1}
Consider the solutions to the equation \eqref{ppPKS_basic} subject to initial data $n_{\mathrm{in}}\in   H^{M}(\Torus^3)$, $\cc_{\mathrm{in}}\in  H^{M+1}(\Torus^3),\, M\geq 3$. Further assume that {\color{blue} (Does $\ep$ depend on the shear profile $u$? or $\|\na u\|_{W^{M+3,\infty}}$?)}
\begin{align}\label{smll_c_nq}
\textcolor{blue}{\| \cc_{\mathrm{in} }\|_{ H^{M+1} (\mathbb{T}^3)}\leq \epsilon( \|n_{\mathrm{in}}\|_{H^{M}(\Torus^3)},u).}
\end{align}
Then there exists a family of time-dependent flows $u\in L_t^\infty C^\infty_{x,y,z}$ such that if their magnitudes $A$ are greater than $A_0(\|n_{\mathrm{in}}\|_{  H^{M}},u)$, then the solutions $(n,\cc)$ are globally smooth. 
\end{theorem}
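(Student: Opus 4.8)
The plan is to build the flow $U_A$ as an \emph{alternating sequence of shear flows}, switching periodically between a flow that shears in the $y$-direction while depending on $x$ (of the form $(u(t,x),0,0)$, say), a flow shearing in another coordinate direction, and so on, cycling through the coordinate pairs so that no frequency stays "unmixed" forever --- this is the "rubik's cube" construction alluded to after the statement. On each time slab $[t_k,t_{k+1}]$ one shear flow is active; within that slab the dynamics is effectively two-dimensional in the $(x,y)$ (or relevant) variables plus a transported $z$-variable, and one invokes the linear enhanced-dissipation theory: a single time-dependent shear of amplitude $A$ (after a rescaling $t\mapsto At$, so $\nu = 1/A$) produces $\nu^{1/3}$-enhanced dissipation on the nonzero (in the sheared variable) modes on the $L^2$ level, and then --- this is exactly the content flagged in the excerpt --- this is upgraded to the higher "gliding regularity" norms via the commutator estimate Lemma \ref{lem:cm_L_trm}, controlling $\nu[\Gamma^n,\partial_{yy}]$. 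The upshot of one slab is: the portion of $(n,\mathfrak C)$ with nonzero frequency in the active shear direction is damped by a factor $e^{-\delta\nu^{1/3}(t_{k+1}-t_k)}$ in the gliding norm, while the $x$-independent (resp. direction-independent) part is essentially unaffected by that particular shear but will be hit by the next shear in the cycle.

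The key steps, in order. First, set up the functional framework: work with the gliding-regularity / $\FM$-type energy functional \eqref{F_M} adapted to whichever shear is currently active, so that along each slab one has a closed differential inequality of the form $\frac{d}{dt}\mathcal F \le -\delta\nu^{1/3}\mathcal F_{\neq} + (\text{commutator terms}) + (\text{nonlinear terms})$, where the commutator terms are absorbed using Lemma \ref{lem:cm_L_trm} and the nonlinear (aggregation) terms using Lemmas \ref{lem:est_nNL} and \ref{lem:est_ctrm}. Second, prove a \emph{per-slab a priori estimate}: on $[t_k,t_{k+1}]$, starting from data of size $R$ in the gliding norm, the solution stays of size $\lesssim R$ (transient growth of $\nabla\mathfrak C$ is at most polynomial in the slab length / amplitude, which is the genuinely dangerous term in the parabolic-parabolic setting), provided $A$ is large; moreover the component with nonzero active-direction frequency contracts. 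Third, choose the slab length $\tau$ and the amplitude $A$: take $\tau \sim A^{1/3}$-ish so that $e^{-\delta\nu^{1/3}\tau} = e^{-\delta\tau/A^{1/3}}$... more precisely, rescale so $\tau$ fixed and $\nu^{1/3}\tau = $ a fixed large constant, giving a definite contraction factor $\theta < 1$ per "effective" slab. Fourth --- the combinatorial heart --- show that after one full cycle through all the alternating directions, \emph{every} nonzero frequency of $(n-\overline n, \nabla\mathfrak C)$ has been contracted: any mode $(\xi_1,\xi_2,\xi_3)\neq 0$ has some nonzero component, hence is damped during the slab whose shear acts on that component, while during the other slabs it suffers only the bounded transient growth; so over a full cycle the net effect on the high-norm energy is multiplication by $\theta \cdot (\text{bounded})^{(\text{number of slabs}-1)} < 1$ after choosing $A$ large enough that $\theta$ beats the accumulated transient growth. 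Fifth, iterate: bootstrap from "the energy is $\le R$ at $t_k$" to "the energy is $\le R$ at $t_{k+1}$ and $\le \theta' R$ after a full cycle," which gives uniform-in-time boundedness of the gliding norm; finally convert uniform gliding-norm control into the statement that the solution is globally smooth (standard, since the gliding norms control $H^M$ up to the explicit shear-dependent change of variables, for every $M$, and local well-posedness propagates smoothness).

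The main obstacle --- and the reason the parabolic-parabolic case is genuinely harder than parabolic-elliptic --- is the \emph{transient growth of $\nabla\mathfrak C$ caused by the strong shear itself}. A shear flow of amplitude $A$ applied to $\mathfrak C$ stretches its gradient by a factor that grows linearly in $At$ before enhanced dissipation kicks in around $t\sim \nu^{-1/3}=A^{1/3}$, so over one slab $\|\nabla\mathfrak C\|$ can grow by a large factor $\sim A^{2/3}$; this growth feeds the aggregation nonlinearity $\nabla\cdot(n\nabla\mathfrak C)$ and, naively, destroys the contraction. Resolving this is precisely where the \emph{gliding regularity} norms are essential: measured in coordinates moving with the flow, $\nabla\mathfrak C$ does not grow, and the enhanced-dissipation contraction is clean --- but then the price is paid in the commutator terms $\nu[\Gamma^n,\partial_{yy}]$ (Lemma \ref{lem:cm_L_trm}) and in the fact that the "good" coordinates change at each switch of the alternating construction, so one must carefully track how the gliding-norm energy transforms across a switch and verify the mismatch is controlled uniformly (a fixed-geometry fact, independent of $A$). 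Balancing the contraction factor $\theta(A)\to 0$ against the per-switch and per-slab losses, and checking all constants are genuinely independent of the number of slabs so the iteration closes on $[0,\infty)$, is the delicate bookkeeping that the proof must carry out.
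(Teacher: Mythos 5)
Your overall scheme is the paper's scheme: time-dependent shear flows with $A^{-1/3}$ enhanced dissipation, the upgrade to gliding-regularity norms through the commutator Lemma \ref{lem:cm_L_trm}, a coupled functional handling the aggregation terms, and an alternation through the three coordinate directions so that over a full cycle every nonzero Fourier mode of $(n-\overline n,\cc)$ is a ``remainder'' in some phase and is damped, then iteration. Under the smallness hypothesis \eqref{smll_c_nq} your direct iteration corresponds to the paper's Phase \#2, i.e.\ Proposition \ref{Pro:main} applied phase by phase as in Theorem \ref{thm:alt_2} (the paper's Phase \#1, which pays three derivatives to make the chemical small, is only needed when no smallness is assumed).

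There is, however, a concrete quantitative gap in your step three/four. You take the slab length $\sim A^{1/3}$ so that each slab yields a \emph{fixed} contraction factor $\theta<1$, and you assert that the mismatch at each switch of adapted coordinates is ``a fixed-geometry fact, independent of $A$.'' That is not so: at the end of a slab of length $T$ the commuting field is $\Gamma_{y;T}=\partial_y+\int_0^T\partial_y u\,ds\,\partial_x$, whose coefficient is of size $T$, and the time weight $\Phi(T)^{-1}$ is also large; converting gliding-norm control in the old frame into flat Sobolev (hence new-frame) control, which is unavoidable at every switch because the adapted vector fields change direction, costs a factor that is polynomial in $A$ (of order $A^{C(M)}$). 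A fixed $\theta<1$ per slab cannot absorb such losses, so the iteration as you set it up does not close. This is precisely why the paper runs each phase for the longer time $A^{1/3+\te}$ with $\te=\te(M)>0$ chosen as in \eqref{te_M_chc}: the remainders then contract by $e^{-\delta A^{\te}/C}$ per phase (cf.\ \eqref{est_short_t}), which overpowers the polynomial-in-$A$ frame-change losses, and the gluing condition \eqref{Glu_con} together with the exponent bookkeeping in Propositions \ref{pro:prp_FM} and \ref{Pro:2} is exactly the mechanism that transfers control across a junction. Two smaller points: a flow of the form $(u(t,x),0,0)$ is not divergence-free, so the building blocks must be $(u(t,y),0,0)$, $(0,0,u(t,x))$, $(0,u(t,z),0)$ as in the paper; and the smallness of the chemical gradient, which is what licenses the nonlinear enhanced dissipation (Remark \ref{rmk:sml_c}), must be re-verified at the start of every slab -- it does propagate, since the averaged chemical grows only by $O(A^{-1/5})$ per phase (\eqref{Con_<grd_c>}) while the remainder decays, but your write-up leaves this step implicit.
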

\textcolor{red}{It is enough to gain smallness of the data at time $CA^{1/3+\te}$, i.e., $\|n-\overline{n}\|_{H^s}(CA^{1/3+\te})+\|c\|_{H^{s+1}}(CA^{1/3+\te})\leq \frac{1}{A^{...}}+C\ep$. Then we choose the $A^{-1}$ and $\ep$ small enough such that the solution is tiny at time $CA^{1/3+\te}$, after that time, we can withdraw all the flows and use small data argument to get that the solution decay to zero as time goes to infinity. }
\fi
\myc{\begin{rmk}
\color{red}I think with slightly more effort, we can upgrade the regularity of $u$ to $C^\infty_{t,x,y,z}$. Basically, we need to introduce an order one transition time layer between different phases. Since the time is short, the regularity norms are controlled.  
\end{rmk}}
\subsection{Sketch of the Proof}
To motivate the idea and highlight the challenges, we first present the blow-up mechanism of the Patlak-Keller-Segel type equations (\eqref{ppPKS_basic}, \eqref{pe-PKS}). Then we introduce the regularization effects induced by fluid advection. Finally, we highlight the obstacles in applying these regularization mechanisms in the parabolic-parabolic case \eqref{ppPKS_basic} and our ideas to address them. 

In the PKS type systems \eqref{ppPKS_basic} -  \eqref{pe-PKS}, there are two competing forces - the nonlinear aggregation ($\na \cdot( n\na c)$) and the diffusion ($\de n$). On the one hand, the cells aggregate to form Dirac singularities, while on the other hand, cell diffusion regularizes the dynamics. The solutions remain smooth when diffusion prevails over nonlinear aggregation (\cite{BlanchetEJDE06, CalvezCorrias,CalvezCarrillo06,Wei181} ). However, if the aggregation dominates, singularities can develop  (\cite{JagerLuckhaus92,BlanchetCarrilloMasmoudi08,Schweyer14, Winkler13, GhoulMasmoudi18,CollotGhoulMasmoudiNguyen19,CollotGhoulMasmoudiNguyen192} ). One natural approach to suppress the blow-up is to enhance the diffusion to counteract the nonlinear aggregation. This can be achieved by replacing the diffusion operator with porous media type diffusion, e.g., \cite{Blanchet09,BRB10}. Alternatively, the presence of external fluid flow can also achieve the same goal. The primary mechanism here is that strong fluid transportation creates fast oscillations in the cell density, thereby improving diffusion. This regularization effect of fluid flows, commonly referred to as the ``enhanced dissipation phenomena'' in the literature, is applicable to various fluid-related problems. For instance, in the study of hydrodynamic stability, the enhanced dissipation effect is crucial in deriving the sharp stability threshold associated with various shear flows, see, e.g., \cite{BMV14, BVW16, BGM15I,BGM15II,BGM15III,ChenLiWeiZhang18, IbrahimMaekawaMasmoudi19, WeiZhangZhao20, LiWeiZhang20, MasmoudiZhao20, MasmoudiZhao22,LiMasmoudiZhao22, Jia22,BedrossianHe19,CotiZelatiDolce20}. 
Furthermore, the enhanced dissipation phenomena find applications in a wide range of areas, ranging from plasma physics to mathematical biology, see, e.g., \cite{Bedrossian17, BedrossianCotiZelatiDolce22,CotiZelatiDolceFengMazzucato, CotiZelatiDietertGerardVaret22,FengFengIyerThiffeault20, AlbrittonOhm22,Shvydkoy21, HeKiselev21,GongHeKiselev21}.

To relate the system \eqref{ppPKS_basic} to the existing theory of enhanced dissipation, we first divide the equation \eqref{ppPKS_basic} by the amplitude of the flow $A=\|U_A\|_{L_{t,x,y,z}^\infty}$, and rescale time properly to obtain
\begin{align}\label{PKS_rsc}
\lf\{\begin{array}{cc}\ba
\pa_t n+&u_A \cdot\na n=\frac{1}{A}\de n-\frac{1}{A}\na \cdot(n\na \cc),\\
\pa_t  \mathfrak C+&u_A \cdot\na  \mathfrak C=\frac{1}{A}\de  \mathfrak C+\frac{1}A\lf(n-\overline{n}\rg),\quad u_A:=\frac{U_A}{\|U_A\|_{L^\infty}}, \\
n(t&=0)=n_{\mathrm{in}},\quad  \mathfrak C(t=0)=\cc_{\mathrm{in}}.\ea \end{array}\rg.
\end{align} 
Here we still use $t$ to denote the new time variable. In the large amplitude $A$ regime, we can view the equation \eqref{PKS_rsc} as a perturbation to the passive scalar equation  
\begin{align*}
\pa_t f+u \cdot \na f=\frac{1}{A}\de f. 
\end{align*}
For our purpose, it is enough to focus on the passive scalar equations on $\mathbb{T}^3$ subject to shear flows:
\begin{align}
\pa_t f+u(t,y)\pa_x f  =\frac{1}{A}\de f,\quad  f(t=0)=f_{\text{in}},\quad   0<A^{-1}\ll 1.\label{ps_intro_1}
\end{align}
To motivate the key regularizing mechanism, we decompose the solutions to \eqref{ps_intro_1} into the average in the shearing direction and the remainder: 
\begin{align}
\lan f\ran(y,z)=\frac{1}{|\Torus|}\int f(x,y,z)dx,\quad f_\nq(x,y,z) =f(x,y,z) -\lan f\ran(y,z).\label{shr_avg_rmd}
\end{align}
It is easy to see that the $x$-average $\lan f\ran$ solves a heat equation; hence the $x$-average dissipates on a time scale $\mathcal O(A)$. The time scale $\mathcal{O}(A)$ is long if the $A$ is large. On the other hand, under suitable assumptions, the remainder $f_\nq$ dissipates on a time scale much faster than $\mathcal{O}(A). $ This deviation in dissipative time scales, caused by fluid advection, is called the \emph{enhanced dissipation}, and it has attracted much attention in recent years. Most analyses on the enhanced dissipation phenomenon are carried out in a 2-dimensional setting but can be easily extended to a 3-dimensional one. We first consider the \emph{stationary} shear flow, i.e., $u(t,y)=u(y)$. If the shear flow profile  $u(y)$ has only finitely many nondegenerate critical points, then the flow is called nondegenerate shear flow. In the paper \cite{BCZ15}, J. Bedrossian and M. Coti Zelati showed that if the stationary shear flows are nondegenerate, then there exist positive constants $\delta, C $ such that the following estimate holds:
\begin{align}\label{ED_intro_1}
\|f _{\neq}(t)\|_{L^2}\leq C \|f _{\mathrm{in};\neq}\|_{L^2} e^{-\delta  {A^{-1/2}|\log A|^{ -2}}  {t}},\quad \forall t\geq 0.
\end{align}
If the parameter $A^{-1}$ is small, the dissipation time scale $\mathcal{O}(A^{ 1/2}|\log A|^2)$ is much shorter than the heat dissipation time scale $\mathcal{O}(A)$. 
In the paper, the authors constructed explicit hypocoercivity functionals in the spirit of C. Villani \cite{villani2009}, and showed that these functionals  decay with enhanced rate $\mathcal{O}(A^{-1/2})$. Similar estimates are derived for other degenerate shear flows. The result was improved in the paper \cite{Wei18}. By applying resolvent estimates and a Gearhart-Pr\"{u}ss type theorem, D. Wei  proved that 
\begin{align}\label{ED_intro_2}
\|f _{\neq}(t)\|_{L^2}\leq C \|f _{\mathrm{in};\neq}\|_{L^2} e^{-\delta A^{-1/2}t},\quad \forall t\geq 0.
\end{align}
In the paper \cite{CotiZelatiDrivas19}, M. Coti-Zelati and D. Drivas  applied stochastic methods to show that the $A^{-1/2}$-enhanced dissipation rate are sharp for non-degenerate stationary shear flows. 
We also refer the interested readers to the works by Tarek Elgindi,  M. Coti-Zelati and M. G. Delgadino \cite{ElgindiCotiZelatiDelgadino18}, Y. Feng and G. Iyer \cite{FengIyer19}, which derive the explicit relationship between the mixing effect of fluid flow and their enhanced dissipation rate. Recently, D. Albritton, R. Beekie, and M. Novack proved the estimate \eqref{ED_intro_2} on bounded channel \cite{AlbrittonBeekieNovack21}. The enhanced dissipation phenomena also appear in fractional dissipative systems, see, e.g., \cite{ElgindiCotiZelatiDelgadino18}, \cite{He21} and \cite{LiZhao21}. 

Note that the above enhance-dissipation estimate is sharp for  \emph{stationary} shear flows, which leaves open the question that whether one can improve the dissipation rate by relaxing the \emph{stationary} constraint. The first step to prove  Theorem \ref{thm_main} is to show that by introducing time dependency into the shear flow, the enhanced dissipation rate can be improved from $\mathcal{O}(A^{-1/2})$ to $\mathcal{O}(A^{-1/3})$. 
\begin{theorem}\label{thm:L_ED}
Consider the solutions $f_{\neq}$ to the passive scalar equations \eqref{ps_intro_1} subject  to shear flow $(u(t,y),0,0).$ There exists a family of shear flows $u_A\in C^\infty_{t,y}$ such that the following enhanced dissipation estimate is satisfied
\begin{align}
\| f_{\neq}(s+t)\|_{L ^2}\leq &C_0   \| f_{\neq}(s)\|_{L^2 }e^{-\delta_0 A^{-1/3}t},\label{ED_tdsh_intr}
\end{align}
for all positive time $s,t\geq 0$. The constants $\delta_0$ and $C_0$ depend on the shear $u_A$, and they are independent of $A$ and the solutions. Moreover, the spatial Sobolev norms of the velocity fields are bounded independent of $A$, i.e., $\|\pa_y u_A\|_{L^\infty_t W_y^{M,\infty}}\leq C_M,\,\forall M\in \mathbb{N}. $
\end{theorem}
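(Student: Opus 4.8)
Taking the Fourier transform in $x$ and $z$, the equation \eqref{ps_intro_1} for $f_\nq$ splits into the scalar family $\pa_t g+ik\,u_A(t,y)\,g=\tfrac1A(\pa_{yy}-k^2-\ell^2)g$, $k\in\Integers\setminus\{0\}$, $\ell\in\Integers$; the multiplier $-k^2/A-\ell^2/A$ only adds damping, so it suffices to prove $\|g(s+t)\|_{L^2_y}\le C_0\|g(s)\|_{L^2_y}e^{-\delta_0A^{-1/3}t}$ for $\pa_t g+iku_A g=\tfrac1A\pa_{yy}g$, uniformly in $k\ge1$; the worst frequency is $k=1$ (for $k\ge2$ the rate only improves, by a factor $k^{2/3}$), so I fix $k=1$ and write $\nu=A^{-1}$. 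For the shear I would fix a profile $p\in C^\infty(\Torus)$ that is \emph{steep away from a small bad set}: $|p'(y)|\ge c_0>0$ for $y\notin\Torus_\rho$, where $\Torus_\rho$ is a union of finitely many arcs of total length $O(\rho)$ on which $p'$ turns around (so $\|p''\|_{L^\infty}\sim c_0/\rho$), $\rho$ being a fixed, $A$-independent constant; a smoothed triangular wave (two turn-around arcs) is a concrete choice. Set
\begin{align*}
u_A(t,y):=p\big(y-c_2A^{-1/3}t\big),
\end{align*}
so that the bad set travels around $\Torus$ at the slow speed $c_2A^{-1/3}$; equivalently one may replace the continuous translation by cycling through $\sim\rho^{-1}$ equispaced translates of $p$ on time-windows of length $\sim A^{1/3}$ joined by fixed-length smooth transition layers, which is the ``alternating'' realization. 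Either way $u_A\in C^\infty_{t,y}$, and, the spatial profile being the fixed function $p$, $\|\pa_y u_A\|_{L^\infty_t W^{M,\infty}_y}=\|p'\|_{W^{M,\infty}}=:C_M$ is independent of $A$, as claimed.

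\textbf{Mechanism and proof.} Moving the bad set guarantees that at every fixed height $y$ the instantaneous profile $u_A(t,\cdot)$ is \emph{linear to leading order}, with slope $\gtrsim c_0$, for a $(1-O(\rho))$-fraction of each rotation period $T_A:=2\pi c_2^{-1}A^{1/3}$, and lies in the degenerate regime $|\pa_y u_A(t,y)|\le c_0$ only for the complementary $O(\rho)$-fraction. On the steep set the equation is a Couette-type shear, whose enhanced-dissipation time scale is $\nu^{-1/3}=A^{1/3}$: over one period of length $\sim A^{1/3}$ the transported $y$-frequency sweeps by $\sim A^{1/3}$, which yields an order-one multiplicative decay $e^{-\kappa}$, i.e.\ precisely the rate $A^{-1/3}$. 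I would make this rigorous with a time-dependent hypocoercivity functional in the spirit of Villani and of \cite{BCZ15,Wei18}, localized to the steep set, of the schematic form
\begin{align*}
\Phi(t)=\|g\|_{L^2_y}^2+\beta\,\nu^{2/3}\|\pa_y g\|_{L^2_y}^2+\gamma\,\nu^{1/3}\,\mathrm{Im}\big\langle\chi(t,y)\,\pa_y u_A(t,y)\,g,\ \pa_y g\big\rangle_{L^2_y},
\end{align*}
with $\chi(t,\cdot)$ a smooth cutoff to $\{\,|\pa_y u_A(t,\cdot)|\ge c_0\,\}$ and $0<\gamma^2\ll\beta\ll1$ fixed: the weights $\nu^{2/3},\nu^{1/3}$ are the Couette scalings, one has $\Phi\simeq\|g\|_{L^2_y}^2$, and differentiating $\Phi$ along the equation and using $|\pa_y u_A|\ge c_0$ on $\mathrm{supp}\,\chi$ produces $\tfrac{d}{dt}\Phi\le-\delta\nu^{1/3}\Phi+(\text{errors from }1-\chi\text{ and from }\pa_t\chi,\pa_y\chi)$. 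Integrating over one period $T_A$, and using that every height is steep for a $(1-O(\rho))$-fraction of $T_A$, gives a net decrement $\Phi(s+T_A)\le e^{-\delta_0}\Phi(s)$, hence $\Phi(s+t)\le\Phi(s)e^{-\delta_0A^{-1/3}t}$ for all $s,t\ge0$; the bound starts from an arbitrary time $s$ because the construction is $T_A$-periodic. This is \eqref{ED_tdsh_intr} (the cases $k\ge2$, $\ell\in\Integers$ being no worse). Alternatively one can run a resolvent / Gearhart--Pr\"uss estimate for each frozen translate of $p$ on its window and splice the per-window bounds using that the steep sets cover $\Torus$.

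\textbf{The main obstacle.} The crux is the interplay between (i) the bad set $\Torus_\rho$, where $|\pa_y u_A|$ is small and enhanced dissipation degenerates to the slower $\nu^{1/2}$-rate, so that $\Phi$ need not decay there — and may even grow slightly through the cross term — and (ii) the commutators $[\chi,\pa_y]$, $\nu[\pa_{yy},\chi]$, and the term $\pa_t\chi$ created by localizing the functional. One must show these are a tunably small fraction of the good decrement: the degenerate stretch seen at a given height lasts $O(\rho A^{1/3})$ per period, contributing $O(\rho)$ to the exponent, which is dominated by the order-one gain from the steep part; the cutoff commutators are genuinely lower order — they cost $\pa_y\chi=O(1/\rho)$, which is large but $A$-independent — and $\pa_t\chi=O(A^{-1/3})$ is small, so for $\rho$ small and $\beta,\gamma$ tuned all of them are absorbed into $\delta\nu^{1/3}\Phi$. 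Finally the whole estimate has to be organized so that every constant ($C_0,\delta_0,\kappa,\beta,\gamma$ and the choices $c_0,c_2,\rho$) is independent of $A$ even though the intrinsic time and frequency scales are both $A^{1/3}$; keeping track of this scaling is exactly what forces, and produces, the exponent $1/3$.
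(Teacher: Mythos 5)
Your construction and proof strategy are genuinely different from the paper's, and the construction itself is a reasonable alternative. The paper shifts a fixed profile logarithmically in time, $u_A(t,y)=\mathcal{U}(y+\log(1+t))$, smoothly ``rewound'' every $O(A^{1/3})$, and obtains the fixed decrement per window by a contradiction argument: if the dissipation were small, the viscous solution would track the inviscid one, whose dissipation is forced to be large by the pointwise-in-$y$ space--time lower bound $\int_{\tau_0}^{A^{1/3}|\alpha|^{-2/3}}\bigl|\int_{\tau_0}^{s}\partial_y u\,d\tau\bigr|^2 ds\gtrsim A|\alpha|^{-2}$, which is where the $e^{\mathfrak{h}}$-weighted non-vanishing claim enters. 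Your slow traveling wave $p(y-c_2A^{-1/3}t)$ has the right scaling: for $\alpha=1$ one has $\int_0^s p'(y-c\tau)\,d\tau=\tfrac{1}{c}\bigl(p(y)-p(y-cs)\bigr)$, and the analogous space--time integral over a window of length $A^{1/3}$ equals $c^{-3}\int_0^{c_2}|p(y)-p(y-w)|^2dw\gtrsim A$ uniformly in $y$ as long as $p$ has no flat arc of length $c_2$, so the mechanism you describe is not defeated by the cancellation that kills the speed-one translation.

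The gap is in the step you use to conclude. The asserted inequality $\tfrac{d}{dt}\Phi\le-\delta\nu^{1/3}\Phi+(\text{errors from }1-\chi,\ \partial_t\chi,\ \partial_y\chi)$ cannot hold pointwise in time: when the solution is concentrated on the degenerate set $\{\chi=0\}$, the localized cross term produces no coercivity there, so the ``error from $1-\chi$'' is of the same size as the putative good term $-\delta\nu^{1/3}\|g\|_{L^2}^2$, not a tunably small fraction of it. The entire difficulty of the theorem is to convert the geometric statement ``each $y$ is steep for a $(1-O(\rho))$-fraction of each period'' into an actual $L^2$ decrement, given that nothing prevents the $L^2$ mass from sitting, at any given instant, exactly where $\chi$ vanishes and that diffusion couples different heights; this requires a genuine time-averaging argument (the paper does it via the viscous--inviscid comparison plus the uniform-in-$y$ lower bound above; a hypocoercivity route would need a time-dependent weight built from quantities like $\int_t^{t+T}|\partial_y u_A(\tau,y)|^2d\tau$ rather than a spatial cutoff), and your proposal asserts rather than proves it. A secondary point: fixing $k=1$ and declaring that higher $x$-frequencies ``only improve the rate'' is not justified for this non-autonomous flow; the uniform statement over $1\le|k|\le A^{1/2}/K$ must be run through the same argument (frequencies $|k|\gtrsim A^{1/2}$ are indeed handled by direct dissipation, as in the paper). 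Until the period-averaging step is made rigorous, the argument has a genuine gap at its central point.
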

\ifx\begin{rmk}
Before renormalizing in time, the estimate \eqref{ED_tdsh_intr} corresponds to the following
\begin{align*}
\|f_{\neq}(s+t)\|_{L ^2}\leq C \|f_{\neq}(s)\|_{L^2 }e^{-\delta A^{2/3} t},\quad \forall s, t\geq 0.
\end{align*}
\end{rmk}
\fi
\begin{rmk}
This construction is based on Tarek Elgindi's  logarithmic-shifted shear flows (\cite{Elgindi20} ). We reproduce all the details of his construction in Section \ref{Sec:Lnr}. Thanks to a ``rewinding'' procedure, the resulting shear flows $u_A$ have a mild dependence on the amplitude $A$. However, this dependence will not alter the spacial Sobolev norm of the shear. The construction is general in the sense that for most shear profile functions, we can design time-dependent flows that achieve the enhanced dissipation \eqref{ED_tdsh_intr}. 
\end{rmk}

{

Compared to existing enhanced dissipation flows in the literature, Theorem \ref{thm:L_ED} provides time-dependent shear flows on $\Torus^3$ that balance the enhanced dissipation and the transient growth of the passive scalar solutions. We first recall that there are many freedoms in choosing fluid flows to suppress the blow-ups in the parabolic-elliptic PKS system \eqref{pe-PKS}. For example, one can choose shear flows with an enhanced dissipation estimate \eqref{ED_intro_1} (\cite{BedrossianHe16} ) or flows with sufficiently short dissipation time (\cite{KiselevXu15,IyerXuZlatos,
BedrossianBlumenthalPunshonSmith192} ). However, the story changes drastically for coupled systems. Here, we provide a heuristic argument to show that the flows constructed in Theorem \ref{thm:L_ED} are optimal in a certain sense. Motivated by the parabolic-parabolic PKS systems \eqref{ppPKS_basic}, we introduce a toy model,
\begin{align*}
\pa_t \rho +u\cdot\na \rho=\frac{1}{A}\de \rho-\frac{1}{A}\de g, \quad \pa_t g+u\cdot\na g=\frac{1}{A}\de g.
\end{align*}
Here $\rho$ plays the role of the cell density, and $g$ is the chemical density, respectively. The linear forcing term $\dss-\frac{1}{A}\de g$ in the $\rho$-equation mimics the aggregation nonlinearity. We assume suitable average-free conditions on the data and focus on the growth of the solution $\rho$. It is enough to estimate the net contribution from the forcing $\dss-\frac{1}{A}\de g.$ 
We expect the higher derivatives of the solution $g$ to undergo transient growth, and enhanced dissipation, which can be summarized as follows 
\begin{align}
\|\de g(t) \|_\infty\leq \mathfrak{G}(t) \exp\{-\mathfrak{R} t\}. 
\end{align} 
We start by considering the stationary nondegenerate shear flows, which have enhanced dissipation rate $ \mathfrak R=\mathcal{O}{(A^{-1/2})}$ \eqref{ED_intro_2} and transient growth $ \mathfrak G(t)=\mathcal{O}(t^2)$. If we employ this type of flow, the contribution from the chemical might not be negligible, i.e.,  
\begin{align}
\frac{1}{A}\int_0^\infty \|\de g(t)\|_\infty dt\leq \frac{C}{A}\int_0^\infty t^2\exp\lf\{-\frac{t}{CA^{1/2}}\rg\}dt= CA^{1/2}. 
\end{align} 
Similarly, we can consider the stochastic enhanced dissipation flows constructed in \cite{BedrossianBlumenthalPunshonSmith192, BlumenthalCotiZelatiGvalani22}. Here,  the growth rate of the gradients and the enhanced dissipation rate are $ \mathfrak G(t)=e^{C_1 t},\quad  \mathfrak R=|\log A|^{-1}/C_2$. Hence the net contribution of the chemicals to the system can be large for $A\gg1 $,
\begin{align}
\frac{1}{A}\int_0^\infty\|\de g(t)\|_\infty dt\leq \frac{1}{A}\int_0^\infty \exp\lf\{C_1 t-\frac{ t}{C_2|\log A|}\rg\}dt=\infty.
\end{align}
For the time-dependent shear flows $u$ constructed in Theorem \ref{thm:L_ED}, the growth factor is $ \mathfrak G(t)=\mathcal{O}(t^2)$ and the enhanced dissipation rate $ \mathfrak R =\mathcal{O}(A^{-1/3})$. In this case, the gradient of $g$ has bounded contribution to the $\rho$-dynamics, i.e.,
\begin{align}
\frac{1}{A}\int_0^\infty \|\de g(t)\|_\infty dt\leq \frac{C}{A}\int_0^\infty t^2\exp\lf\{-\frac{t}{CA^{1/3}}\rg\}dt\leq C. 
\end{align}
In conclusion, constructing a smooth flow that balances the transient growth of gradients and enhanced dissipation is crucial for our analysis. Theorem \ref{thm:L_ED} achieves this goal.  

}


As a result of Theorem \ref{thm:L_ED}, if we introduce the strong shear flow $A(u_A(t,y),0,0)$ to the system, the remainder $n_\nq,\, \na \cc_\nq $ decay fast. Hence it is reasonable to expect that after a short amount of time, the remainders become small and the solutions become quasi-two-dimensional. 

This is the content of the next main theorem. 
\myb{
\begin{theorem}\label{thm:short_t}
Consider the solutions $(n,\cc)$ to the equation \eqref{PKS_rsc} initiated from the data $n_{\mathrm{in}}\in  H^\mathbb{M}(\Torus^3)$, $\cc_{\mathrm{in}}\in  H^{\mathbb{M}+1}(\Torus^3),\ \mathbb{M}\geq 5$. Define a parameter 
\begin{align}\label{te_M_chc}
\te(\mathbb{M})= \frac{1}{108(2+\mathbb{M})}.
\end{align} Further assume that the shear flows $u_A$ in the equation \eqref{PKS_rsc} are the ones constructed in Theorem \ref{thm:L_ED}. There exists a threshold $A_0=A_0(\|n_{\mathrm{in}}\|_{H^\mathbb{M}}, \|\cc_{\mathrm{in}}\|_{H^{\mathbb{M}+1}}, \|u_A\|_{L_t^\infty W^{\mathbb{M}+3,\infty}}, \mathbb{M})$ such that if $A\geq A_0$, then there exists a universal constant $C$ such that the following estimate holds at time instance $A^{1/3+\te(\mathbb{M})}$,
\begin{align}\label{est_short_t}
\|n_\nq(A^{1/3+\te})\|_{H^{\mathbb{M}-1}}^2+\|\cc_\nq(A^{1/3+\te})\|_{H^{\mathbb{M}}}^2\leq C \exp\lf\{-\frac{A^{\te}}{C}\rg\}.
\end{align} Moreover, the $x$-average is bounded as follows
\begin{align}\label{est_shrt_t_0}
\|\lan n\ran (A^{1/3+\te}) \|_{\myr{H^{\mathbb{M}-1}}}^2+\|\lan \cc\ran(A^{1/3+\te})\|_{\myr{H^{\mathbb{M}}}}^2\leq \mathfrak{B}(\|n_{\mathrm{in}}\|_{H^\mathbb{M}}, \|\cc_{\mathrm{in}}\|_{H^{\mathbb{M}+1}}, \|u_A\|_{L_t^\infty W^{\mathbb{M}+3,\infty}}, \mathbb{M}). 
\end{align} 
\end{theorem}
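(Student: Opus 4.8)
The plan is a continuity argument for \eqref{PKS_rsc} on $[0,A^{1/3+\te}]$, organized around the splitting \eqref{shr_avg_rmd}. Averaging \eqref{PKS_rsc} in $x$ annihilates the shear advection, so the $x$-average $(\lan n\ran,\lan\cc\ran)$ solves a \emph{flow-free, two-dimensional} parabolic--parabolic PKS system on $\Dy$ in which the PKS time is slowed by a factor $A$, with the only extra forcing being $-\tfrac1A\na_{y,z}\cdot\lan n_\nq\na_{y,z}\cc_\nq\ran$, which is quadratically small in the remainder. The remainder $(n_\nq,\cc_\nq)$ obeys
\begin{align*}
\pa_t n_\nq+u_A(t,y)\,\pa_x n_\nq&=\tfrac1A\de n_\nq-\tfrac1A\big[\na\cdot(n\na\cc)\big]_\nq,\\
\pa_t \cc_\nq+u_A(t,y)\,\pa_x\cc_\nq&=\tfrac1A\de\cc_\nq+\tfrac1A\,n_\nq,
\end{align*}
so the $\cc_\nq$-equation is \emph{linear}, forced only by $n_\nq$, and both remainder equations carry the enhanced-dissipation structure of \eqref{ps_intro_1} for the shear $u_A$ of Theorem \ref{thm:L_ED}. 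The two engines are then: on the horizon $A^{1/3+\te}$ the enhanced dissipation $e^{-\delta_0A^{-1/3}t}$ has supplied $\sim A^{\te}$ $e$-foldings, which will drive the remainder down to $\exp\{-A^{\te}/C\}$; meanwhile that horizon corresponds to only $A^{\te-2/3}\to0$ units of \emph{physical} PKS time for the average, which therefore barely moves from the data, so \eqref{est_shrt_t_0} holds with $\mathfrak B$ essentially the data norm.

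The core linear ingredient is the lift of Theorem \ref{thm:L_ED} from $L^2$ to the high-regularity ``gliding'' norms that control $\|n_\nq\|_{H^{\mathbb{M}-1}}$ and $\|\cc_\nq\|_{H^{\mathbb{M}}}$; this is precisely what the functional \eqref{F_M} is built to do. Working mode-by-mode in $x$, for $k\nq0$ the operator $u_A(t,y)\pa_x-\tfrac1A\de$ is the one whose $L^2$ propagator decays like $e^{-\delta_0A^{-1/3}t}$, and one propagates this through $\mathbb{M}$ derivatives by differentiating with the time-adapted fields $\Gamma_y$ attached to the shear. The price is a family of commutators $\tfrac1A[\Gamma_y^{ijk},\pa_{yy}]$, bounded \emph{uniformly in $A$} by Lemma \ref{lem:cm_L_trm}, which relies on the $A$-independence of $\|\pa_y u_A\|_{L^\infty_tW^{\mathbb{M}+3,\infty}}$. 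Converting from gliding back to ordinary Sobolev norms costs a transient-growth factor $\lan t\ran^{c(\mathbb{M})}$ of degree $O(\mathbb{M})$ that is polynomial in $t$ and independent of $A$ --- the feature distinguishing these flows from mixing or random flows, precisely as in the toy model of the Introduction, where $\mathfrak G(t)=O(t^2)$ for second derivatives.

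For the bootstrap, fix a maximal $t_\ast\le A^{1/3+\te}$ on which \eqref{F_M}, together with $\|\lan n\ran\|_{H^{\mathbb{M}-1}}+\|\lan\cc\ran\|_{H^{\mathbb{M}}}\le2\mathfrak B_0$, obeys twice the bound we want to prove. Feeding these hypotheses into the evolution of \eqref{F_M}: the aggregation contributions $\tfrac1A\na\cdot(\lan n\ran\na\cc_\nq)$, $\tfrac1A\na\cdot(n_\nq\na\lan\cc\ran)$, $\tfrac1A[\na\cdot(n_\nq\na\cc_\nq)]_\nq$ and the linear forcing $\tfrac1A n_\nq$ are estimated through Lemmas \ref{lem:est_nNL} and \ref{lem:est_ctrm}; since each carries a $\tfrac1A$ prefactor and each is built from quantities that are either $O(\mathfrak B_0)$ or decaying by enhanced dissipation, after absorption into the dissipation and the parabolic smoothing already present in \eqref{F_M} they perturb the functional by at most an algebraic-in-$A$ amount that, on the window $[0,A^{1/3+\te}]$, stays dominated by the enhanced dissipation precisely because $\te$ is small. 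Hence \eqref{F_M} self-improves, and separately $(\lan n\ran,\lan\cc\ran)$ --- solving a flow-free 2D parabolic--parabolic system for physical time $A^{\te-2/3}$, shorter than its data-determined local existence time once $A\ge A_0$, with an $O(A^{\te-2/3})\to0$ extra source --- stays below $\tfrac32\mathfrak B_0$ by two-dimensional short-time theory, which also identifies the function $\mathfrak B$ of \eqref{est_shrt_t_0}. Thus $t_\ast=A^{1/3+\te}$; evaluating the improved bound there, the $A^{\te}$ $e$-foldings of enhanced dissipation beat the transient-growth factor $\lan A^{1/3+\te}\ran^{c(\mathbb{M})}$ and the nonlinear correction, giving $\exp\{-A^{\te}/C\}$, i.e. \eqref{est_short_t}. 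The particular value $\te=\tfrac1{108(2+\mathbb{M})}$ is chosen to make the accumulated algebraic losses --- the degree-$O(\mathbb{M})$ transient growth, the derivatives recovered via the smoothing of $\tfrac1A\de$, and the factors of $A^{1/3}$ from time integrals against the dissipative decay --- literally fit under the exponential gain; it is not optimized.

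The main obstacle is the high-regularity enhanced-dissipation estimate of the second paragraph in the presence of this destabilizing feedback: the chemical gradient $\na\cc_\nq$ genuinely grows before it decays, and that growth re-enters the $n_\nq$-equation through the aggregation nonlinearity, so one must simultaneously (a) certify that the transient growth is merely polynomial in $t$ and uniform in $A$ --- this is where Elgindi's logarithmic-shift profiles and the ``rewinding'' underlying Theorem \ref{thm:L_ED} are essential, and where the $A^{-1/3}$ rate (rather than the $A^{-1/2}$ rate of stationary shears) is indispensable --- and (b) bound the commutators $\tfrac1A[\Gamma_y^{ijk},\pa_{yy}]$ with no loss in $A$, which is the content of Lemma \ref{lem:cm_L_trm} and the step that pins down the admissible range of $\te$. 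Everything else --- local well-posedness of \eqref{PKS_rsc} for smooth data, the 2D short-time bound for the average, and the product/interpolation estimates behind Lemmas \ref{lem:est_nNL}--\ref{lem:est_ctrm} (which need $\mathbb{M}\ge5$ so the Sobolev spaces remain algebras after the derivative losses in three dimensions) --- is routine once \eqref{F_M} and the adapted norms are in place.
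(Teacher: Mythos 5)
Your setup (average/remainder splitting, gliding-norm functional \eqref{F_M}, the commutator Lemma \ref{lem:cm_L_trm}, and the final count of $A^{\te}$ $e$-foldings against a polynomial-in-$t$ transient factor) matches the paper's machinery, but your single continuity argument contains a genuine gap: you run a \emph{decaying} bootstrap for the remainder functional from $t=0$ on the whole window $[0,A^{1/3+\te}]$, and this cannot close without the smallness hypothesis \eqref{smll_c_nq} on $\cc_{\mathrm{in};\nq}$, which Theorem \ref{thm:short_t} does not assume. The problematic term is $\tfrac1A\na\cdot\bigl(\lan n\ran\,\na d_\nq\bigr)$ with $d_\nq=S_0^{t}\cc_{\mathrm{in};\nq}$: its gradient grows transiently like $t\,\|\cc_{\mathrm{in};\nq}\|$ before the enhanced dissipation takes over, while $\lan n\ran$ is $O(1)$, so over each enhanced-dissipation timescale $\delta^{-1}A^{1/3}$ it injects an amount of order $\|\cc_{\mathrm{in};\nq}\|_{H^{\mathbb M+1}}^2\|\lan n\ran\|_{H^{\mathbb M}}^2$ into the $n_\nq$-energy — comparable to the functional itself, not an ``algebraic-in-$A$'' perturbation. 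In the paper this forcing can only be absorbed by the $Q$-weighted memory term in \eqref{F_M} when $Q$ is large \emph{and} $Q\,\|\cc_{\mathrm{in};\nq}\|$ stays of order one, i.e.\ exactly when the initial chemical remainder is small (this is Proposition \ref{Pro:main} and the point of Remark \ref{rmk:sml_c}); with general data your claimed halving of the functional per enhanced-dissipation step fails on the early part of the interval.

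The paper's proof of this theorem circumvents precisely this obstruction by splitting the horizon as in \eqref{tm_hrzn_dcm}: on the growth phase $[0,T_h]$, $T_h=A^{1/3+\te/2}$, Proposition \ref{pro:prp_FM} only propagates \emph{boundedness} of the higher functional $\mathbb H$ (with a Gr\"onwall factor $\exp\{C\|\cc_{\mathrm{in};\nq}\|_{H^{\mathbb M+1}}^2\}$), no decay is claimed; the $A^{2/3}$-weight on the deviation $c_\nq$ together with the linear decay of $d_\nq$ over $A^{\te/2}$ $e$-foldings (Theorem \ref{thm:ED_gldrg}) then yields the smallness of the full $\cc_\nq(T_h)$ encoded in the gluing condition \eqref{Glu_con}. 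Only after this, on $[T_h,A^{1/3+\te}]$, does the bootstrap of Proposition \ref{Pro:2} (run at regularity $\mathbb M-1$, which is why the statement loses one derivative) deliver the exponential decay that produces $\exp\{-A^{\te}/C\}$ at the final time. To repair your argument you would need either to add this two-phase structure, or to prove boundedness (not decay) of the functional up to a time long enough for $d_\nq$ to become exponentially small and only then switch to the decaying bootstrap — which is the paper's proof.
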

\begin{rmk}We highlight that we obtain a fast decay of the remainder in a rougher Sobolev space. On the other hand, obtaining enhanced dissipation for the top-order Sobolev norm is challenging.   
\end{rmk}}

The final step to prove Theorem \ref{thm_main} is understanding the long-time dynamics. If we continue to use the shear flows constructed in Theorem \ref{thm:L_ED}, the nonlinear aggregation will eventually kick in through the $x$-averages $\lan n\ran, \, \lan \cc\ran$-dynamics after the time-scale $\mathcal O(A)$. To get control over the solutions, one has to assume the subcritical mass constraint $\mathfrak{M}=\| n\|_1< 8\pi|\Torus|$. We address this case in a companion paper \cite{ElgindiHe22II}. 
In Theorem \ref{thm_main}, the total mass $\Mass$ is arbitrary, and one cannot expect that the $x$-averages $\lan n\ran, \, \lan \cc\ran$ stay bounded for all time. To overcome the nonlinearity effect, we introduce the last ingredient of the proof. It is an alternating construction of time-dependent flows from the paper \cite{HeKiselev21}. If one alternates the shear direction of the flow in a particular time scale, the enhanced dissipation can dampen all the information fast. By carefully implementing this idea, we can complete the proof.  It is worth mentioning that alternating shear flows have found applications in various field of fluid mechanics, see, e.g., \cite{BrueColomboCrippaDeLellisSorella22,BrueDeLellis23,JeongYoneda22,JeongYoneda21,HeKiselev21}.
\ifx
With the enhanced dissipation effect of shear flow  introduced, we are ready to lay out the battle plan. First we observe that the validity of the enhanced dissipation estimates \eqref{ED_intro_1} and \eqref{ED_intro_2} dwell on the constraint that the $x$-averages of the passive scalar solutions $f^\nu$ vanish at almost every $y$. To better understand this constraint, we consider passive scalar solutions which are homogeneous in the $x$-direction, i.e., $f (x,y)=f (y)$. Since these types of solutions also solve the one-dimensional heat equation, the fluid transport effect is vacuous and no enhanced dissipation is expected. Therefore, the $x$-average constraint in \eqref{ps_intro_1} is necessary. To capture this heterogeneous nature of the shear flow enhanced dissipation effect, it is natural to decompose the solution $n$ into 
the $x$-average part $\lan n\ran^x$ and the remainder $n_{\neq}$ as follows:
\begin{align} 
\lan n\ran^x=\frac{1}{|\Torus|}\int_{\Torus} n(x,y,z)dx,\quad n_{\nq}^x= n(x,y,z)-\lan n\ran^x.
\end{align} The detailed equations for each component will be presented in the next section. The main idea of the proof is that we can treat the evolution of the remainder $n_{\neq}$ as a perturbation to the passive scalar equation \eqref{ps_intro_1}. Since the solution to the passive scalar dissipates in a fast time scale, we expect that the remainder $n_{\neq}$ decays fast and will not alter the dynamics of the whole system. On the other hand, as the $x$-average $\lan n\ran^x$ is invariant under the shear flow, it solves a modified parabolic-parabolic PKS equation in dimension two. Since the total mass of the solution is a critical quantity in dimension two, the $8\pi$-mass constraint \eqref{mass_constraint} guarantees higher regularity estimation on the solution.   

The above idea works well in the parabolic-elliptic PKS equations. However, new challenges arize when we come to the parabolic-parabolic PKS equations. In the `paper \cite{He}, the author made the observation that effect of the strong shear on the system \eqref{ppPKS_basic} is two-fold. On the one hand, it enhances the diffusion effect of the cell density. On the other hand, it also triggers a strong growth of the chemical gradient, which in turn destabilizes the dynamics of the cell density through the aggregation nonlinearity. To overcome this destabilization effect, the smallness condition \eqref{smll_c_nq} is introduced. 
\fi
\ifx
\textcolor{red}{If we allow alternating shear flows, we can derive the following result
\begin{theorem}\label{thm_2}
Consider the solutions to the equation \eqref{ppPKS_basic} subject to smooth initial data $n_{\mathrm{in}}\in   W^{s,\infty}(\Torus^3)$, $c_{\mathrm{in}}\in  W^{s+1,\infty}(\Torus^3),\, s\geq 3$.  Further assume that 
\begin{align}\label{smallness_of_c_neq_thm_2}
{\| c_{\mathrm{in};\neq}\|_{ L_y^\infty W_x^{s+1,\infty} (\mathbb{T}^3)}\leq \epsilon( n_{\mathrm{in}},c_{\mathrm{in};0}).}
\end{align}
Then there exist an alternating shear flow and a constant $A_0(M,\eta,\|n_{\mathrm{in}}\|_{  W^{s,\infty}}, \|c_{\mathrm{in}}\|_{  W^{s+1,\infty}})$ such that if the magnitude $A$ of the fluid vector field is greater than $A_0$, i.e., $A\geq A_0$, then the solution $(n,c)$ is globally smooth. 
\end{theorem}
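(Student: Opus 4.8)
The plan is to combine the $A^{-1/3}$ enhanced dissipation of Theorem~\ref{thm:L_ED}, the short-time relaxation estimate of Theorem~\ref{thm:short_t}, and the alternating (``rubik's cube'') concatenation of shear directions from \cite{HeKiselev21}, run inside a global-in-time continuity argument. First I would pass to the rescaled system \eqref{PKS_rsc}, so that the advection is normalized and both the diffusion and the aggregation nonlinearity carry the small prefactor $A^{-1}$; I would also replace the $W^{s,\infty}$ data by $H^{\mathbb M}$ data with $\mathbb M$ chosen large compared with $s$ (using the instantaneous parabolic smoothing of the local-in-time solution, and recovering the $W^{s,\infty}$ conclusion at the end by Sobolev embedding on $\Torus^3$). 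Time is then split into consecutive phases of length $A^{1/3+\te}$, $\te=\te(\mathbb M)$ as in \eqref{te_M_chc}, during each of which the advecting field is one of the shear flows of Theorem~\ref{thm:L_ED}, oriented so that successive phases mix the $x$-, the $y$-, and the $z$-variable in turn.

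In the first phase, with field $A\,(u_A(t,y),0,0)$, I would split $(n,\cc)$ into its $x$-average and remainder as in \eqref{shr_avg_rmd}. Theorem~\ref{thm:short_t} then supplies exactly what is needed at the end of the transient window: by \eqref{est_short_t} the remainder $(n_\nq,\cc_\nq)$ is exponentially small in $A$ in a Sobolev norm one derivative below the data, while by \eqref{est_shrt_t_0} the $x$-average stays bounded by an $A$-independent quantity $\mathfrak B$ depending only on the entering norms, on $\mathbb M$, and on the ($A$-independent) spatial Sobolev norms of the shear. The quantitative heart is the toy-model balance displayed above: the $\mathcal O(t^2)$ transient growth of the chemical remainder against the $\mathcal O(A^{-1/3})$ enhanced-dissipation rate gives
\[\frac1A\int_0^\infty \|\de\cc_\nq(t)\|_\infty\,dt \;\le\; \frac{C}{A}\int_0^\infty t^2\, e^{-t/(CA^{1/3})}\,dt \;\le\; C,\]
so the chemical remainder cannot drive a runaway in the cell density through the aggregation term $\na\cdot(n\na\cc)$; smallness of $\cc_{\mathrm{in};\nq}$ makes this $\mathcal O(1)$ constant genuinely small, which is precisely what lets one close the estimate in this regime without any mass restriction.

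I would then iterate. After phase one the solution is, up to an error exponentially small in $A$, a function of $(y,z)$ only; switching the field to $A\,(0,u_A(t,z),0)$ — constant in $x$, hence preserving that $x$-independence up to the small error — produces $A^{-1/3}$ enhanced dissipation for every mode with $k_y\neq0$, so after another window the solution is essentially a function of $z$ alone. A third phase with $A\,(0,0,u_A(t,y))$ damps the remaining $k_z\neq0$ modes, and afterwards only the conserved total average $\overline n$ (together with $\overline{\cc}=0$) survives, again modulo an error exponentially small in $A$; crucially, in each phase the boundedness of the along-shear average inherited from the previous phase keeps the entering norms — hence all the constants produced by Theorem~\ref{thm:short_t} — uniform in $A$. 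Choosing $A_0$ large therefore drives $\|n-\overline n\|_{H^{\mathbb M-1}}+\|\cc\|_{H^{\mathbb M}}$ below any prescribed threshold after three phases, so that $(n,\cc)$ becomes a small perturbation of the constant equilibrium $(\overline n,0)$; from that point one may either switch off the advection and let the solution relax to $(\overline n,0)$ by the standard parabolic stability of constant states of \eqref{ppPKS_basic}, or keep cycling the three shear directions so that the bootstrap self-sustains (the phase growth factors are monotone in the entering data, so for $A$ large each cycle returns a perturbation at least as small). Either way the solution is smooth on $[0,\infty)$, and undoing the time rescaling gives the claimed result. One must of course be careful in the concatenation — the shears of Theorem~\ref{thm:L_ED} depend mildly on $A$ through the ``rewinding'' step, one splices in order-one transition layers so that $U_A\in L^\infty_t C^\infty_{x,y,z}$, and one checks that the assembled field stays divergence-free with the stated bounds — but this is routine.

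\medskip\noindent\textbf{Main obstacle.} The hard part is the single-phase nonlinear estimate behind Theorem~\ref{thm:short_t}: propagating the enhanced dissipation of Theorem~\ref{thm:L_ED} into the high ``gliding-regularity'' Sobolev norms in the presence of the commutators $\frac1A[\Gamma^n,\pa_{yy}]$ between the shear-adapted derivative and the Laplacian, while at the same time controlling the cross term $\na\cdot(n_\nq\na\cc_\nq)$, the mixed term $\na\cdot(\lan n\ran\,\na\cc_\nq)$, and the transient growth of $\na\cc_\nq$ itself. Everything rests on the $t^2$-versus-$A^{-1/3}$ balance being favorable enough that all of these contributions integrate to a quantity controlled by the small parameter in the hypothesis on $\cc_{\mathrm{in};\nq}$; the alternating bookkeeping and the Sobolev embeddings are comparatively soft.
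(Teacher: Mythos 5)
Your overall architecture (the $A^{-1/3}$ linear enhanced dissipation, a single-phase nonlinear relaxation estimate, and a cyclic rotation of the shear direction, run in a bootstrap) is the paper's architecture, but the core of your iteration contains a genuine gap. You claim that after one cycle of three phases the cell density itself is driven close to its mean, so that $(n,\cc)$ becomes a small perturbation of $(\overline n,0)$. The paper neither proves nor uses this, and it is precisely the point that is not justified in your sketch: the smallness of a previously damped component of $n$ has to \emph{persist} through the later phases, during which it is re-forced at an $O(1)$ rate by the transport tilting of the still-large averaged parts (e.g.\ in your third phase the term $u_A(t,y)\pa_z$ acting on the large $y$-independent, $z$-dependent part instantly recreates $y$-dependence) and by the aggregation nonlinearity whose coefficients ($\lan n\ran$, $\na\cc$) are large and even growing transiently. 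The paper deliberately sidesteps this: across the first cycle it only propagates smallness for the \emph{chemical}, which is possible because the $\cc$-equation is linear with forcing $\frac{1}{A}(n-\overline n)$, so over a window of length $A^{1/3+\te}$ the forcing contributes only $O(A^{-1/3+})$ no matter how large $n$ is (this is the content of Theorem \ref{thm:avg_yz}, Lemma \ref{Lem:gc0Lpest} and Lemmas \ref{lem:Cnfg_T0b}--\ref{lem:Cnfg_T1a}); the outcome of cycle one is \eqref{Goal_Phase_1}: $\cc$ small in $H^{\mathbb{M}+1}$ (after paying three derivatives), while $n$ is merely \emph{bounded}. Your hypothesis \eqref{smallness_of_c_neq_thm_2} does not rescue the later phases either, since it concerns only the $x$-remainder and the chemical remainder relative to the new shear direction is large when the direction is switched.

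This gap then infects your endgame. Since $n-\overline n$ is not known to be small, you cannot ``switch off the advection and let the solution relax by the standard parabolic stability of constant states'': for the parabolic--parabolic PKS system the state $(\overline n,0)$ is linearly (Turing-type) unstable once $\overline n$ exceeds the first eigenvalue threshold, and no mass restriction is assumed here, so the flow must be kept on for all time. Your alternative, ``keep cycling so the bootstrap self-sustains,'' is the right idea, but your justification (monotonicity of the constants produced by Theorem \ref{thm:short_t}) is not a closing argument: the bound \eqref{est_shrt_t_0} on the averages is a fixed function $\mathfrak B$ of the entering data, not a contraction, so iterating Theorem \ref{thm:short_t} alone allows the averaged norms to grow from cycle to cycle. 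The paper's mechanism is different and is the missing idea: once the chemical is small at the end of the first cycle, every subsequent phase is handled by Proposition \ref{Pro:main}, whose only smallness hypothesis \eqref{smll_c_nq} is on the chemical remainder; its conclusion \eqref{ConED}--\eqref{Con_<grd_c>} gives genuine exponential enhanced-dissipation decay of the remainders and control of the averages, which simultaneously yields global regularity (Theorem \ref{thm:alt_2}) and regenerates the chemical smallness needed to start the next phase, exactly as emphasized in Remark \ref{rmk:sml_c}. Your proposal never engages this chemical-only smallness structure, and that is where it fails to close.
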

}\fi

In Section \ref{Sec:Bttl_Pln}, we lay down the structure of the proof.  

\subsection{Notation} Throughout the paper, the constants $C$ can change from line to line. 
Moreover, to avoid cumbersome notation, we allow the implicit constant $C$ to depend on the $L_t^\infty W_{x,y,z}^{M,\infty}$-norm of the velocity and regularity level $M$. We recall that the velocity field $u_A$ we construct has the property that $\|u_A\|_{L^\infty_t W^{M,\infty}_{x,y,z}}\leq  C_M, \ \forall M\in\mathbb{N}$. Hence this notation convention will not cause confusion. 

To avoid complicated notation, we will reuse the notion $T_i$. For the proof of each individual lemma, we are going to define different ``local'' quantities $T_i$'s. Once the proof is finished, the current $T_i$'s will no longer be in use.  


For $\iota\in\{x,y,z\}$, we use $\lan f\ran^\iota, \   f_{\nq}^\iota$ to denote the $\iota$-average and $\iota$-remainder:
\begin{align}\label{f_iota}
\lan f\ran^\iota:=\frac{1}{|\Torus|}\int_{\Torus} f(x,y,z)d\iota,\quad f_{\nq}^\iota(x,y,z):= f(x,y,z)-\lan f\ran^\iota.
\end{align}
The following double average notation (and its natural analogues) is also applied in the text:
\begin{align}\label{f_iota_2}
\lan \lan f\ran\ran^{x,y}(z):=\lan \lan f\ran^{x}\ran^y(z)=\frac{1}{|\Torus|^2}\iint f(x,y,z)dxdy.
\end{align}
The following vector field (and its natural analogues)  and multi-index notation are used:
\begin{align}
\Gamma_{y;t}=\pa_y+\int_0^t \pa_y u(T+s,y)ds\pa_x, \quad \Gamma_{y;t}^{ijk}:=\pa_x^i\Gamma_{y;t}^j\paz^k,\qquad |i,j,k|:=i+j+k.
\end{align}
The choice of the reference time $T$ will be specified in a case-by-case scenario. If the $t$ is clear from the text, we will also drop the `$;t$' in the subscript, i.e., $\Gamma_y,\, \Gamma_{y}^{ijk}$.

In Section \ref{Sec:Lnr} of the paper, we apply Fourier transformation in the $x$-variable or in the  $(x,z)$ variables. The  frequency variables corresponding $x$ and $z$ are denoted by $k$ and $\ell$, respectively. 
The notation $\widehat{(\cdot)}$ is used to denote the Fourier transform and the $(\cdot)^\vee$ is used to denote the inverse transform. If we consider the transformation only in the $x$-variable, the Fourier transform and its inverse has the following form
\begin{equation*}
\wh{f}_\al(y,z) := \frac{1}{2\pi}\int_{-\pi}^{\pi} e^{-\sqrt{-1}\al x} f(x,y,z) dx , \quad \check{g}(x,y,z) = \sum_{\al=-\infty}^\infty g_\al(y,z) e^{\sqrt{-1}\al x}.
\end{equation*}
The Fourier variables corresponding to $x,y,z$ are $\al, \beta,\gamma$, respectively. 
\ifx
Similarly, we can define Fourier transform in the $x,y_2$-variables. In that case, we use $\wh f_{k,\ell}(y_1)$ to represent the transformed functions. The notation $|\pa_x|^{1/2}$ should be understood as Fourier multipliers with symbol $|k|^{1/2}$, i.e.,
\begin{align} 
 M(\pa_x,\pa_{y_2})f(x,y_1,y_2)=\left(M(ik,i\ell)\widehat{f}(k,y_1,\ell)\right)^\vee.
\end{align}\fi
Recall the classical $L^p$ norms and Sobolev $H^m$ norms:
\begin{align}
\|f\|_{L^p}=&\|f\|_p=\left(\int |f|^p dV\right)^{1/p};\quad \|f\|_{L_t^q([0,T]; L^p )}=\left(\int_0^T\|f(t )\|_{L ^p}^qdt\right)^{1/q};\\
\|f\|_{H ^m}=&\left(\sum_{|i,j,k|=0}^{M}\|\pa_x^i\pa_y^j\pa_z^k f\|_{L^2}^2\right)^{1/2};\quad
\|f\|_{\dot H^m}=\left(\sum_{|i,j,k|=m}\|\pa_x^i\pa_y^j\pa_z^k f\|_{L ^2}^2\right)^{1/2}.
\end{align}

Throughout the paper, we use the notation $S_{t_r}^{t_r+\tau}$ to denote the semigroup corresponding to the passive scalar equation initiating from time $t_r$
\begin{align}
\pa_\tau \rho+u(t_r+\tau,y)\pa_x \rho =\frac{1}{A} \de \rho.
\end{align}
Adopting this notation, $S_{t_r}^{t_r+\tau} \rho_{\neq}$ is the solution to the passive scalar subject to the initial data $\rho_{\neq}(t_r)$ at time $t_r$. We will also use the notation $S_{t_r,t_r+\tau}^\al$ to denote the solution semigroup corresponding to the $\al$-by-$\al$ equation
\begin{align}
\pa_\tau \wh\rho_\al+u(t_r+\tau,y)i\al \wh\rho_\al =\frac{1}{A} (-|\al|^2+\pa_{yy}+\pa_{zz}) \wh\rho_\al, \quad \,\forall (y,z)\in\Torus ^2,\label{k-by-k-PS}
\end{align}
which is initiating from $t=t_r$.

The notation $t_r$ is the reference time and is defined in \eqref{dcmp}. 

\myc{\color{red}
Notations: $t$: time, \quad $s,t$ is the classical time, $t_\star$ is a starting time, $\tau$ is the increment from time $t_\star$;   $\iota\in\{x,y, z\}$, denotes the coordinate axis variables; $m$: regularity level, $M$: maximal regularity level; $f$ viscous solution to the passive scalar; $\eta$: inviscid solution to the passive scalar; $\te$: extra time factor in $A^{1/3+\te}$. To avoid confusion, $\Phi(t_r+\tau)$ only means $\Phi$ evaluated at time $t_r+\tau$. It is not $\Phi$ times $(t_r+\tau). $ }
 \section{Road Map}\label{Sec:Bttl_Pln}
To present the main ideas involved in the proof of Theorem \ref{thm_main}, we need several preparations. First, we prove the linear enhanced dissipation estimate for a special family of time-dependent shear flows. These time-dependent shear flows have optimized the balance between enhanced dissipation and gradient creation. Moreover, we can upgrade the linear estimates to include higher-order gliding regularity norms based on this balance. These preparations will be accomplished in subsection \ref{Sec_s:L}. Secondly, we develop the nonlinear theory for the system \eqref{PKS_rsc} in subsection \ref{Sec_s:nl}. A functional is introduced to characterize various components of the system in higher order gliding regularity norms. We can show that if the initial chemical gradient is moderately small, then nonlinear enhanced dissipation holds.
On the other hand, if the initial chemical gradient is large, the function is still bounded for a sufficiently long time. After these preparations, we introduce an alternating construction in subsection \ref{Sec_s:Alt} to show that the solutions to equation \eqref{PKS_rsc} are globally regular if the flow is strong enough.

\subsection{Linear Theory of Time-dependent Shear Flows}\label{Sec_s:L}
The starting point of the analysis is the enhanced dissipation phenomena induced by a special family of time dependent shear flows. We consider the passive scalar equations subject to small viscosity:
\begin{align}\label{PS_intr} 
\pa_t f_{\neq}+u_A(t,y)\pa_x f_{\neq} =\frac{1}{A} \de f_{\neq},\quad f_{\neq}(t=0)=f_{\text{in};\nq}, \quad \frac{1}{|\Torus|}\int_{\Torus}f_{\text{in};\neq}(x,y ,z)dx \equiv 0. 
\end{align} 

If we do the Fourier transform in the $(x,z)$-variables, we obtain that
\begin{align}\label{PS_Fourier} 
\pa_t \wh f_{\al,\gamma}(t,y)+u_A(t, y)i\al\wh f_{\al,\gamma}(t,y) =\frac{1}{A} (-|\al|^2+\pa_{yy}-|\gamma|^2 )\widehat f_{\al,\gamma}(t,y),\quad \al\neq 0.
\end{align}
Since the passive scalar equations  \eqref{PS_intr} is linear, we can consider each Fourier mode independently. Hence the theorem below directly implies Theorem \ref{thm:L_ED}.
\begin{theorem}\label{thm:L_ED_al} Consider solution $f_{\al,\gamma}$ to the equation  \eqref{PS_Fourier}. \myr{Given any shear profile $\uu\in C^\infty(\Torus)$, such that $\uu'$ is not identically zero.} There exist time dependent shear flows $u_A(t,y)=\Psi_A(t) \uu (y+\Phi_A(t))\in C^\infty_{t,y}$ such that the following enhanced dissipation estimate holds 
\begin{align}\label{ED_tdp_sh_a}
\|\wh f_{\al,\gamma}(s+t)\|_2\leq &C \|\wh f_{\al,\gamma}(s)\|_2e^{-\delta_0  \frac{1}{A^{1/3}}t}.
\end{align}
Here $\delta_0\in(0,1),\ C\geq 1$ are constants that depend only on the shear profile $\uu$. Moreover, the Sobolev norms of the velocity fields is bounded independent of $A$, i.e., $\|\pa_y u_A\|_{L^\infty_t W_y^{M,\infty}}\leq C_M,\,\forall M\in \mathbb{N}. $
\end{theorem}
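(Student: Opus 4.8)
The plan is to reproduce and adapt Elgindi's logarithmically-shifted shear flow construction and run a hypocoercivity argument on each frequency mode. Fix $\al\neq 0$ and $\gamma$, and write the equation for $\wh f_{\al,\gamma}$ in the form $\pa_t g + i\al\, u_A(t,y)\, g = \tfrac1A(\pa_{yy} - |\al|^2 - |\gamma|^2) g$. The diffusive factors $e^{-(|\al|^2+|\gamma|^2)t/A}$ only help, so it suffices to study $\pa_t g + i\al\, u_A(t,y)\, g = \tfrac1A \pa_{yy} g$, and by rescaling $x$ we may take $\al=1$. The first step is to recall that a \emph{stationary} nondegenerate shear $\uu$ gives enhanced dissipation at rate $A^{-1/2}$, and the rate is governed by the size of $\{y : |\uu'(y)|$ small$\}$. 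The key idea (Elgindi) is that a shear moving at logarithmic speed, $u_A(t,y)=\uu(y+c\log(1+t))$, sees its critical points swept through, so that \emph{no fixed} $y$ stays near a critical point: the time-averaged decay one obtains is strictly faster, of order $A^{-1/3}$.

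Concretely, I would proceed as follows. (1) Change to the moving frame $y\mapsto y - \Phi_A(t)$, which removes the shift from the transport term at the cost of adding a drift $-\Phi_A'(t)\pa_y$ (or, after the $x$-Fourier transform, this is absorbed differently); alternatively work directly with the vector field $\Gamma_{y;t}$ introduced in the Notation section, which is exactly the characteristic-adapted derivative for this flow. (2) Introduce a Villani-type hypocoercivity functional $\mathcal{E}(t) = \|g\|_2^2 + \beta\, A^{\sigma}\langle \pa_y g, \Gamma g\rangle + \lambda\, A^{\tau}\|\Gamma g\|_2^2$ (real part), with exponents $\sigma,\tau$ and small constants $\beta,\lambda$ to be tuned, where $\Gamma$ is the adapted derivative so that $[\pa_t + iu_A\pa_x, \Gamma]$ is lower order. (3) Differentiate $\mathcal{E}$ in time; the cross term produces the coercive contribution $\sim A^{\sigma}\,\alpha^2\, \uu'(y+\Phi_A)^2 |g|^2$ plus error terms from $\tfrac1A\pa_{yy}$ and from the time-dependence $\Psi_A', \Phi_A'$. (4) The crucial point: integrate over a time window of length $\sim A^{1/3}$ and use that $\Phi_A(t)$ moves fast enough (logarithmic-in-the-rescaled-variable, which after the $A$-rescaling becomes the right polynomial rate) that $\int \uu'(y+\Phi_A(t))^2\,dt$ is bounded below uniformly in $y$ — this is where "not identically zero" on $\uu'$ is used, via the fact that $\uu'$ has isolated zeros and the orbit $\{y+\Phi_A(t)\}$ cannot linger near them. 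Choosing $\sigma = 2/3$ balances the coercive term against the $\tfrac1A$-errors and yields $\tfrac{d}{dt}\mathcal{E} \le -\delta_0 A^{-1/3}\mathcal{E}$ in an averaged/iterated sense, i.e. $\mathcal{E}(s+A^{1/3}) \le e^{-\delta_0}\mathcal{E}(s)$, which bootstraps to \eqref{ED_tdp_sh_a}. (5) Finally, the "rewinding" procedure: the construction of $\Phi_A, \Psi_A$ only mildly depends on $A$ (a reparametrization of time and a cutoff of the logarithmic shift once it has run long enough, then restarted), and crucially $u_A(t,\cdot) = \Psi_A(t)\uu(\cdot + \Phi_A(t))$ is always a reparametrized copy of the fixed profile $\uu$, so $\|\pa_y u_A\|_{L^\infty_t W^{M,\infty}_y} \le \|\Psi_A\|_\infty \|\uu'\|_{W^{M,\infty}} \le C_M$ independently of $A$.

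The main obstacle I anticipate is step (4): making rigorous the lower bound $\inf_y \int_s^{s+A^{1/3}} \Psi_A(t)^2\,\uu'(y+\Phi_A(t))^2\,dt \gtrsim A^{-1/3}\cdot(\text{something of size }1)$ in the correctly rescaled variables, while simultaneously controlling the errors generated by $\Phi_A'$ and $\Psi_A'$ in the time-differentiated functional (a fast-moving shift could in principle destroy the commutator estimates). Elgindi's insight is that the logarithmic speed is precisely the sweet spot — fast enough to sweep critical points, slow enough that $\Phi_A'(t)\to 0$ and the extra drift terms are subcritical — and the technical heart of the proof is verifying that this balance survives on $\Torus$ for a \emph{general} profile $\uu$ with $\uu'\not\equiv 0$, which requires a compactness/covering argument on the (finitely many) near-degenerate intervals of $\uu'$ rather than an explicit computation. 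A secondary technical point is handling the $|\gamma|^2$ term and the genuinely $2$-D (in $(y,z)$, or rather $1$-D in $y$ with a parameter) nature cleanly; but since $|\gamma|^2/A \ge 0$ this only contributes favorably and can be carried along as a harmless multiplicative exponential.
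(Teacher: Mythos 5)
There is a genuine gap at the heart of your step (4): the coercive quantity you propose to bound from below, $\inf_y \int_s^{s+A^{1/3}} \Psi_A(t)^2\,\uu'(y+\Phi_A(t))^2\,dt$, is the wrong one, and a lower bound on it does not yield enhanced dissipation. The mechanism for shear-enhanced dissipation is the transfer of $x$-frequency into $y$-frequency, which is encoded in the growth of the \emph{accumulated} gradient $B^{(1)}(t,y)=\int_0^t \pa_y u_A(s,y)\,ds$ (equivalently, the growth of $\|\pa_y\eta\|_{L^2}$ for the inviscid evolution $\eta$), not in the pointwise-in-time size of $\uu'(y+\Phi_A(t))$. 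Your criterion is satisfied, for instance, by the linearly translating shear $\uu(y+t)$ (the time average of $\uu'(y+t)^2$ is certainly bounded below in $y$), yet for that flow $\int_0^t\uu'(y+s)\,ds=\uu(y+t)-\uu(y)$ stays bounded, there is no filamentation, and there is no enhanced dissipation — this is exactly why the logarithmic speed is critical, and it is not merely a matter of keeping the drift error terms $\Phi_A'$ subcritical in a functional, as you suggest. What the paper actually proves (Lemma \ref{lem_ED_1}) is the uniform-in-$y$ lower bound \eqref{lowr_bnd_1} on $\int_{\tau_0}^{A^{1/3}|\al|^{-2/3}}\big|\int_{\tau_0}^{s}\uu'(y+\log(1+\tau))\,d\tau\big|^2ds$: after the substitution $\mathfrak h=\log(1+\tau)$ the inner integral becomes $\int \uu'(y+\mathfrak h)e^{\mathfrak h}d\mathfrak h$, whose value over one period is nonzero for some $y_0$ (claim \eqref{Claim}, proved by contradiction with an integration by parts that uses only $\uu'\not\equiv 0$), and the exponential weight makes the last period dominate, giving linear-in-time growth of the accumulated gradient uniformly in $y$. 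Note also that your appeal to ``isolated zeros of $\uu'$'' is neither available under the hypothesis $\uu'\not\equiv 0$ (the zero set may have interior) nor needed in the paper's argument.

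Structurally, the paper also does not run a Villani-type hypocoercivity functional: it fixes the window $[0,|\al|^{-2/3}A^{1/3}]$, assumes for contradiction that the dissipation $\frac{2}{A}\int\|\pa_yF\|_2^2$ is small, selects by Chebyshev a good time $\tau_0$ with controlled $\dot H^1,\dot H^2$ data, compares the viscous solution with the explicit inviscid solution, and uses the lower bound above to contradict the smallness assumption; exponential decay and uniformity in the starting time then come from the smooth ``rewinding'' of the logarithmic shift every $\mathcal O(A^{1/3})$, which is the one part of your plan (together with the $A$-independence of $\|\pa_y u_A\|_{L_t^\infty W^{M,\infty}_y}$ and the harmless role of $|\al|^2+|\gamma|^2$) that matches the paper. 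Even granting that some hypocoercivity scheme adapted to $\Gamma_{y;t}$ could be made to work, it would have to be driven by the growth of $B^{(1)}$, so as written your proposal is missing the key idea and its central estimate would be established for a quantity that cannot distinguish the logarithmic shift from shifts that provably fail.
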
 
Let us present the main idea of the construction. 

The main obstacle for a smooth stationary shear flow $(u(y),0,0)$ on the torus to achieve the enhanced dissipation rate $A^{-1/3}$ is that its profile function $u$ always contains critical points. Near the critical points, the enhanced dissipation effect will slow down (\cite{CotiZelatiDrivas19}). By introducing time dependence, we can ensure that the critical point of the profile moves around and will not occupy a specific region for a long time. As a result, the flow can induce a faster dissipation in the time-average sense. 

Let us start by considering time-dependent shears taking the form  $(u(y+\log(1+t)),0,0)$, which are logarithmic-in-time shifts of general smooth functions $u\in C^\infty_y$. We first show that the $L^2$-norm of the solution to \eqref{PS_Fourier} decays by a fixed amount on the time interval $[0, A^{1/3}]$, see, e.g., Lemma \ref{lem_ED_1}. Hence, it is tempting to believe that the solutions decay exponentially. However, the logarithmic shifts of the shear flow profiles slow down as time progresses. Hence we will smoothly truncate the flows and restart them after an appropriately chosen time interval. This smooth rewinding procedure will not alter the Sobolev norm $\|\pa_y u_A\|_{L_t^\infty W^{M+3,\infty}}$. After the rewinding process, our proof is complete.   

\ifx
, we consider the $L^\infty$-norm of the solutions \eqref{PS_intr}. In our application, it turns out that an $L^\infty$-enhanced dissipation with sharp rate $\mathcal{O}(A^{-1/3})$ is desired. Classically, one can use the duality argument and Nash inequality, together with the $L^2$-estimate developed thusfar \eqref{ED_tdsh_intr} to derive the following (see, e.g., \cite{HeKiselev21}),
\begin{align}
\|f_\nq(s+t)\|_\infty\leq C\|f_\nq(s)\|_\infty e^{-\delta\frac{t}{A^{1/3}|\log A|^2}},\quad \forall s,t\geq 0.
\end{align}
However, it turns out that the logarithmic loss in the decay rate is not sufficient for our analysis. To derive the $L^\infty$ enhanced dissipation with sharp rate, we choose to establish enhanced dissipation for 

and then invoke the Sobolev embedding
\fi
The next object of study the higher $L^2$-based Sobolev norms. The starting point is the following observation. There exists a vector field that commute with the transport part of \eqref{PS_intr}, i.e.
\begin{align}\label{Gamma}
\Gamma_{y;t}:=\pa_y+\int_0^t \pa_y u(s,y)ds\pa_x,\quad [\Gamma_{y;t}, \pa_t +u\pa_x]=0.
\end{align}
As a result, the advection term ($u\pa_x$) in the equation \eqref{PS_intr} does not drive a transient growth of the $H^1$-norm induced by the $\Gamma_{y;t}$-vector field, $\|\Gamma_{y;t} f_\nq\|_2$. On the contrary, direct energy estimates yields that the canonical $H^1$-norm $\|f_\nq\|_{H^1}$ undergoes a transient linear growth ($\sim \mathcal{O}(t)$) thanks to the advection in  \eqref{PS_intr}.
In conclusion, Sobolev norms induced by the vector fields $\pa_x,\, \Gamma _{y;t},\, \pa_z$ are well-adapted to the passive scalar equations \eqref{PS_intr}. From now on, we call these norms ``gliding regularity norms'' as in the celebrated work \cite{MouhotVillani11}. \myr{To simplify the notation, we also use the following variants throughout the text
\begin{align}
\Gamma_{y;t}^{ijk}=\Gamma_y^{ijk}=\pa_x^i\Gamma_{y;t}^j\pa_z^k.
\end{align}}

An obstacle in analyzing these gliding regularity norms comes from the diffusion operator $\frac{1}{A}\de$. When one applies $\Gamma_{y;t}$-derivative to the equation \eqref{PS_intr}, the commutator term $\frac{1}{A}[\Gamma _y,\de]$ arises. This term involves growth of the form $\mathcal{O}(\frac{t^2}{A})$. Nevertheless, we can use the following time weight to control the  commutator terms, 
\begin{align}{
\Phi(t)=\frac{1}{1+ \frac{t^{3}}{A}  }}.\label{varphi}
\end{align}
Combining all these considerations, we obtain the norm
\begin{align}
\|f_\nq(t,\cdot)\|_{{ \mathcal{Z}}_{G,\Phi}^M}^2:=&{\sum_{|i,j,k|=0}^{M}G^{2i}\Phi(t)^{2j}\|\Gamma_{y;t}^{ijk} f_\nq(t,\cdot)\|_{L^2}^2}:=\sum_{i+j+k=0}^M G^{2i}\Phi(t)^{2j}\lf\|\pa_x^i\Gamma_{y;t}^{j}\pa_z^k f_\nq(t,\cdot)\rg\|_{L^2}^2.\label{Gliding_norm}
\end{align}We apply the notations $\Gamma_{y;t}^{ijk}:=\pa_x^i\Gamma_{y;t}^j\pa_z^k$, and $|i,j,k|:=i+j+k.$ Moreover, the argument of the time weight $\Phi(t)$ is \emph{always} the time variable of the target function $f_\nq(t,\cdot)$ throughout the paper.    
\myr{Here $G$ is a constant depending only on the norm $\|u_A\|_{L_t^\infty W^{M+2,\infty}}$ and  the regularity level $M$, but independent of $A$.} We note that the higher gliding regularity norm will slowly deplete over time. However, an enhanced dissipation estimate is enough to compensate for the decay. Later, we simplify the notation to $\mathcal{Z}^M$. The enhanced dissipation estimates of the gliding regularity norms are summarized in the following theorem.  

\begin{theorem}\label{thm:ED_gldrg}
Consider solutions to the equation \eqref{PS_intr}. There exists a threshold $G_M:=G_M(\| u_A\|_{L_t^\infty W^{M+2,\infty}})$ such that for any parameter $G$ that is above the threshold $G_M$, the following enhanced dissipation estimate holds 
 \begin{align}
\|f_\nq(s+t)\|_{ \mathcal{Z} ^{M}_{G,\Phi}}^2\ \leq& \ 2e^2 \ \|f_\nq(s)\|_{ \mathcal{Z}^{ M}_{G,\Phi}}^2\ \exp\left\{-\myr{2}\delta_{\mathcal Z}\frac{t}{A^{1/3}}\right\},   \quad  \forall \, s, t\in[0,\infty);\label{Gld_Rg_ED}\\
\delta_{\mathcal Z}:=&\delta_0\lf(\log (8 C_0 )\rg)^{-1}.\label{Chc_del_M0}
\end{align}
Here the parameters $\delta_0,\, C_0$ are defined in the  estimate  \eqref{ED_tdsh_intr}. \myc{\footnote{Moreover, the $\delta_{\mathcal Z}$ is in fact independent of the $M$. Can we use $\delta_\mathcal{Z}$ instead?}} 
\myr{Moreover, there exists a constant $C$, which depends only on the $M,\,\delta_{\mathcal Z}^{-1}$ such that  the following estimate holds
\begin{align}\label{Z_implc}
\sum_{|i,j,k|=0}^{M}G^{2i}\|\gt f_\nq(t)\|_{L^2}^2\leq C\sum_{|i,j,k|=0}^{M}G^{2i} \|\pa_x^i\pa_y^j\pa_z^k f_\nq(0)\|_{L^2}^2\exp\lf\{-\delta_{\mathcal Z} \frac{ t}{A^{1/3}}\rg\},\quad \forall t\geq 0.
\end{align}}  
\end{theorem}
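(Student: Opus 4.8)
The plan is to run a weighted energy estimate on the functional $\|f_\nq(t)\|_{\mathcal Z^M_{G,\Phi}}^2$, treating the advection as a harmless transported piece (via the commuting vector field $\Gamma_{y;t}$) and the diffusion-commutator as the only genuine enemy, controlled by the time-weight $\Phi(t)$. First I would differentiate the passive scalar equation \eqref{PS_intr} by $\Gamma_{y;t}^{ijk}=\pa_x^i\Gamma_{y;t}^j\pa_z^k$. Since $[\Gamma_{y;t},\pa_t+u\pa_x]=0$ (equation \eqref{Gamma}), and $\pa_x,\pa_z$ trivially commute with the flow, the transport term contributes nothing to $\frac{d}{dt}\|\Gamma_{y;t}^{ijk}f_\nq\|_2^2$; the diffusion term $\frac1A\de$ contributes the good dissipation $-\frac2A\|\na\Gamma_{y;t}^{ijk}f_\nq\|_2^2$ plus the commutator $\frac1A[\Gamma_{y;t}^{ijk},\de]f_\nq$. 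The key structural fact (this is where Lemma \ref{lem:cm_L_trm} is invoked, as flagged in the introduction) is that $\frac1A[\Gamma_{y;t}^{j},\pa_{yy}]$ produces terms of the schematic form $\frac1A(\int_0^t\pa_{yy}u)\pa_x\Gamma^{j-1}$ and $\frac1A(\int_0^t\pa_{yy}u)^2\pa_x^2\Gamma^{j-2}$, i.e. lower-$\Gamma$-order with gain of $\pa_x$-powers, and the time-integral factors grow like $\mathcal O(t)$ and $\mathcal O(t^2)$ respectively, hence the commutator is $\mathcal O(t/A)$ and $\mathcal O(t^2/A)$ relative to appropriately $\pa_x$-shifted lower norms.

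Next I would assemble the weighted sum $\sum_{|i,j,k|\le M}G^{2i}\Phi(t)^{2j}\|\Gamma_{y;t}^{ijk}f_\nq\|_2^2$ and compute its time derivative. Two new terms appear beyond the individual energy identities: the derivative hitting $\Phi(t)^{2j}$, which yields $2j\,\Phi'(t)\Phi(t)^{-1}\cdot(\text{the }(i,j,k)\text{ term})$ with $\Phi'/\Phi=-\frac{3t^2/A}{1+t^3/A}\le 0$, so this is a \emph{favorable} (dissipative) contribution; and the commutator terms, which after using the gain of one $\pa_x$-power can be absorbed: a factor $\frac tA\cdot G^{2i}\Phi^{2j}\|\pa_x\Gamma^{j-1}\cdots\|_2\|\Gamma^{j}\cdots\|_2$ is rewritten, using $G^{2i}=G^{-2}\cdot G^{2(i+1)}$ and $\Phi^{2j}=\Phi^{-2}\Phi^{2(j-1)}\cdot\Phi^2$, as $\frac{t\Phi^2}{AG}\cdot(\text{norm at }(i{+}1,j{-}1,k))\cdot(\text{norm at }(i,j,k))$; since $\frac{t^3}{A}\Phi(t)=\frac{t^3/A}{1+t^3/A}\le 1$ we get $\frac{t\Phi^2}{A}\le\Phi^{1/3}A^{-2/3}\le A^{-2/3}$ uniformly (and similarly $\frac{t^2\Phi^2}{A}\le\Phi^{1/3}A^{-1/3}\le A^{-1/3}$ for the double-commutator), so for $G$ above a threshold $G_M$ depending only on $\|u_A\|_{L^\infty_tW^{M+2,\infty}}$ these terms are dominated by $\delta_{\mathcal Z}A^{-1/3}$ times the full functional plus a small multiple of the $\pa_x$-boosted dissipation, which is available because the boosted index $(i+1,j-1,k)$ still lies in range when $j\ge1$ (and for $j=0$ there is no commutator). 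This is the main obstacle: bookkeeping the $\pa_x$-gain against the $G$-weights and the $\Phi$-weights simultaneously, so that a single choice of $G_M$ works for all $1\le j\le M$; the introduction already signals this is the technical heart.

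Having shown $\frac{d}{dt}\|f_\nq\|_{\mathcal Z^M_{G,\Phi}}^2\le -\frac1A\sum G^{2i}\Phi^{2j}\|\na\Gamma^{ijk}_{y;t}f_\nq\|_2^2 + C A^{-2/3}\|f_\nq\|_{\mathcal Z^M_{G,\Phi}}^2$ is \emph{not} quite enough (the $A^{-2/3}$ error beats the desired $A^{-1/3}$ rate only if the $\Z^0$ part already decays). So the final step is a two-tier argument in the spirit of \cite{BCZ15,HeKiselev21}: on each time window of length $A^{1/3}$, use Theorem \ref{thm:L_ED} (equivalently Theorem \ref{thm:L_ED_al}) applied mode-by-mode — which gives $\|f_\nq(s+t)\|_2\le C_0\|f_\nq(s)\|_2 e^{-\delta_0 A^{-1/3}t}$, hence a fixed multiplicative drop $\le C_0 e^{-\delta_0}$ of the \emph{bottom} norm over each window — and combine it with the differential inequality, which on a window of length $A^{1/3}$ gives at most growth by $e^{CA^{-2/3}\cdot A^{1/3}}=e^{C A^{-1/3}}\to 1$, i.e. essentially no growth of the \emph{higher} norms, while the dissipation term $-\frac1A\int\|\na\Gamma^{ijk}f_\nq\|_2^2$ together with a Poincaré-type inequality on the nonzero $x$-modes forces the higher norms to inherit the decay of the bottom norm after a window. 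Iterating over consecutive windows, choosing $\delta_{\mathcal Z}=\delta_0(\log(8C_0))^{-1}$ so that the per-window factor $2e^2\cdot(\text{drop})\le e^{-2\delta_{\mathcal Z}A^{-1/3}\cdot A^{1/3}}$ matches, yields \eqref{Gld_Rg_ED}; and since $\Phi(t)\in(0,1]$ with $\Phi(0)=1$, dropping the $\Phi^{2j}$ weights from the left and using $\Gamma_{y;0}^{ijk}=\pa_x^i\pa_y^j\pa_z^k$ on the right gives \eqref{Z_implc}, with $C$ depending only on $M$ and $\delta_{\mathcal Z}^{-1}$ through the finitely many window-iteration constants.
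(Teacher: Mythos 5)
Your setup and regularity step track the paper closely: differentiating by $\Gamma_{y;t}^{ijk}$, using $[\Gamma_{y;t},\pa_t+u\pa_x]=0$, and absorbing the commutator $\frac1A[\Gamma_y^j,\pa_{yy}]$ via the $\pa_x$-gain, the $G$-shift and the weight $\Phi$ is exactly the role of Lemma \ref{lem:cm_L_trm}, and your conclusion that the $\mathcal Z^M_{G,\Phi}$-norm grows by at most a fixed factor over a window of length $O(A^{1/3})$ (for $G\ge G_M$) is the paper's estimate \eqref{Reglrty_est}. (Quantitatively your error term is $O(\Phi A^{-1/3}G^{-2}+\ldots)$ rather than $O(A^{-2/3})$, but that does not affect the near-conservation over one window.)

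The genuine gap is in your decay step. You claim that ``the dissipation term $-\frac1A\|\na\Gamma^{ijk}_{y;t}f_\nq\|_2^2$ together with a Poincar\'e-type inequality on the nonzero $x$-modes forces the higher norms to inherit the decay of the bottom norm after a window.'' Poincar\'e with the $\frac1A$ prefactor only yields the heat rate $e^{-t/A}$, which over a window of length $A^{1/3}$ is a factor $1-O(A^{-2/3})$, i.e.\ no decay at the enhanced scale; and there is no general mechanism by which $L^2$-decay of $f_\nq$ transfers to decay of $\|\Gamma^{ijk}_{y;t}f_\nq\|_2$ (the differentiated quantity does not solve the passive scalar equation, only a forced version of it). The paper closes exactly this step differently: for each multi-index it introduces the passive-scalar evolution of the \emph{differentiated data}, $S_{t_\star}^{t_\star+\tau}\bigl(\Gamma^{ijk}_{y;t_\star}f_\nq(t_\star)\bigr)$, which is a bona fide solution of \eqref{PS_intr} with zero $x$-average and therefore enjoys the $L^2$ enhanced dissipation of Theorem \ref{thm:L_ED}, contracting by $1/8$ after time $\delta_{\mathcal Z}^{-1}A^{1/3}$ by the choice \eqref{Chc_del_M0}; it then runs an energy/Gr\"onwall estimate (again via Lemma \ref{lem:cm_L_trm}) on the deviation $\Gamma^{ijk}_{y;t}f_\nq-S_{t_\star}^{t_\star+\tau}\Gamma^{ijk}_{y;t_\star}f_\nq$, which starts from zero and is shown to stay below $\frac{1}{32}\|f_\nq(t_\star)\|_{\mathcal Z^M_{G,\Phi}}^2$ for $G$ large. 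Adding the two pieces gives the window decay \eqref{Decay_est}, and iteration gives \eqref{Gld_Rg_ED}. Without this semigroup comparison (or an equivalent device, e.g.\ rerunning the hypocoercivity argument with the commutator forcing via Duhamel), your argument does not produce the $A^{-1/3}$ rate for the higher gliding norms. A smaller point: in deriving \eqref{Z_implc} you cannot simply ``drop'' the $\Phi^{2j}$ weights, since the unweighted sum dominates the $\mathcal Z$-norm; one pays a factor $\Phi(t)^{-2M}\lesssim(1+t^3/A)^{2M}$ and absorbs it into half of the exponential, which is why \eqref{Z_implc} carries the rate $\delta_{\mathcal Z}$ instead of $2\delta_{\mathcal Z}$ and a constant depending on $\delta_{\mathcal Z}^{-1}$.
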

\begin{rmk}
We observe that even though the $\mathcal{Z}$-norm depletes over time, it still grants us enough enhanced dissipation as specified in \eqref{Z_implc}. Another comment is that the parameter $\delta_{\mathcal Z}$ can be chosen independent of the regularity level $M. $
\end{rmk}
\myc{If we add the combinatorial weight in front of the norms, then we might control the combinatorial factors in the commutator. As a result, we might get enhanced dissipation of the Gevrey norm.}
This concludes the linear theory.\ifx
\begin{lem}\label{lem:ED_gld}Assume all conditions in Theorem \ref{thm:L_ED}. The following estimate holds
\begin{align}\label{l_ED_gld}
\|f_{\neq}(s+t)\|_{ \mathcal{Z}^M_{G,\Phi}}^2\leq C\|f_{\nq}(s)\|_{ \mathcal{Z}^M_{G,\Phi}}^2 e^{-2\delta_{\mathcal Z}\frac{t}{A^{1/3}}},\quad \forall 0\leq s, t .
\end{align} 
Here the parameters $0<\delta_{\mathcal Z}\leq \delta_0, \, C$ are constants depending only on $u_A,\ M$. Moreover, they are independent of the parameter $A$. The parameter $G$ will be chosen depending on $\myr{\| u_A\|_{L_t^
\infty W^{M+2,\infty}}}, M$. 
\end{lem}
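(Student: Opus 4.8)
I would deduce the enhanced dissipation \eqref{Gld_Rg_ED} for $\mathcal E(t):=\|f_\nq(t)\|_{\mathcal Z^M_{G,\Phi}}^2$ from the $L^2$ estimate \eqref{ED_tdsh_intr} by an interval (``block'') iteration. Fix the block length $\tau_\star:=A^{1/3}\delta_{\mathcal Z}^{-1}=A^{1/3}\log(8C_0)/\delta_0$, so that $C_0 e^{-\delta_0\tau_\star/A^{1/3}}=\tfrac18$. It is enough to prove, for an arbitrary $s\ge0$, (i) a \emph{within-block growth bound} $\mathcal E(s+r)\le\Lambda\,\mathcal E(s)$ for $0\le r\le\tau_\star$ with $\Lambda$ independent of $A$, and (ii) an \emph{over-block decay} $\mathcal E(s+\tau_\star)\le e^{-2}\mathcal E(s)$. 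Granting (i)--(ii), writing $t=m\tau_\star+r$ with $0\le r<\tau_\star$ gives $\mathcal E(s+t)\le\Lambda e^{-2m}\mathcal E(s)\le(\Lambda e^{2})e^{-2\delta_{\mathcal Z}t/A^{1/3}}\mathcal E(s)$ (using $m\ge t/\tau_\star-1$), which is \eqref{Gld_Rg_ED} up to the harmless prefactor; and \eqref{Z_implc} then follows by absorbing the time weight, since $\Phi(t)^{-2j}\le(1+t^3/A)^{2M}$ and $(1+\sigma^3)^{2M}e^{-2\delta_{\mathcal Z}\sigma}\le C_{M,\delta_{\mathcal Z}}e^{-\delta_{\mathcal Z}\sigma}$ with $\sigma=tA^{-1/3}$, together with $\Gamma_{y;0}=\partial_y$ and $\Phi(0)=1$ at the reference time.

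\textbf{Energy identity and the commutator.} For $|i,j,k|\le M$ set $g_{ijk}:=\Gamma_{y;t}^{ijk}f_\nq$. Since $[\Gamma_{y;t},\partial_t+u_A\partial_x]=0$ by \eqref{Gamma} and $\partial_x,\partial_z$ commute with the whole equation, $g_{ijk}$ solves the passive scalar equation with a commutator forcing,
\[
\partial_t g_{ijk}+u_A\partial_x g_{ijk}=\tfrac1A\Delta g_{ijk}+\tfrac1A[\Gamma_{y;t}^{ijk},\Delta]f_\nq,
\]
and retains zero $x$-average. Differentiating $\mathcal E$, the transport terms cancel and one gets $\tfrac{d}{dt}\mathcal E=-\mathcal T-\mathcal D+\mathcal C$, where $\mathcal D:=\tfrac2A\sum G^{2i}\Phi^{2j}\|\nabla g_{ijk}\|_2^2\ge0$, $\mathcal T:=2\sum j\,(-\tfrac{\Phi'}{\Phi})G^{2i}\Phi^{2j}\|g_{ijk}\|_2^2\ge0$ (using $\Phi'<0$ and $-\tfrac{\Phi'}{\Phi}=\tfrac{3t^2/A}{1+t^3/A}$), and $\mathcal C:=\tfrac2A\sum G^{2i}\Phi^{2j}\langle[\Gamma_{y;t}^{ijk},\Delta]f_\nq,\,g_{ijk}\rangle$. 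The commutator is generated from $[\Gamma_{y;t},\Delta]=-(\partial_{yy}b)\partial_x-2(\partial_y b)\partial_x\Gamma_{y;t}+2b(\partial_y b)\partial_{xx}$, $b:=\int_0^t\partial_y u_A$; iterating this and sorting the Leibniz terms, $[\Gamma_{y;t}^{ijk},\Delta]f_\nq$ is a finite sum of terms $c(t,y)\,\partial_x g_{i'j'k'}$ with $|i',j',k'|\le M$ and $|c|\le C(1+t)^2$ --- the ``lost'' derivative always a single $\partial_x$ on an at-most-$M$-th order gliding derivative. Cataloguing the $(i',j',k')$ and the $(1+t)$-powers that occur, uniformly in $A$, is exactly the content of Lemma \ref{lem:cm_L_trm}.

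\textbf{Within-block growth.} Invoking Lemma \ref{lem:cm_L_trm}, after one integration by parts in $x$ to move the surplus $\partial_x$ off the commutator, the three absorbing mechanisms built into the norm take over: (a) the derivative-losing pieces $\tfrac1A c\,\partial_x g_{i'j'k'}$ are dominated by $\tfrac12\mathcal D$; (b) the coefficient growth, which sits in the $j\ge1$ terms and is at worst of size $\tfrac1A(1+t)^2$, is controlled once one notices that, \emph{because} the time weight is cubic, $\tfrac1A(1+t)^2\,\Phi(t)^{2j}\le\tfrac{C}{A^{1/3}}$ for all $t\ge0$ (and more is absorbed into $\mathcal T$ through $-\Phi'/\Phi\gtrsim t^2/A$); (c) the genuinely lower-order feedback carries a factor $G^{-1}$ from comparing the weights $G^{2i}\Phi^{2j}$ and $G^{2(i+1)}\Phi^{2j}$, so it is harmless once $G\ge G_M$; the Young parameters in (a)--(b) must be chosen $t$-dependently, which is where the cubic power in $\Phi$ is used. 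Altogether $\tfrac{d}{dt}\mathcal E\le-\tfrac12\mathcal D+C_\star A^{-1/3}\mathcal E$ with $C_\star$ independent of $A$, hence $\mathcal E(s+r)\le e^{C_\star\tau_\star/A^{1/3}}\mathcal E(s)=e^{C_\star/\delta_{\mathcal Z}}\mathcal E(s)=:\Lambda\,\mathcal E(s)$ on $[s,s+\tau_\star]$, and, integrating, $\int_s^{s+\tau_\star}\mathcal D\,dt\le C\,\mathcal E(s)$ with $C$ independent of $A$.

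\textbf{Over-block decay, and the main obstacle.} The decay is extracted by working upward in the $\Gamma_y$-order $j$. The $j=0$ part carries no commutator at all ($[\Gamma_{y;t}^{i0k},\Delta]=0$), so $g_{i0k}=\partial_x^i\partial_z^k f_\nq$ solves the plain passive scalar equation and \eqref{ED_tdsh_intr} gives directly $\|g_{i0k}(s+\tau_\star)\|_2\le\tfrac18\|g_{i0k}(s)\|_2$. For $j\ge1$ one uses Duhamel for $g_{ijk}$ on $[s,s+\tau_\star]$ together with the $L^2$-enhanced dissipation of the passive-scalar semigroup: the homogeneous term contributes $\tfrac18\|g_{ijk}(s)\|_2$, and the Duhamel integral of the forcing $\tfrac1A[\Gamma_{y;t}^{ijk},\Delta]f_\nq=\tfrac1A\sum c\,\partial_x g_{i'j'k'}$ is controlled by Cauchy--Schwarz in $\sigma$ against the dissipation budget $\int_s^{s+\tau_\star}\mathcal D\,dt\le C\mathcal E(s)$ from the previous step (the surplus $\partial_x g_{i'j'k'}$ being exactly what $\mathcal D$ sees), together with the smallness factors $G^{-1}$ and $\tfrac1A(1+t)^2\Phi^{2j}\le C A^{-1/3}$ that tame the coefficients for the $j\ge1$ blocks; weighting and summing over $(i,j,k)$ and enlarging $G\ge G_M$ and $A\ge A_0$ gives $\mathcal E(s+\tau_\star)\le e^{-2}\mathcal E(s)$. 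The genuine difficulty throughout is the commutator $\tfrac1A[\Gamma_{y;t}^{ijk},\Delta]f_\nq$: it formally costs one derivative and its coefficients grow polynomially in $t$, and closing the estimate requires handling both defects at once --- the lost derivative through the physical dissipation $\mathcal D$, the $t$-growth through the cubic time weight $\Phi$ (with $|\Phi'/\Phi|\sim t^2/A$), and the surviving lower-order terms through the large frequency weight $G$ --- which is precisely Lemma \ref{lem:cm_L_trm}.
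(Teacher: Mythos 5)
Your proposal is correct and follows essentially the same route as the paper: reduce \eqref{Gld_Rg_ED} to a per-block regularity estimate and a per-block decay estimate on intervals of length $\delta_{\mathcal Z}^{-1}A^{1/3}$ (with $\delta_{\mathcal Z}$ tied to the $L^2$ semigroup contraction \eqref{ED_tdsh_intr}), control the commutator $\tfrac1A[\Gamma_{y;t}^{j},\partial_{yy}]$ via the structure of Lemma \ref{lem:cm_L_trm} (dissipation for the lost derivative, the cubic weight $\Phi$ for the $t$-growth, large $G$ for the lower-order feedback), iterate over blocks, and absorb $\Phi^{-2M}$ into half the exponential to get \eqref{Z_implc}. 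The only cosmetic difference is that you phrase the decay step through a Duhamel representation with Cauchy--Schwarz against the accumulated dissipation, whereas the paper runs an energy/Gr\"onwall estimate on the difference $\Gamma_{y;t}^{ijk}f_\nq-S_{t_\star}^{t_\star+\tau}\Gamma_{y;t_\star}^{ijk}f_\nq$; these are equivalent implementations of the same comparison-to-semigroup argument.
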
\fi
\subsection{Nonlinear Theory for the Time-dependent Shear Flows}\label{Sec_s:nl}
In this subsection, we discuss the nonlinear theory associated with the parabolic-parabolic PKS system \eqref{PKS_rsc} subject to  time dependent shear flows. The goal is to prove Theorem \ref{thm:short_t}. 

First of all, we specify the main set of equations and their local existence theory. We rewrite the equation \eqref{PKS_rsc} as follows
\begin{subequations}\label{ppPKS}
\begin{align}
\label{ppPKS_n_1}\pa_t n+&u_A\pa_x n+\frac{1}{A}\na \cdot(n\na \cc )=\frac{1}{A}\de n,\\
\label{ppPKS_c_1}\pa_t \cc+&u_A\pa_x \cc=\frac{1}{A}\de \cc+\frac{1}{A}n-\frac{1}{A}\overline{n},\\
n(t=&0)=n_{\mathrm{in}} ,\quad \cc(t=0 )=\cc_{\mathrm{in}}, \quad (x,y,z)\in \mathbb{T}^3. 
\end{align}
\end{subequations}
The fluid velocity drift is normalized, i.e., $\dss \|u_A\|_{L^\infty_{t,x,y,z}}=1$. To prove Theorem \ref{thm:short_t}, it is enough to consider the time horizon $t\in [0, A^{1/3+\te}]$, where $\te=\te(M)$ is defined in \eqref{te_M_chc}. To rigorous capture the transient growth of the chemical gradient, we decompose the time horizon into two parts:
\begin{align}
\label{tm_hrzn_dcm}[0, A^{1/3+\te}]=[0, A^{1/3+\te/2})\cup[A^{1/3+\te/2}, A^{1/3+\te}]=:[0, T_h)\cup [T_h, A^{1/3+\te}]=:\mathcal{P}_{\text{gr}}\cup \mathcal{P}_{\text{dc}}. 
\end{align}
Here $\mathcal{P}_{\text{gr}}$ is the transient growth phase, and $\mathcal{P}_{\text{dc}}$ represents the decaying phase. 

Next, to characterize the enhanced dissipation, we decompose the solution $n, \, \cc$ into the $x$-average part $\lan n\ran,\,\lan \cc\ran$ and the remainder part $n_{\neq}, \, \cc_\nq$  \eqref{shr_avg_rmd}. Taking the $x$-average of  \eqref{ppPKS_n_1}, \eqref{ppPKS_c_1} yields the $(\lan n\ran,\, \lan \cc\ran)$-equations, i.e.,
\begin{subequations}\label{ppPKS_0_mode}
\begin{align}
\pa_t \lan n\ran+&\frac{1}{A}\na_{y,z} \cdot\left( \lan n\ran\na_{y,z} \lan \cc\ran\right)+\frac{1}{A}\left\lan\na_{y,z}\cdot(n_{\neq}\na_{y,z} \cc_{\neq})\right\ran=\frac{1}{A}\de_{y,z} \lan n\ran,\label{ppPKS_<n>}\\
\pa_t \lan \cc\ran=&\frac{1}{A}\de_{y,z} \lan \cc\ran+\frac{1}{A}\lan n\ran-\frac{1}{A}\overline{n},\label{ppPKS_<c>}\\
\lan n\ran(t=&0)=\lan n_{\mathrm{in}}\ran ,\quad \lan \cc\ran(t=0)=\lan \cc_{\mathrm{in}}\ran,\quad (y,z)\in\mathbb{T}^2. 
\end{align}
\end{subequations}
Then we observe that since the chemical equation is linear, it is possible to decompose the remainder $\cc_\nq$ as follows:
\begin{subequations}\label{ppPKS_neq}
\begin{align}\cc_{ \neq}(t_r+\tau)=c_{\neq}^{t_r}(\tau)+&d_{\neq}^{t_r}(\tau), \label{dcmp}\\
\pa_\tau c^{t_r}_{\neq}+u_A(t_r+\tau,y)\pa_x c_{\neq}^{t_r}=&\frac{1}{A}\de c_{\neq}^{t_r}+\frac{1}{A}n_{\neq},\quad c_{\neq}^{t_r}(\tau=0)=0 ,\label{ppPKS_c_neq}\\
\pa_\tau d_{\neq}^{t_r}+u_A(t_r+\tau,y)\pa_x d_{\neq}^{t_r}=&\frac{1}{A}\de d_{\neq}^{t_r},\quad d_{\neq}^{t_r}(\tau=0)=\cc_{\neq}(t=t_r).\label{ppPKS_d_neq}
\end{align}
Here $t_r$ is a reference time taking values in $\{0, T_h\}$ \eqref{tm_hrzn_dcm}. If $t_r=0$, then the initial data for $d_\nq^{t_r}$ is $\cc_{\mathrm{in};\nq}$. In the text, we also use the notation $S_{t_r}^{t_r+\tau}\cc_{\nq}$ to represent the passive scalar solution $d_\nq^{t_r}(\tau)$ that initiated from $(t_r,\cc_{\nq}(t_r))$ \eqref{ppPKS_d_neq}. We view $d_\nq^{t_r}(\tau)$ as the main part of the chemical remainder and $c_\nq^{t_r}(\tau)$ as the deviation. Both of these components undergo transient growth. However, thanks to the zero initial condition, the deviation $c_\nq^{t_r}$ is small. Here we emphasize that the definition of $c_\nq^{t_r}$ and $d_\nq^{t_r}$ is sensitive to the reference time $t_r$, and we only choose $t_r\in\{0, T_h\}$ \eqref{tm_hrzn_dcm}. 
\myc{\footnote{\bf Is it reasonable to consider the Prop. 2.2 and Prop. 2.3  only?}
}
Finally, the equation for the remainders $n_{\neq}$ reads as follows
\begin{align}
\pa_t n_{\neq}+u_A \pa_x n_{\neq}+&\frac{1}{A}\na_{y,z} \cdot( n_{\neq}\na_{y,z} \lan \cc\ran)+\frac{1}{A}\na \cdot( \lan n\ran\na \cc_{\neq} )+\frac{1}{A}\na\cdot(n_{\neq}\na \cc_{\neq})_{\neq}=\frac{1}{A}\de n_{\neq},\label{ppPKS_n_neq}\ \
n_{\neq}(t=0) =n_{\mathrm{in};\neq}. 
\end{align}
\end{subequations}
First of all, we present the following local well-posedness result, which can be proven through standard argument. \myc{(? Check!?)}
\begin{theorem}\label{thm:lcl_exst}
Consider solutions $(n,\cc)$ to the  equation \eqref{ppPKS} subject to initial data $n_{\mathrm{in}}\in H^{M}(\Torus^3),\, \cc_{\mathrm{in}}\in H^{M+1}(\Torus^3),\, M\geq 3$ and regular flow $u\in L_t^\infty W^{M+3,\infty}_{x,y,z}$. There exists a small constant $T_\varepsilon(\|n_{\mathrm{in}}\|_{H^M},\|\cc_{\mathrm{in}}\|_{H^{M+1}})$ such that the unique solution exists on the time interval  $[0,T_\varepsilon]$. 
\end{theorem}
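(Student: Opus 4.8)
The statement to prove is Theorem \ref{thm:lcl_exst}, a local well-posedness result for the parabolic-parabolic PKS system with a smooth time-dependent shear. This is a standard fixed-point / energy-estimate argument, so the proof should be short and the main work is bookkeeping the nonlinear term.

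Here is my plan.

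\medskip

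\textbf{Plan.} The plan is to run a standard Picard iteration (or, equivalently, an a priori energy estimate combined with a contraction mapping argument) in the space $X_T := C([0,T];H^M(\Torus^3)) \times C([0,T];H^{M+1}(\Torus^3))$ for the pair $(n,\cc)$. First I would set up the iteration scheme: given $(n^{(m)},\cc^{(m)})$, define $(n^{(m+1)},\cc^{(m+1)})$ by solving the two linear parabolic equations
\begin{align*}
\pa_t n^{(m+1)}+u_A\pa_x n^{(m+1)}-\tfrac{1}{A}\de n^{(m+1)} &= -\tfrac{1}{A}\na\cdot\big(n^{(m)}\na\cc^{(m)}\big),\\
\pa_t \cc^{(m+1)}+u_A\pa_x \cc^{(m+1)}-\tfrac{1}{A}\de \cc^{(m+1)} &= \tfrac{1}{A}\big(n^{(m)}-\overline{n^{(m)}}\big),
\end{align*}
with the prescribed initial data. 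Each of these is a linear advection-diffusion equation with smooth (in particular, $L_t^\infty W^{M+3,\infty}_{x,y,z}$) drift $u_A$, so it is uniquely solvable with the expected regularity by standard linear parabolic theory; the smoothness of $u_A$ is exactly what guarantees that commutators like $[\pa_x^i\pa_y^j\pa_z^k, u_A\pa_x]$ are controlled in $H^M$.

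\medskip

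\textbf{Key estimates.} The core step is the $H^M$ (resp. $H^{M+1}$) energy estimate for these linear problems. Testing the $n^{(m+1)}$-equation against $\pa^\alpha \pa^\alpha n^{(m+1)}$ for each multi-index $|\alpha|\le M$ and summing, the diffusion term gives a good negative term $-\tfrac{1}{A}\|\nabla n^{(m+1)}\|_{\dot H^M}^2$, the transport term contributes only a harmless $C\|u_A\|_{W^{M,\infty}}\|n^{(m+1)}\|_{H^M}^2$ after integrating by parts and commuting derivatives, and the forcing term is bounded using the algebra property of $H^M$ for $M\ge 3 > 3/2$ (so $H^M(\Torus^3)$ is a Banach algebra): $\|\na\cdot(n^{(m)}\na\cc^{(m)})\|_{H^{M-1}} \le C\|n^{(m)}\|_{H^M}\|\cc^{(m)}\|_{H^{M+1}}$. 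Here one should track powers of $A$; since $A\ge$ const, the factors $1/A$ are simply bounded. This yields a differential inequality of the form $\tfrac{d}{dt}(\|n^{(m+1)}\|_{H^M}^2+\|\cc^{(m+1)}\|_{H^{M+1}}^2) \le C\big(1 + \|n^{(m)}\|_{H^M}^2+\|\cc^{(m)}\|_{H^{M+1}}^2\big)\big(\|n^{(m+1)}\|_{H^M}^2+\|\cc^{(m+1)}\|_{H^{M+1}}^2\big) + C\|n^{(m)}\|_{H^M}^2$. Choosing $R := 2(\|n_{\mathrm{in}}\|_{H^M}^2+\|\cc_{\mathrm{in}}\|_{H^{M+1}}^2)+1$ and $T_\varepsilon$ small depending only on $R$ (hence only on the data norms), a Gronwall argument shows the ball of radius $\sqrt R$ in $X_{T_\varepsilon}$ is invariant under the iteration map. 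A parallel computation on differences $(n^{(m+1)}-n^{(m)}, \cc^{(m+1)}-\cc^{(m)})$ — now estimated one derivative lower, in $H^{M-1}\times H^M$, to avoid losing derivatives on the difference of the quadratic term — shows the map is a contraction on $[0,T_\varepsilon]$ after possibly shrinking $T_\varepsilon$. Uniqueness follows from the same difference estimate applied to two solutions, and continuity in time from the equation.

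\medskip

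\textbf{Main obstacle.} Frankly, there is no serious obstacle: this is the routine local theory and the only mild care needed is (i) to estimate the difference of iterates in a weaker norm than the iterates themselves (the usual ``high-norm boundedness, low-norm contraction'' trick), since the quadratic nonlinearity $\na\cdot(n\na\cc)$ loses one derivative and cannot be closed at top order on differences, and (ii) to make sure the dependence of $T_\varepsilon$ is genuinely only on $\|n_{\mathrm{in}}\|_{H^M}$ and $\|\cc_{\mathrm{in}}\|_{H^{M+1}}$ — the implicit constants involving $u_A$ are absorbed into the universal-constant convention announced in the Notation subsection, so they do not appear in the statement. One should also note in passing that the solution produced is automatically as smooth as the data permits (bootstrapping: if the data are $C^\infty$ then so is the solution on $[0,T_\varepsilon]$), which is what is needed downstream in Theorem \ref{thm_main}.
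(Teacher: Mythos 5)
Your proposal is correct, and it is exactly the ``standard argument'' the paper invokes: the paper states Theorem \ref{thm:lcl_exst} without giving any proof, remarking only that it follows by standard methods, and your Picard iteration with $H^M\times H^{M+1}$ energy estimates (algebra property for $M\geq 3$, high-norm boundedness plus low-norm contraction on differences, Gronwall to get $T_\varepsilon$ depending only on the data norms) is precisely that routine route. Nothing to correct.
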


Next, we specify the norms that we use to measure the solutions. Motivated by the functional introduced in the linear setting, we consider the following coupled functional for the nonlinear system \eqref{ppPKS_neq}:
\begin{align}\label{F_M} \qquad
&\mathbb{F}^{t_r;M}_{G,Q}[t_r+\tau,n_\nq,\cc_\nq]\\
&=\sum_{|i,j,k|=0}^{M}\Phi^{2j} G^{4+2i}\ \lf\|\ \pa_x^i\Gamma_{y;t_r+\tau}^j\pa_z^k \ n_\nq(t_r+\tau)\ \rg\|_{L^2}^2\\
&\quad+\sum_{|i,j,k|=0}^{M+1}\Phi^{2j\myr{+2}}G^{2i}\ (A^{2/3}\mathbbm{1}_{j<M+1}+\mathbbm{1}_{j=M+1})\ \lf\|\ \pa_x^i\Gamma_{y;t_r+\tau}^j\pa_z^k\  \lf[ \ \cc_\nq(t_r+\tau)-S_{t_r}^{t_r+\tau}(\cc_{\nq}(t_r))\ \rg]\rg \|_{L^2}^2\\
&\quad+ \myr{Q^2}\sum_{|i,j,k|=0}^{M+1}\lf \|\ \pa_x^i\Gamma_{y;t_r}^j\pa_z^k \ \cc_{\nq}(t_r)\ \rg\|^2_{L^{2} }\ \myr{\exp\lf\{- \frac{  \delta_{\mathcal Z}}{2A^{1/3}}\tau\rg\}},\quad \Phi=\Phi(t_r+\tau).
\end{align}\myc{
\footnote{Previous Functional:
\begin{align}
F_M[n_\nq,\cc_\nq]=&\sum_{|i,j,k|=0}^{M}\Phi^{2j} G^{4+2i}\|\pa_x^i\Gamma_{y;t}^j\pa_z^k n_\nq\|_{L^2}^2\\
&+\sum_{|i,j,k|=0}^{M+1}\Phi^{2j\myr{+2}}G^{2i}(A^{2/3}\mathbbm{1}_{j<M+1}+\mathbbm{1}_{j=M+1})\|\pa_x^i\Gamma_{y;t}^j\pa_z^k c_\nq\|_{L^2}^2+ \frac{1}{\ep ^2}\|\cc_{\nq}(0)\|^2_{H^{M+1} }e^{-\delta_{\mathcal Z} \frac{t}{2A^{1/3}}}.
\end{align}}} 
Here we make several comments concerning the functional. The reference time is $t_r$, which takes values at two time instances, i.e., $t_r=0$ and $t_r=T_h$ \eqref{tm_hrzn_dcm}. The parameter $\tau\geq 0$ is the time increment. If the reference time $t_r$ is zero, $\tau$ is nothing but the current time $t$.   \myc{Check the dependence of $G$. It depends on $M, \delta_{\mathcal Z}/\delta_{\mathcal Z}, \|u_A\|_{L^\infty_t W^{M+3/+4?,\infty}}?$}
Here the parameter $G\geq1$ will be chosen depending on $\{\| u_A\|_{L_t^\infty W^{M+3,\infty}_y}, M\}$ and the parameter $Q\geq1$ depends on the initial conditions $\|n_{\mathrm{in}}\|_{H^M},\ \|\lan \cc_{\mathrm{in}}\ran\|_{H^{M+1}}$, i.e., $Q=Q(\|n_{\mathrm{in}}\|_{H^M},\|\lan \cc_{\mathrm{in}}\ran\|_{H^{M+1}})$. The parameter $\delta_{\mathcal Z}$ is defined in Theorem  \ref{thm:ED_gldrg}. 
Similar to the $\mathcal{Z}$-norm, a fast decay of the functional $\FM$ corresponds to the enhanced dissipation \eqref{Z_implc}. Moreover, thanks to the extra $A^{2/3}$-weight in the chemical  component, boundedness of the functional can be translated to smallness of the chemical deviation $c_\nq^{tr}=\cc_\nq-S_{t_r}^{t_r+\tau}\cc_\nq$ in lower order gliding regularity spaces.

Finally, we specify the scheme to prove Theorem \ref{thm:short_t}. We consider two variants of the prototype functional $\mathbb{F}$, i.e., 
\begin{align}
\mathbb{H}_{G}^{\mathbb{M}+1}[t,n_\nq,\cc_\nq]\ :=\ &\mathbb{F}^{t_r=0;\mathbb{M}+1}_{G,1}[t,n_\nq,\cc_\nq],\quad t\in \mathcal{P}_{\mathrm{gr}}=[0, T_h=A^{1/3+\zeta/2}]\label{defn_H};\\
\mathbb{L}_{G}^{\mathbb{M}}[t,n_\nq,\cc_\nq]\ :=\ &\myr{\mathbb{F}^{t_r=T_h;\mathbb{M}}_{G,A^{1/4}}[t,n_\nq,\cc_\nq]},\quad t\in \mathcal{P}_{\mathrm{dc}}=[A^{1/3+\zeta/2}, A^{1/3+\zeta}]\label{defn_L}.
\end{align}
The transient growth phase $\mathcal{P}_{\text{gr}}$ and the decaying phase $\mathcal{P}_{\text{dc}}$ are defined in \eqref{tm_hrzn_dcm}. In the transient growth phase, we propagate the functional $\mathbb{H}$. The boundedness of $\mathbb{H}$ is translated to smallness of the chemical deviation $c_\nq^{t_r=0}=\cc_\nq-S_{0}^{t}\cc_{\mathrm{in};\nq}$ \eqref{dcmp} in a lower order regularity space. This smallness, when combined with the linear enhanced dissipation \eqref{Z_implc} of the passive scalar solution $S_0^t\cc_{\mathrm{in};\nq}$, yields that the chemical gradient $\na\cc_\nq$ is small at the transition time $T_h$. In the decaying phase $\mathcal{P}_{\text{dc}}$, we capitalize the smallness of $\na\cc_\nq(T_h)$ into the enhanced dissipation of the lower norm $\mathbb{L}$. We explicitly spell out these heurisitics in the following two propositions. 

\noindent
\textbf{Growth Phase $\mathcal{P}_{\mathrm{gr}}$: Propagation of the higher regularity norm $\mathbb{H}$. }
In the growth phase, we apply energy method to prove the following proposition. 
\begin{pro}\label{pro:prp_FM}
Consider the solutions to the equation \eqref{ppPKS}  initiated from initial data $n_{\mathrm{in}}\in H^{M}(\Torus^3),\, \cc_{\mathrm{in}}\in H^{M+1}(\Torus^3),\, M\geq 5$. Assume that the ambient shear flow $u_A$ is the one defined in Theorem \ref{thm:L_ED}. Recall the definition of $ \te\ \eqref{te_M_chc},\,T_h=A^{1/3+\zeta/2} \ \eqref{tm_hrzn_dcm}$, and consider the functional $\mathbb{H}$ \eqref{defn_H}. 
There exist thresholds $G_{\mathbb{H}},\, \, A_\mathbb{H} $ such that if the following constraints are satisfied, 
\begin{align}G\geq& G_\mathbb{H}(\|u_A\|_{L_t^\infty W_y^{M+3}}, M),
\quad A\geq A_\mathbb{H}(M, \myr{\|u_A\|_{L_t^\infty W_y^{M+3}}, G},n_{\mathrm{in}},\cc_{\mathrm{in}}),
\end{align}
the following estimate holds for all $t\in[0, T_h]$,
\myb{
\begin{align}\label{est_pro}
{\mathbb{H}^{M}_{G}}[t]+\|\lan n\ran(t)\|_{H^M}^2+\|\lan \cc\ran(t)\|_{H^{M+1}}^2
\leq&  \mathfrak{B} \left(M,\|u_A\|_{L_t^\infty W^{M+3,\infty}},\delta_{\mathcal Z}^{-1},\|\cc_{\mathrm{in}} \|_{H^{M+1}},\| n_\mathrm{in}\|_{H^M}\right).
\end{align}}\myc{\footnote{Previous:
\myr{\begin{align}\FM[t]+&\|\lan n\ran(t)\|_{H^M}^2+\|\lan \cc\ran(t)\|_{H^{M+1}}^2\\
\leq &C(M,\|u_A\|_{L_t^\infty W^{M+3,\infty}},\delta_{\mathcal Z}^{-1},\|\cc_{\nq}(0)\|_{H^{M+1}})\left(\FM[0]+\|\lan n\ran(0)\|_{H^M}^2+\|\lan \cc\ran(0)\|_{H^{M+1}}^2\right),\quad \forall t\in[0, A^{1/3+\te}].
\end{align}}
\myr{Question: Why the expression is linear in terms of the initial data? Is it true? }
Here $\te>0$ is a small parameter depending on $\{\|\pa_yu\|_{L^\infty_y W^{M+3,\infty}}, \, M\}$ and $C$ is a universal constant.
\begin{align}\label{te_choice}
A^{12\te(2+M_0)}\leq A^{\frac{1}{9}}.
\end{align} }}
\end{pro}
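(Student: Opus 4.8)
\textbf{Proof plan for Proposition \ref{pro:prp_FM}.}

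The plan is to run a single Gr\"onwall-type energy estimate on the coupled quantity $\mathcal{E}(t) := \mathbb{H}_G^M[t] + \|\lan n\ran(t)\|_{H^M}^2 + \|\lan \cc\ran(t)\|_{H^{M+1}}^2$ over the growth window $t\in\mathcal{P}_{\mathrm{gr}}=[0,T_h]$, where $T_h = A^{1/3+\te/2}$ is short in the relevant time units (recall $\te=\te(\mathbb{M})$ is tiny, so $T_h \ll A^{1/3}\cdot A^{\te/2}$, i.e. only polynomially many enhanced-dissipation timescales fit inside). The strategy is to differentiate $\mathcal{E}$ in time and show that $\frac{d}{dt}\mathcal{E} \leq \frac{C}{A^{1-c\te}}\, P(\mathcal{E})$ for a polynomial $P$ and a constant $c$ controlled by $\mathbb{M}$ through \eqref{te_M_chc}; then since the window has length $A^{1/3+\te/2}$, the accumulated growth is at most $A^{1/3+\te/2}\cdot A^{-1+c\te} = A^{-2/3 + (c+1/2)\te} \ll 1$ once $A$ is large, which closes the estimate by a continuity/bootstrap argument. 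The key structural inputs are: (i) the gliding vector fields $\Gamma_{y;t_r+\tau}$ commute with the transport operator (see \eqref{Gamma}), so the advection term $u_A\pa_x$ produces \emph{no} contribution in the $\mathbb{F}$-energy identity; (ii) the time weight $\Phi(t)=(1+t^3/A)^{-1}$ is tuned (as in \eqref{varphi}) so that $\dot\Phi/\Phi \sim -t^2/A$ exactly absorbs the commutator $\frac{1}{A}[\Gamma_{y;t}^j,\de]$, which is the reason Lemma \ref{lem:cm_L_trm} is invoked; (iii) the extra $A^{2/3}$-weight on the low-$j$ part of the chemical deviation component converts boundedness of $\mathbb{F}$ into the smallness of $c_\nq^{t_r}$ needed to control the nonlinearity.

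The execution breaks into the following steps. \emph{Step 1: $x$-averaged system.} Treat \eqref{ppPKS_<n>}--\eqref{ppPKS_<c>} as a forced 2D parabolic-parabolic PKS system; the only non-standard term is the quadratic remainder flux $\frac{1}{A}\lan \na_{y,z}\cdot(n_\nq\na_{y,z}\cc_\nq)\ran$, which I bound in $H^{M-1}$ by $\frac{C}{A}\|n_\nq\|_{H^M}\|\cc_\nq\|_{H^{M+1}} \leq \frac{C}{A}\mathcal{E}$ via Sobolev product estimates (this is where $\mathbb{M}\geq 5$ enters, to keep $H^{M-1}$ an algebra and have enough room for the divergence). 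Since $A^{-1}$ is small, this forcing is negligible on the $\mathcal{P}_{\mathrm{gr}}$ window, and the 2D local theory gives the $\mathfrak{B}$-bound on $\|\lan n\ran\|_{H^M}+\|\lan \cc\ran\|_{H^{M+1}}$. \emph{Step 2: remainder $n_\nq$ energy.} Apply $\pa_x^i\Gamma_{y;t}^j\pa_z^k$ to \eqref{ppPKS_n_neq}, pair with the weighted quantity, and collect: the transport term vanishes by commutation, the diffusion term gives $-\frac{1}{A}\|\na \Gamma^{ijk}n_\nq\|^2$ plus a commutator handled by Lemma \ref{lem:cm_L_trm} and the $\Phi$-derivative, and the three nonlinear flux terms are estimated by Lemma \ref{lem:est_nNL}; crucially the $\lan \cc\ran$-transport-of-$n_\nq$ term and the $\lan n\ran\na\cc_\nq$ term each carry a $\frac{1}{A}$, and the pure remainder term $\frac{1}{A}\na\cdot(n_\nq\na\cc_\nq)_\nq$ is quadratic in the small remainder, hence $\frac{C}{A}\mathcal{E}^{3/2}$. \emph{Step 3: chemical deviation $c_\nq^{t_r}$ energy.} Here I estimate the $\mathbb{F}$-component built on $\cc_\nq - S_{t_r}^{t_r+\tau}\cc_\nq = c_\nq^{t_r}$, which solves the \emph{forced} passive scalar equation \eqref{ppPKS_c_neq} with source $\frac{1}{A}n_\nq$ and zero data; the energy identity then has the same commutator structure (Lemma \ref{lem:cm_L_trm}, Lemma \ref{lem:est_ctrm}) plus the source term $\frac{1}{A}\lan \Gamma^{ijk}n_\nq, \Gamma^{ijk}c_\nq^{t_r}\rangle$, bounded by $\frac{C}{A}\mathcal{E}$; the $A^{2/3}$ (resp. $1$) weighting for $j<M+1$ (resp. $j=M+1$) is designed so that the top gliding derivative, which cannot be enhanced-dissipated, is only asked to stay bounded rather than small. \emph{Step 4: the decaying reservoir term.} The last line of \eqref{F_M} is $Q^2\|\cc_\nq(t_r)\|_{H^{M+1}}^2 e^{-\delta_{\mathcal Z}\tau/(2A^{1/3})}$, which is a prescribed function of $\tau$; its time derivative is strictly negative (of size $\sim -\frac{\delta_{\mathcal Z}}{A^{1/3}}$ times itself), so it can only help absorb slack from the other terms. \emph{Step 5: assemble and bootstrap.} Summing Steps 1--4 gives $\frac{d}{dt}\mathcal{E}\leq \frac{C}{A^{1-c\te}}(\mathcal{E}+\mathcal{E}^{3/2}+\mathcal{E}^{2})$; choosing $G$ large (Step 2--3 require $G\geq G_\mathbb{H}(\|u_A\|_{L^\infty_t W^{M+3,\infty}},M)$ to dominate the commutator combinatorics) and then $A\geq A_\mathbb{H}$ so that the initial $\mathcal{E}(0)$ times $e^{CT_h/A^{1-c\te}}$ stays below twice its initial value, a standard continuity argument on $\mathcal{P}_{\mathrm{gr}}$ yields \eqref{est_pro} with $\mathfrak{B}$ depending only on the indicated quantities.

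The main obstacle is Step 2--3, specifically the control of the commutator $\frac{1}{A}[\pa_x^i\Gamma_{y;t}^j\pa_z^k,\de]$ and its interplay with the gliding-norm weights: each $\Gamma_y$ carries the growing coefficient $\int_0^\tau \pa_y u_A(t_r+s,y)\,ds$, which is of size $\mathcal{O}(\tau)$, so a $j$-fold commutator with $\pa_{yy}$ generically produces terms as large as $\frac{\tau^2}{A}\cdot(\text{lower gliding derivatives})$ and, worse, mixed terms with up to $j+1$ powers of $\Gamma_y$ that must be reabsorbed into the dissipation $-\frac{1}{A}\|\na\Gamma^{ijk}n_\nq\|^2$. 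Making this closure work is exactly the role of Lemma \ref{lem:cm_L_trm} together with the precise choice $\Phi(t)=(1+t^3/A)^{-1}$ and a large $G$; the delicate point is that on the growth window the weight $\Phi$ has only degraded by a bounded factor (since $T_h^3/A = A^{3\te/2}\cdot A^{-?}$... more precisely $T_h^3/A = A^{1+3\te/2}/A = A^{3\te/2}$, which is \emph{not} bounded), so one must track the polynomial-in-$A^\te$ losses carefully and feed them into the budget $A^{-2/3+(c+1/2)\te}\ll 1$, which is precisely why $\te$ is taken as small as $\frac{1}{108(2+\mathbb{M})}$ in \eqref{te_M_chc}. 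The secondary difficulty is bookkeeping the two different reference times $t_r\in\{0,T_h\}$ consistently, but that is organizational rather than analytic.
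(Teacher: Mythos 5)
Your overall architecture matches the paper's: combine the differential inequality for the functional (Theorem \ref{thm:F_M_reg}, built on Lemmas \ref{lem:cm_L_trm}, \ref{lem:est_nNL}, \ref{lem:est_ctrm}) with the $x$-average estimates and close by a bootstrap. However, the quantitative mechanism you use to close the Gr\"onwall argument is wrong, and this is not a bookkeeping issue but the central point of the proposition. You claim $\frac{d}{dt}\mathcal{E}\leq \frac{C}{A^{1-c\te}}P(\mathcal{E})$ and then budget the growth as $T_h\cdot A^{-1+c\te}\ll 1$. The dominant terms in the energy identity do not carry a prefactor $A^{-1}$: the diffusion commutator contributes $\sim \frac{C\Phi(t)}{G^2A^{1/3}}\mathcal{E}$, the source $\frac{1}{A}n_\nq$ in the $c_\nq^{t_r}$-equation, once the $A^{2/3}$ weight is accounted for, contributes $\sim\frac{C\Phi^2}{G^2A^{1/3}}\mathcal{E}$ (your ``$\frac{C}{A}\mathcal{E}$'' is off by a factor $A^{2/3}$), and the forcing by the passive-scalar part $d_\nq^{t_r}=S_{0}^{t}\cc_{\mathrm{in};\nq}$ contributes $\sim\frac{C}{A^{1/3}}e^{-\delta_{\mathcal Z}t/(2A^{1/3})}\|\cc_{\mathrm{in};\nq}\|_{H^{M+1}}^2\,\mathcal{E}$. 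Since the window has length $T_h=A^{1/3+\te/2}\gg A^{1/3}$, a raw bound $\frac{C}{A^{1/3}}\mathcal{E}$ would integrate to $A^{\te/2}\to\infty$, so your budget argument simply does not apply; the estimate closes only because each $A^{-1/3}$-coefficient carries a time factor ($\Phi(t)$, $(A^{2/3}+t^2)^{-1}A^{1/3}$, or the enhanced-dissipation exponential of $d_\nq$) whose time integral over $[0,T_h]$ is bounded \emph{uniformly in $A$}, with the $O(1)$ integrals then absorbed by taking $G$ large (depending on $M$, $\|u_A\|_{L_t^\infty W^{M+3,\infty}}$, $\delta_{\mathcal Z}^{-1}$).

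A consequence of this misidentification is that your conclusion in Step 5 (``$\mathcal{E}$ stays below twice its initial value'') is both unprovable here and stronger than the statement. The $d_\nq$-forcing term has time integral of order $\delta_{\mathcal Z}^{-1}\|\cc_{\mathrm{in};\nq}\|_{H^{M+1}}^2$, which is $A$-independent but in no way small since Proposition \ref{pro:prp_FM} makes no smallness assumption on the chemical data; exponentiating it is exactly what produces the bound $\mathfrak{B}$ with its (exponential) dependence on $\|\cc_{\mathrm{in}}\|_{H^{M+1}}$, and is the reason this growth-phase proposition can dispense with the smallness hypothesis \eqref{smll_c_nq} of Proposition \ref{Pro:main}. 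Relatedly, in your Step 1 the remainder flux $\frac{1}{A}\lan\na_{y,z}\cdot(n_\nq\na_{y,z}\cc_\nq)\ran$ cannot be dismissed as ``negligible since $A^{-1}$ is small'': converting $\na\cc_\nq$ into gliding derivatives costs a factor $t$ (since $\pa_y=\Gamma_{y;t}-B^{(1)}\pa_x$ with $B^{(1)}\sim t$), so one must split $\cc_\nq=c_\nq^{t_r}+d_\nq^{t_r}$ and use the $A^{2/3}$-weighted smallness of $c_\nq^{t_r}$ together with the enhanced dissipation of $d_\nq^{t_r}$, as in Theorem \ref{thm:<n>_est} and the estimates \eqref{ddt_nz_HM}--\eqref{ddtnz_HM1}; this is again an $A^{-1/3}$-with-integrable-weight structure, not an $A^{-1}$ one. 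Finally, the $H^{M+1}$ bound on $\lan\cc\ran$ is not obtained by including it in the Gr\"onwall quantity, but a posteriori from the $\lan n\ran$ bound via the heat-semigroup gradient estimate \eqref{na_c_0_est} as in \eqref{nacz_HM}; this is a minor structural difference, but your version would need the same integrable-coefficient analysis to avoid circularity.
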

\begin{rmk}Throughout the paper, the regularity index $M$ will be changing. But they will be smaller than the $\mathbb{M}$ in Theorem \ref{thm:short_t} and Theorem \ref{thm:alt_1}.
\end{rmk}

\noindent
\textbf{Decaying Phase $\mathcal{P}_{\mathrm{dc}}$: Enhanced dissipation of   the lower regularity norm $\mathbb{L}$.} 

To prove the enhanced dissipation of $ \mathbb{L}$, we use a bootstrap argument, see, e.g., \cite{BedrossianHe16}, \cite{He}. We set the reference time $t_r= T_h=A^{1/3+\te/2}$ \eqref{tm_hrzn_dcm}. Assume that $[t_r,T_\star]$ is the largest interval on which the following hypotheses hold:

\noindent
\begin{subequations}\label{Hypotheses}

\noindent
1) Remainders' enhanced dissipation estimates: 
\begin{align}
\mathbb{L}_{G}^{M}[t_r+\tau, n_\nq,\cc_\nq]\leq& 2C_{ED}\ \mathbb{L}_{G}^{M} [t_r,n_\nq,\cc_\nq]\ \exp\lf\{-\frac{2\delta \tau}{A^{1/3}}\rg\},\quad\forall\ t_r+\tau\in [t_r, T_\star];\label{HypED}
\end{align} 

\noindent
2) Uniform-in-time estimates of the $x$-averages:
\begin{align}
\|\lan n\ran\|_{L_t^\infty([t_r,T_\star];H_{y,z}^M)}\leq& 2\BB_{\lan n\ran;H^M};\label{Hyp_<n>}\\
\| \lan \cc\ran\|_{L_t^\infty([t_r,T_\star];H_{y,z}^{M+1})}\leq& 2 \BB_{ \lan \cc\ran;H^{M+1}}.\label{Hyp_<c>}
\end{align} 
\ifx
3)  Uniform-in-time $L^\infty$ estimates: 
\begin{align}
\|n\|_{L_t^\infty([0,T_\star];L^\infty)}\leq& 2C_{n;L^\infty};\label{Hyp_n_Linf}\\
\|\na c_{\neq}\|_{L_t^\infty([0,T_\star];L^\infty)}\leq &2 C_{\na c_{\neq};L^\infty};\label{Hyp_na_c_neq_Linf}
\end{align}\fi
\end{subequations}
\myr{The parameter $\delta$ is chosen depending only on the constants $\delta_0,\, C_0$ \eqref{ED_tdsh_intr} and $\delta_{\mathcal Z}$ \eqref{Chc_del_M0}, 
\begin{align}\label{Chc_del}
\delta=\delta(\delta_0,\, C_0,\,\delta_{\mathcal Z})\myc{?\lf(=\frac{\delta_{\mathcal Z}}{2\log(32 C_0^2)}\rg)}.
\end{align} \myc{If the $\delta_{\mathcal Z}$ is actually independent of $M$, we can replace $\delta_{\mathcal Z}$ by $\delta_{\mathcal Z}$.}}
By the local well-posedness of the equation \eqref{ppPKS} in $H^M,\, M\geq 3$ (Theorem \ref{thm:lcl_exst}), we have that the interval $[t_r,T_\star]$ is non-empty.   

The nonlinear enhanced dissipation is the consequence of the following proposition. We recall that $\mathbb{M}$ is the regularity level specified in Theorem \ref{thm:short_t}.
{
\begin{pro}\label{Pro:2}
Consider the solutions to the equation \eqref{ppPKS}  initiated from the initial data $n_{\mathrm{in}}\in H^{M}(\Torus^3),\, \cc_{\mathrm{in}}\in H^{M+1}(\Torus^3),\, 3\leq M\leq\mathbb{M}$. Assume that the ambient shear flow $u_A$ is the one defined in Theorem \ref{thm:L_ED},  and \myr{the following ``gluing'' constraint} is satisfied at time $t_r=T_h=A^{1/3+\te/2}$ \eqref{te_M_chc}, \eqref{tm_hrzn_dcm},
\begin{align} 
\label{Glu_con}\myr{\sum_{|i,j,k|=0}^{M}\Phi(T_h)^{2j}\|\pa_x^i\Gamma_{y;T_h}^{j}\pa_z^k n_\nq(T_h)\|_{L^2}^2+\sum_{|i,j,k|=0}^{M+1}A^{2/3}\myr{\Phi(T_h)^{2j+2}}\| \pa_x^i\Gamma_{y;T_h}^{j} \pa_z^k\cc_{\neq}(T_h)\|_{ L^2 (\mathbb{T}^3)}^2\leq \BB_1^2}.\quad 
\end{align}Here the bound  $\BB_1$ is independent of $A$ and $ \Gamma_{y;T_h} = \pa_y+\int_0^{T_h} \pa_y u_A(s,y) ds\pa_x$. \myc{We only use the classical $\mathcal{Z}$ norm in the $d_\nq^{T_h}$ part. But the bound in \eqref{est_pro} only gives us the bound with ``$\Phi^{2j+2}$'' in $c_\nq^{t_r=0}$ and $e^{-A^{\cdots}}$ in $d_\nq^{t_r=0}$. So our actual $d_\nq^{t_r=T_h}$ bound is like 
\begin{align}
\|d_{\nq}^{T_h}(\tau)\|_{\mathcal{Z}^{M+1}}^2\lesssim& \sum_{|i,j,k|=0}^{M+1}G^{2i}\Phi(T_h)^{2j}\|\Gamma_{y;T_h}^{ijk}\cc_{\nq}(T_h)\|_{L^2}^2\exp\lf\{-\frac{\delta_\mathcal{Z}}{A^{1/3}}\tau \rg\}\\
\lesssim&\frac{1}{\Phi^2(T_h)}\sum_{|i,j,k|=0}^{M+1}G^{2i}\Phi(T_h)^{2j+2}\|\Gamma_{y;T_h}^{ijk}\cc_{\nq}(T_h)\|_{L^2}^2\exp\lf\{-\frac{\delta_\mathcal{Z}}{A^{1/3}}\tau \rg\}\\
\lesssim&\frac{A^{3\te}}{A^{2/3}}\mathcal{B}_1^2 \exp\lf\{-\frac{\delta_\mathcal{Z}}{A^{1/3}}\tau \rg\}.
\end{align}}

Let $[t_r,T_\star]$ be the maximal time interval  on which the hypotheses \eqref{Hypotheses} hold.  
{There exist thresholds $G_\mathbb{L},\, A_\mathbb{L}, \, Q_\mathbb{L}$ such that if the following constraints are satisfied, 
\begin{align}G\geq& G_\mathbb{L}(C_0, \delta_0, \delta_{\mathcal Z}^{-1}, \|u_A\|_{L_t^\infty W_y^{M+3}}, M),\\
\quad A\geq& A_\mathbb{L}(C_0, \delta_0,M,\delta_{\mathcal Z}^{-1}, \|u_A\|_{L_t^\infty W_y^{M+3}}, \myr{G},\BB_1, \|\lan n\ran (T_h)\|_{H^{M}},\|\lan\cc\ran (T_h)\|_{H^{M+1}}),
\end{align}}then the following stronger estimates can be developed:

\begin{subequations} 
1) Remainders' enhanced dissipation estimates: 
\begin{align}
\mathbb{L}_{G}^{M}[T_h+\tau]\leq C_{ED}\ \mathbb{L}_{G}^{M}[T_h]\ \exp\left\{-2\frac{\delta \tau}{A^{1/3}}\right\},\quad \forall \ T_h+\tau \in[T_h, T_\star].\label{ConED_glue}
\end{align}

2) Uniform-in-time estimates of the $x$-averages:
\begin{align}
\|\lan n\ran\|_{L_t^\infty([T_h,T_\star];H_{y,z}^M)}\leq&
\BB_{\lan n\ran;H^M}:= 2\|\lan n\ran(T_h)\|_{H^{M}}+2; \label{Con_<n>_glue}\\
\| \lan \cc\ran\|_{L_t^\infty([T_h,T_\star];H_{y,z}^{M+1})}\leq& \BB_{ \lan \cc\ran;H^{M+1}}:= \myr{\frac{1}{A^{1/5}}+\|\lan \cc(T_h)\ran \|_{H^{M+1}}} .\label{Con_<cc>_glu}
\end{align}
\end{subequations}
\ifx 
Here the constants are chosen as follows\myr{
\begin{align}
\BB_{\lan n\ran;H^M};\ 
\BB_{ \lan \cc\ran;H^{M+1}}=\BB_{ \lan \cc\ran;H^{M+1}}\myr{\left(\sum_{|i,j,k|=0}^{M}\|\pa_x^i\Gamma_{y;T_h}^j\pa_z^k n(T_h)\|_{L^2}^2, \sum_{|i,j,k|=0}^{M+1}\|\pa_x^i\pa_{y;T_h}^j\pa_z^k\cc(T_h)\|_{L^{2}}\right)}.
\end{align}}\fi
As a consequence of the bootstrap argument, the estimates \eqref{ConED_glue}, \eqref{Con_<n>_glue}, \eqref{Con_<cc>_glu} hold on the time horizon $[T_h, A^{1/3+\te}]$. 
\end{pro}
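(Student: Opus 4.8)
The proof will run as a continuity (bootstrap) argument in the increment variable $\tau\ge 0$ based at the reference time $t_r=T_h=A^{1/3+\te/2}$ of \eqref{tm_hrzn_dcm}. Let $[t_r,T_\star]$ be the maximal interval on which the hypotheses \eqref{Hypotheses} hold; it is nonempty by the local well-posedness theory, Theorem~\ref{thm:lcl_exst}. The plan is to upgrade \eqref{HypED}, \eqref{Hyp_<n>}, \eqref{Hyp_<c>} to the strict estimates \eqref{ConED_glue}, \eqref{Con_<n>_glue}, \eqref{Con_<cc>_glu} on the same interval; since the improved bounds carry no factor of $2$, the usual continuation dichotomy then forces $T_\star\ge A^{1/3+\te}$, which is the assertion. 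Throughout I will use the splitting \eqref{dcmp} at $t_r=T_h$, namely $\cc_\nq(t_r+\tau)=c_\nq^{t_r}(\tau)+d_\nq^{t_r}(\tau)$ with $d_\nq^{t_r}=S_{t_r}^{t_r+\tau}\cc_\nq(t_r)$ the passive scalar of \eqref{ppPKS_d_neq} and $c_\nq^{t_r}$ the deviation of \eqref{ppPKS_c_neq}; the chemical entry of $\mathbb{L}$ in \eqref{defn_L}, \eqref{F_M} is exactly the $A^{2/3}/1$-weighted gliding norm of $c_\nq^{t_r}$, together with the purely time-decaying budget term $Q^2\|\cc_\nq(t_r)\|_{H^{M+1}}^2e^{-\delta_{\mathcal Z}\tau/(2A^{1/3})}$ with $Q=A^{1/4}$.

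For the remainders, the first move is to dispose of $d_\nq^{t_r}$: the linear enhanced-dissipation estimate for gliding norms, Theorem~\ref{thm:ED_gldrg} and in particular \eqref{Z_implc}, applied at base time $T_h$, shows that $\|d_\nq^{t_r}(\tau)\|_{\mathcal Z^{M+1}}^2$ decays like $e^{-\delta_{\mathcal Z}\tau/A^{1/3}}$ from an amplitude which, using the gluing bound \eqref{Glu_con} to convert the $A^{2/3}\Phi(T_h)^{2j+2}$-weighted norm of $\cc_\nq(T_h)$ into a bound on $\|\cc_\nq(T_h)\|_{\dot H^{M+1}}$, is a small negative power of $A$ times $\BB_1^2$ because $\te=\te(\mathbb{M})$ in \eqref{te_M_chc} is tiny; the same input gives $\mathbb{L}(t_r)\le C(G,M)\BB_1^2$. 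Hence $\na\cc_\nq$ differs from the $\mathbb{L}$-controlled quantity $\na c_\nq^{t_r}$ only by the negligible, exponentially decaying piece $\na d_\nq^{t_r}$. Then I differentiate $\mathbb{L}$ along \eqref{ppPKS_n_neq} and \eqref{ppPKS_c_neq}. The diffusion $\tfrac1A\de$ produces the favorable terms $-\tfrac2A\|\na\Gamma_{y;t_r+\tau}^{ijk}(\cdot)\|_{L^2}^2$, which by Theorem~\ref{thm:ED_gldrg} dominate and deliver the rate $-2\delta_{\mathcal Z}A^{-1/3}$; the weight derivative $\pa_\tau\Phi^{2j}$ from \eqref{varphi} produces exactly the $O(t^3/A)$ loss that $\Phi$ is built to absorb; the transport $u_A\pa_x$ is inert since $\Gamma_{y;t}$ commutes with $\pa_t+u_A\pa_x$, cf.\ \eqref{Gamma}; the diffusion commutator $\tfrac1A[\Gamma_{y;t_r+\tau}^{ijk},\de]$ is controlled by Lemma~\ref{lem:cm_L_trm} at the price of a fixed power of $G$, whence the requirement $G\ge G_\mathbb{L}$; and the coupling/nonlinear terms $\tfrac1A\na\cdot(n_\nq\na\langle\cc\rangle)$, $\tfrac1A\na\cdot(\langle n\rangle\na\cc_\nq)$, $\tfrac1A\na\cdot(n_\nq\na\cc_\nq)_\nq$ and $\tfrac1A n_\nq$ are estimated by Lemmas~\ref{lem:est_nNL} and \ref{lem:est_ctrm}, using \eqref{Hyp_<n>}--\eqref{Hyp_<c>}, the smallness of $\na d_\nq^{t_r}$ just established, and the $G^{4+2i}$-versus-$G^{2i}$ gap between the $n_\nq$- and $\cc_\nq$-weights in \eqref{F_M}. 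Collecting terms yields $\frac{d}{d\tau}\mathbb{L}\le\left(-\tfrac{2\delta_{\mathcal Z}}{A^{1/3}}+\tfrac{C(G^{-1}+A^{-\kappa})}{A^{1/3}}\right)\mathbb{L}+\tfrac{C\BB_1^4}{A}e^{-c\tau/A^{1/3}}$ for some $\kappa,c>0$; taking $G$ large, then $A$ large, makes the bracket $\le-2\delta A^{-1/3}$ with $\delta$ as in \eqref{Chc_del}, and Gr\"onwall (with $\mathbb{L}(t_r)=\mathbb{L}[T_h]$) yields \eqref{ConED_glue}, strictly improving \eqref{HypED}.

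For the $x$-averages, $(\langle n\rangle,\langle\cc\rangle)$ solves the planar system \eqref{ppPKS_0_mode}. I carry out $H^M$ (resp.\ $H^{M+1}$) energy estimates, integrating by parts in the aggregation term $\tfrac1A\na_{y,z}\cdot(\langle n\rangle\na_{y,z}\langle\cc\rangle)$ and in the source-versus-diffusion pairing of the $\langle\cc\rangle$-equation so that the top derivative lands on the dissipative term and can be absorbed; this produces $\frac{d}{dt}\|\langle n\rangle\|_{H^M}^2\le\tfrac CA\|\langle\cc\rangle\|_{H^{M+1}}^2\|\langle n\rangle\|_{H^M}^2+(\text{remainder source})$ and $\frac{d}{dt}\|\langle\cc\rangle\|_{H^{M+1}}^2\le\tfrac CA\|\langle n\rangle\|_{H^M}^2+(\text{remainder source})$, where the remainder sources arise from $\tfrac1A\langle\na\cdot(n_\nq\na\cc_\nq)\rangle$ and are quadratic in the $\mathbb{L}$-controlled, exponentially decaying remainders, hence contribute the time-integrable quantity $\lesssim\tfrac1A\BB_1^4e^{-c\tau/A^{1/3}}$. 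Because the time horizon has length at most $A^{1/3+\te}\ll A$ and $\te$ is tiny, Gr\"onwall over $[t_r,T_\star]$ only introduces a multiplicative factor $\exp(O(A^{-2/3+\te}))$ and an additive $O(A^{-2/3+\te})$, both $o(1)$ as $A\to\infty$ with implicit constants depending on $\|\langle n\rangle(T_h)\|_{H^M}$ and $\|\langle\cc\rangle(T_h)\|_{H^{M+1}}$ (permitted in $A_\mathbb{L}$); this gives \eqref{Con_<n>_glue} and \eqref{Con_<cc>_glu}. All three hypotheses having been strictly improved, the bootstrap closes and the estimates hold on $[T_h,A^{1/3+\te}]$.

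The main obstacle is the chemical part of the $\mathbb{L}$-estimate. The top-order gliding norm of $\cc_\nq$ carries weight $1$ rather than $A^{2/3}$ and so enjoys no enhanced dissipation, so one must check that the forcing $\tfrac1A n_\nq$ driving $c_\nq^{t_r}$ in \eqref{ppPKS_c_neq} is reabsorbed without spoiling the decay rate $2\delta A^{-1/3}$; this requires a tight accounting among the $A^{2/3}$-weights in \eqref{F_M}, the choice $Q=A^{1/4}$ for the budget term, and the powers of $G$ lost in the commutator Lemma~\ref{lem:cm_L_trm} and in the nonlinear estimates. The second delicate point is the quasilinear cross terms $\na\cdot(\langle n\rangle\na\cc_\nq)$ and $\na\cdot(n_\nq\na\langle\cc\rangle)$, which mix the averaged and remainder unknowns and are precisely the reason the weight on $n_\nq$ in \eqref{F_M} is boosted to $G^{4+2i}$ relative to $G^{2i}$ on $\cc_\nq$; controlling them with room to spare is what pins down the thresholds $G_\mathbb{L}$, $A_\mathbb{L}$, $Q_\mathbb{L}$.
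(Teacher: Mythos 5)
Your overall skeleton---the bootstrap on $[T_h,T_\star]$, the splitting $\cc_\nq=c_\nq^{T_h}+d_\nq^{T_h}$ at the reference time $T_h$, the use of the gluing bound \eqref{Glu_con} together with the smallness of $\te$ in \eqref{te_M_chc} to render the passive piece $d_\nq^{T_h}$ negligible, and the closing of the $x$-average bounds over a horizon of length $A^{1/3+\te}\ll A$---is the same as the paper's. However, there is a genuine gap in how you derive the enhanced dissipation \eqref{ConED_glue}. You assert that the dissipation terms $-\tfrac{2}{A}\|\na\Gamma_{y}^{ijk}(\cdot)\|_{L^2}^2$ ``by Theorem \ref{thm:ED_gldrg} dominate and deliver the rate $-2\delta_{\mathcal Z}A^{-1/3}$,'' and then close with a single Gr\"onwall on $\frac{d}{d\tau}\mathbb{L}$. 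Theorem \ref{thm:ED_gldrg} is a statement about solutions of the passive scalar equation \eqref{PS_intr}; it is not a functional inequality converting the instantaneous dissipation into a coercive term of size $\delta_{\mathcal Z}A^{-1/3}\,\mathbb{L}$. For an $x$-remainder the best pointwise-in-time coercivity available from $-\tfrac1A\|\na g\|_{2}^2$ is the Poincar\'e rate $O(A^{-1})$, which is far too slow; the rate $A^{-1/3}$ is not a spectral gap of the generator at any fixed time and cannot appear in the energy identity for $\mathbb{L}$. This is precisely why the paper's Theorem \ref{thm:F_M_reg} only produces small \emph{positive} terms of size $O(G^{-1}A^{-1/3})\mathbb{L}$ on the right-hand side, i.e.\ the regularity estimate $\mathbb{L}[t_\star+\tau]\le 2\,\mathbb{L}[t_\star]$ on blocks of length $\delta^{-1}A^{1/3}$, and no decay.

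What is missing is the second half of the paper's scheme: the decay estimate over one enhanced-dissipation block. The paper fixes an arbitrary $t_\star$, compares $\Gamma_{y}^{ijk}n_\nq$ and $\Gamma_y^{ijk}c_\nq^{T_h}$ with the passive-scalar evolution $S_{t_\star}^{t_\star+\tau}$ of their data at $t_\star$ through the deviation functional $\DD$ in \eqref{D_M} and Theorem \ref{thm:DD_est}, shows the deviation remains below $\tfrac1{16}\mathbb{L}[t_\star]$ after time $\delta^{-1}A^{1/3}$, and then uses the linear enhanced dissipation \eqref{ED_tdsh_intr}, \eqref{Gld_Rg_ED} on the passive pieces together with the exponentially decaying budget term in \eqref{F_M} (here with $Q=A^{1/4}$) to conclude $\mathbb{L}[t_\star+\delta^{-1}A^{1/3}]\le\tfrac12\mathbb{L}[t_\star]$; iterating as in \eqref{ED_argument} yields \eqref{ConED_glue}. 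Without this comparison step your argument produces no decay of the $n_\nq$- and $c_\nq^{T_h}$-components at all. Your handling of the gluing data (paying $\Phi(T_h)^{-2}\sim A^{O(\te)}$ against the $A^{2/3}$ weight) and of the $x$-averages (an $H^{M+1}$ energy estimate for $\lan\cc\ran$ with the top derivative absorbed into the dissipation, in place of the paper's heat-semigroup bound \eqref{nacz_HM}) is sound and matches the paper up to minor variation.
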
}
\ifx
If the parameter $G$ is large compared to $\{\|\pa_y u_A\|_{L^\infty_t W^{M+3,\infty}_y}, M\}$, \, and the parameters $ A$ are chosen large depending on $\{G,\|\pa_y u_A\|_{L^\infty_t W^{M+3,\infty}_y}, M, \|n_{\mathrm{in}}\|_{H^M}, \|\cc_{\mathrm{in}}\|_{H^{M+1}}\}$,
Further define a variant of the functional $F_M$ \eqref{F_M} on the time interval $[T_h, A^{1/3+\te}]$:    
\begin{align}
\mathbb{G}_M [n_\nq,\cc_\nq](T_h+\tau)=&\sum_{|i,j,k|=0}^{M}\Phi^{2j} G^{4+2i}\|\pa_x^i\Gamma_{y;T_h+\tau}^j\pa_z^k n_\nq\|_{L^2}^2\\
&+\sum_{|i,j,k|=0}^{M+1}\Phi^{2j\myr{+2}}(T_h+\tau)G^{2i}(A^{2/3}\mathbbm{1}_{j<M+1}+\mathbbm{1}_{j=M+1})\|\pa_x^i\Gamma_{y;T_h+\tau}^j\pa_z^k (\cc_\nq-S_{T_h}^{T_h+\tau}\cc_\nq(T_h))\|_{L^2}^2\\
&+ \sum_{|i,j,k|=0}^{M+1}\|\pa_x^i\Gamma_{y;T_h}^j\pa_z^k \cc_{\nq}(T_h)\|^2_{L^{2} }e^{-\delta_{\mathcal Z} \frac{t}{2A^{1/3}}}.\label{G_M}
\end{align}
{\bf In the alternating process, the gluing at $A^{1/3+\te/2}$ is a problem, we need the $1/A^{2/3-}$ to help? We have $\int_0^{A^{1/3+\te}}\Phi_A^{-2M}(t)\frac{t^2}{A^{5/3}}e^{-(t-A^{1/3+\te/2})_+}dt<A^{-\varepsilon}?$}
\myr{There is an ambiguity in the definition of $F_M$ here! The $F_M$ implicitly means that we decompose the $\cc$ in a special way. But when we derive this gluing estimate, we are decomposing $\cc_\nq$ in a different fashion! So I suggest we define the following new functional on $[T_h:=A^{1/3+\te/2}, A^{1/3+\te}]$:
\begin{align}c_\nq (T_h+\tau)&:=\cc_\nq(T_h+\tau)-S_{T_h}^{T_h+\tau}\cc_\nq(T_h);\\
\mathbb{G}[n_\nq,\cc_\nq](T_h+\tau)=&\sum_{|i,j,k|=0}^{M}\Phi^{2j} G^{4+2i}\|\pa_x^i\Gamma_{y;T_h+\tau}^j\pa_z^k n_\nq\|_{L^2}^2\\
&+\sum_{|i,j,k|=0}^{M+1}\Phi^{2j\myr{+2}}G^{2i}(A^{2/3}\mathbbm{1}_{j<M+1}+\mathbbm{1}_{j=M+1})\|\pa_x^i\Gamma_{y; T_h+\tau}^j\pa_z^k c_\nq\|_{L^2}^2\\
&+ \sum_{|i,j,k|=0}^{M+1}\|\pa_x^i\Gamma_{y;T_h+\tau}^j\pa_z^k\cc_{\nq}(T_h)\|^2_{H^{M+1} }e^{-\delta_{\mathcal Z} \frac{\tau}{2A^{1/3}}}.
\end{align} }\fi
\myb{
The proof of Theorem \ref{thm:short_t} is completed once we combine the two propositions above:
\begin{proof}[Proof of Theorem \ref{thm:short_t}]\myc{I am confused by the $M, \mathbb{M}$ counts here. } We decompose the time horizon as in \eqref{tm_hrzn_dcm}. On the time interval $[0,T_h]$, we apply Proposition \ref{pro:prp_FM} and obtain that 
\begin{align}
\sum_{|i,j,k|\leq \mathbb{ M}}\Phi(T_h)^{2j+2}\|\pa_x^i \Gamma_{y;T_h}^j \pa_z^k \cc_{\neq}(T_h)\|_{ L^2 (\mathbb{T}^3)}^2\leq\frac{ C}{A^{2/3}}\mathfrak{B} \left( \mathbb M,\|u_A\|_{L_t^\infty W^{ \mathbb M+3,\infty}},\delta_{ \mathcal Z}^{-1},\|\cc_{\mathrm{in}} \|_{H^{ \mathbb M+1}},\| n_\mathrm{in}\|_{H^ \mathbb M}\right).
\end{align} 
This is the gluing condition \eqref{Glu_con} in Proposition \ref{Pro:2} with regularity level $M=\mathbb{M}-1$. Now an application of Proposition \ref{Pro:2} yields \eqref {est_short_t} and \eqref{est_shrt_t_0}. This concludes the proof.
\end{proof}}
\myc{\footnote{\textcolor{red}{ The Order of Choosing the Constants. Add some details here. Note that $A$ will be chosen large compared to all the constants appeared in the proof, including the constants appeared in the initial $W^{s,\infty}$ estimates, which might not appear in the hypotheses above.}}}
To prepare ourselves for the alternating shear construction, we present the following proposition, which is in the same vein as Proposition \ref{Pro:2}.
\begin{pro}\label{Pro:main}
Consider the solutions to the equation \eqref{ppPKS} initiated from initial data $n_{\mathrm{in}}\in H^{M}(\Torus^3),\, \cc_{\mathrm{in}}\in H^{M+1}(\Torus^3),\, M\geq 3$. Assume that the ambient shear flow $u_A$ is the one defined in Theorem \ref{thm:L_ED}, and the initial chemical remainder is small in the sense that 
\begin{align}\label{smll_c_nq}
\myb{\| \cc_{\mathrm{in};\neq}\|_{ H^{M+1} (\mathbb{T}^3)}\leq \epsilon.  }  
\end{align} 
\myr{There exist thresholds $G_0,\, \ep_0, \, A_0, \, Q_0$ such that if the following constraints are satisfied, 
\begin{align}\label{chc_GAQ}
G\geq& G_0(\|u_A\|_{L_t^\infty W_y^{M+3}}, M), \quad \ep^{-1}\geq \ep_0^{-1}(n_{\mathrm{in}},\cc_{\mathrm{in}}),\\
\quad A\geq& A_0(M, \|u_A\|_{L_t^\infty W_y^{M+3}}, \myb{G,\ep^{-1},}n_{\mathrm{in}},\cc_{\mathrm{in}}),\quad Q\geq Q_0(M,\|u_A\|_{L_t^\infty W_y^{M+3}}, \myb{G,\ep^{-1}},n_{\mathrm{in}},\cc_{\mathrm{in}}),
\end{align}}
the following set of conclusions hold

 \begin{subequations}\label{Conclusions}
1) Remainders' enhanced dissipation estimates:  
\begin{align}\label{ConED} 
\mathbb{F}_{G,Q}^{t_r=0;M}[t,n_\nq,\cc_\nq]\ \leq\ & C_{ED}
\mathbb{F}_{G,Q}^{t_r=0;M}[0,n_\nq,\cc_\nq]\exp\lf\{-\frac{2\delta t}{A^{1/3}}\rg\}\\ \leq\  &C(\|n_{\mathrm{in};\neq}\|_{H^M}^2+\| \cc_{\mathrm{in};\nq}\|_{H^{M+1}}^2+ 1)\exp\lf\{-\frac{2\delta t}{A^{1/3}}\rg\};
\end{align}

2) Uniform-in-time estimates of the $x$-averages:
\begin{align}
\|\lan n\ran\|_{L_t^\infty([0,T_\star];H_{y,z}^M)}\leq& 2\|\lan n_{\mathrm{in}}\ran\|_{H^M_{y,z}}+2;\label{Con_<n>}\\
\| \na\lan \cc\ran\|_{L_t^\infty([0,T_\star];H_{y,z}^{M})}\leq&{A^{-1/5}}+2\|\na \lan \cc_{\mathrm{in}}\ran\|_{H^{M}_{y,z}}.\label{Con_<grd_c>}
\end{align} 
\end{subequations}
\end{pro}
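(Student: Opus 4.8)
Since Proposition \ref{Pro:main} is stated ``in the same vein'' as Proposition \ref{Pro:2}, I would run a single bootstrap argument, but now with the reference time fixed at $t_r=0$, so that the increment $\tau$ coincides with the physical time $t$ and the functional in play is $\mathbb{F}^{t_r=0;M}_{G,Q}$ as in \eqref{F_M}, \eqref{dcmp}. The key structural simplification compared with Theorem \ref{thm:short_t} is that the smallness hypothesis \eqref{smll_c_nq} removes the need for the two–phase (growth/decay) decomposition \eqref{tm_hrzn_dcm}: the destabilizing transient growth of the chemical gradient is already under control at $t=0$, so there is no growth phase $\mathcal{P}_{\mathrm{gr}}$ and no gluing condition \eqref{Glu_con} — the smallness parameter $\epsilon$ plays the role that the bound $\BB_1$ played in Proposition \ref{Pro:2}. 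Concretely, I would let $[0,T_\star]$ be the maximal interval on which the weakened versions of \eqref{ConED}, \eqref{Con_<n>}, \eqref{Con_<grd_c>} hold (constants doubled, exactly as in \eqref{Hypotheses}); local well-posedness (Theorem \ref{thm:lcl_exst}) makes this interval nonempty, and the task is to improve the doubled bounds back to their stated form, which by continuity propagates them to the whole time horizon of interest.

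\textbf{Step 1: enhanced dissipation of the functional.} Differentiating $\mathbb{F}^{t_r=0;M}_{G,Q}[t,n_\nq,\cc_\nq]$ in time, the dominant contribution comes from the dissipative terms in \eqref{ppPKS_n_neq} and \eqref{ppPKS_c_neq}; the linear gliding-regularity enhanced dissipation of Theorem \ref{thm:ED_gldrg} (applied mode by mode, together with \eqref{Z_implc}) supplies the main decay $e^{-2\delta t/A^{1/3}}$, with $\delta$ as in \eqref{Chc_del}. What remains are: (i) the commutator terms $\frac1A[\Gamma_{y;t}^{j},\de]$ produced by commuting the gliding vector field past the Laplacian; (ii) the nonlinear remainder terms $\frac1A\na\cdot(n_\nq\na\cc_\nq)_\nq$; and (iii) the $x$-average coupling terms $\frac1A\na_{y,z}\cdot(n_\nq\na_{y,z}\lan\cc\ran)$ and $\frac1A\na\cdot(\lan n\ran\na\cc_\nq)$. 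Term (i) is handled by Lemma \ref{lem:cm_L_trm}: the $\mathcal{O}(t^2/A)$ growth it would otherwise generate is absorbed by the time weight $\Phi(t)$ of \eqref{varphi} that is built into the norm, once $G$ is chosen above the threshold $G_0(\|u_A\|_{L_t^\infty W^{M+3,\infty}},M)$. Terms (ii)--(iii) are estimated with Lemmas \ref{lem:est_nNL} and \ref{lem:est_ctrm}, using the bootstrap bounds on $\lan n\ran,\lan\cc\ran$ and the $A^{2/3}$-weight in the chemical block of $\mathbb{F}$, which converts boundedness of the functional into smallness of the deviation $c_\nq^{0}=\cc_\nq-S_0^t\cc_{\mathrm{in};\nq}$ in the lower gliding-regularity norms. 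The universal $A^{-1}$ prefactor on every commutator and nonlinear term, together with the smallness $\epsilon$ of $\cc_{\mathrm{in};\nq}$ and the largeness of $A$ and $Q$, renders all of these strictly subordinate to the $A^{-1/3}$ linear decay, which closes the enhanced dissipation inequality \eqref{ConED} with the improved constant $C_{ED}$; the second line of \eqref{ConED} then follows from $\mathbb{F}^{t_r=0;M}_{G,Q}[0]\le C(\|n_{\mathrm{in};\nq}\|_{H^M}^2+\|\cc_{\mathrm{in};\nq}\|_{H^{M+1}}^2+1)$.

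\textbf{Step 2: the $x$-averages.} The pair $(\lan n\ran,\lan\cc\ran)$ solves the two-dimensional parabolic–parabolic PKS system \eqref{ppPKS_<n>}--\eqref{ppPKS_<c>}, forced by $\frac1A\lan\na_{y,z}\cdot(n_\nq\na_{y,z}\cc_\nq)\ran$. Running $H^M$ (resp. $H^{M+1}$) energy estimates on this 2D system and feeding in the enhanced dissipation just established for $n_\nq,\cc_\nq$, the forcing becomes integrable on the relevant time scale — this is exactly the computation behind the heuristic toy-model bound $\frac1A\int_0^\infty\|\de g\|_\infty\,dt\le C$ after Theorem \ref{thm:L_ED}, which is where the $A^{-1/3}$ rate (rather than $A^{-1/2}$) is decisive. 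This yields $\|\lan n\ran\|_{L_t^\infty H^M}\le 2\|\lan n_{\mathrm{in}}\ran\|_{H^M}+2$. For $\na\lan\cc\ran$, the $\lan\cc\ran$-equation \eqref{ppPKS_<c>} is a heat equation (note $\overline n$ is conserved and $\lan\cc\ran$ is mean-zero, so its linear part only redistributes the norm) forced by $\frac1A\na\lan n\ran$; integrating on the horizon under consideration and using the bootstrap bound on $\lan n\ran$ contributes at most $O(A^{-2/3+\te})\le \tfrac12 A^{-1/5}$, which gives \eqref{Con_<grd_c>}. This improves the doubled bootstrap bounds \eqref{Con_<n>}, \eqref{Con_<grd_c>} back to their stated form; combined with Step 1 this closes the bootstrap and establishes the proposition.

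\textbf{Main obstacle.} The crux is Step 1: keeping the commutator terms $\frac1A[\Gamma_{y;t}^{j},\de]$ and the nonlinear remainder terms genuinely subordinate to the $A^{-1/3}$ linear decay \emph{uniformly} over the whole range of gliding indices $|i,j,k|\le M$ (resp. $M+1$ for the chemical). This is delicate precisely because $\Gamma_{y;t}$ has coefficients $\int_0^t\pa_y u_A\,ds=O(t)$, so iterated commutators accumulate powers of $t$; the entire design of $\mathbb{F}^{t_r;M}_{G,Q}$ — the weights $\Phi^{2j}$, the powers of $G$, and the $A^{2/3}$-weight on the chemical block — is tuned so that Lemmas \ref{lem:cm_L_trm}, \ref{lem:est_nNL}, \ref{lem:est_ctrm} close, and carrying out that bookkeeping is where essentially all of the work lies. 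The smallness hypothesis \eqref{smll_c_nq} is what lets the argument proceed from $t=0$ without a preliminary growth phase, and is what makes the nonlinear contribution in \eqref{ConED} a small perturbation rather than a source of net growth.
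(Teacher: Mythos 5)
Your overall architecture (bootstrap with $t_r=0$, hypotheses with doubled constants, closing via Lemmas \ref{lem:cm_L_trm}, \ref{lem:est_nNL}, \ref{lem:est_ctrm}, and Step 2 for the $x$-averages via the 2D averaged system and the heat semigroup) matches the paper. But there is a genuine gap in Step 1, at the very point where the enhanced dissipation rate is supposed to appear. You claim that, upon differentiating $\mathbb{F}^{t_r=0;M}_{G,Q}$ in time, ``the dominant contribution comes from the dissipative terms'' and that Theorem \ref{thm:ED_gldrg}, ``applied mode by mode,'' supplies the decay $e^{-2\delta t/A^{1/3}}$ inside this differential inequality. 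This does not work as stated: the energy identity only produces the negative terms $-\frac{1}{A}\|\na \Gamma_{y;t}^{ijk} n_\nq\|_2^2$ (and likewise for $c_\nq^{0}$), which yield a rate of order $A^{-1}$, not $A^{-1/3}$; and Theorem \ref{thm:ED_gldrg} is a statement about the passive scalar semigroup $S_{t_\star}^{t_\star+\tau}$, not an inequality one can insert term-by-term into the evolution of the nonlinear solution, which solves a forced equation. Indeed, the paper's own differential inequality for $\mathbb{F}$ (Theorem \ref{thm:F_M_reg}) contains no decaying mechanism at rate $A^{-1/3}$; it only yields the regularity estimate \eqref{Rg_est_NL}, i.e.\ that $\mathbb{F}$ at most doubles on windows of length $\delta^{-1}A^{1/3}$.

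The missing idea is the comparison step that converts the \emph{linear} enhanced dissipation into decay of the \emph{nonlinear} functional: fix an arbitrary $t_\star$, introduce the deviations $\mathbb{V}^n_{ijk},\mathbb{V}^c_{ijk}$ between the nonlinear solution and the passive scalar evolution $S_{t_\star}^{t_\star+\tau}$ of its data at $t_\star$, and control the deviation functional $\mathbb{D}^{t_r;M}_{G,Q}$ of \eqref{D_M} via Theorem \ref{thm:DD_est}. One then decomposes $\mathbb{F}[t_\star+\tau]\le 2\,\mathbb{D}[t_\star+\tau]+\mathcal{R}[t_\star+\tau]$ as in \eqref{pf_Decay_st1}, where $\mathcal{R}$ consists of passive-scalar terms that, by Theorem \ref{thm:L_ED}/\ref{thm:ED_gldrg} and the choice of $\delta$ in \eqref{Chc_del}, drop below $\frac{1}{16}\mathbb{F}[t_\star]$ at $\tau=\delta^{-1}A^{1/3}$, while $\mathbb{D}$ stays below $\frac{1}{16}\mathbb{F}[t_\star]$ by a Gr\"onwall estimate. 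This gives the decay estimate \eqref{Decay_est_NL}, and only then does the iteration over windows (as in \eqref{ED_argument}) produce \eqref{ConED}. Without this regularity-plus-decay-by-comparison structure, your Step 1 asserts the conclusion rather than proving it; with it, the rest of your proposal (including the observation that the smallness \eqref{smll_c_nq} removes the growth phase and replaces the gluing condition) is in line with the paper's argument.
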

\begin{rmk}\label{rmk:sml_c}
We highlight that the derivation of the enhanced dissipation estimate \eqref{ConED} requires smallness of the chemical gradient $\na \cc$ \eqref{smll_c_nq}. This is the main obstacle to overcome in the alternating construction.
\end{rmk}

This concludes Section \ref{Sec_s:nl}. 
\ifx
If we consider general initial condition, namely, if we drop the smallness constraint \eqref{smll_c_nq}, then we can do the following. We can allow the $\|n\|_{H^M}$ to grow to an extremely large level but independent of $A$. And we can use the following observation: if $\lan n\ran^\iota$ or $\lan \lan n\ran\ran^{s,s'}$ is bounded in terms of the bootstrapping constants, we can take $A$ large such that the growth of $\na\lan c\ran ^\iota$ or $\na\lan\lan c\ran\ran^{s,s'}$ is of order $CA^{-1/2}$. For example, after the first phase, we have that $\lan \lan c\ran\ran ^{y,z}$ is small, and we can make sure that it will be small for all later time in phase $\mathfrak{b}$ and phase $\mathfrak{c}$. So after phase $\mathfrak{a}$, $\na \lan \lan c\ran \ran ^{y,z}$ is small and after phase $\mathfrak{b}$, $\na \lan c\ran^{y}$ is small. Now after the first $\mathfrak{a}, \mathfrak{b},\mathfrak{c}$ alternate, we have that even though the $n$ is amplified, the $\na \cc$ is small ($O(A^{-1/2})$). Afterwards, we use the main theorem here.  My hope is that even though the size of the initial chemical gradient is large, it can only force the $\lan n\ran^\iota$	 LINEARLY. Hence the $n$ will not have nonlinear blow up tendency. So we can pump the energy from chemical to the density in the first round of alternating shear process.
\fi
\subsection{\myb{Suppression of Blow-up with Alternating Shear Flows}}\label{Sec_s:Alt}
In this section, we sketch the idea of suppression of chemotactic blow-up through alternating shear flows. Several lemmas will be presented along with the argument. The proof of these technical lemmas will be postponed to Section \ref{Sec:Alt}. We consider the following time-dependent alternating shear flows acting on the system:
\begin{align}
\text{Phase }\mathfrak{a}: \widetilde{u}_A:=&(u_A(t,y),0,0)^\top  ,\quad \forall t\in[0, A^{1/3+\te}];\\
\text{Phase }\mathfrak{b}: \wt u_A:=&(0,0, u_A(t,x))^\top ,\quad \forall t\in(A^{1/3+\te}, 2A^{1/3+\te}];\\
\text{Phase }\mathfrak{c}: \wt u_A:=&(0,u_A(t,z),0)^\top,\quad \forall t\in(2A^{1/3+\te}, 3A^{1/3+\te}].
\end{align}Here $u_A$ is the flow defined in Theorem \ref{thm:L_ED}.  The explicit form of $\wt u_A$ can be found in Figure \ref{Fig:alternating_shear}. 
\begin{figure}[h]
\centering
\includegraphics[scale=0.8]{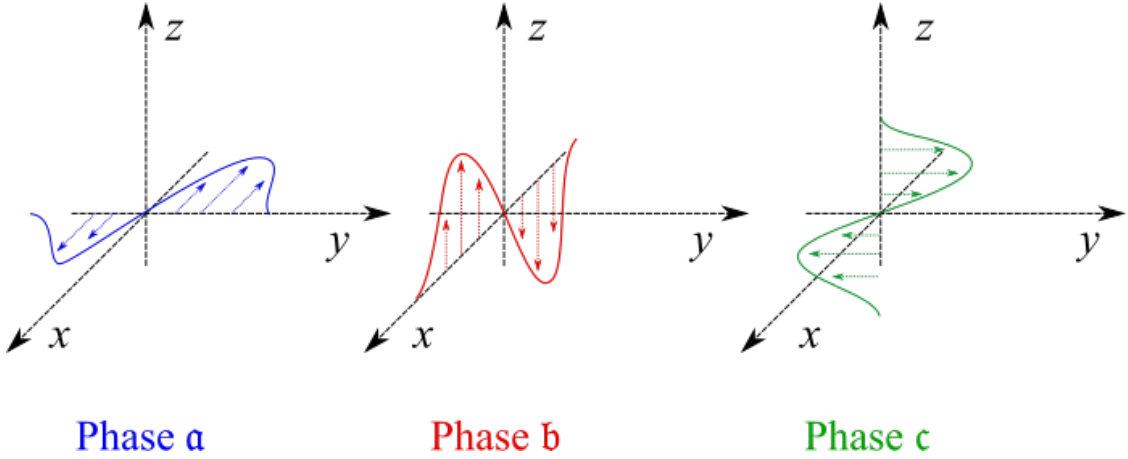}
\caption{The alternating shear flows} \label{Fig:alternating_shear}
\end{figure} 
{We define the following switching time instances:
\begin{subequations}
\begin{align}
T_{I\mathfrak{a}}=& 3IA^{1/3+\te} ,\ \
\quad T_{I\mathfrak{b}}=(1+3I)A^{1/3+\te} ,\ \
\quad T_{I\mathfrak{c}}= (2+3I)A^{1/3+\te} , \quad I\in \{0, 1,2,....\}=\mathbb{N}.
\end{align}\end{subequations} 
The subscripts $\mathfrak{a},\mathfrak{b},\mathfrak{c}$ indicate the phase of the shear flows and the subscript $I$ indicates which period the system is in. Since the shearing direction changes over time, we consider three distinct gliding regularity norms induced by the following vector fields:
\begin{align}
\text{Phase }\mathfrak{a}:\qquad&\pa_x, \, \Gamma_{y;t},\, \pa_z,\quad \Gamma_{y;t}:=\pa_y+\int_0^t \pa_yu(T_{I\mathfrak{a}}+s,y)ds\pa_x,\quad \Gamma_{y;t}^{ijk}:=\pa_x^i\Gamma_{y;t}^j\pa_z^k,\quad t\in[0, A^{1/3+\te}];\\
\text{Phase }\mathfrak{b}:\qquad&\Gamma_{x;t}, \, \pa_y,\, \pa_z,\quad \Gamma_{x;t}:=\pa_x+\int_0^t \pa_xu(T_{I\mathfrak{b}}+s,x)ds\pa_z,\quad \Gamma_{x;t}^{ijk}:=\Gamma_{x;t}^i\pa_y^j\pa_z^k,\quad t\in[0, A^{1/3+\te}];\\
\text{Phase }\mathfrak{c}:\qquad&\pa_x, \, \pa_y,\, \Gamma_{z;t} ,\quad \Gamma_{z;t}:=\pa_z+\int_0^t \pa_zu(T_{I\mathfrak{c}}+s,z)ds\pa_y,\quad \Gamma_{z;t}^{ijk}:=\pa_x^i\pa_y^j\Gamma_{z;t}^k,\quad t\in[0, A^{1/3+\te}].
\end{align}

With all the notation introduced, we are ready to present the main theorems in the alternating construction. As it turns out, there are two distinct time phases in the system, i.e., 
\begin{align}\label{ph_1}
\text{Phase }\#\ 1:\ & I=0, \quad t\in[T_{0\mathfrak{a}},T_{1\mathfrak{a}}]=[0, 3A^{1/3+\te});\\ \label{ph_2}
\text{Phase  }\#\ 2:\ &I\geq 1, \quad t\in \cup_{I=1}^\infty [T_{I\mathfrak{a}},T_{(I+1)\mathfrak{a}}]=[3A^{1/3+\te},\infty ).
\end{align}
In Phase \# 1, the solutions undergo transient growth. However, by paying three derivatives, one can show that the chemical gradient $\na \cc$ decays to a low level. The following theorem summarized the state of the system at the end of Phase \# 1:
\begin{theorem}\label{thm:alt_1}
Consider solutions to \eqref{PKS_rsc} subject to the initial data $(n_{\mathrm{in}},\cc_{\mathrm{in}})\in H^{\mathbb{M}+3}\times H^{\mathbb{M}+4},\, \mathbb{M}\geq 5$. There exists a threshold $A_0=A_0(\mathbb{M},\|\na \wt u_A\|_{L_t^\infty W^{\mathbb{M}+4,\infty}}, \|n_{\mathrm{in}}\|_{H^{\mathbb{M}+3}},\| \cc_{\mathrm{in}}\|_{H^{\mathbb{M}+4}})$ such that if the parameter $A\geq A_0,$ then the following bounds hold at the time instance $t=T_{1\mathfrak a}$, 
\begin{align}
\|n(T_{1\mathfrak{a}})\|_{H^{\mathbb{M}}}^2\leq  \mathfrak B(\|n_{\mathrm{in}}\|_{H^{\mathbb{M}+3}},\|\cc_{\mathrm{in}}\|_{H^{\mathbb{M}+4}}) ,\quad \|\cc(T _{1\mathfrak{a}})\|_{H^{\mathbb{M}+1}}^2\leq \frac{\mathfrak B(\|n_{\mathrm{in}}\|_{H^{\mathbb{M}+3}},\|\cc_{\mathrm{in}}\|_{H^{\mathbb{M}+4}})}{A^{1/3}}.\label{Goal_Phase_1}
\end{align}
Moreover, $\|n(t)\|_{H^{\mathbb{M}}} +\|\cc(t)\|_{H^{\mathbb{M}+1}} <\infty$ for all $t\in [0, 3A^{1/3+\te}]$. 
\end{theorem}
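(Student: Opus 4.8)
The plan is to apply the single-phase result of Section~\ref{Sec_s:nl} --- Theorem~\ref{thm:short_t}, equivalently the pair of Propositions~\ref{pro:prp_FM} and \ref{Pro:2} --- three times in succession, once on each subinterval $[T_{0\mathfrak a},T_{0\mathfrak b}]$, $[T_{0\mathfrak b},T_{0\mathfrak c}]$, $[T_{0\mathfrak c},T_{1\mathfrak a}]$, permuting the roles of the coordinate axes at each switch to match the current shearing direction, and bookkeeping carefully which averages are left untouched by which shear. We run the three phases at regularity levels $\mathbb M+3$, $\mathbb M+2$, $\mathbb M+1$ respectively, so that each phase costs exactly one derivative on $n$ (and one on $\cc$) and we land in $H^{\mathbb M}\times H^{\mathbb M+1}$; since $\mathbb M\ge 5$ every level is $\ge 5$ and Theorem~\ref{thm:short_t} applies, with a common time length $A^{1/3+\te}$ (the smallest of the admissible exponents). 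The ``orientation dictionary'' is: in Phase~$\mathfrak a$ the shear $(u_A(t,y),0,0)$ makes $f=\lan f\ran^x+f_\nq^x$ the relevant splitting, with $f_\nq^x$ enjoying the $A^{-1/3}$ enhanced dissipation of Theorem~\ref{thm:ED_gldrg} in the gliding fields $\pa_x,\Gamma_{y;t},\pa_z$; in Phase~$\mathfrak b$ the shear $(0,0,u_A(t,x))$ plays the identical role with $z$ the advected direction and $x$ the shearing variable, so one uses $f=\lan f\ran^z+f_\nq^z$ and the fields $\Gamma_{x;t},\pa_y,\pa_z$; in Phase~$\mathfrak c$ the shear $(0,u_A(t,z),0)$ has $y$ advected and $z$ shearing, so $f=\lan f\ran^y+f_\nq^y$ and the fields $\pa_x,\pa_y,\Gamma_{z;t}$. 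Because Sections~\ref{Sec_s:L}--\ref{Sec_s:nl} only exploit the shear structure, never a property special to one axis, Propositions~\ref{pro:prp_FM} and \ref{Pro:2} (hence Theorem~\ref{thm:short_t}) hold verbatim in each framing after relabelling, with the same constants; and at every switching time the pertinent $\Gamma$ restarts with vanishing integral term, so the gliding norm there equals the plain Sobolev norm and Theorem~\ref{thm:short_t}, which delivers its conclusions in plain Sobolev norms, hands off cleanly.

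On $[0,A^{1/3+\te}]$, Theorem~\ref{thm:short_t} at level $\mathbb M+3$ yields global existence together with $\|n_\nq^x(T_{0\mathfrak b})\|_{H^{\mathbb M+2}}^2+\|\cc_\nq^x(T_{0\mathfrak b})\|_{H^{\mathbb M+3}}^2\le Ce^{-A^{\te}/C}$ and $\|\lan n\ran^x(T_{0\mathfrak b})\|_{H^{\mathbb M+2}}^2+\|\lan\cc\ran^x(T_{0\mathfrak b})\|_{H^{\mathbb M+3}}^2\le\mathfrak B$; i.e., the solution is, up to an exponentially small error, $x$-independent. This data is bounded in exactly the spaces needed to restart on $[T_{0\mathfrak b},T_{0\mathfrak c}]$ in the $z$-framing, and here one is forced to use the general-data Propositions~\ref{pro:prp_FM}--\ref{Pro:2} rather than Proposition~\ref{Pro:main}: the $z$-remainder $\cc_\nq^z$ is the $z$-remainder of the order-one $\lan\cc\ran^x$ and so is not small, which is precisely the obstruction of Remark~\ref{rmk:sml_c}. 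Theorem~\ref{thm:short_t} at level $\mathbb M+2$ then gives global existence with $\|n_\nq^z(T_{0\mathfrak c})\|_{H^{\mathbb M+1}}^2+\|\cc_\nq^z(T_{0\mathfrak c})\|_{H^{\mathbb M+2}}^2\le Ce^{-A^{\te}/C}$ and $\|\lan n\ran^z(T_{0\mathfrak c})\|_{H^{\mathbb M+1}}^2+\|\lan\cc\ran^z(T_{0\mathfrak c})\|_{H^{\mathbb M+2}}^2\le\mathfrak B$. In parallel one verifies that the $x$-dependent part of $(n,\cc)$, which was $\le Ce^{-A^{\te}/C}$ at $T_{0\mathfrak b}$, remains below $A^{-K}$ (any fixed $K$) throughout Phase~$\mathfrak b$: the advection $u_A(t,x)\pa_z$ and the aggregation $\na\cdot(n\na\cc)$ create $x$-frequencies only from pre-existing $x$-frequencies or from $\pa_x\cc$, which is tiny, so this component is driven by exponentially small forcing and can grow at most polynomially in $A$. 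Hence at $T_{0\mathfrak c}$ the solution is, up to $O(A^{-K})$ and the exponentially small $z$-remainder, a function of $y$ alone, the $y$-only part $\lan\lan\cc\ran\ran^{x,z}$ being the sole order-one piece of $\cc$.

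Finally, on $[T_{0\mathfrak c},T_{1\mathfrak a}]$, Theorem~\ref{thm:short_t} at level $\mathbb M+1$ gives global existence, $\|n_\nq^y(T_{1\mathfrak a})\|_{H^{\mathbb M}}^2+\|\cc_\nq^y(T_{1\mathfrak a})\|_{H^{\mathbb M+1}}^2\le Ce^{-A^{\te}/C}$, and $\|\lan n\ran^y(T_{1\mathfrak a})\|_{H^{\mathbb M}}^2+\|\lan\cc\ran^y(T_{1\mathfrak a})\|_{H^{\mathbb M+1}}^2\le\mathfrak B$. Now the $y$-only part of $\cc$ is killed by the Phase~$\mathfrak c$ enhanced dissipation, while $\lan\cc\ran^y$ --- assembled from the $x$-only, $z$-only and $(x,z)$-mixed parts of $\cc$, all of which were $O(A^{-K})$ at $T_{0\mathfrak c}$ and, in Phase~$\mathfrak c$, solve heat equations forced by exponentially small sources (plus the at most $A^{-1/5}$ slack appearing in \eqref{Con_<cc>_glu} and \eqref{Con_<grd_c>}) --- is bounded by $CA^{-1/5}$. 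Summing, $\|\cc(T_{1\mathfrak a})\|_{H^{\mathbb M+1}}^2\le Ce^{-A^{\te}/C}+CA^{-2/5}\le \mathfrak B/A^{1/3}$ for $A$ large, and $\|n(T_{1\mathfrak a})\|_{H^{\mathbb M}}^2\le\|n_\nq^y\|_{H^{\mathbb M}}^2+\|\lan n\ran^y\|_{H^{\mathbb M}}^2\le\mathfrak B$; the three consecutive global-existence statements give $\|n(t)\|_{H^{\mathbb M}}+\|\cc(t)\|_{H^{\mathbb M+1}}<\infty$ on all of $[0,3A^{1/3+\te}]$, which is the claim.

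I expect the main obstacle to be the seam analysis between successive phases, and specifically the control of frequencies damped in one phase against their re-generation by the shear and the PKS nonlinearity of a later phase. Concretely, in Phase~$\mathfrak b$ the advection $u_A(t,x)\pa_z$ acting on the order-one $z$-dependent remnant $\lan\cc\ran^x$ instantly manufactures $x$-structure, and one must show this is harmless --- that the $x$-structure so created is itself damped by the $z$-shear, and that its nonlinear feedback into the $\lan n\ran^z$-dynamics is quadratically small --- which requires running a mildly refined version of the energy estimates behind Propositions~\ref{pro:prp_FM}--\ref{Pro:2} on the $\iota$-only and $\iota$-remainder projections separately, rather than merely citing Theorem~\ref{thm:short_t} as a black box. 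A secondary, purely bookkeeping point is that the three invocations produce thresholds $A_0,G,Q$ each depending only on the data and the fixed profile $\uu$; selecting them consistently, with $A$ chosen last and largest, yields the single $A_0=A_0(\mathbb M,\|\na\wt u_A\|_{L^\infty_tW^{\mathbb M+4,\infty}},\|n_{\mathrm{in}}\|_{H^{\mathbb M+3}},\|\cc_{\mathrm{in}}\|_{H^{\mathbb M+4}})$ of the statement.
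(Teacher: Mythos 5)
Your overall skeleton is the same as the paper's: three applications of the single-phase propositions (Propositions \ref{pro:prp_FM} and \ref{Pro:2}, relabelled to the current shearing frame) at regularity levels $\mathbb M+3,\mathbb M+2,\mathbb M+1$, losing one derivative per phase, exactly as in Lemmas \ref{lem:Cnfg_T0b}--\ref{lem:Cnfg_T1a}. The gap is in your seam analysis, which is precisely the step you flag as delicate but then resolve with an incorrect frequency-bookkeeping claim. You assert that in Phase $\mathfrak b$ ``the advection $u_A(t,x)\pa_z$ and the aggregation create $x$-frequencies only from pre-existing $x$-frequencies or from $\pa_x\cc$,'' hence the $x$-dependent part stays below $A^{-K}$ throughout the phase. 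This is false: in Fourier variables $u_A(t,x)\pa_z$ preserves $(\beta,\gamma)$ and shifts $\al$ with a coefficient proportional to $\gamma$, so applied to the order-one, $x$-independent but $z$-dependent field $\lan\cc\ran^x(T_{0\mathfrak b})$ (or $\lan n\ran^x$) it instantly manufactures order-one $x$-dependent structure in the $\gamma\neq 0$ sector --- this phase mixing is the very mechanism of the enhanced dissipation --- and the $\tfrac1A$-nonlinearity then transfers part of it into the $\{\gamma=0,\al\neq0\}$ sector at size of order $A^{-2/3+\te}$-type, not $A^{-K}$ for arbitrary $K$. So the smallness of the $x$-only piece of $\cc$ at $T_{0\mathfrak c}$, which is the input your final $A^{-1/3}$ bound hinges on, is not justified by your argument.

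What is true, and what the paper uses instead, is weaker and structurally different: since the transport annihilates $\gamma=0$ modes, the $x$-only piece $\lan\lan\cc\ran\ran^{y,z}$ obeys a one-dimensional heat equation forced by $\tfrac1A\bigl(\lan\lan n\ran\ran^{y,z}-\overline n\bigr)$, where the forcing is merely \emph{bounded}, not small; the smallness over $[T_{0\mathfrak b},T_{0\mathfrak c}]$ then comes from the $1/A$ prefactor, the $(t/A)^{1/2}$ smoothing of Lemma \ref{Lem:gc0Lpest}, the time horizon $A^{1/3+\te}\ll A$, and the small initial value \eqref{in_cnstr_bc}, while the boundedness of $\lan\lan n\ran\ran^{y,z}$ requires the dedicated double-average energy estimate of Theorem \ref{thm:avg_yz} (your proposal never invokes it). The same correction applies to your Phase $\mathfrak c$ step: $\lan\cc\ran^y$ is forced by $\tfrac1A(\lan n\ran^y-\overline n)$, which is order one rather than ``exponentially small,'' and the final bound $\|\cc(T_{1\mathfrak a})\|_{H^{\mathbb M+1}}^2\lesssim A^{-1/3}$ is again the heat-semigroup estimate started from the $O(A^{-1/3})$-small datum at $T_{0\mathfrak c}$, as in Lemma \ref{lem:Cnfg_T1a}. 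With these two propagation steps replaced by the Theorem \ref{thm:avg_yz} plus Lemma \ref{Lem:gc0Lpest} argument, your outline closes; as written, the quantitative claims at the seams are unjustified and in part wrong.
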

As a result, we observe that at the end of Phase \# 1, the chemical gradient becomes small. Hence, Proposition \ref{Pro:main} becomes applicable. By repeatedly applying Proposition \ref{Pro:main} in Phase \# 2, we expect the following conclusion: 
\begin{theorem}\label{thm:alt_2}
Consider solution to the equation \eqref{PKS_rsc} in Phase \# 2, i.e., $t\in\cup_{I=1}^\infty [T_{I\mathfrak{a}},T_{(I+1)\mathfrak{a}}]=[3A^{1/3+\te},\infty)$. Further assume that the condition \eqref{Goal_Phase_1} holds at time $T_{1\mathfrak a}$. Then there exists a threshold \[A_0=A_0( \mathbb{M},\|\na \wt u_A\|_{L_t^\infty W^{\mathbb{M}+1,\infty}}, \|n(T_{1\mathfrak a})\|_{H^{\mathbb{M}}},\| \cc(T_{1\mathfrak a})\|_{H^{\mathbb{M}+1}}) \] such that if $A\geq A_0,$ then the  following enhanced dissipation holds
\begin{align}
\|(n-\overline {n})(T_{1\mathfrak a}+t)\|_{H^\mathbb{M}}^2+\|\cc(T_{1\mathfrak a}+t)\|_{H^{\mathbb{M}+1}}^2\leq& C(\|n(T_{1\mathfrak a})\|_{H^{\mathbb{M} }}^2+\|\cc(T_{1\mathfrak a})\|_{H^{\mathbb{M}+1}}^2)e^{-\delta t/A^{1/3+\te}},\quad\forall t\geq 3A^{1/3+\te}.\label{Goal_Phase_2} 
\end{align}Here $C,\, \delta$ are constants that depends on  $\mathbb{M},\|\na \wt u_A\|_{L_t^\infty W^{\mathbb{M}+1,\infty}}$.   
\end{theorem}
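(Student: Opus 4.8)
The plan is to treat the evolution on each full period $[T_{I\mathfrak a}, T_{(I+1)\mathfrak a}]$ (for $I\geq 1$) as three consecutive applications of Proposition \ref{Pro:main}, one per phase $\mathfrak a$, $\mathfrak b$, $\mathfrak c$, each run on a time window of length $A^{1/3+\te}$. The key point is that the hypothesis \eqref{Goal_Phase_1} provides, at the start of Phase \#\,2, exactly the ``small chemical gradient'' condition \eqref{smll_c_nq} needed to invoke Proposition \ref{Pro:main}: indeed $\|\cc(T_{1\mathfrak a})\|_{H^{\mathbb M+1}}^2\leq \mathfrak B/A^{1/3}$, so choosing $A$ large makes $\|\cc(T_{1\mathfrak a})_{\neq}\|_{H^{\mathbb M+1}}\leq\epsilon$ for the $\epsilon=\epsilon_0(n,\cc)$ of that proposition (with the relevant data bounds frozen at the level $\mathfrak B$). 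First I would set up a bootstrap/induction over $I$: the inductive hypothesis is that at time $T_{I\mathfrak a}$ one has $\|n(T_{I\mathfrak a})\|_{H^{\mathbb M}}^2\leq \mathfrak B_\ast$ and $\|\cc(T_{I\mathfrak a})\|_{H^{\mathbb M+1}}^2\leq \mathfrak B_\ast / A^{1/3}$ for a fixed constant $\mathfrak B_\ast$ depending only on the Phase \#\,1 output. Then within period $I$, apply Proposition \ref{Pro:main} in Phase $\mathfrak a$ with reference time $t_r=T_{I\mathfrak a}$ and regularity level $M=\mathbb M$: the smallness \eqref{smll_c_nq} holds by the inductive hypothesis, so we obtain the enhanced dissipation \eqref{ConED} of $\mathbb F^{t_r;M}_{G,Q}$ and the $x$-average bounds \eqref{Con_<n>}, \eqref{Con_<grd_c>} on $[T_{I\mathfrak a}, T_{I\mathfrak b}]$.

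The next step is the \emph{rotation/gluing} argument: at the end of Phase $\mathfrak a$ the remainder $n_{\neq}^x,\ \na\cc_{\neq}^x$ have decayed by the enhanced-dissipation factor $\exp\{-2\delta A^{\te}\}$ (since $\tau=A^{1/3+\te}$ gives $\tau/A^{1/3}=A^{\te}$), which is super-polynomially small in $A$. Crucially, what was the $x$-average at the end of Phase $\mathfrak a$ becomes, in Phase $\mathfrak b$, the shear-invariant ($z$-sheared) average in the $z$-direction only for its further $z$-average, while its $z$-non-average part is now a genuine ``remainder'' for the Phase $\mathfrak b$ shear $(0,0,u_A(t,x))$, and therefore also gets hit by enhanced dissipation. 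This is the ``Rubik's cube'' mechanism alluded to in the introduction: after the three phases $\mathfrak a,\mathfrak b,\mathfrak c$, every nontrivial Fourier mode — anything not constant in all of $x,y,z$ simultaneously, i.e. everything after subtracting $\overline n$ — has been placed in the ``remainder'' slot of at least one phase and hence contracted by a factor $\exp\{-2\delta A^{\te}\}$. The zero mode $\overline n$ is conserved and the zero mode of $\cc$ stays zero by the normalization. Combining the three phase estimates over the period $[T_{I\mathfrak a},T_{(I+1)\mathfrak a}]$, and absorbing the transient-growth factors $\mathfrak G(\tau)=\mathcal O(\tau^2)=\mathcal O(A^{2/3+2\te})$ against the exponential gain, yields $\|n(T_{(I+1)\mathfrak a})-\overline n\|_{H^{\mathbb M}}^2+\|\cc(T_{(I+1)\mathfrak a})\|_{H^{\mathbb M+1}}^2\leq \frac12\big(\|n(T_{I\mathfrak a})-\overline n\|_{H^{\mathbb M}}^2+\|\cc(T_{I\mathfrak a})\|_{H^{\mathbb M+1}}^2\big)$ once $A\geq A_0$, which both closes the induction (the bounds stay within $\mathfrak B_\ast$, in fact decrease) and gives geometric decay period-by-period, i.e. the claimed $e^{-\delta t/A^{1/3+\te}}$ rate. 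Interpolating inside each period using the within-period bounds \eqref{ConED}, \eqref{Con_<n>}, \eqref{Con_<grd_c>} upgrades the discrete-time geometric decay to the continuous-in-$t$ statement \eqref{Goal_Phase_2}.

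There are two bookkeeping subtleties to handle carefully. First, Proposition \ref{Pro:main} controls $n_{\neq}$ and $\cc_{\neq}$ (the $x$-remainders relative to the Phase $\mathfrak a$ shear) together with $\lan n\ran,\lan\cc\ran$ in $H^{\mathbb M}$-type norms, but the quantity we must propagate between periods is the \emph{full} $H^{\mathbb M}$ norm of $n-\overline n$; reconstructing it from the phase-adapted gliding norms requires that at the switching instants the vector fields $\Gamma_{y;t}$ etc.\ be re-initialized, and one loses a controlled number of derivatives (three total across $\mathfrak a,\mathfrak b,\mathfrak c$ — hence the $H^{\mathbb M+3}$ input in Theorem \ref{thm:alt_1} and the $H^{\mathbb M}$ output here). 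Second, the $x$-average bound \eqref{Con_<grd_c>} only gives $\|\na\lan\cc\ran\|_{H^{\mathbb M}}\lesssim A^{-1/5}+2\|\na\lan\cc_{\mathrm{in}}\ran\|_{H^{\mathbb M}}$, which is \emph{not} by itself small — so for the rotation to work one must check that after Phase $\mathfrak a$ the relevant ``new remainder'' for Phase $\mathfrak b$ genuinely has small gradient, which it does because its own $x$-average (= the triple average) is governed by the slow heat flow and the initial triple average of $\cc$ was already $\lesssim A^{-1/6}$ by \eqref{Goal_Phase_1}. The main obstacle, as flagged in Remark \ref{rmk:sml_c}, is precisely making sure the smallness of the chemical gradient is \emph{regenerated} at the start of each new phase and not merely present at the start of each period; this is the heart of why one needs the full three-direction alternation rather than a single shear, and why the enhanced dissipation must beat the transient growth with room to spare (the $A^{-1/3}$ rate of Theorem \ref{thm:L_ED} versus the $\mathcal O(\tau^2)$ growth, evaluated at $\tau\sim A^{1/3+\te}$, leaves a net gain $\exp\{-2\delta A^{\te}\}\cdot A^{\mathcal O(1)}\to 0$).
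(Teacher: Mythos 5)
Your proposal is correct and follows essentially the same route as the paper: the paper's (one-line) proof is exactly the repeated application of Proposition \ref{Pro:main} phase by phase, with the smallness \eqref{smll_c_nq} supplied by \eqref{Goal_Phase_1} and regenerated by the smallness of $\cc$, combined with the block-iteration argument \eqref{ED_argument} to convert the per-period contraction of all non-constant modes into the rate $e^{-\delta t/A^{1/3+\te}}$. Your write-up simply makes explicit the induction over periods and the Fourier-sector ("Rubik's cube") bookkeeping that the paper leaves implicit.
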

With these two theorems, we are ready to prove Theorem \ref{thm_main}:
\begin{proof}[Proof of Theorem  \ref{thm_main}] 
Combining Theorem \ref{thm:alt_1} and Theorem \ref{thm:alt_2}, we observe that  the Sobolev norms of the solution is bounded globally in time and the well-posedness follows from Theorem \ref{thm:lcl_exst}.  \myc{local existence should be enough.} 
\end{proof}
\ifx we prove the following estimate:
\begin{align}
\|(n-\overline{n})(T_{(I+1)\mathfrak{a}})\|_{H^{\mathbb{M} }}^2+\|\cc(T_{(I+1)\mathfrak{a}})\|_{H^{\mathbb{M}+1}}^2\leq \frac{1}{2}(\|(n-\overline{n})(T_{I\mathfrak{a}})\|_{H^{\mathbb{M} }}^2+\|\cc(T_{I\mathfrak{a}})\|_{H^{\mathbb{M}+1}}^2).
\end{align}\fi 
 \myc{(Need a lemma relating $H^\mathbb{M}$ and $H_\Gamma^\mathbb{M}$. They differ by polynomial powers of $A$. However, after $A^{1/3+\zeta/2}$ time, the ED has $e^{-A^\ep}$ decay and overpowers the $A^\mathbb{M}$. Be careful, you really need to run $A^{1/3+\zeta/2}$ time to guarantee that the change of norm make sense!)}

}

\ifx\subsubsection{Analysis in Phase $\mathfrak{a}$}\label{Sec:Bootstrap}
\textcolor{red}{To avoid complicated notations, we focus on the first time interval $[0, A^{1/3+\te}]$, then apply the result to other directions without loss of generality. Need a special section explaining how to glue all the argument together. }

\subsubsection{Analysis in Phase $\mathfrak{b}$ \& $\mathfrak{c}$}
\myb{It seems that the estimate in Phase $\mathfrak{b}$ \& $\mathfrak{c}$ are similar.?}
\fi

The remaining part of the paper is organized as follows: in Section \ref{Sec:Lnr}, we prove the Theorem \ref{thm:L_ED}; in Section \ref{Sec:ED}, we prove the enhanced dissipation of the remainder $n_{\neq}^\iota,\, \na c_{\neq}^\iota$; in Section \ref{Sec:Alt}, we prove the theorems and lemmas in subsection \ref{Sec_s:Alt}. 
\section{Linear Theory}\label{Sec:Lnr}
In this section, we present the proof of Theorem \ref{thm:L_ED} and Theorem  \ref{thm:ED_gldrg}. %
\subsection{Time-dependent Shear Flows with Logarithmic Shift}
In this section, we recall the equations \eqref{PS_intr}, \eqref{PS_Fourier}  subject to diffusion coefficient $\frac{1}{A}$.
Before proving Theorem \ref{thm:L_ED}, we present a lemma which captures the decay of the $L^2$ norm.
\begin{lem}\label{lem_ED_1}
Consider the equation \eqref{PS_Fourier} subject  to the shear flow  $u (t,y)=\uu(y+\log(t+1)).$  Assume that the profile $\uu\in C^\infty$ is not constant, i.e., there exists $y_\ast\in \Torus$ such that  $\uu'(y_\ast)\neq 0$. Then the solutions to \eqref{PS_Fourier} satisfy the estimate
\begin{align}
\|\wh f_{\al ,\gamma  } (|\al |^{-2/3}A^{1/3})\|_{L_y^2}\leq (1-\kappa) \|\wh f_{\al ,\gamma  } (0)\|_{L_y^2},\quad \forall \al \neq 0,\label{ED_tdp_shr}
\end{align}
for a constant $0<\kappa<1$ that depends only on $\uu$.
\end{lem}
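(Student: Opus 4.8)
The statement is a quantitative "decay in one enhanced-dissipation time unit" estimate for a single Fourier mode $\wh f_{\al,\gamma}$ of the time-dependent shear $\uu(y+\log(t+1))$. I would reduce to the case $\gamma=0$ first: the $-|\gamma|^2/A$ term only contributes extra dissipation (it produces a factor $e^{-|\gamma|^2 t/A}\le 1$ over the relevant time window), so it can be dropped in proving an upper bound. Similarly, absorbing $-|\al|^2/A$ only helps, so the essential object is the scalar equation $\pa_t g + i\al\,\uu(y+\log(t+1))\,g = \tfrac1A \pa_{yy} g$ on $\Torus_y$, and one wants $\|g(|\al|^{-2/3}A^{1/3})\|_{L^2_y}\le (1-\kappa)\|g(0)\|_{L^2_y}$.

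\textbf{Main mechanism.} The plan is a hypocoercivity / "weighted energy" argument in the spirit of \cite{BCZ15, villani2009}, but exploiting that the critical points move. Compute $\tfrac{d}{dt}\|g\|_{L^2_y}^2 = -\tfrac2A\|\pa_y g\|_{L^2_y}^2 \le 0$, so one always has monotone decay; the issue is to show the decay is not too slow, i.e. that $\|g(t)\|^2$ cannot stay near $\|g(0)\|^2$ throughout $[0,T]$ with $T=|\al|^{-2/3}A^{1/3}$. Argue by contradiction: suppose $\|g(t)\|_{L^2_y}^2 \ge (1-\kappa)^2\|g(0)\|_{L^2_y}^2$ for all $t\in[0,T]$. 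Then $\tfrac1A\int_0^T\|\pa_y g\|_{L^2_y}^2\,dt$ is small (of order $\kappa\|g(0)\|^2$), so on average $g$ is close in $L^2_y$ to its $y$-average $\brak{g}_y(t)$. But $\brak{g}_y$ satisfies $\tfrac{d}{dt}\brak{g}_y = -\tfrac{i\al}{|\Torus|}\int \uu(y+\log(t+1)) g\,dy$, and if $g$ is nearly constant in $y$ this is $\approx -i\al\,\overline{\uu}\,\brak{g}_y$, a pure oscillation — no decay. To extract decay one must use the second-order effect: test the equation against $\uu(y+\log(t+1))$ (or against $\int_0^y[\uu(y'+\log(t+1))-\overline\uu]\,dy'$, the usual hypocoercivity multiplier), producing a cross term that is coercive precisely where $\uu'\ne 0$. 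Because $\uu'(y_\star)\ne0$, $\uu'$ is bounded below on a fixed-size neighborhood $N$ of $y_\star$, and as $t$ ranges over $[0,T]$ the shift $\log(t+1)$ moves $N$ around but it always has positive measure; averaging the coercivity-in-$N$ estimate over $t\in[0,T]$ gives a genuine loss of $L^2$ energy of a fixed fraction, contradicting the assumption for a suitable $\kappa(\uu)$.

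\textbf{Scales.} The time $T=|\al|^{-2/3}A^{1/3}$ and the balance are dictated by the standard shear-flow heuristic: the enhanced dissipation rate for a nondegenerate profile is $\sim(|\al|/A)^{1/2}\cdot$(correction), but near a critical point where $\uu'$ vanishes to first order the effective rate degrades to $(|\al|^2/A)^{1/3}$; here the \emph{log shift} guarantees we only ever sit near a critical point for a vanishing fraction of $[0,T]$, so the $A^{1/3}$-type scale is exactly the window on which one accumulates a fixed amount of decay. Concretely I would build the hypocoercive functional $\mathcal E(t) = \|g\|^2 + \al\,\beta(t)\,\mathrm{Im}\brak{M(t)g,g} + \al^{2/3}A^{-2/3}\,\varepsilon\,\|\pa_y g\|^2$ (with $M(t)$ the primitive of $\uu(\cdot+\log(t+1))-\overline\uu$ and $\beta(t),\varepsilon$ tuned), show $\mathcal E$ is comparable to $\|g\|^2$ and satisfies $\tfrac{d}{dt}\mathcal E \le -c\,|\al|^{2/3}A^{-1/3}\mathcal E + (\text{error from }\dot M)$, then integrate over one time unit $T=|\al|^{-2/3}A^{1/3}$.

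\textbf{Main obstacle.} The delicate point is controlling the time-dependence of the multiplier: $\tfrac{d}{dt}M(t) = \tfrac{1}{t+1}(\ldots)$ because of the $\log(t+1)$ shift, so the error terms from $\dot M$ carry a factor $\tfrac{1}{t+1}$. Near $t=0$ this is order one and threatens to overwhelm the small coercive gain over the very short initial window; this is exactly why the statement is phrased at the single time $T$ rather than as an exponential decay, and why Lemma~\ref{lem_ED_1} is later upgraded by the "rewind and restart" procedure. I expect the bulk of the work to be the careful bookkeeping showing that the coercivity harvested on $[0,T]$, integrated against the moving neighborhood where $\uu'\ne0$, strictly dominates these $\tfrac{1}{t+1}$-weighted commutator errors plus the $\tfrac1A\pa_{yy}$ diffusion losses, with all constants depending only on $\uu$ (through $\|\uu\|_{C^k}$ and $\min_{N}|\uu'|$) and not on $\al$ or $A$.
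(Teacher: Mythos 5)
Your overall scaffolding (energy identity, contradiction from smallness of $\frac1A\int_0^T\|\pa_y g\|_{L^2_y}^2\,dt$, reduction to the $y$-equation at fixed $\al,\gamma$) matches the paper's Step~\#1, but your engine for extracting decay — a time-dependent hypocoercivity functional whose coercivity is "averaged over the moving neighborhood where $\uu'\neq0$" — has a genuine gap, and it is exactly at the point where the whole lemma lives. The fact that the instantaneous critical point moves and spends only an $O(\epsilon)$ fraction of $[0,T]$ in any $\epsilon$-neighborhood is \emph{not} sufficient: the same is true for the shift $t$ in place of $\log(1+t)$, and for that flow the time-integrated phase gradient $\int_0^t\uu'(y+s)\,ds$ stays bounded (periodicity and zero mean produce cancellation), so no enhanced dissipation at the $A^{-1/3}$ scale is available. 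What must be proved, and what your proposal never identifies, is a \emph{non-cancellation} lower bound for the accumulated gradient: uniformly in $y$,
\begin{align}
\int_{\tau_0}^{A^{1/3}|\al|^{-2/3}}\Big|\int_{\tau_0}^{s}\uu'\big(y+\log(1+\tau)\big)\,d\tau\Big|^2 ds\ \gtrsim\ \frac{A}{|\al|^2},
\end{align}
which is \eqref{lowr_bnd_1} in the paper. After the substitution $\mathfrak h=\log(1+\tau)$ this hinges on the claim that $\int_0^{2\pi}\uu'(y_0+\mathfrak h)e^{\mathfrak h}\,d\mathfrak h\neq 0$ for some $y_0$ (proved by integration by parts and periodicity, \eqref{Claim}), together with the geometric-series structure over successive $2\pi$-periods in $\mathfrak h$; this is the precise reason the logarithmic shift works while faster shifts fail. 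Without this ingredient your claimed differential inequality $\frac{d}{dt}\mathcal E\le -c|\al|^{2/3}A^{-1/3}\mathcal E$ cannot be established, and your "averaging of local coercivity" heuristic would equally "prove" a false statement for other time dependences.

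Two further remarks. First, the paper's actual route is not a multiplier/hypocoercivity computation at all: assuming \eqref{Cntra_Assmp}, it selects (via Chebyshev) a good time $\tau_0$ with controlled $\dot H^1$ and $\dot H^2$ data, compares the viscous solution with the explicitly solvable inviscid flow $\eta=\exp\{-i\al\int u\}\,F(\tau_0)$, and uses the lower bound above to force $\frac1A\int\|\pa_yF\|^2$ to be of order one, contradicting the smallness assumption; if you insist on a functional-based proof, the weight in your cross term would have to encode $\int_0^t\pa_y u(s,y)\,ds$ (i.e.\ the $\Gamma_{y;t}$ vector field) rather than the instantaneous primitive $M(t)$, and you would then need the very same lower bound on that accumulated quantity. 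Second, your scaling remark is inverted: the nondegenerate-critical-point rate $\mathcal O(A^{-1/2})$ is the \emph{degraded} one, and $\mathcal O((|\al|^2/A)^{1/3})$ is the monotone-shear rate that the moving shift is designed to recover; also the identified obstacle ($\dot M\sim(1+t)^{-1}$ errors near $t=0$) is not the main difficulty — the early times are cheap, and the real work is the uniform-in-$y$ mixing lower bound plus the viscous–inviscid comparison.
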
\ifx
\begin{remark}
Note that \emph{all} space-time derivatives of $u$ are uniformly bounded in space and time.
As far as I know, this is the best rate available for any smooth incompressible velocity field on $\mathbb{T}^2$ (shear flow or not). Observe also that the choice of $\log(t+1)$ in the velocity field is somewhat crucial. Indeed, the idea is to vary the velocity field in such a way that no value of $y$ is a critical point for all time. If we vary the velocity field too quickly, there will be cancelations and in fact no enhanced dissipation (this is the case for example when $\log(1+t)$ is replaced by $t$).  
\end{remark}
\fi
\begin{proof}
We organize the proof in several steps.

\noindent
\textbf{Step \#1: General setup.} First of all, we identify the range of the wave number $|\al |$ on which we focus. 
If $|\al |\geq A^{1/2}/K$ for some universal constant $K$, direct application of the non-expansive property of the $L^2$ norm of solutions to \eqref{PS_Fourier} yields \begin{align}
\|\wh f_{\al ,\gamma  } (t)\|_{2}\leq \|\wh f_{\al ,\gamma  } (0)\|_{2}e^{-\frac{|\al |^{2/3}}{K^{4/3}A^{ 1/3}}t },\quad |\al |\geq A^{1/2}/K.\label{ED_tdp_sh_hm}
\end{align}
Hence at time instance $|\al|^{-2/3}A^{1/3}$, the $L^2$ norm decays as in \eqref{ED_tdp_shr} with $\kappa=1-e^{-1/K^{4/3}}$. Hence in the remaining proof, we always assume that $|\al |\leq A^{1/2}/K$. Without loss of generality, we assume $\al \geq 1$. We choose the constant $K$ such that 
\begin{align}\label{K_choice}
A^{1/3}|\al|^{-2/3}\geq K^{2/3}\geq e^{100\pi}.
\end{align}
We are going to refine the choice of $K$ in \eqref{Choice_H,K_1} and \eqref{Choice_H,K_2}. 

Rearranging the terms in \eqref{PS_Fourier} yields that 
\begin{align}
\pa_t \left(\wh f_{\al ,\gamma  } (t,y)e^{\frac{1}{A}(|\al |^2+|\gamma  |^2)t}\right)+iu(t,y)\al \left(\wh f_{\al ,\gamma  }(t,y)e^{\frac{1}{A}(|\al |^2+|\gamma  |^2)t}\right) =&\frac{1}{A}\pa_{yy}\left(\wh f_{\al ,\gamma  }(t,y)e^{\frac{1}{A}(|\al |^2+|\gamma  |^2)t}\right).\label{PS_hypo_1}
\end{align}
As a result, we define $F =\wh f_{\al ,\gamma  } (t,y)e^{\frac{1}{A}(|\al |^2+|\gamma  |^2)t}$ and consider the following equation
\begin{align}
\pa_t F(t,y)+iu\al   F(t,y) =&\frac{1}{A}\pa_{yy}F(t,y).\label{PS_hypo}
\end{align}
We show that for $t=|\al |^{-2/3}A^{1/3}$, the solutions to \eqref{PS_hypo} must decrease in $L^2$ by a fixed amount. Assume without loss of generality that $\|F(0)\|_{L_y^2}=1$. Observe that we have the following energy identities:
\begin{align}
\|F(t)\|_{L_y^2}^2=&1-\frac{2}{A}\int_{0}^{t}\|\partial_y F(s)\|_{L_y^2}^2ds,\label{L_2_f_nu}\\
 \qquad \|\partial_y F(t)\|_{L_y^2}^2+ &\frac{2}{A} \int_{0}^{t}\|\partial_{yy}F(s)\|_{L_y^2}^2ds\leq 2\|\pa_y u\|_{L^\infty_{t,y}}|\al| \int_{0}^t\|\partial_y F(s)\|_{L_y^2}ds+\|\partial_{y}F (0)\|_{L_y^2}^2.\label{H_1_f_nu}
 \end{align}
To derive the second inequality, we also use the fact that $\|F(s)\|_{L^2}\leq \|F(0)\|_{L^2}=1. $
Assume then, that
\begin{align}\frac{ 2}{A} \int_{0}^{|\al |^{-2/3}A^{1/3}} \|\partial_y F(s)\|_{L_y^2}^2ds<\kappa.\label{Cntra_Assmp}
\end{align}
Here $\kappa$ is a small constant to be chosen later in \eqref{choice_ka}. Thanks to \eqref{L_2_f_nu}, we have 
\begin{align}
\|F(t)\|_{L^2_y}^2\geq 1-\kappa,& \quad \forall t\in[0,A^{1/3}|\al |^{-2/3}]. 
\end{align} 

Next, we identify a time instance $\tau_0$ with the property that:
\begin{align}\label{tau_0_cnfg}
\|\partial_y F(\tau_0 )&\|_{L_y^2}^2< {(2\|\pa_y u\|_{L^\infty_{t,y}} +1)H\sqrt\kappa|\al |^{2/3}}{A^{2/3}}, \quad \|\partial_{yy} F(\tau_0 )\|_{L_y^2}^2< {(2\|\pa_y u\|_{L^\infty_{t,y}} +1)H^2\sqrt\kappa|\al |^{4/3}}{A^{4/3}},\\
\frac{2}{A} &\int_{\tau_0}^{|\al |^{-2/3}A^{1/3}}\|\partial_y F(s )\|_{L_y^2}^2ds<\kappa, \quad\tau_0\in\lf[0,\frac{4}{H}|\al |^{-2/3}A^{1/3}\rg].
\end{align}
Here \myr{$H\geq 4e^{16\pi}$} is a large constant to be chosen later in \eqref{Choice_H,K_1}, \eqref{Choice_H,K_2}.  The last inequality is a  direct consequence of \eqref{Cntra_Assmp}. 
The explicit argument involves an application of the Chebyshev inequality. First we note that by the assumption \eqref{Cntra_Assmp}, 
\begin{align}
\bigg|\left\{t\in\left[0, |\al|^{-2/3}A^{1/3}\right]\bigg|\ \|\pa_y F(t)\|_{L_y^2}^2> {H\kappa|\al |^{2/3}}{A^{2/3}}\right\}\bigg|\leq \frac{\kappa {A} }{{H\kappa|\al |^{2/3}}{A^{2/3}}}=\frac{A^{1/3}}{H|\al |^{2/3}},
\end{align}
so on the time interval $[0,|\al |^{-2/3}A^{1/3}]$, a fraction of $1-\frac{1}{H}$ points have the $\dot H^1$ bound $\|\pa_y F(t)\|_{L_y^2}^2\leq {H\kappa|\al |^{2/3}}{A^{2/3}}$. Now we choose $\tau_0'$ as the minimum of all these points, i.e.
\begin{align}
\tau_0'=\min\left\{t\big| \|\pa_y F(t)\|_{L_y^2}^2\leq {H\kappa|\al |^{2/3}}{A^{2/3}}\right\}.
\end{align}
We observe that $\tau_0'\leq \frac{2}{H}|\al |^{-2/3}A^{1/3}$. Now on the interval $[\tau_0',|\al |^{-2/3}A^{1/3})$, we apply the estimate \eqref{H_1_f_nu} subject to initial time $\tau_0'$ and the assumption \eqref{Cntra_Assmp}  to obtain that
\begin{align}\label{defn_C_1}
 \|&\partial_y F(t)\|_{L_y^2}^2+\frac{2}{A}\int_{\tau_0'}^t\|\pa_{yy}F(s)\|_{L^2_y}^2ds\\
 \leq& 2\|\pa_y u\|_{L^\infty_{t,y}} |\al |A^{1/2} \sqrt{t}\Big(\frac{1}{A}\int_{\tau_0'}^t \|\pa_y F(s)\|_{L_y^2}^2ds\Big)^{1/2}+{H\kappa |\al |^{2/3}}{A^{2/3}}\leq{ (2\|\pa_y u\|_{L^\infty_{t,y}} +1) H\sqrt{\kappa}|\al |^{2/3}}{A^{2/3}}\\
 =:&C_1(\|\pa_y u\|_{L_{t,y}^\infty})H\sqrt{\kappa}|\al |^{2/3}A^{2/3},\quad \forall t\in \left[\tau_0', |\al|^{-2/3}A^{1/3}\right].
\end{align}
Now the Chebyshev inequality yields that
\begin{align}
\left|\left\{t\in \left[\tau_0', |\al|^{-2/3}A^{1/3}\right]\bigg| \|\pa_{yy}F(t)\|_{L^2_y}^2>{H^2C_1 \sqrt{\kappa}|\al |^{4/3}}{A^{4/3}}\right\}\right|\leq \frac{C_1H\sqrt{\kappa}|\al |^{2/3}A^{5/3}}{H^2C_1 \sqrt{\kappa}|\al |^{4/3}A^{4/3}}\leq \frac{1}{H}|\al |^{-2/3}A^{1/3}.
\end{align}
Therefore, by \eqref{defn_C_1}, we can find $\tau_0\in[\tau_0',\tau_0'+\frac{2}{H} A^{1/3}|\al |^{-2/3}]\subset [0,\frac{4}{H}A^{1/3}|\al |^{-2/3}]$ such that \begin{align}\|\partial_y F(\tau_0)\|_{L_y^2}^2<{C_1(u)H\sqrt\kappa|\al |^{2/3}}{A^{2/3}}, \qquad \|\partial_{yy} F(\tau_0)\|_{L_y^2}^2<{C_1(u)H^2\sqrt\kappa|\al |^{4/3}}{A^{4/3}}.
\end{align}This is \eqref{tau_0_cnfg}.

We denote the solution to the corresponding inviscid problem by $\eta$, i.e., $\eta$ solves
\begin{align}
\partial_t \eta(t,y)+iu\al   \eta (t,y)=0,\quad \eta(\tau_0,y)=F(\tau_0,y).\label{Invscd_pb}
\end{align}

\noindent
\textbf{Step \# 2: Quantitative estimates. }
In the second step, we provide some necessary estimates. 

\myr{ First of all, we present some upper bounds for the viscous/inviscid solutions. The starting point is the following claim
\begin{align}\label{upperbound}
\|\pa_{yy}&\eta(\tau_0+t,y)\|_{L_y^2}+\|\partial_{yy}F(\tau_0+t,y)\|_{L_y^2}\\
\leq & C|\al |^2t^2\|F (\tau_0,y)\|_{L_y^2} +C|\al |t\left(\|\partial_y F (\tau_0,y)\|_{L_y^2}+\|F(\tau_0,y)\|_{L_y^2}\right)+\|\partial_{yy} F(\tau_0,y)\|_{L_y^2},\quad\forall t\geq 0. 
\end{align} Here the constant $C$ depends on the $L_t^\infty W_y^{2,\infty}$-norm of the shear $u$. 
To prove this bound, we observe that a direct $L^2$-based energy estimate on the solutions to \eqref{PS_hypo} and \eqref{Invscd_pb} yields that
\begin{align}
\label{dt_Feta}\quad\quad\left\{\begin{array}{cc}\displaystyle\|F\|_{L^2}\leq\|F(\tau_0)\|_{L^2},& \qquad \frac{d}{dt}\|\partial_y F\|_{L^2}\leq C|\al | \|F\|_{L^2},\qquad \frac{d}{dt} \|\partial_{yy}F\|_{L^2}\leq C|\al |(\|\partial_y F\|_{L^2}+\|F\|_{L^2});\\
\|\eta\|_{L^2}= \|F(\tau_0)\|_{L^2},&\qquad \frac{d}{dt}\|\partial_y \eta\|_{L^2}\leq C|\al | \|\eta\|_{L^2},\qquad \frac{d}{dt} \|\partial_{yy}\eta\|_{L^2}\leq C|\al |(\|\partial_y \eta\|_{L^2}+\|\eta\|_{L^2}).\end{array}\right. 
\end{align}  Now a direct integration in time yields the upper bound \eqref{upperbound}. 

If we focus on the time interval $\left[\tau_0,|\al |^{-2/3}A^{1/3}\right]$, finer estimates can be obtained. The energy estimate \eqref{dt_Feta}, together with the initial configuration \eqref{tau_0_cnfg}, yields that 
\begin{align}\|\partial_y \eta (t)\|_{L_y^2}\leq {C(\|\pa_yu\|_{L^\infty_{t,y}})\sqrt{H}|\al |^{1/3}}{A^{1/3}},\quad \forall t\in\Big[\tau_0,|\al |^{-2/3}A^{1/3}\Big].\label{pay_eta_L2}
\end{align}
Combining the upper bounds of the $\dot H^2$-norms  \eqref{upperbound} and the fact that the solutions $F$ and $\eta$ are initiated from identical data at the initial time $\tau_0$,  we obtain
\begin{align}\label{upbnd_2}
\|\partial_{yy}&\eta (t)\|_{L_y^2}+\|\partial_{yy}F(t)\|_{L_y^2}\\
\leq  & C|\al |^2t^2\|F (\tau_0,y)\|_{L_y^2} +C|\al |t\left(\|\partial_y F (\tau_0,y)\|_{L_y^2}+\|F(\tau_0,y)\|_{L_y^2}\right)+\|\partial_{yy} F(\tau _0,y)\|_{L_y^2}\\
 \leq &{C H|\al |^{2/3}}{A^{2/3}}+{CH|\al |^{1/3}}{A^{1/3}}\leq{CH|\al |^{2/3}}{A^{2/3}},\quad \forall t\in\Big[\tau_0,|\al |^{-2/3}A^{1/3}\Big].&
\end{align}
}

The remaining part of step \# 2 is devoted to the proof of the following lower bound for all smooth solutions to the inviscid equation \eqref{Invscd_pb}:
\begin{equation} \label{lowerbound}\int_{\tau_0}^{A^{1/3}|\al|^{-2/3}}\|\partial_y \eta(s) \|_{L_y^2}^2ds \geq  {\delta} A\|\eta (\tau_0) \|_{L_y^2}^2-Ct\|\partial_y \eta (\tau_0)\|_{L_y^2}^2, \quad 0<|\al |\leq A^{1/2}/K.
\end{equation}
To prove the lower bound \eqref{lowerbound}, we first rewrite the solution to \eqref{Invscd_pb} as
\[\eta(t,y)=\exp\left\{-i \al \int_{\tau_0}^{t}u(s,y)ds\right\}\eta (\tau_0,y),\quad \forall t\geq \tau_0.\]
The $\pa_y$-derivative reads,
\[\partial_y \eta (t,y) = \exp\left\{-i\al \int_{\tau_0}^t u(s,y)ds\right\}\pa_y \eta (\tau_0,y) -i\al \left(\int_{\tau_0}^t \partial_y u(s,y)ds\right) \exp\left\{-i\al \int_{\tau_0}^t u(s,y)ds\right\}\eta (\tau_0,y).\]
Thus, there exists a universal constant $C$ so that the following estimate holds
\[\|\partial_y \eta(t) \|_{L_y^2}^2\geq \frac{1}{C}|\al |^2\norm{\int_{\tau_0}^t \partial_y u(s,\cdot)\eta(\tau_0,\cdot)ds}_{L_y^2}^2-C\|\partial_y \eta (\tau_0,\cdot)\|_{L_y^2}^2.\] Integration in time yields that for all $t\geq \tau_0$, 
\[\int_{\tau_0}^t\|\partial_y\eta (s)\|_{L_y^2}^2ds\geq \frac{1}{C}|\al |^2\int_{\tau_0}^t\norm{\int_{\tau_0}^s \pa_y u(\tau,\cdot)\eta  (\tau_0,\cdot)d\tau}_{L_y^2}^2ds-Ct\|\partial_y \eta (\tau_0,\cdot)\|_{L_y^2}^2.\]
Recalling the explicit form of the flow $u(t,y)=\uu(y+\log(1+t))$, we will establish \eqref{lowerbound} once we show that
\begin{align}\label{lowr_bnd_1}\int_{\tau_0}^{A^{1/3}|\al |^{-2/3}}\bigg|\int_{\tau_0}^s\uu'( y+\log(1+\tau))d\tau\bigg|^2ds\geq \frac{A}{C_\ast(\uu)|\al |^2},\quad \forall y\in\Torus.
\end{align}
Now we use the change of variable $ \mathfrak{h}:=\log(1+\tau)$ to rewrite the above inequality
\begin{align}
\int_{\tau_0}^{A^{1/3}|\al |^{-2/3} }\bigg|\int_{\log (1+\tau_0)}^{\log(1+s)}\uu'(y+ \mathfrak{h})e^ \mathfrak{h} d \mathfrak{h}\bigg|^2ds\geq \frac{A }{C_\ast(\uu)|\al |^2},\quad \forall y\in\Torus.
\end{align}
Here we assume that $\uu'(y)$ is not identically zero on the interval $[0,2\pi]$. Then we prove the following claims:

\noindent
a) There exists a point $y_0$ on $\Torus $ such that 
\begin{align}\label{Claim}
\mathcal{R}_0:=\bigg|\int_0^{2\pi}\uu'(y_0+ \mathfrak{h})e^ \mathfrak{h}d \mathfrak{h}\bigg|>0.
\end{align}

\noindent 
b)  there exist a point $a_0\in[0, 2\pi)$ such that  for $\forall \mathfrak{h}_0\in[a_0,2\pi]$, the integral has the following lower bound
\begin{align}\label{Claim_2}
\bigg|\int_0^{ \mathfrak{h}_0}\uu'(y_0+ \mathfrak{h}) e^{ \mathfrak{h}}d \mathfrak{h}\bigg|\geq \mathcal{R}_0/2>0, \quad \forall \mathfrak{h}_0\in[a_0,2\pi]. 
\end{align}
Here $a_0$ depends only on the profile $\uu.$ 
We further define \begin{align}
\label{tau_a} \tau_{a;0} :=e^{a_0}-1<e^{2\pi}-1. 
\end{align}

The proof of \eqref{Claim} is through contradiction argument. Assume that the claim \eqref{Claim} is false, then we have that 
\begin{align}
\pa_y\int_0^{2\pi} \uu'(y+ \mathfrak{h})e^ \mathfrak{h} d \mathfrak{h}=0\Rightarrow \int_0^{2\pi}-\uu''(y+ \mathfrak{h})e^ \mathfrak{h} d \mathfrak{h}=0,\quad \forall y\in \Torus. 
\end{align}
Now we apply the integration by parts and the vanishing conditions above to derive that
\begin{align}
\int_0^{2\pi} \uu'(y+\mathfrak{h})&e^ \mathfrak{h} d \mathfrak{h}=\int_0^{2\pi} \uu'(y+ \mathfrak{h})\frac{d}{d \mathfrak{h}}e^{ \mathfrak{h}}d \mathfrak{h}=\uu'(y+ \mathfrak{h})e^{ \mathfrak{h}}\bigg|_{ \mathfrak{h}=0}^{2\pi}-\int_0^{2\pi}\uu''(y+ \mathfrak{h})e^ \mathfrak{h} d \mathfrak{h},\quad \forall y\in\Torus.
\end{align}
Since $\uu$ is smooth and periodic, $\uu'$ is periodic. Hence,
\begin{align}
\uu'(y)(e^{2\pi}&-1)=0,\quad \forall y\in\Torus. 
\end{align}
This is a contradiction to the assumption that $\uu'(y)$ is not identically zero and hence yields \eqref{Claim}. 

The claim \eqref{Claim_2} is a natural corollary of \eqref{Claim} through a continuity argument. Without loss of generality, we assume that $\dss\int_0^{2\pi}\uu'(y_0+\mathfrak{h})e^{\mathfrak{h}}d\mathfrak{h}=\mathcal{R}_0>0.$ Since the function $\displaystyle\int_0^{z}\uu'(y_0+\mathfrak{h})e^{\mathfrak{h}}d\mathfrak{h}$ is continuous with respect to $z$, there exists a small neighborhood of $z=2\pi$ such that the function is above $\mathcal{R}_0/2.$ This concludes the proof of the claim. 

\ifx
To prove \eqref{lowr_bnd_1}, we first observe the average-free condition, i.e.,
\begin{align}\label{assumption_0}
\int_{0}^{2\pi}\pa_y u(y+y')dy'=0,\quad\forall y\in\Torus.
\end{align} 

Next we observe a geometric series property of the integral
\begin{align}
\int_0^{s}\pa_y u(y_0+\log(\tau+1))d\tau=\int_{1}^{s+1}\pa_y u(y_0+\log(\tau+1))d(\tau+1)=\int_{0}^{\log(s+1)}\pa_y u(y_0+ \mathfrak{h}) e^{ \mathfrak{h}}d \mathfrak{h}.
\end{align}\fi
Now we consider the following integral ($m\in\mathbb{N}\backslash\{0\}$) and apply the periodicity of $\uu'(\cdot)$ and the claim \eqref{Claim}  to get
\begin{align}\label{int_period}
\bigg|\int_{2m\pi}^{2(m+1)\pi} \uu'(y_0+ \mathfrak{h})e^{ \mathfrak{h}}d \mathfrak{h}\bigg|=&\bigg|\int_{2(m-1)\pi}^{2m\pi} \uu'(y_0+ \mathfrak{h})e^{ \mathfrak{h}+2\pi}d \mathfrak{h}\bigg|\\
=&e^{2m\pi}\bigg|\int_0^{2\pi}\uu'(y_0+ \mathfrak{h})e^{ \mathfrak{h}}d \mathfrak{h}\bigg|=e^{2m\pi} \mathcal R_0>0.
\end{align}
Now we find the smallest $L\in \mathbb{N}\backslash\{0\}$ and largest $U\in\mathbb{N}$ such that
\begin{align}\myb{
 \bigcup_{m=L}^U [e^{2m\pi}, e^{2(m+1)\pi }]\subset [\tau_0+1,A^{1/3}|\al |^{-2/3}+1]. }
\end{align}
The definition of $L, U$ yields that
\begin{align}\label{exp_mpi_t}
 {L=\left\lceil \frac{\log (\tau_0+1)}{2\pi}\right\rceil \quad\Rightarrow\quad }& {\tau_0+1}\leq e^{2L\pi}\leq {e^{2\pi}}(\tau_0+1); \\
\label{exp_mpi_U}{U=\left\lfloor\frac{\log (A^{1/3}|\al|^{-2/3}+1)}{2\pi}\right\rfloor-1 \quad \Rightarrow \quad}  &e^{-4\pi}(A^{1/3}|\al |^{-2/3}+1) \leq e^{2U\pi}\leq A^{1/3}|\al |^{-2/3}+1.
\end{align} 
Since the time interval is long, i.e.,  \eqref{K_choice}, and the $H$ is large, i.e., $H\geq 4e^{16\pi}$ \eqref{tau_0_cnfg},  we have that \myc{(\bf new! check!)}
\myb{\begin{align}L\leq&\frac{ \log (\tau_0+1)}{2\pi}+1\leq \frac{\log(e^{-16\pi}A^{1/3}|\al|^{-2/3}  +1)}{2\pi} +1 \\
\leq&-7 +\frac{\log(A^{1/3}|\al|^{-2/3} +e^{16\pi})}{2\pi} \leq \frac{\log (A^{1/3}|\al|^{-2/3}+1)}{2\pi}-6\leq  U-4.
\end{align}}
Further recall the quantity $H$ in \eqref{tau_0_cnfg} and the constraint on $K$ \eqref{K_choice}.  We have that 
\begin{align}
 \frac{e^{2L\pi}}{e^{2U\pi}}\leq \frac{e^{2\pi}(\tau_0+1)}{e^{-4\pi}(A^{1/3}|\al |^{-2/3}+1)}\leq e^{6\pi}\left(\frac{4}{H}+\frac{1}{K^{2/3}+1}\right).\label{Quot_bnd}
\end{align} 

We focus on the point $y_0\in \Torus$ and prove estimate \eqref{lowr_bnd_1} for $y_0$. On the interval $[a_0,2\pi]$,
\begin{align}\label{sign}
\left(\int_0^{ \mathfrak{h}_0 }  \uu'(y_0+ \mathfrak{h})e^{ \mathfrak{h}}d \mathfrak{h}\right)\times\left(\int_0^{2\pi  } \uu'(y_0+ \mathfrak{h}) e^{ \mathfrak{h}}d \mathfrak{h}\right)\geq 0,\quad \forall \mathfrak{h}_0\in[ a_{0},2\pi].
\end{align} 
Now we use the sign property \eqref{sign} to derive the following estimate with $q\in [\log(\tau_{a;0}+1)+2U\pi, (1+U)2\pi]$, 
\begin{align}
\bigg|\int_{\log(1+\tau_0)}^{q}&\uu'(y_0+ \mathfrak{h})e^{ \mathfrak{h}}d \mathfrak{h}\bigg|\\
\geq& \sum_{m=L}^{U-1} e^{2m\pi}\bigg|\int_0^{2\pi} \uu'(y_0+ \mathfrak{h})e^{ \mathfrak{h}}d \mathfrak{h}\bigg|-\left|\int_{\log(1+\tau_0)}^{2L\pi}\uu'(y_0+ \mathfrak{h})e^ \mathfrak{h} d \mathfrak{h}\right|+e^{2U\pi}\bigg|\int_{0}^{q-2U\pi} \uu'(y_0+ \mathfrak{h})e^{ \mathfrak{h}}d \mathfrak{h}\bigg|. 
 \end{align} Recall the  quotient bound \eqref{Quot_bnd}. 
Now we implement a similar argument as in \eqref{int_period} and choose the $H$ in \eqref{tau_0_cnfg} and $K$ in \eqref{K_choice} to be large compared to $\mathcal{R}_0$ and $\dss\max_{z\in [0,2\pi]}\lf|\int_z^{2\pi}\uu'(y_0+ \mathfrak{h})e^{ \mathfrak{h}}d \mathfrak{h}\rg|$ to derive that 
\begin{align}\label{Choice_H,K_1}
\bigg|\int_{\log(1+\tau_0)}^{q}&\uu'(y_0+ \mathfrak{h})e^{ \mathfrak{h}}d \mathfrak{h}\bigg|\\
\geq&   \frac{\mathcal{R}_0 e^{2U\pi}}{2(e^{2\pi}-1)}-e^{2(L-1)\pi}\max_{z\in[0,2\pi]}\bigg|\int_z^{2\pi}\uu'(y_0+ \mathfrak{h})e^{ \mathfrak{h}}d \mathfrak{h}\bigg|\\
\geq &e^{2U\pi}\left(\frac{\mathcal{R}_0}{2(e^{2\pi}-1)} -e^{6\pi}\left(\frac{4}{H}+\frac{1}{K^{2/3}}\right)\max_{z\in[0,2\pi]}\bigg|\int_z^{2\pi}\uu'(y_0+ \mathfrak{h})e^{ \mathfrak{h}}d \mathfrak{h}\bigg|\right)\\
 \geq &\frac{ e^{2U\pi}\mathcal{R}_0}{4(e^{2\pi}-1)} ,\quad \forall q:=\log(1+s)\in [\log(\tau_{a;0}+1)+2U\pi, (1+U)2\pi].
\end{align}We note that $q\in [\log(\tau_{a;0}+1)+2U\pi, (1+U)2\pi]$ corresponds to $s\in [(\tau_{a;0}+1)e^{2U\pi}-1, e^{2(U+1)\pi}-1]$.  Hence, we have that by \eqref{Choice_H,K_1},
\begin{align}
\int_{\tau_0}^{A^{1/3}|\al |^{-2/3}}&\bigg|\int_{\tau_0}^s \uu' (y_0+\log(1+\tau))d\tau\bigg|^2ds\\
\geq&\int_{(\tau_{a;0}+1)e^{2U\pi}-1}^{ e^{2(U+1)\pi}-1}\bigg|\int_{\tau_0}^s \uu' (y_0+\log(1+\tau))d\tau\bigg|^2ds 
\geq (e^{2\pi}-\tau_{a;0}-1)\frac{e^{6U\pi}}{16(e^{2\pi}-1)^2}\mathcal{R}_0^2.  
\end{align} 
Now we recall the relations \eqref{tau_a}, \eqref{exp_mpi_U}, and obtain that
\begin{align}
\int_{\tau_0}^{A^{1/3}|\al |^{-2/3}}&\bigg|\int_{\tau_0}^s \uu' (y_0+\log(1+\tau))d\tau\bigg|^2ds\geq \frac{A}{ C_\ast(\uu)|\al |^2}.
\end{align} This is $\eqref{lowr_bnd_1}_{y=y_0}$.

To generalize the result to $\forall y\in \Torus$, we note that for any point $y\in \Torus$, there exists a $\delta_y\in[0,2\pi]$ so that $y=y_0+\delta_y (\text{mod } 2\pi) $. Hence
\begin{align}\label{gen_y}
\int_{\tau_0 }^{ A^{1/3}|\al |^{-2/3} }&\left|\int_{\log (1+\tau_0)}^{\log (1+s)} \uu'(y + \mathfrak{h}) e^{ \mathfrak{h}}d \mathfrak{h}\right|^2 ds=\int_{\tau_0 }^{ A^{1/3}|\al |^{-2/3} }\left|\int_{\log (1+\tau_0)}^{\log (1+s)} \uu'(y_0+\delta_y+ \mathfrak{h}) e^{ \mathfrak{h}}d \mathfrak{h}\right|^2 ds\\
=&\int_{ \tau_0 }^{ A^{1/3}|\al |^{-2/3} }\left|\int_{\log(1+\tau_0)+\delta_y}^{\log(1+s)+\delta_y}  \uu'(y_0+ \mathfrak{h})e^{ \mathfrak{h}-\delta_y}d \mathfrak{h}\right|^2ds.
\end{align}
Now we identify the smallest integer $L_y\in \mathbb{N}$ and largest integer $U_y\in \mathbb{N}$ such that 
\begin{align}\myb{
[2L_y\pi,2(U_y+1)\pi]\subset [\log(1+\tau_0)+\delta_y, \log(1+A^{1/3}|\al |^{-2/3})+\delta_y].}
\end{align}
Thanks to the bound $0\leq\delta_y\leq 2\pi$, we have that,
\begin{align}\label{Ly_Uy}
L_y=\left\lceil \frac{\log(1+\tau_0)+\delta_y}{2\pi}\right\rceil\in [L,L+1],\quad U_y=\left\lfloor\frac{ \log(1+A^{1/3}|\al |^{-2/3})+\delta_y}{2\pi}\right\rfloor-1\in[U,U+1].
\end{align}
Now we focus on specific points $q\in [\log(\tau_{a;0}+1)+2U_y\pi, (1+U_y)2\pi]$, and carry out the estimate with \eqref{sign},
\begin{align}\bigg|&\int_{\log(1+\tau_0)+\delta_y}^{q}\uu'(y_0+ \mathfrak{h})e^{ \mathfrak{h}}d \mathfrak{h}\bigg|e^{-\delta_y}\\
\geq& e^{-\delta_y}\left(\sum_{m=L_y}^{U_y-1} e^{2m\pi}\bigg|\int_0^{2\pi} \uu'(y_0+ \mathfrak{h})e^{ \mathfrak{h}}d \mathfrak{h}\bigg|-\bigg|\int_{\log(1+\tau_0)+\delta_y}^{2L_y\pi}\uu'(y_0+ \mathfrak{h})e^ \mathfrak{h} d \mathfrak{h}\bigg|+e^{2U_y\pi}\bigg|\int_{0}^{q-2U_y\pi} \uu'(y_0+ \mathfrak{h})e^{ \mathfrak{h}}d \mathfrak{h}\bigg|\right).  
\end{align}
Now we recall the definition \eqref{Claim}, the estimate \eqref{Quot_bnd}, the fact that $U\geq L+4$, and the relation \eqref{Ly_Uy} to obtain the estimate
 \begin{align}\label{Choice_H,K_2}\bigg|\int_{\log(1+\tau_0)+\delta_y}^{q}&\uu'(y_0+ \mathfrak{h})e^{ \mathfrak{h}}d \mathfrak{h}\bigg|e^{-\delta_y}\\
\geq&   \frac{\mathcal{R}_0 e^{2U_y\pi}e^{-\delta_y}}{2(e^{2\pi}-1)}-e^{2(L_y-1)\pi}\max_{z\in[0,2\pi]}\bigg|\int_z^{2\pi}\uu'(y_0+ \mathfrak{h})e^{ \mathfrak{h}}d \mathfrak{h}\bigg|e^{-\delta_y}\\
\geq&e^{2U\pi}\left(\frac{\mathcal{R}_0 }{2e^{2\pi}(e^{2\pi}-1)} -e^{6\pi}\left(\frac{4}{H}+\frac{1}{K^{2/3}}\right)\max_{z\in[0,2\pi]}\bigg|\int_z^{2\pi}\uu'(y_0+ \mathfrak{h})e^{ \mathfrak{h}}d \mathfrak{h}\bigg|\right)\\
 \geq &\frac{\mathcal{R}_0 }{4e^{2\pi}(e^{2\pi}-1)}e^{2U\pi} ,\quad \forall q:=\log(1+s)+\delta_y\in [\log(\tau_{a;0}+1)+2U_y\pi, (1+U_y)2\pi].
\end{align}
The remaining part of the proof is similar to the $y=y_0$ case. We have that by \eqref{gen_y} and \eqref{Ly_Uy}, 
\begin{align} 
\int_{\tau_0 }^{ A^{1/3}|\al |^{-2/3} }&\left|\int_{\log (1+\tau_0)}^{\log (1+s)} \uu'(y + \mathfrak{h}) e^{ \mathfrak{h}}d \mathfrak{h}\right|^2 ds\\
\geq&\int_{(\tau_{a;0}+1)e^{2U_y\pi-\delta_y}-1}^{ e^{2(U_y+1)\pi-\delta_y}-1}\left|\int_{\log(1+\tau_0)+\delta_y}^{\log(1+s)+\delta_y}  \uu'(y_0+ \mathfrak{h})e^{ \mathfrak{h}-\delta_y}d \mathfrak{h}\right|^2ds\\
\geq& (e^{2\pi}-\tau_{a;0}-1)\frac{e^{6U\pi}}{16e^{6\pi}(e^{2\pi}-1)^2}\mathcal{R}_0^2\geq \frac{A}{ C_\ast(\uu)|\al |^2}.  
\end{align} This is $\eqref{lowr_bnd_1}$.

\noindent
\textbf{Step \# 3: Decay estimates. }
To prove the decay estimate, we consider the difference between the viscous solution and the inviscid solution, which solves the equation 
\[\partial_t(F-\eta)+i u\al  (F-\eta)=\frac{1}{A}\pa_{yy} F.\] 
Recalling the estimate of $\pa_y\eta$ \eqref{pay_eta_L2}, the hypothesis \eqref{Cntra_Assmp}, and the time constraint $t\leq A^{1/3}|\al |^{-2/3}$, we have that the $L^2$-difference is bounded as follows
\begin{align}
\|\eta-F\|_{L_y^2}^2(t)\leq &\frac{1}{A}\int_{\tau_0}^{t} \|\partial_y \eta(s)\|_{L_y^2}\|\partial_y F(s)\|_{L_y^2}ds\\
\leq& C(\|\pa_yu\|_{L^\infty_{t,y}})A^{-2/3}|\al |^{1/3}\sqrt{H}\sqrt{t}\sqrt{\int_{\tau_0}^t\|\partial_yF(s)\|_{L_y^2}^2ds}<C\sqrt{H\kappa} .
\end{align}
Now, by interpolation and the $H^2$-estimate \eqref{upbnd_2}, we obtain the bound
\begin{align}
\|\eta-F\|_{\dot H_y^1} \leq C \|\eta-F\|_{L_y^2}^{1/2}\|\eta-F\|_{\dot H_y^2}^{1/2}<C\sqrt{H^{5/4}\kappa^{1/4} {|\al |^{2/3}}{A^{2/3}}}={CH^{5/8}\kappa^{1/8}|\al |^{1/3}}{A^{1/3}}.
\end{align}
Thus, the triangular inequality yields that
\[\|F\|_{\dot H_y^1}\geq  \|\eta\|_{\dot H_y^1}-\|\eta-F\|_{\dot H_y^1} \geq \|\eta\|_{\dot H_y^1} - {CH^{5/8}\kappa^{1/8}|\al |^{1/3}}{ A^{1/3}}.\]
Hence, 
\[\|F(t)\|_{\dot H_y^1}^2 \geq \frac{1}{2}\|\eta\|_{\dot H_y^1}^2-CH^{5/4}{\kappa^{1/4}|\al |^{2/3}}{A^{2/3}},\quad\forall t\in[\tau_0,A^{1/3}|\al|^{-2/3}].\]
Now we integrate from $\tau_0$ to $|\al |^{-2/3}A^{1/3}$ and apply the lower bound \eqref{lowerbound} to obtain that
\[\frac{1} {A}\int_{\tau_0}^{|\al |^{-2/3}A^{1/3}}\|F(s)\|_{\dot H_y^1}^2 ds\geq \frac{\delta}{16}-C {\sqrt{\kappa} H} -CH^{5/4}{\kappa^{1/4}} ,\]
Combining it with \eqref{Cntra_Assmp}, we then see that:
\[CH^{5/4}(\kappa^{1/2}+\kappa^{1/4})\geq \delta.\]
It follows that $1\geq\kappa\geq \delta^4/(CH^5)$. By choosing the $\kappa$ to be
\begin{align}
\kappa<\frac{\delta^4}{CH^5},  \label{choice_ka}
\end{align}
we obtain a contradiction. As a result,  there exists a constant $\kappa>0$ such that
\begin{align}&
\frac{2}{A}\int_0^{|\al |^{-2/3}A^{1/3}} \|\pa_y F(s)\|_{L^2_y}^2ds \geq \kappa,\quad |\al |\leq \frac{A^{1/2}}{K}. 
\end{align}
By \eqref{L_2_f_nu}, we have the decay \eqref{ED_tdp_shr} for $|\al |\leq \frac{1}{K}A^{1/2}$. Combining  it with the estimate in the high modes \eqref{ED_tdp_sh_hm}, we have completed the proof of the lemma.  
\end{proof}
\begin{proof}[Proof of Theorem \ref{thm:L_ED}]
We first make the observation that the shear flow $u(y+\log (t+1))$ is not uniformly enhanced dissipation in time. As time becomes larger and larger, the shear flow changes slower and slower. Hence the decay rate might deplete over time. To obtain the uniform-in-time version, we introduce the  rewinding of the flow. Recall that $A^{1/3}$ is the time when significant decay ($1-\kappa$) happens, then we define the following flow 
\begin{align}
U(t,y)=u(y+\phi(t)),\quad \phi(t)=\sum_{J=0}^\infty \chi_J(t)\log(1+(t-J(A^{1/3}+1))), \quad t\geq 0,
\end{align}
where the $\chi_J(t)$ is smooth cut-off function such that
\begin{align}
\chi_J(t)=\left\{\begin{array}{cc}1 ,&\quad t\in \left [2J A^{1/3},{(2J+1)}A^{1/3}\right];\\
 \text{monotone},&\quad t\in\left[(2J-1)A^{1/3},{2J}A^{1/3}\right)\cup\left( {(2J+1)}A ^{1/3},2(J+1) A^{1/3} \right],\\
0,&\quad \mathrm{others}.\end{array}\right.
\end{align} 
For this time periodic flow with period $2A^{1/3}$, we show the estimate \eqref{ED_tdsh_intr}. 
If $s, t\in 2A^{1/3}\mathbb{N}$, then by Lemma \ref{lem_ED_1}, we have the estimate \eqref{ED_tdsh_intr}. In the general case, we find  the largest integer $N_1\in\mathbb{N}$ so that $2A^{1/3} N_1\leq s+t$ and the smallest integer $N_2\in\mathbb{N}$ such that $2A^{1/3} N_2\geq s$. Further note that if $t<4A^{1/3}$, then the result \eqref{ED_tdsh_intr} is direct, i.e.,
\begin{align}
\|f(s+t)\|_2\leq \|f(s)\|_2 \leq (1-\kappa)^{-2}\|f(s)\|_2 e^{- \frac{1}{2}|\log(1-\kappa)|\frac{t}{A^{1/3}}},\quad \forall t\in[0, 4A ^{1/3}).
\end{align} Hence we assume that $t\geq 4A^{1/3}$. As a consequence,  $t-(N_1-N_2)2A^{1/3}\leq 4A^{1/3}$. Thanks to the dissipative nature of the $L^2$-norm and the enhanced dissipation estimate for $s,t\in 2A^{1/3}\mathbb{N}$ obtained before, we have
\begin{align}
\|f(s+t)\|_2\leq& \|f (2N_1A^{1/3})\|_2\leq \|f (2N_2A^{1/3})\|_2(1-\kappa)^{(N_2-N_1)}\leq \|f(2N_2A^{1/3})\|_2e^{ -\frac{1}{2}A^{-1/3}(t-4A^{1/3})\log(1-\kappa)^{-1}}\\
\leq &(1-\kappa)^{-2}\|f(s)\|_2e^{-\frac{1}{2}|\log(1-\kappa)| \frac{t}{A^{1/3}}},\quad \forall s\in[0,\infty),\,t\in[4A^{1/3},\infty).
\end{align} 
This concludes the proof of \eqref{ED_tdsh_intr}. 
\end{proof}

\subsection{Enhanced Dissipation in the Gliding Regularity Spaces}
In this subsection, we prove Theorem \ref{thm:ED_gldrg}. The proof develops a general framework to upgrade the $L^2$ enhanced dissipation to the higher gliding regularity spaces. The main obstacle to derive the enhanced dissipation estimates stem from the  commutator terms involving the $\Gamma$ vector field and the diffusion operator $\frac{1}{A}\de$. These challenges are addressed by the \emph{key Lemma \ref{lem:cm_L_trm}}. 

\myc{\footnote{\textcolor{red}{\textbf{Some new question: Can we upgrade this to $y^2$ flow? Namely, given a $\nu^{1/2}$-ED flow, can we derive a functional of the higher gliding regularity such that the ED of the $L^2$-level persists? The commutator terms are particularly weak near the critical points! And using the $t^2$ to model the commutator near the critical point is wasteful! So maybe combining the localization and the commutator estimate yields the Gliding norm ED estimate. \myb{Answer: Yes, there is indeed a commuting vector field for the Poiseuille flow. With this at hand, we can prove higher gliding regularity estimates. A nice question is to identify the general strategy of deriving commuting vector fields for nice enough non-degenerate shear flows.} \myr{Another key point: We can just do finitely many movement of the construction of Michele. If the process converges at the end, then we should be able to extract the first few key terms and show that the remainder term is small on time scale $[0,\nu^{-1/2-\ep}]$.}}}}
}


The remaining part of the subsection is devoted to the proof of Theorem \ref{thm:ED_gldrg}. We organize the proof in four main steps. 
  
\noindent
{\bf Step \# 1: General Setup. } We fix an arbitrary starting time $t_\star\in[0,\infty)$, and observe that Theorem \ref{thm:L_ED}, together with the choice of $\delta_{\mathcal Z}$ \eqref{Chc_del_M0},  yields that
\begin{align}
\left\|S_{t_\star}^{t_\star+\delta_{\mathcal Z}^{-1}A^{1/3}}\mathbb{P}_\nq\right\|_{L^2\rightarrow L^2}\leq \frac{1}{8}.\label{Chc_del_M}
\end{align} 
Here $\mathbb{P}_{\nq}$ is the projection operator such that $\mathbb{P}_{\nq}f=f_\nq.$ 
Next we observe that the following regularity and decay estimates guarantee the enhanced dissipation \eqref{Gld_Rg_ED},
\begin{align}
\|f_\nq(t_\star+\tau)\|_{\mathcal{Z}^M_{G,\Phi}}^2 \leq\  &2\ \|f_\nq(t_\star)\|_{\mathcal{Z}^M_{G,\Phi}}^2,\quad \forall \tau\in[0, \delta_{\mathcal Z}^{-1}A^{1/3}];\label{Reglrty_est}\\
\|f_{\nq}(t_\star+\delta_{\mathcal Z}^{-1}A^{1/3})\|_{\mathcal{Z}_{G,\Phi}^M}^2 \leq\ &\frac{1}{e^2}\ \|f_\nq(t_\star)\|_{\Z_{G,\Phi}^M}^2.\label{Decay_est}
\end{align}
The explicit argument is a variant to the one applied in the proof of Theorem \ref{thm:L_ED}.  Consider general $s,\,t\in[0,\infty)$.  
If the time instance $t$  in \eqref{Gld_Rg_ED} take values in the set $\delta_{\mathcal Z}^{-1}A^{1/3}\mathbb{N}$, iterative application of the decay estimate \eqref{Decay_est} yields the result  \eqref{Gld_Rg_ED}. In the general case, we find  the largest integer $N_1\in\mathbb{N}$ so that $\delta_{\mathcal Z}^{-1}A^{1/3} N_1\leq t$. As a consequence, we obtain the relation $t-N_1\delta_{\mathcal Z}^{-1}A^{1/3}< \delta_{\mathcal Z}^{-1}A^{1/3}\,\Rightarrow\, \delta_{\mathcal Z}A^{-1/3}(t-\delta_{\mathcal Z}^{-1}A^{1/3})<N_1$. Combining the regularity estimate \eqref{Reglrty_est} and the decay estimate \eqref{Decay_est} yields that, 
\begin{align}\label{ED_argument}\\
\|f_\nq(s+t)\|_{\Z^M_{G,\Phi(s+t)}}^2\leq\ 2\ \|f_\nq& (s+N_1\delta_{\mathcal Z}^{-1}A^{1/3})\|_{\Z^M_{G,\Phi(s+N_1\delta_{\mathcal Z}^{-1}A^{1/3})}}^2\leq\ 2\ \|f_\nq (s)\|_{\Z^M_{G,\Phi(s)}}^2\ e^{-2N_1}\\
\leq \ 2\ \|f_\nq(s)&\|_{\Z^M_{G,\Phi(s)}}^2\ \exp\left\{ -2\delta_{\mathcal Z} \frac{t-\delta_{\mathcal Z}^{-1}A^{1/3}}{A^{1/3}}\right\}
\leq  2e^2\ \|f_\nq(s)\|_{\Z^M_{G,\Phi(s)}}^2\ \exp\left\{-2\delta_{\mathcal Z}\frac{t}{A^{1/3}}\right\}.
\end{align} 
This is the result \eqref{Gld_Rg_ED}. Hence, it is enough to prove the estimates \eqref{Reglrty_est} and \eqref{Decay_est}. 

\noindent
{\bf Step \# 2: Proof of the Regularity Estimate \eqref{Reglrty_est}. } 
The {\bf Step \# 2} and {\bf Step \# 3} are mainly devoted to estimating  commutator terms. The  following \emph{key lemma} plays a major role. 
\begin{lem}\label{lem:cm_L_trm}
Consider function $ \mathfrak H_{ijk}\in H^1$ and the family $\{\mathfrak G_{ijk} \}_{|i,j,k|=m}\subset H^1$, where $0\leq m\myr{\leq M}$. Further assume that the family of functions $\{\mf G_{ijk}\}$ satisfies the following relations 
\begin{align}\label{G_cmp_rl}\pa_x\mG_{ijk}=\mG_{(i+1)jk},\quad \Gamma_y\mG_{ijk}=\mG_{i(j+1)k},\quad \pa_z \mG_{ijk}=\mG_{ij(k+1)}.
\end{align} Then there exists a constant $C$, which depends only on $M$ and \myr{$\|u\|_{L_t^\infty W^{M+2,\infty}}$\myc{(Checked once: matching the range above)}}, such that the following estimate holds 
\begin{align} \label{T_cm_est}
\bigg|\frac{G^{ 2i} \Phi^{2j}}{A}&\int \mathfrak{H}_{ijk} [\pa_{yy},\Gamma_y^j]\mathfrak G_{i0k}dV\bigg|\\
\leq&\frac{G^{2i}\Phi^{2j}}{8A}\|\na \mathfrak H_{ijk} \|_2^2+ \left(\frac{C \Phi}{A^{1/3}G^2}+\frac{C}{G(A^{2/3}+t^2)}\right)\lf(G^{2i}\Phi^{2j}\| \mathfrak H _{ijk} \|_2^2+\sum_{\substack{i'+j'+k'\leq m\\ j'\leq j-1}}G^{2i'}\Phi^{2j'} \| \mathfrak G_{i'j'k'}\|_{2}^2 \rg).
\end{align}\ifx
\begin{align}\label{T_com_L_12}
|T_1^{L}+T_2^L|&:=\lf|\sum_{|i,j,k|=0}^{M}\frac{G^{ 2i} \Phi^{2j}}{A}\int \mathfrak{H}_{ijk}  \sum_{\ell=0}^{j-1}\left(\begin{array}{rr}j\\ \ell\end{array}\right)\lf (-2B^{(j-\ell+1)}\pa_x\Gamma_y+\pa_y^{(j-\ell)}(B^{(1)})^2\pa_{xx}  \rg)  \mathfrak{G}_{i\ell k}  dV\rg|\\
&\leq \sum_{|i,j,k|=0}^{M}\frac{G^{\myr{?4+}2i}\Phi^{2j}}{8A}\|\na \mathfrak H_{ijk}\|_2^2+\left(\frac{C \Phi}{A^{1/3}G^2}+\frac{C}{G(A^{2/3}+t^2)}\right)\sum_{|i,j,k|=0}^{M}G^{2i}\Phi^{2j}\left(\| \mathfrak H_{ijk} \|_2^2+\| \mathfrak G_{ijk}\|_{2}^2\right) \\ \label{T_com_L_3}
|T_3^{L}|&:=\lf|\sum_{|i,j,k|=0}^{M}\frac{G^{2i}\Phi^{2j}}{A}\int \mathfrak H_{ijk}  \  \sum_{\ell=0}^{j-1}\left(\begin{array}{rr} j\\ \ell\end{array}\right)B^{(j-\ell+2)}\pa_x\mathfrak G_{i\ell k}   dV\rg|\\
&\leq\frac{C }{(A^{2/3}+t^2)G}\sum_{|i,j,k|=0}^{M}G^{2i}\Phi^{2j}\left(\| \mathfrak H_{ijk} \|_2^2+\| \mathfrak G_{ijk}\|_{2}^2\right).
\end{align}\fi
\end{lem}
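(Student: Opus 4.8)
The plan is to expand the commutator $[\pa_{yy},\Gamma_y^j]$ explicitly and track how each resulting term distributes powers of $t$, $A^{-1}$, and $G$. Recall that $\Gamma_y = \pa_y + B(t,y)\pa_x$ where $B(t,y) := \int_0^t \pa_y u(T+s,y)\,ds$, so that $[\pa_{yy},\Gamma_y] = (\pa_{yy}B)\pa_x + 2(\pa_y B)\pa_x\pa_y$. Iterating, $[\pa_{yy},\Gamma_y^j] = \sum_{\ell=0}^{j-1}\Gamma_y^\ell\,[\pa_{yy},\Gamma_y]\,\Gamma_y^{j-1-\ell}$, and after commuting all the $\pa_x$'s and lower-order $\Gamma_y$-powers through (using that $\pa_x$ commutes with $\Gamma_y$ and that each further $\pa_y$ hitting $B^{(m)}:=\pa_y^m B$ lowers the derivative count on the target), one obtains a finite sum of terms of the schematic form $B^{(p)} B^{(q)} \pa_x^2 \Gamma_y^\ell$, $B^{(p)}\pa_x\pa_y\Gamma_y^\ell$, and $B^{(p)}\pa_x\Gamma_y^\ell$ with $\ell \le j-1$, $p,q\ge 1$, and combinatorial coefficients bounded in terms of $j\le M$. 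The key quantitative input is the bound $\|B^{(m)}\|_{L^\infty_{t,y}} = \|\int_0^t \pa_y^m u\,ds\|_\infty \le \|u\|_{L^\infty_t W^{M+1,\infty}}\, t$, i.e.\ each factor of $B$ costs one power of $t$; this is where the hypothesis $\|\pa_y u_A\|_{L^\infty_t W^{M,\infty}}\le C_M$ (absorbed into the constant $C$) enters.

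Next I would estimate the three types of terms against the target $\mathfrak H_{ijk}$. For the worst term, $\frac{G^{2i}\Phi^{2j}}{A}\int \mathfrak H_{ijk}\, (B^{(1)})^2\pa_{xx}\Gamma_y^\ell \mathfrak G_{i0k}\,dV$: using the compatibility relations \eqref{G_cmp_rl}, $\pa_{xx}\Gamma_y^\ell \mathfrak G_{i0k} = \mathfrak G_{(i+2)\ell k}$, and $\|(B^{(1)})^2\|_\infty \lesssim t^2$, so this is $\lesssim \frac{G^{2i}\Phi^{2j}}{A} t^2 \|\mathfrak H_{ijk}\|_2 \|\mathfrak G_{(i+2)\ell k}\|_2$. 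The point is that $\frac{t^2}{A}\Phi^{2} \lesssim 1$ by the definition \eqref{varphi} of $\Phi$, and more precisely $\frac{t^2}{A}\Phi^2 = \frac{t^2/A}{(1+t^3/A)^2}$; one then rebalances: $\frac{G^{2i}\Phi^{2j}}{A}t^2 = G^{2i}\Phi^{2(j-1)}\cdot \Phi^2\frac{t^2}{A} \lesssim \frac{1}{G^4}\cdot G^{2(i+2)}\Phi^{2(j-1)}\cdot\frac{\Phi^2 t^2}{A}$. Applying Young's inequality, the $\mathfrak G_{(i+2)\ell k}$-factor lands in the sum $\sum_{i'+j'+k'\le m,\, j'\le j-1} G^{2i'}\Phi^{2j'}\|\mathfrak G_{i'j'k'}\|_2^2$ (note $j'=\ell\le j-1$ as required), with coefficient $\lesssim \frac{1}{G^4}\cdot\frac{\Phi^2 t^2}{A}$; and the $\mathfrak H_{ijk}$-factor carries the complementary coefficient. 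To extract the stated form of the coefficient $\frac{C\Phi}{A^{1/3}G^2} + \frac{C}{G(A^{2/3}+t^2)}$, split according to whether $t\le A^{1/3}$ or $t\ge A^{1/3}$: in the first regime $\frac{t^2}{A}\Phi^2 \le \frac{t^2}{A} \le \frac{t^2}{A(A^{2/3}+t^2)}\cdot(A^{2/3}+t^2)/A^{?}$... more cleanly, $\frac{\Phi^2 t^2}{A}\le \min\{\frac{t^2}{A},1\}$ and one checks $\min\{\frac{t^2}{A},1\}^{1/2}\lesssim \frac{A^{1/3}}{A^{1/3}} $ — the honest bookkeeping is that $\frac{\Phi^2 t^2}{A}$, after taking a square root via Young, is $\lesssim \frac{\Phi}{A^{1/3}} + \frac{1}{A^{2/3}+t^2}$ up to constants, which yields exactly the two displayed terms after inserting the $G$-powers.

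The terms with only one factor of $B$ (the $B^{(p)}\pa_x\pa_y\Gamma_y^\ell$ and $B^{(p)}\pa_x\Gamma_y^\ell$ pieces) are easier: they carry only one power of $t$, giving coefficients $\lesssim \frac{G^{2i}\Phi^{2j}}{A}t$, and since $\frac{t}{A}\Phi \lesssim A^{-1/3}$ (again from \eqref{varphi}, as $\frac{t\Phi}{A}=\frac{t/A}{1+t^3/A}\lesssim A^{-2/3}\le A^{-1/3}$), they are absorbed into the $\frac{C\Phi}{A^{1/3}G^2}$-type term with room to spare; the $\pa_x\pa_y\Gamma_y^\ell$ factor is handled by writing $\pa_y = \Gamma_y - B\pa_x$ so that $\pa_y\Gamma_y^\ell\mathfrak G$ becomes a combination of $\mathfrak G_{i'j'k'}$'s plus another $B\pa_x$-term (iterating this at most $M$ times terminates), or alternatively by peeling off $\frac{1}{A}\|\na\mathfrak H_{ijk}\|_2$ via integration by parts when a $\pa_y$ can be moved onto $\mathfrak H$ — this is the source of the $\frac{G^{2i}\Phi^{2j}}{8A}\|\na\mathfrak H_{ijk}\|_2^2$ term on the right side. \textbf{The main obstacle} I anticipate is the combinatorial/bookkeeping step: organizing the full expansion of $[\pa_{yy},\Gamma_y^j]$ so that (a) every surviving $\mathfrak G$-index genuinely satisfies $j'\le j-1$ (crucial for closing the energy estimate without circularity), (b) the combinatorial coefficients stay bounded by a constant depending only on $M$ and $\|u\|_{W^{M+2,\infty}}$, and (c) the powers of $t$, $\Phi$, $A$, and $G$ recombine precisely into the two advertised coefficient shapes rather than something merely comparable. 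The analytic inequalities are all elementary consequences of the definition of $\Phi$; it is the careful accounting that requires attention.
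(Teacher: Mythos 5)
Your overall architecture---expanding $[\pa_{yy},\Gamma_y^j]$ explicitly, bounding each $B^{(m)}$ by $Ct$, bookkeeping with $\Phi$ and $G$, and producing the $\frac{1}{8A}\|\na \mathfrak H_{ijk}\|_2^2$ term by integration by parts---is the same as the paper's. But there is a genuine gap at the top-order contribution $\ell=j-1$, which is exactly the case the paper must treat separately. In your expansion the double-$\pa_x$ family $B^{(p)}B^{(q)}\pa_x^2\Gamma_y^{\ell}$ is allowed to have $\ell=j-1$; the resulting object is $\mathfrak G_{(i+2)(j-1)k}$, whose total order is $i+2+(j-1)+k=m+1$, so it does \emph{not} belong to the sum $\sum_{i'+j'+k'\le m,\ j'\le j-1}$ on the right-hand side (you verified only the constraint $j'\le j-1$, not $i'+j'+k'\le m$). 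Likewise the top-order single-$B$ term produces $\pa_x\Gamma_y^{j}\mathfrak G_{i0k}=\mathfrak G_{(i+1)jk}$, which violates both constraints. Your proposed remedy, rewriting $\pa_y=\Gamma_y-B^{(1)}\pa_x$, merely exchanges one inadmissible object for the other, since $\pa_x\pa_y\Gamma_y^{j-1}\mathfrak G_{i0k}=\mathfrak G_{(i+1)jk}-B^{(1)}\mathfrak G_{(i+2)(j-1)k}$, and the iteration does not terminate in admissible terms while powers of $t$ accumulate. Integrating by parts only in $x$ to reduce $\mathfrak G_{(i+2)(j-1)k}$ to $\mathfrak G_{(i+1)(j-1)k}$ leaves, after Young's inequality, a coefficient of size $t^4\Phi^2/(AG^2)$, which at $t\sim A^{1/3}$ is of order $A^{1/3}/G^2$---far larger than the claimed $\frac{C\Phi}{A^{1/3}G^2}+\frac{C}{G(A^{2/3}+t^2)}$, and fatal for the Gr\"onwall-type arguments this lemma feeds into.

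The paper closes this case with an exact algebraic cancellation that your accounting misses: for $\ell=j-1$ one has $\bigl(-2B^{(2)}\pa_x\Gamma_y+\pa_y(B^{(1)})^2\pa_{xx}\bigr)\mathfrak G_{i(j-1)k}=-2B^{(2)}\pa_x\pa_y\mathfrak G_{i(j-1)k}$, i.e., the two-powers-of-$t$, double-$\pa_x$ pieces cancel identically, leaving a single $B^{(2)}$ (one power of $t$) and one genuine $\pa_y$. That $\pa_y$ is then integrated by parts onto $\mathfrak H_{ijk}$ (the source of the $\|\pa_y\mathfrak H_{ijk}\|_2^2/(12A)$ contribution) and onto $B^{(2)}$ (giving $B^{(3)}$), so the surviving factor is the admissible $\mathfrak G_{(i+1)(j-1)k}$ with coefficient $\sim t^2\Phi^2/A\le C\Phi/A^{1/3}$, exactly as stated. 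Your handling of the $\ell\le j-2$ terms and of the single-$B$ terms reproduces the paper's estimates of $T_1,T_2,T_3$ and is fine; the missing ingredient is this top-order cancellation (or an equivalent reorganization that never rewrites the one surviving $\pa_y$ at top order), without which the stated bound does not follow.
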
\begin{proof}The proof of the lemma is postponed to the end of the subsection. 
\end{proof}

To prove the regularity estimate \eqref{Reglrty_est}, we express the time evolution of $\|f_\nq\|_{\Z_{G,\Phi}^ M}^2$ as follows 
\begin{align}
\label{l_TLR}\frac{d}{d\tau}&\frac{1}{2}\sum_{|i,j,k|=0}^{M}G^{2i}\Phi^{2j} \|\palt f_\nq\|_2^2\\
\leq &-\sum_{|i,j,k|=0}^{M}\frac{G^{2i} \Phi^{2j}}{A}\|\na (\palt f_\nq)\|_2^2+\sum_{|i,j,k|=0}^{M}\frac{G^{2i} }{A}\Phi^{2j}\int \palt f_\nq \ [\Gamma_y^j,\pa_{yy}]\pa_x^i\pa_z^k f_\nq dV\\
=:&-\sum_{|i,j,k|=0}^{M}\frac{G^{2i} \Phi^{2j}}{A}\|\na (\palt f_\nq)\|_2^2+T_{cm}^{L;R}.
\end{align}
Here ``$L$'' stands for ``linear'' and ``$R$'' stands for ``regularity''. 
 
By setting $\mH=\palt f_\nq$ and $\mG_{ijk}=\palt f_\nq$ in Lemma \ref{lem:cm_L_trm} and checking that the condition \eqref{G_cmp_rl} holds, we reach the conclusion that the commutator term $T_{cm}^{L;R}$ in \eqref{l_TLR} is bounded as follows
\begin{align}
|T_{cm}^{L;R} |\leq& \frac{1}{8A}\sum_{|i,j,k|=0}^{M}G^{2i} \Phi^{2j}\|\na (\palt f_\nq)\|_2^2+\lf(\frac{C\Phi}{G^2A^{1/3}}+\frac{C}{G(A^{2/3}+ {t^2})}\rg)\|f_\nq\|_{\Z^M_{G,\Phi}}^2. 
\end{align} 
Here $C$'s are constants depending only on $M, \ \| u_A\|_{L^\infty_t W^{M+2,\infty}}$.  Hence,  for all $
 \tau\in[0,\delta_ M^{-1}A^{1/3}]$, 
\begin{align}
\frac{d}{d\tau}\|f_\nq(t_\star+\tau)\|_{\Z^ M_{G,\Phi}}^2\leq C\left(\frac{1}{G(A^{2/3}+(t_\star+\tau)^2)}+\frac{1}{A^{1/3}G^2(1+(t_\star+\tau)^3/A)}\right)&\|f_\nq(t_\star+\tau)\|_{\Z^M_{G,\Phi}}^2.
\end{align} 
Now by solving the differential relation and choosing the threshold $G_ M>0  $ large enough compared to $M$, $\delta_{\mathcal Z}^{-1}$ (as defined in \eqref{Chc_del_M}), and $\| u\|_{L_t^\infty W^{M+2,\infty}}$, we reach the result  \eqref{Reglrty_est}. This concludes the {\bf Step \# 2}.

\noindent
{\bf Step \# 3: Proof of the Decay Estimate \eqref{Decay_est}. }
Next we prove the decay estimate \eqref{Decay_est}. The handle we apply is the passive scalar solution $S_{t_\star}^{t_\star+\tau}( \pa_x^i\Gamma^{j}_{y;t_\star}\pa_z^k  f_\nq(t_\star))$, whose $\Z^ M_{G,\Phi}$-norm naturally decays to $1/8$ of its value at $t_\star$ after time $ \delta_{ M}^{-1}A^{1/3}$ \eqref{Chc_del_M}. To avoid lengthy notation, we use the simplified notions \[\Gamma_{y;t}^{ijk}=\pa_x^i\Gamma_{y;t_\star+\tau}^j\pa_z^k,\qquad S_{t_\star}^{t_\star+\tau} \Gamma^{ijk}_{y;t_\star}f_\nq=S_{t_\star}^{t_\star+\tau}\lf( \pa_x^i\Gamma^{j}_{y;t_\star}\pa_z^k f_\nq(t_\star)\rg).\] The remaining task is to show that the deviation between the solution $ \Gamma^{ijk}_{y;t} f_\nq$ and the passive solution $S_{t_\star}^{t_\star+\tau} \Gamma^{ijk}_{y;t_\star} f_\nq$ is small. To this end, we express the time evolution of the $\Z^M_{G,\Phi}$-norm of the difference as follows 
\begin{align}
\frac{d}{d\tau}&\frac{1}{2}\sum_{|i,j,k|=0}^{M}G^{2i} \Phi^{2j}\| \gt f_\nq-S_{t_\star}^{t_\star+\tau} \Gamma^{ijk}_{y;t_\star}  f_\nq\|_2^2\\
=&-\sum_{|i,j,k|=0}^{M}\frac{G^{2i} }{A}\Phi^{2j}\|\na (\gt  f_\nq-S_{t_\star}^{t_\star+\tau}\gts  f_\nq)\|_2^2 \\
&+\sum_{|i,j,k|=0}^{M} {G^{2i} j\Phi^{2j-1}\Phi'}  \|\gt f_\nq-S_{t_\star}^{t_\star+\tau}\gts  f_\nq\|_2^2 \\
  &+\sum_{|i,j,k|=0}^{M}\frac{G^{2i} \Phi^{2j} }{A}\int (\gt f_\nq-S_{t_\star}^{t_\star+\tau}\gts  f_\nq) [\Gamma_y^j,\pa_{yy}]\pa_x^i\pa_z^k f_\nq dV\\
=:&-\mathfrak D_1-\mathfrak D_2+T_{cm}^{L;D}.
\end{align}
Here ``$L$'' stands for ``linear'' and ``$D$'' stands for ``decay''. To estimate the commutator term,  we set $\mH=\gt  f_\nq-S_{t_\star}^{t_\star+\tau}\gts  f_\nq$ and $\mG_{ijk}=\gt f_\nq$ in Lemma \ref{lem:cm_L_trm}. The condition \eqref{G_cmp_rl} can be checked directly. 
Hence an application of Lemma \ref{lem:cm_L_trm} yields that
\begin{align}
|T_{cm}^{L;D}  | 
\leq& \frac{1}{8}\mathfrak D_1+\left(\frac{C \Phi(t_\star+\tau)}{A^{1/3}G^2}+\frac{C}{G(A^{2/3}+(t_\star+\tau)^2)}\right)\sum_{|i,j,k|=0}^{M}G^{2i}\Phi^{2j}\|\gt f_{\neq}-S_{t_\star}^{t_\star+\tau}\gts  f_\nq\|_2^2\\
&+\left(\frac{C\Phi(t_\star+\tau)}{A^{1/3}G^2}+\frac{C}{G(A^{2/3}+(t_\star+\tau)^2)}\right)\sup_{\tau\in[0,\delta_{\mathcal Z}^{-1}A^{1/3}]}\|f_{\nq}(t_\star+\tau)\|_{\Z^M_{G,\Phi}}^2.
\end{align}  
Here the constant $C$ depends only on the regularity level $M$ and the norm $\| u_A\|_{L_t^\infty W ^{M+2,\infty}}$.  Combining this bound with the regularity estimate \eqref{Reglrty_est}, and the decomposition \eqref{T_cm_est} yields that \begin{align}
 \frac{d}{d\tau}\sum_{|i,j,k|=0}^{M}&G^{2i}\Phi^{2j}\|\gt f_\nq-S_{t_\star}^{t_\star+\tau}\gts  f_\nq\|_{L^2}^2\\
 \leq & {C} \left(\frac{\Phi(t_\star+\tau)}{A^{1/3}G^2}+\frac{1}{G((t_\star+\tau)^2+A^{2/3})}\right)\\
 &\qquad \times \left(\sum_{|i,j,k|=0}^{M}G^{2i}\Phi^{2j}\|\gt f_\nq-S_{t_\star}^{t_\star+\tau}\gts f_\nq\|_{L^2}^2+\|f_{\neq}(t_\star)\|_{\Z^ M_{G, \Phi}}^2\right).
 \end{align}  
By applying a Gr\"onwall-type estimate, and choosing the threshold $G\geq G_ M >1$ large enough compared to $\|u\|_{L_t^\infty W^{M+2,\infty}},  \, M$ and $\delta_ M^{-1}$ (as defined in \eqref{Chc_del_M}), we achieve the following bound  
 \begin{align}
 \sum_{|i,j,k|=0}^{M}G^{2i}\Phi^{2j}\|\gt f_\nq-S_{t_\star}^{t_\star+\tau}\gts f_\nq\|_{L^2}^2\leq\frac{1}{32}\|f_{\neq}(t_\star)\|_{\Z^ M_{G, \Phi}}^2,\quad \forall \tau\in[0,\delta_{\mathcal Z}^{-1}A^{1/3}]. 
 \end{align}
Since the choice of $\delta_{\mathcal Z}$ \eqref{Chc_del_M} yields that $$ \sum_{|i,j,k|=0}^{M}G^{2i}\Phi^{2j}\|S_{t_\star}^{t_\star+\delta_{ M}^{-1}A^{1/3}}\gts f_\nq\|_2^2\leq \frac{1}{64}\|f_{\neq}(t_\star)\|_{\Z^ M_{G, \Phi}}^2,$$ we have obtained that 
 \begin{align}
 \|f_\nq (t_\star+\delta_{\mathcal Z}^{-1}A^{1/3})\|_{\Z^M_
 {G, \Phi}}^2\leq& 2\sum_{|i,j,k|=0}^{M}G^{2i}\Phi^{2j}\|\gt f_\nq(t_\star+\delta_{\mathcal Z}^{-1}A^{1/3})-S_{t_\star}^{t_\star+\delta_{\mathcal Z}^{-1}A^{1/3}}\gts f_\nq\|_2^2\\
 &+2\sum_{|i,j,k|=0}^{M}G^{2i}\Phi^{2j}\|S_{t_\star}^{t_\star+\delta_{\mathcal Z}^{-1}A^{1/3}}\gts f_\nq\|_2^2\\
 \leq &\lf(\frac{2}{32}+\frac{1}{32}\rg) \|f_\nq(t_\star)\|_{\Z^M_{G, \Phi}}^2\leq \frac{1}{e^2}\|f_\nq(t_\star)\|_{\Z^M_{G, \Phi}}^2.&
 \end{align}
At this point, we have obtained \eqref{Decay_est}. Hence, the enhanced dissipation \eqref{Gld_Rg_ED} is achieved. This concludes the {\bf Step \# 3}. 

\noindent
{\bf Step \# 4: Proof of \eqref{Z_implc}. }Thanks to the definition of the $\mathcal{Z}$-norm \eqref{Gliding_norm} and the estimate \eqref{Gld_Rg_ED}, we have that 
\begin{align}
\sum_{|i,j,k|=0}^{M}G^{2i}\|\palt f_\nq(t)\|_{L^2}^2\leq& C(M)\lf(\Phi^{-2M}e^{-\delta_{\mathcal Z}\frac{t}{A^{1/3}}}\rg)\sum_{|i,j,k|=0}^{M}G^{2i}\|\pa_x^i\pa_y^j\pa_z^k f_{\mathrm{in};\nq}\|_{L^2}^2 e^{-\delta_{\mathcal Z}\frac{t}{A^{1/3}}}\\
\leq &C(M,\delta_{\mathcal Z}^{-1})\sum_{|i,j,k|=0}^{M}G^{2i}\|\pa_x^i\pa_y^j\pa_z^k f_{\mathrm{in};\nq}\|_{L^2}^2 e^{-\delta_{\mathcal Z}\frac{t}{A^{1/3}}}. 
\end{align}
This concludes the proof of Theorem \ref{thm:ED_gldrg}. The only remaining task is to prove Lemma \ref{lem:cm_L_trm}. We collect the proof  below.  

\ifx
\begin{proof}[Proof of Lemma \ref{lem:Estimate_TLR_1}]
To estimate $T_{1}^{L;R}$ term in \eqref{linear_TLR_123}, we decompose it into two terms, 
\begin{align*}
T_{1}^{L;R}=&-\sum_{|i,j,k|=0}^{M}\frac{G^{2i}\Phi^{2j}}{A} 2j\int \palt f_\nq \ B^{(2)}\pa_x \palt f_\nq dV\\
&-\sum_{|i,j,k|=0}^{M}\sum_{\ell=0}^{j-2}\frac{G^{ 2i}\Phi^{2j}}{A}2\left(\begin{array}{rr}j\\ \ell\end{array}\right)\int \palt f_\nq \  B^{(j-\ell+1)} \pa_x^{i+1} \Gamma_y^{\ell+1}\pa_z^k f_\nq dV.
\end{align*}
We observe that the first term is zero through integration by parts. For the second term, we apply the H\"older inequality and Young's inequality to obtain
\begin{align*} 
T_{1}^{L;R}\leq& C \sum_{|i,j,k|=0}^{M}\frac{G^{2i}\Phi^{2j}}{A}\|\palt f_\nq\|_2\ \sum_{\ell=0}^{j-2}t \|\Gamma_y^{\ell+1}\pa_x^{i+1}\pa_z^k f_\nq\|_2\\
\leq&\frac{C}{A^{2/3}G} \sum_{|i,j,k|=0}^{M}\sum_{\ell=0}^{j-2}\left(G^{i}\Phi^{j}\|\palt f_\nq\|_2\right)\|u_y\|_{W^{M,\infty}}\left(\frac{\Phi^{j-\ell-1}t}{A^{1/3}}\right)\left(\Phi^{\ell+1}G^{i+1}\|\pa_x^{i+1} \Gamma_y^{\ell+1}\pa_z^k f_\nq\|_2\right). 
\end{align*}Since $\ell\leq j-2$, we have that $\Phi^{ j-\ell-1 }\leq \Phi $. Recalling the definition of $\Phi$ \eqref{varphi}, we have that\begin{align}
T_1^{L;R}\leq&\frac{C}{A^{2/3}G} \sum_{|i,j,k|=0}^{M}\sum_{\ell=0}^{j-2}\frac{t/A^{1/3}}{ 1+t^{ 3}/A}\left(G^{i}\Phi^{j}\|\palt f_\nq\|_2\right)\left(\Phi^{\ell+1}G^{i+1}\|\pa_x^{i+1}\Gamma_y^{\ell+1}\pa_z^k f_\nq \|_2\right)\\
\leq&\frac{C}{A^{2/3}G}\frac{1}{1+\frac{t^2}{A^{2/3}}}\|f_\nq\|_{\Z^ M_{G;\Phi}}^2.\label{T_1R}
\end{align}
This concludes the proof of the lemma.  
\end{proof}

\begin{proof}[Proof of Lemma \ref{lem:Estimate_TLR_23}]
To estimate the term $T_{2}^{L;R}$ in \eqref{linear_TLR_123}, we single out the $\ell=j-1$ case as follows
\begin{align}
T_{2}^{L;R}=& \sum_{|i,j,k|=0}^{M}\sum_{\ell=0}^{j-1}\frac{G^{2i}\Phi^{2j}}{A}\int \palt f_\nq \ \left(\begin{array}{rr}j\\ \ell\end{array}\right)(\pa_y^{(j-\ell)}(B^{(1)})^2\pa_{xx})\pa_x^i\Gamma_y^\ell\pa_z^k f_\nq dV\\
=& \sum_{|i,j,k|=0}^{M}\frac{G^{2i}\Phi^{2j}}{A}j\int (\pa_y+B^{(1)}\pa_x)(\pa_x^i\Gamma_y^{j-1}\pa_z^k f_\nq) \ \pa_y(B^{(1)})^2\pa_{xx}(\pa_x^i \Gamma_{y}^{j-1}\pa_z^kf_\nq )dV\\
&+ \sum_{|i,j,k|=0}^{M}\sum_{\ell=0}^{j-2}\frac{G^{2i}\Phi^{2j}}{A}\left(\begin{array}{rr}j\\ \ell\end{array}\right)\int \palt f_\nq\ \pa_y^{(j-\ell)}(B^{(1)})^2\ \pa_x^{i+2}\Gamma_y^\ell\pa_z^k f_\nq dV\\
=:&T_{2;1}^{L;R}+T_{2;2}^{L;R}. \label{T_LR_212}
\end{align}
We start with estimating $T_{2;1}^{L;R}$ in \eqref{T_LR_212},

\begin{align}
T_{2;1}^{L;R}=&\sum_{|i,j,k|=0}^{M}\frac{G^{2i}\Phi^{2j}}{A}j\int  \pa_y (\pa_x^i\Gamma_y^{j-1}\pa_z^k f_\nq) \ \pa_y(B^{(1)})^2\ \pa_{xx}(\pa_x^i \Gamma_{y}^{j-1}\pa_z^kf_\nq )dV\\
&+\sum_{|i,j,k|=0}^{M}\frac{G^{2i}\Phi^{2j}}{A}j\int B^{(1)}\ \pa_x (\pa_x^i\Gamma_y^{j-1}\pa_z^k f_\nq) \ \pa_y(B^{(1)})^2 \ \pa_{xx}(\pa_x^i \Gamma_{y}^{j-1}\pa_z^kf_\nq )dV\\
=&-\sum_{|i,j,k|=0}^{M}\frac{G^{2i}\Phi^{2j}}{A}j\int  \frac{1}{2}\pa_y(\pa_x^{i+1}\Gamma_y^{j-1}\pa_z^k f_\nq)^2 \ \pa_y(B^{(1)})^2 dV\\
&+\sum_{|i,j,k|=0}^{M}\frac{G^{2i}\Phi^{2j}}{A}j\int \frac{1}{2}\pa_x(\pa_x^{i+1}\Gamma_y^{j-1}\pa_z^k f_\nq)^2 \  B^{(1)}\ \pa_y(B^{(1)})^2 dV\\
=&\sum_{|i,j,k|=0}^{M}\frac{G^{2i}\Phi^{2j}}{A}j\int  \frac{1}{2}(\pa_x^{i+1}\Gamma_y^{j-1}\pa_z^k f_\nq)^2 \ \pa_{yy}(B^{(1)})^2 dV.
\end{align}
Hence we have that 
\begin{align}
|T_{2;1}^{L;R}|\leq \frac{ C}{A^{1/3}G^2} \ \frac{\frac{t^2}{A^{2/3}}}{1+\frac{t^6}{A^2}}\ \sum_{|i,j,k|=0}^{M}\ G^{2(i+1)}\Phi^{2(j-1)}\ \|\pa_x^{i+1}\Gamma_y^{j-1}\paz^k f_\nq\|_{L^2}^2.\label{T_21LR}
\end{align}Next we estimate the $T_{2;2}^{L;R}$ term,
\begin{align}
|T_{2;2}^{L;R}|\leq &\frac{C}{A^{1/3}G^2} \ \sum_{|i,j,k|=0}^{M}\sum_{\ell=0}^{j-2}\ \frac{  \Phi^{j-\ell}t^2}{A^{2/3}}\ \left(G^{i}\Phi^{j} \|\pa_x^{i}\Gamma_y^{j}\pa_z^k f_\nq\|_2\right)\ \left(G^{i+2}\Phi^{\ell }\|\pa_x^{i+2}\Gamma_y^\ell\pa_z^k f_\nq\|_2\right).
\end{align}Since $\ell\leq j-2$,  $\Phi^{j-\ell} \leq \Phi^2$, and we have \begin{align}
|T_{2;2}^{L;R}|\leq &\ \frac{C}{A^{1/3}G^2} \ \sum_{|i,j,k|=0}^{M}\sum_{\ell=0}^{j-2}\ \frac{  \frac{t^2}{A^{2/3}}}{1+\frac{t^6}{A^2}}\  \left(G^{i}\Phi^{j} \|\pa_x^{i}\Gamma_y^{j}\pa_z^k f_\nq\|_2\right)\ \left(G^{i+2}\Phi^{\ell }\|\pa_x^{i+2}\Gamma_y^\ell\pa_z^k f_\nq\|_2\right)\\
\leq&\ \frac{C}{A^{1/3}G^2 (1+\frac{t^3}{A})}\|f_\nq\|_{\Z^ M_{G,\Phi}}^2.\label{T_22LR}
\end{align}
Combining the estimates \eqref{T_21LR}, \eqref{T_22LR} and \eqref{T_LR_212} yields the estimate of $T_{2}^{L;R}$. 
The treatment of the $T_{3}^{L;R}$ is similar to the treatment of $T_{2}^{L;R}$. So we omit the detail for the sake of brevity. \textcolor{blue}{(Check?! It seems that the term has less $t$ and $\pa_x$. So it should be fine, right?)} 
\end{proof}
\begin{proof}[Proof of Lemma \ref{lem:T123_LD}]
Here we distinguish between the $\ell=j-1$ case and $\ell\leq j-2$ case.  In the $\ell=j-1$ case, the $T_{1}^{L;D}+T_{2}^{L;D}$ has the following simplification 
\begin{align}
&|T_{1}^{L;D}+T_{2}^{L;D}|\\
&=\bigg|-\sum_{|i,j,k|=0}^{M}\frac{G^{2i}\Phi^{2j}}{A}j \int (\palt f_\nq-S_{t_\star}^{t_\star+\tau}\palt  f_\nq)  \ 2B^{(2)}\ \pa_x\pa_y\Gamma_{y}^{j-1}\pa_x^i\pa^k_z f_\nq dV\bigg|\\
&\leq\sum_{|i,j,k|=0}^{M}\frac{G^{2i}\Phi^{2j}}{12 A}\|\pa_y (\palt f_\nq-S_{t_\star}^{t_\star+\tau}\palt  f_\nq)\|_2^2+\sum_{|i,j,k|=0}^{M}\frac{CG^{2i+2}\Phi^{2j-2}(\Phi^2 t^2)}{AG^2}\|\pa_x^{i+1}\Gamma _y^{j-1}\pa_z^k f_\nq\|_2^2\\
&\leq\sum_{|i,j,k|=0}^{M}\frac{G^{2i}\Phi^{2j}}{12A}\|\na(\palt f_\nq-S_{t_\star}^{t_\star+\tau}\palt  f_\nq)\|_2^2+\sum_{|i,j,k|=0}^{M}\frac{C G^{2i+2}\Phi^{2j-2}}{A^{1/3}G^2}\frac{t^2/A^{2/3}}{1+t^6/A^{2}}\|\pa_x^{i+1}\Gamma_y^{j-1}\pa_z^kf_\nq\|_2^2\\
 &\leq\frac{1}{8}\mathfrak D+\frac{C}{A^{1/3}G^2(1+t^3/A)}\sup_{\tau\in[0,\delta_{\mathcal Z}^{-1}A^{1/3}]}\|f_\nq(t_\star+\tau)\|_{\Z^ M_{G,\Phi}}^2.\label{Case_a_T12D} 
\end{align} 

For the $\ell\leq j-2$ case, we estimate the two terms individually. For the $T_{1}^{L;D}$ term, we have
\begin{align}
|T_{1}^{L;D}|=&\bigg|-2\sum_{|i,j,k|=0}^{M}\frac{G^{2i}\Phi^{2j}(t)}{A}\sum_{\ell=0}^{j-2}\left(\begin{array}{rr}j\\\ell\end{array}\right) B^{(j-\ell+1)}\int (\palt f_\nq-S_{t_\star}^{t_\star+\tau}\palt  f_\nq)\ \pa_x^{i+1}\Gamma_y^{\ell+1}\pa_z^k f_\nq dV\bigg|\\
\leq &\sum_{|i,j,k|=0}^{M}\sum_{\ell=0}^{j-2}\frac{C}{A^{2/3}G}\left(\frac{\Phi^{ j-\ell-1}t}{A^{1/3}}\right)\big(G^{ i}\Phi^{j}\|\palt f_{\neq}-S_{t_\star}^{t_\star+\tau}\palt  f_\nq\|_2\big) \ \big(G^{i+1}\Phi^{\ell+1}\|\pa_x^{i+1}\Gamma_y^{\ell+1}\pa_z^kf_\nq\|_2\big)\\
\leq &\frac{C }{(A^{2/3}+t^2)G}\sum_{|i,j,k|=0}^{M}G^{2i}\Phi^{2j}\|\palt f_{\neq}-S_{t_\star}^{t_\star+\tau}\palt f_\nq\|_2^2\\
&+\frac{C}{(A^{2/3}+t^2)G}\sup_{\tau\in[0,\delta_{\mathcal Z}^{-1}A^{1/3}]}\|f_{\nq}(t_\star+\tau)\|_{\Z^M_{G,\Phi}}^2. \label{Case_b_T1D} 
\end{align}

Next we estimate the $T_{2}^{L;D}$ term, which has the $t^2$ factor. We estimate it as follows 
\begin{align}
|T_{2}^{L;D}|\leq& \frac{C}{G^2A^{1/3}}\sum_{|i,j,k|=0}^{M}\sum_{\ell=0}^{j-2}\left(\begin{array}{rr}j\\\ell\end{array}\right)\frac{\Phi^{j-\ell} t^2}{A^{2/3}}(G^{ i}\Phi^{j}\|\palt f_\nq-S_{t_\star}^{t_\star+\tau}\palt  f_\nq\|_2)\ (G^{i+2}\Phi^{\ell}\|\pa_x^{i+2}\Gamma_y^{\ell}\pa_z^k f_\nq\|_2 )\\
\leq&\frac{C }{A^{1/3}G^2(1+t^3/A)}\sum_{|i,j,k|=0}^{M}G^{2i}\Phi^{2j}\|\palt f_{\neq}-S_{t_\star}^{t_\star+\tau}\palt  f_\nq\|_2^2\\
&+\frac{C}{A^{1/3}G^2(1+t^3/A)}\sup_{\tau\in[0,\delta_{\mathcal Z}^{-1}A^{1/3}]}\|f_{\nq}(t_\star+\tau)\|_{\Z^M_{G,\Phi}}^2.\label{Case_b_T2D} 
\end{align}
The estimate of the $T_3^{L;D}$ is as follows:
\begin{align}
|T_3^{L;D}|\leq&\sum_{|i,j,k|=0}^{M}C\sum_{\ell=0}^{j-1}\frac{G^{2i}\Phi^{2j}}{A}\int |\palt f_\nq-S_{t_\star}^{t_\star+\tau}\palt  f_\nq| \ |B^{(j-\ell+2)}|\ |\pa_{x}^{i+1} \Gamma_y^{\ell}\pa_z^k f_\nq | dV\\
\leq &\frac{C}{A^{2/3}G}\sum_{|i,j,k|=0}^{M}(G^{i}\Phi^{j}\|\palt f_\nq-S_{t_\star}^{t_\star+\tau}\palt f_\nq\|_2)  \left(\frac{\Phi^{ j-\ell}t}{A^{1/3}}\right)(G^{i+1}\Phi^{ {\ell} }\|\pa_x^{i+1}\Gamma_y^{j-1}\pa_z^k f_\nq\|_2)\\
\leq&\frac{C}{(A^{2/3}+t^2)G}\left(\sum_{|i,j,k|=0}^{M}G^{2i}\Phi^{2j}\|\palt f_\nq-S_{t_\star}^{t_\star+\tau}\palt f_\nq\|_2^2+\sup_{\tau\in[0,\delta_{\mathcal Z}^{-1}A^{1/3}]}\|f_{\neq}(t_\star+\tau)\|_{ {\Z^ M_{G, \Phi}}}^2\right).&\label{Case_a_T3D}
\end{align}
This concludes the proof. \end{proof}
\fi
\begin{proof}[Proof of Lemma \ref{lem:cm_L_trm}]
Since the result \eqref{T_cm_est} is trivially true for  $j=0$, we always assume that $j\geq 1$ throughout the proof.  According to the commutator relation \eqref{cm_yt_j_yy} and the computation rule \eqref{G_cmp_rl}, we can rewrite the left hand side of \eqref{T_cm_est} in the following fashion,
\begin{align}\label{Tcm_est_123}
\bigg|\frac{G^{2i} \Phi^{2j}}{A}&\int \mf H_{ijk} \ [\Gamma_y^j,\pa_{yy}]\mf G_{i0k} dV\bigg|\\
=&\bigg|\frac{G^{2i}\Phi^{2j}}{A}\int \mf H_{ijk}\  \sum_{\ell=0}^{j-1}\binom{j}{\ell}\lf(-2B^{(j-\ell+1)}\pa_x\Gamma_y+\pa_y^{(j-\ell)}(B^{(1)})^2\pa_{xx}-B^{(j-\ell+2)}\pa_x\rg)\mf G_{i\ell k} dV\bigg|\\
=&: \bigg|T_{1} +T_{2} +T_{3} \bigg|.
\end{align}
Here the quantities $B^{(m)}$ are defined as 
\begin{align}\label{B_m_mt}
B^{(m)}(t,y):=\int_0^t \pa_y^{m}u(s,y)ds.
\end{align} With this notation, the $\Gamma$-derivative can be rewritten as $\pa_y +B^{(1)}\pa_x. $ 

Let us start by estimating the quantity $|T_1+T_2|$. Here we distinguish between the $\ell=j-1$ case and the $0\leq\ell\leq j-2$ case.  In the first case, we expand $\Gamma$ as $\pa_y+B^{(1)}\pa_x$ and observe the following relation
\begin{align}
\lf(-2B^{(2)}\pa_x\Gamma_y+\pa_y(B^{(1)})^2\pa_{xx}\rg)\mG_{i(j-1)k}=&-2B^{(2)} \lf(\pa_{xy} +B^{(1)}\pa_{xx}\rg)\mG_{i(j-1)k}+2B^{(2)}B^{(1)}\pa_{xx}\mG_{i(j-1)k}\\
=&-2B^{(2)}\pa_{xy}\mG_{i(j-1)k}. 
\end{align}Hence we can simplify the $T_{1}+T_{2}$ with the property $\pa_x\mG_{ijk}=\mG_{(i+1)jk}$ and integration by parts,  
\begin{align}
&|T_{1}+T_{2}|\mathbbm{1}_{\ell=j-1}=\bigg|\frac{G^{2i}\Phi^{2j}}{A}j \int \mH \ 2B^{(2)}\ \pa_x\pa_y \mG_{i(j-1)k} dV\bigg|\\
&= \bigg|\frac{G^{2i}\Phi^{2j}}{A}j \int \pa_y\mH \ 2B^{(2)}\  \mG_{(i+1)(j-1)k} dV + \frac{G^{2i}\Phi^{2j}}{A}j \int \mH \ 2  B^{(3)}\  \mG_{(i+1)(j-1)k} dV\bigg|.
\end{align}
Now applications of the H\"older inequality, the Young's inequality   and the definition $\Phi(t)=1+\frac{t^3}{A}$ \eqref{varphi} yield that 
\begin{align}\label{Case_a_T12} 
|T_{1}  +T_{2} |\mathbbm{1}_{\ell=j-1} &\leq \frac{G^{2i}\Phi^{2j}}{12 A}\|\pa_y \mH\|_2^2+ \frac{CG^{2i+2}\Phi^{2j-2}(\Phi^2 t^2)}{AG^2}\|\mG_{(i+1)(j-1)k}\|_2^2\\
&\quad+ \frac{G^{2i}\Phi^{2j}}{G (A^{2/3}+t^2)}\| \mH\|_2^2+\frac{CG^{2i+2}\Phi^{2j-2}\Phi^2 }{G A^{2/3}}\frac{(A^{2/3} t^2+t^4)}{A^{4/3}}\| \mG_{(i+1)(j-1)k}\|_2^2\\
&= \frac{G^{2i}\Phi^{2j}}{12A}\|\pa_y\mH\|_2^2+ \frac{C G^{2i+2}\Phi^{2j-2}}{A^{1/3}G^2}\frac{t^2/A^{2/3}}{(1+t^3/A)^{2}}\|\mG_{(i+1)(j-1)k}\|_2^2\\
&\quad+ \frac{G^{2i}\Phi^{2j}}{G (A^{2/3}+t^2)}\| \mH\|_2^2+ \frac{CG^{2i+2}\Phi^{2j-2} }{G A^{2/3}}\frac{( t^2/A^{2/3}+t^4/A^{4/3})}{(1+t^3/A)^2}\| \mG_{(i+1)(j-1)k}\|_2^2\\
 &\leq\frac{ {G^{2i}\Phi^{2j}}}{12A} \|\pa_y\mH\|_2^2+ \frac{G^{2i}\Phi^{2j}}{G (A^{2/3}+t^2)}\| \mH\|_2^2\\
 &\quad+\lf(\frac{C}{A^{1/3}G^2(1+t^3/A)}+\frac{C}{{G (A^{2/3}+t^2)}}\rg) G^{2(i+1)}\Phi^{2(j-1)}\|\mathfrak G_{(i+1)(j-1)k}\|_{2}^2.
\end{align} 
For the $0\leq\ell\leq j-2$ case, we estimate the two terms individually. For the $T_{1} $ term, we have that
\begin{align} \label{Case_b_T1} 
|T_{1} |\mathbbm{1}_{0\leq \ell\leq j-2}=&\bigg| 2 \frac{G^{2i}\Phi^{2j} }{A}\sum_{\ell=0}^{j-2}\binom{j}{\ell}\int  \mH \  B^{(j-\ell+1)} \pa_x\Gamma_y \mG _{i\ell k}   dV\bigg|\\
\leq &\sum_{\ell=0}^{j-2}\frac{C}{A^{2/3}G}\left(\frac{\Phi^{ j-\ell-1}t}{A^{1/3}}\right)\big(G^{ i}\Phi^{j}\|\mH\|_2\big) \ \big(G^{i+1}\Phi^{\ell+1}\|\mG_{(i+1)(\ell+1)k}\|_2\big)\\
\leq &\frac{C }{(A^{2/3}+t^2)G} G^{2i}\Phi^{2j}\|\mH\|_2^2 +\frac{C}{(A^{2/3}+t^2)G}\sum_{\substack{i'+j'+k'\leq m\\ j'\leq j-1}}G^{2i'}\Phi^{2j'}\|\mG_{i'j'k'}\|_2^2.
\end{align} 
Next we estimate the $T_{2} $ term as follows  
\begin{align}\label{Case_b_T2} 
|T_{2} |\mathbbm{1}_{0\leq\ell\leq j-2}\leq& \frac{C}{G^2A^{1/3}} \sum_{\ell=0}^{j-2}\binom{j}{\ell}\frac{\Phi^{j-\ell} t^2}{A^{2/3}}(G^{ i}\Phi^{j}\|\mH\|_2)\ (G^{i+2}\Phi^{\ell}\|\mG_{(i+2)\ell k}\|_2 )\\
\leq& \frac{C}{G^2A^{1/3}} \sum_{\ell=0}^{j-2}\frac{ t^2/A^{2/3}}{(1+t^3/A)^2}(G^{ i}\Phi^{j}\|\mH\|_2)\ (G^{i+2}\Phi^{\ell}\|\mG_{(i+2)\ell k}\|_2 )\\
\leq&\frac{C }{A^{1/3}G^2(1+t^3/A)}\lf(G^{2i}\Phi^{2j}\|\mH\|_2^2+\sum_{\substack{i'+j'+k'\leq m\\ j'\leq j-1}}G^{2i'}\Phi^{2j'}\|\mG_{i'j'k'}\|_2^2\rg).  
\end{align} 
The estimate of the $T_3$ is as follows:
\begin{align}\label{T3}
|T_3|\leq&C\sum_{\ell=0}^{j-1}\frac{G^{2i}\Phi^{2j}}{A}\int |\mH| \ |B^{(j-\ell+2)}|\ |\mG_{(i+1 ) \ell k}   | dV\\
\leq &\frac{C}{A^{2/3}G}\sum_{\ell=0}^{j-1}(G^{i}\Phi^{j}\|\mH\|_2)  \left(\frac{\Phi^{ j-\ell}t}{A^{1/3}}\right)(G^{i+1}\Phi^{ {\ell} }\|\mG_{(i+1)\ell k}\|_2)\\
\leq&\frac{C}{(A^{2/3}+t^2)G}\lf(G^{2i}\Phi^{2j}\|\mH\|_2^2 +\sum_{\substack{i'+j'+k'\leq m\\ j'\leq j-1}}G^{2i'}\Phi^{2j'}\|\mG_{i'j'k'}\|_2^2 \right).
\end{align}
Combining \eqref{Case_a_T12}, \eqref{Case_b_T1}, \eqref{Case_b_T2}, and \eqref{T3} yields \eqref{T_cm_est}.
\end{proof}

\section{Nonlinear Theory}\label{Sec:ED} 
In this section, we develop the nonlinear theory for the system \eqref{PKS_rsc}. We will prove the propositions stated in Section \ref{Sec_s:nl}. 

First of all, we observe that to derive the estimates  in Proposition \ref{Pro:main} (or other propositions in Section \ref{Sec_s:nl}), there are two types of quantities to consider, namely, the $x$-average of the unknowns $\nz, \, \cz,$ and the remainders of the solutions $n_\nq,\, c_\nq^{t_r},\, d_\nq^{t_r}.$ In Subsection \ref{Sec:NL_xavg}, we collect the lemmas which provide bounds to the $x$-averages. In Subsections \ref{Sec:NLEDreg} - \ref{Sec:NL_EDdc}, we collect estimates of the remainder. The main goal is to prepare necessary bounds to derive the nonlinear  enhanced dissipation. As in the linear case, our general scheme is to fix an arbitrary time $t_\star$ on the time horizon, and then derive the following regularity estimate and the decay estimate on the time intervals $[t_\star,t_\star+\delta^{-1}A^{1/3}]$, i.e.,
\begin{align}
\FM[t_\star+\tau] \ \leq \ &2\FM[t_\star],\quad \forall \tau\in[0, \delta^{-1}A^{1/3}];\label{Rg_est_NL}\\
\FM[t_\star+\delta^{-1}A^{1/3}]\ \leq\ &\frac{1}{2}\FM[t_\star].\label{Decay_est_NL}
\end{align}
We recall that the parameter $\delta$ is chosen as $\delta=\delta(\delta_{\mathcal Z})$ \eqref{Chc_del}, and $\delta_{\mathcal Z}$ is defined in \eqref{Chc_del_M0}.  
We collect theorems involving \eqref{Rg_est_NL} in Subsection \ref{Sec:NLEDreg} and decay estimates associated with \eqref{Decay_est_NL} in Subsection \ref{Sec:NL_EDdc}. Finally, in Subsection \ref{Sec:NL_Con}, we combine results in previous sections to derive Propositions  \ref{pro:prp_FM}, \ref{Pro:2}, and \ref{Pro:main}. 

 \myc{Need to check that if we multiply the $\Phi^2$ to the $c$ part, the estimates of $c_\nq$  and $T_{c;com}$ are still fine. }

\ifx
The goal is to show that for $G\geq G(M, \|u_y\|_{L^\infty_tW_y^{M+1,\infty}})\geq 1$, and $A\geq A_0\geq 1, \quad 0<\delta_{\mathcal Z}\leq 1$, the enhanced dissipation estimate holds:
\begin{align}\label{Goal_F_M_ED}
F_{M}(t)\leq C_{M} F_{M}(0)e^{-\delta_{\mathcal Z}t/A^{1/3}}, \quad\forall t\in[0, A^{1/3+\te}) .
\end{align}
Same as in the linear case, the key is to derive the regularity estimate and the decay  estimate. In order to show the enhanced dissipation, we  first fix an arbitrary starting time $t_\star\in[0,A^{1/3+\te})$ such that $[t_\star,t_\star+\delta_{\mathcal Z}^{-1}A^{1/3}]\subset [0, A^{1/3+\te}] $. 
Next we prove the regularity estimate and the decay estimate on the time interval $[t_\star,t_\star+\delta_{\mathcal Z}^{-1}A^{1/3}]$, i.e.,
\begin{align}
F_M(t_\star+\tau) \leq &2F_M(t_\star),\quad \forall \tau\in[0, \delta_{\mathcal Z}^{-1}A^{1/3}];\label{Regularity_est_NL}\\
F_M(t_\star+\delta_{\mathcal Z}^{-1}A^{1/3})\leq&\frac{1}{2}F_M(t_\star).\label{Decay_est_NL}
\end{align}
\fi

\subsection{The $x$-average Estimates} \label{Sec:NL_xavg}
In this section, we present two lemmas concerning the $x$-average $\lan n\ran,\ \lan \cc\ran$. 

\begin{theorem}\label{thm:<n>_est}
Consider solutions to the system \eqref{ppPKS_<n>}, \eqref{ppPKS_<c>} subject to initial conditions $(\lan n_{\mathrm{in}}\ran,\lan \cc_{\mathrm {in}}\ran)\in H^M\times H^{M+1}$, $M\geq 3$. There exists a constant $C$ depending only on $M$ and $\|\pa_y u_A\|_{L_t^\infty W^{M,\infty}}$ such that the following estimates hold  for all $ t\in [0,A^{1/3+\te}]$, \myr{
\begin{align}\label{<n>_est_dff}
\frac{d}{dt}&\left(\sum_{|j,k|=0}^{ M}\|\pa_y^j\pa_z^k \nz (t)\|_{L_{y,z}^2}^2\right)\\
\leq &\frac{C}{A}\|\nz(t)\|_{H_{y,z}^M}^2\left(\sup_{\tau\in[0, A^{1/3+\te}]}\|\nz(\tau)\|_{H_{y,z}^{M}}^2 +\|\na \lan \cc_{\mathrm{in}}\ran\|_{H_{y,z}^M}^2\right)\\
& +\frac{C}{A}\lf(\sum_{|i,j,k|=0}^{ M+1}\| \Gamma_{y;t}^{ijk} \cc_\nq\|_{L_{x,y,z}^2}^2+ t^2 \sum_{ |i,j,k|=0}^{ M } \| \Gamma_{y;t}^{(i+1)j k} \cc_\nq\|_{L_{x,y,z}^2}^2\rg) \left(\sum_{|i,j,k|=0}^ M\|\Gamma_{y;t}^ {ij k}  n_\nq\|_{L^{2}_{x,y,z}}^2\right).
\end{align}}
\myc{He: In this way, we can reduce the discussion of the zero mode to the discussion of $\cc_\nq. $}
\end{theorem}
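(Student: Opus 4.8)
Looking at Theorem~\ref{thm:<n>_est}, the goal is a differential inequality for the $x$-average $\nz$ in $L^2$-based Sobolev norms, controlling it by products of low-order quantities (so that after Grönwall and the enhanced-dissipation bounds on the remainders, one obtains uniform bounds over $[0,A^{1/3+\te}]$).

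\textbf{Plan of proof.} The starting point is the $\nz$-equation \eqref{ppPKS_<n>}, which I rewrite schematically as
\begin{align}
\pa_t\nz=\frac1A\de_{y,z}\nz-\frac1A\na_{y,z}\cdot(\nz\na_{y,z}\cz)-\frac1A\lan\na_{y,z}\cdot(n_\nq\na_{y,z}\cc_\nq)\ran.
\end{align}
First I would apply $\pa_y^j\pa_z^k$ for $|j,k|\le M$, pair with $\pa_y^j\pa_z^k\nz$ in $L^2_{y,z}$, and sum. The dissipation term $\frac1A\de_{y,z}$ gives $-\frac2A\|\nz\|_{\dot H^{|j,k|+1}}^2\le 0$, which I discard (or keep if needed to absorb a borderline term, though here it is not needed). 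The analysis splits into the ``drift'' nonlinearity $\na_{y,z}\cdot(\nz\na_{y,z}\cz)$ and the ``quadratic remainder'' forcing $\lan\na_{y,z}\cdot(n_\nq\na_{y,z}\cc_\nq)\ran$.

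\textbf{The drift term.} Here I would use a standard commutator/product estimate: after integrating by parts in $(y,z)$, the term $\frac1A\int \pa_y^j\pa_z^k(\nz\na\cz)\cdot\na\pa_y^j\pa_z^k\nz$ is controlled by $\frac CA\|\nz\|_{H^M}\|\na\cz\|_{H^M}\|\nz\|_{H^{M}}$ using the Moser/Kato–Ponce inequality on $\T^2$ (requiring $M\ge 3$ so $H^{M-1}\hookrightarrow L^\infty$ and the algebra property holds). To get the stated form with $\sup_\tau\|\nz(\tau)\|_{H^M}^2$ and $\|\na\lan\cc_{\mathrm{in}}\ran\|_{H^M}^2$ rather than $\|\na\cz\|_{H^M}$, I would invoke the companion $\cz$-estimate (Lemma on $\cz$ in Subsection~\ref{Sec:NL_xavg}, to be stated after this one, or a Duhamel estimate on \eqref{ppPKS_<c>}): since $\cz$ solves a heat equation forced by $\frac1A(\nz-\bar n)$, one has $\|\na\cz(t)\|_{H^M}\lesssim \|\na\lan\cc_{\mathrm{in}}\ran\|_{H^M}+\frac1A\int_0^t\|\nz\|_{H^M}\,ds\cdot(\text{heat smoothing})$, and the $\frac1A\int_0^t$ over $[0,A^{1/3+\te}]$ is $O(A^{-2/3+\te})$ times $\sup_\tau\|\nz\|_{H^M}$, hence absorbed into the claimed bound. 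This is where the specific structure $\sup_\tau\|\nz\|_{H^M}^2+\|\na\lan\cc_{\mathrm{in}}\ran\|^2_{H^M}$ enters.

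\textbf{The quadratic remainder term and passing to $\Gamma$-norms.} This is the main obstacle. We have $\frac1A\int \pa_y^j\pa_z^k\lan\na\cdot(n_\nq\na\cc_\nq)\ran\,\pa_y^j\pa_z^k\nz$; after integration by parts and Cauchy–Schwarz this is bounded by $\frac CA\|\nz\|_{H^M}\big\|\lan n_\nq\na\cc_\nq\ran\big\|_{H^{M+1}_{y,z}}$, and the product must be estimated by $\|n_\nq\|_{H^M_{x,y,z}}\|\na\cc_\nq\|_{H^{M+1}_{x,y,z}}$ type bounds (Sobolev on $\T^3$, $M\ge 3$). The delicate point is that the conclusion \eqref{<n>_est_dff} is phrased in \emph{gliding} $\Gamma_{y;t}$-norms, not plain $\pa_y$-norms, for the remainder factors. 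So I would express $\pa_x^i\pa_y^j\pa_z^k$ in terms of $\Gamma_{y;t}^{ijk}$ via $\pa_y=\Gamma_{y;t}-B^{(1)}(t,y)\pa_x$ where $B^{(1)}=\int_0^t\pa_y u\,ds$ satisfies $|B^{(1)}|\lesssim t$ (using $\|\pa_y u_A\|_{L^\infty_t W^{M,\infty}}\le C_M$): inductively, each plain derivative of order $\le m$ is a sum of $\Gamma$-derivatives of order $\le m$ with coefficients that are polynomials in $t$ of degree equal to the number of $\pa_y$'s traded, and derivatives of $B^{(m)}$. Tracking the worst power of $t$ one sees the quadratic remainder contributes $\big(\sum_{|i,j,k|\le M+1}\|\Gamma^{ijk}_{y;t}\cc_\nq\|_{L^2}^2+t^2\sum_{|i,j,k|\le M}\|\Gamma^{(i+1)jk}_{y;t}\cc_\nq\|_{L^2}^2\big)\big(\sum_{|i,j,k|\le M}\|\Gamma^{ijk}_{y;t}n_\nq\|_{L^2}^2\big)$ — the extra $t^2$ and the shifted index $i\mapsto i+1$ recording exactly one factor of $B^{(1)}$ being spent on the top $y$-derivative of $\cc_\nq$. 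I would carry out this ``translation lemma'' carefully (it is the step most prone to bookkeeping errors), verify the $n_\nq$-side needs only order $\le M$ because it carries no extra derivative loss, and then collect everything with the factor $\frac1A$ out front to land on \eqref{<n>_est_dff}. The rest (Young's inequality to split products, summation over multi-indices) is routine.
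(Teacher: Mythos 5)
Your treatment of the drift term is essentially the paper's: direct energy estimate, Sobolev product bound, and the heat-semigroup estimate \eqref{na_c_0_est} to replace $\|\na\cz\|_{H^M_{y,z}}$ by $\sup_\tau\|\nz(\tau)\|_{H^M_{y,z}}+\|\na\lan\cc_{\mathrm{in}}\ran\|_{H^M_{y,z}}$. One small correction there: the dissipation is in fact used, not optional — after integrating by parts, both nonlinear terms carry the factor $\na\pa_y^j\pa_z^k\nz$, which is of order $M+1$ and is absorbed into $\frac{1}{A}\|\na\pa_y^j\pa_z^k\nz\|_2^2$ by Young's inequality; it is only discarded at the very end, which is why it does not appear in \eqref{<n>_est_dff}.

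The genuine gap is in your ``translation lemma'' for the quadratic remainder term. The factors $n_\nq$ and $\cc_\nq$ are not $x$-averages, so under your scheme every plain $\pa_y$ falling on them must be traded via $\pa_y=\Gamma_{y;t}-B^{(1)}\pa_x$; expanding $\pa_y^{j}=(\Gamma_{y;t}-B^{(1)}\pa_x)^{j}$ produces terms with up to $j$ factors of $B^{(1)}$ (and derivatives thereof), hence coefficients of size $t^{j}$ — up to $t^{M+1}$ on the chemical factor — together with correspondingly many extra $\pa_x$'s, not ``exactly one factor of $B^{(1)}$'' as you assert. The resulting bound is not \eqref{<n>_est_dff}: the stated right-hand side carries no $t$-weights at all on the $n_\nq$ side and only a single $t^2$ with a single extra $\pa_x$ on the $\cc_\nq$ side, and this precise structure is needed downstream (for instance $t^2\leq C\Phi^{-1}A^{2/3}$ is paired against the $A^{2/3}$ weight in $\FM$; terms like $t^{4}\|\pa_x^{i+2}\Gamma_{y;t}^{j}\pa_z^k\cc_\nq\|_2^2$ are not controlled by the functional). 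The paper avoids this loss with one observation you are missing: since $B^{(1)}$ does not depend on $x$, the $x$-average annihilates the correction, so $\pa_y^j\lan f\ran^x=\lan\Gamma_{y;t}^{j}f\ran^x$. Hence all the outer derivatives applied to $\lan n_\nq\na\cc_\nq\ran$ become gliding derivatives of the product for free; one then applies the gliding product estimate \eqref{Prd_est_gld}, and the only $t$-loss comes from converting the single gradient on $\cc_\nq$ via \eqref{Gld_Reg_grd}, whose commutator \eqref{cm_yt_j_y} contains exactly one $B^{(m)}$ per term — this is the source of the lone $t^2$ and the shift $i\mapsto i+1$ in \eqref{<n>_est_dff}. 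With that identity inserted in place of your term-by-term trading, the rest of your argument goes through.
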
 
\begin{proof}
We recall the equation \eqref{ppPKS_<n>} and implement direct energy estimate of the Sobolev norm to derive that
\begin{align}\label{T_NL0_12}
\frac{d}{dt}\frac{1}{2}\sum_{|j,k|=0}^ {M}\|\pa_{y}^j\pa_{z}^k\nz\|_{L^2}^2=&-\frac{1}{A}\sum_{|j,k|=0}^ {M}\|\na \pa_{y}^j\pa_{z}^k \nz\|_{L^2}^2+\frac{1}{A}\sum_{|j,k|=0}^ {M}\int \na_{y,z}\pa_{y}^j\pa_{z}^k\nz \cdot \pa_y^j\pa_z^k\lf( \nz\na\cz\rg)dV\\
&+\frac{1}{A}\sum_{|j,k|=0}^ {M}\int \na_{y,z}\pa_y^j\pa_z^k \nz\cdot \pa_y^j\pa_z^k\lf\lan n_\nq \na_{y,z} \cc_{\nq}\rg\ran dV\\
=&-\frac{1}{A}\sum_{|j,k|=0}^ {M}\|\na \pa_{y}^j\pa_{z}^k \nz\|_{L^2}^2+T_1+T_2.
\end{align}
To estimate $T_1$ term in \eqref{T_NL0_12}, we apply the Sobolev product estimate ($M\geq 2$) and the integral estimate \eqref{na_c_0_est} \myc{(We need an $H^M$ version.  Added.)} to obtain
\begin{align}\label{T_NL_0_1_est}
|T_1|\leq &\frac{1}{4A}\sum_{|j,k|=0}^ {M}\|\na\pa_y^j\pa_z^k \nz\|_{L_{y,z}^2}^2+\frac{C}{A}\|\nz\|_{H_{y,z}^M}^2\|\na \cz\|_{H_{y,z}^M}^2\\
\leq &\frac{1}{4A}\sum_{|j,k|=0}^ {M}\|\na\pa_y^j\pa_z^k \nz\|_2^2+\frac{C}{A}\|\nz\|_{H_{y,z}^M}^2\left(\sup_{\tau\in[0, t]}\|\nz(\tau)\|_{H_{y,z}^{M}}^2 +\|\na \lan \cc_{\text{in}}\ran\|_{H_{y,z}^M}^2\right).
\end{align}

To estimate the $T_2$ term in \eqref{T_NL0_12},  we invoke the product estimate \eqref{Prd_est_gld}, and the fact that $\pa_y^j\lan f\ran^x=\lan \Gamma_{y;t}^jf\ran^x$ to estimate
\begin{align}
 \lf|T_2\rg| 
&=\bigg|\frac{1}{A}\sum_{|j,k|=0}^ {M}\int \na_{y,z}\pa_y^j\pa_z^k \nz\cdot \lan\Gamma_{y;t}^j\pa_z^k(n_\nq \na_{y,z}  \cc_{\nq})\ran dV\bigg|\\
&\leq \frac{1}{8A}\sum_{|j,k|=0}^ {M}\|\na\pa_y^j\pa_z^k \nz\|_{L_{y,z}^2}^2+\frac{C}{A}\sum_{|j,k|=0}^ {M}\|\Gamma_{y;t}^{j}\pa_z^k (n_\nq\na_{y,z} \cc_\nq)\|_{L^2_{x,y,z}}^2\\
&\leq\frac{1}{8A}\sum_{|j,k|=0}^ {M}\|\na\pa_y^j\pa_z^k \nz\|_{L_{y,z}^2}^2+\frac{C}{A}\left(\sum_{|i,j,k|=0}^{M}\| \Gamma_{y;t}^{ijk}  \na_{y,z} \cc_\nq \|_{L^2_{x,y,z}}^2\right)\left(\sum_{|i,j,k|=0}^{M}\|\Gamma_{y;t}^{ijk}  n_\nq\|_{L^{2}_{x,y,z}}^2\right).
\end{align} 
Now we apply the estimate of the gradient \eqref{Gld_Reg_grd} to obtain that 
\begin{align}\label{T_NL_0_2_est}
|T_2|
\leq&  \frac{1}{4A}\sum_{|j,k|\leq M}\|\na\pa_y^j\pa_z^k \nz\|_{L_{y,z}^2}^2\\
&+ \frac{C}{A}\bigg(\sum_{|i,j,k|=0}^{ M+1}\|\Gamma_{y;t}^{ijk} \cc_\nq\|_{L_{x,y,z}^2}^2+ {t}^2 \sum_{ |i,j,k|=0}^{ M }  \|\Gamma_{y;t}^{(i+1)jk}\cc_\nq\|_{L_{x,y,z}^2}^2\bigg) \left(\sum_{|i,j,k|=0}^{M}\|\Gamma_{y;t}^{ ij k } n_\nq\|_{L^{2}_{x,y,z}}^2\right).
\end{align}
Now combining the decomposition \eqref{T_NL0_12}, and the estimates \eqref{T_NL_0_1_est}, \eqref{T_NL_0_2_est}, we end up with the result \eqref{<n>_est_dff}. 


\end{proof}
To conclude this section, we present a technical lemma involving the double average of the cell density $n$, which will be applied in the alternating construction. 
\begin{theorem}\label{thm:avg_yz}
Consider solutions to the system \eqref{ppPKS_<n>}, \eqref{ppPKS_<c>}subject to initial conditions $(\lan n_{\mathrm{in}}\ran,\lan \cc_{\mathrm {in}}\ran)\in H^M\times H^{M+1}$, $M\geq 3$. Further recall the double average $\lan\lan f\ran\ran^{\cdot ,\cdot}$ \eqref{f_iota_2}.  There exists constant $C$ such that following estimates hold for all $ t\in [0,A^{1/3+\te}]$,  
\begin{align}\label{n_xz_est_dff}
\frac{d}{dt}&\left(\sum_{j=0}^{ M}\|\pa_y^j  \lan \lan n\ran\ran^{x,z} (t)\|_{L_{y}^2}^2\right)\\
\leq &-\frac{1}{A}\sum_{j=0}^{M}\|\pa_y^{j+1}\lan\lan n\ran\ran^{x,z} \|_{L_y^2}^2+\frac{C}{A}\myr{\|\lan n\ran^{x}(t)\|_{H_{y,z}^M}^2\left(\sup_{s\in[0, A^{1/3+\te}]}\|\lan  n \ran^{x}(s)\|_{H_{y,z}^{M}}^2 +\|\pa_y  \lan \cc_{\mathrm{in}}\ran^{x}\|_{H_{y,z}^M}^2\right)}\\
&+\frac{C}{A}\lf(\sum_{|i,j,k|=0}^{M+1}\|\Gamma_{y;t}^{ijk} \cc_\nq\|_{L_{x,y,z}^2}^2+ t^2 \sum_{ |i,j,k|=0}^{ M } \|\Gamma_{y;t}^{(i+1)jk} \cc_\nq\|_{L_{x,y,z}^2}^2\rg) \left(\sum_{|i,j,k|=0}^{M}\|\Gamma_{y;t}^{ijk}  n_\nq\|_{L^{2}_{x,y,z}}^2\right).
\end{align}
Similarly, {
\begin{align}\label{nxy_est_dff}
\frac{d}{dt}&\left(\sum_{k=0}^{M}\|\pa_z^k  \lan \lan n\ran\ran^{x,y} (t)\|_{L_{z}^2}^2\right)\\
\leq &-\frac{1}{A}\sum_{k=0}^{M}\|\pa_z^{k+1}\lan\lan n\ran\ran^{x,y} \|_{L_z^2}^2+\frac{C}{A}\|\lan n\ran^{x}(t)\|_{H_{y,z}^M}^2\left(\sup_{s\in[0, A^{1/3+\te}]}\| \lan n\ran^{x}(s)\|_{H_{y,z}^{M}}^2 +\|\pa_z \lan  \cc_{\mathrm{in}}\ran ^{x}\|_{H_{y,z}^M}^2\right)\\
&+\frac{C}{A}\bigg(\sum_{|i,j,k|=0}^{M+1}\|\Gamma_{y;t}^{ijk} \cc_\nq\|_{L_{x,y,z}^2}^2+ t^2 \sum_{ |i,j,k|=0}^{ M } \| \Gamma_{y;t}^{(i+1)j k} \cc_\nq\|_{L_{x,y,z}^2}^2\bigg) \left(\sum_{|i,j,k|=0}^{M}\|\Gamma_{y;t}^{ijk}  n_\nq\|_{L^{2}_{x,y,z}}^2\right).
\end{align}} 
\myc{{\bf Remark: }Here the red part involves estimates of the $x$-average. These can be derived from the $x$-average estimates. We note that there is a small parameter $\frac{1}{A}$ in front and the contribution from this term is controllable if we consider time interval of length $\mathcal{O}(A^{1/3+\te})$. }
\end{theorem}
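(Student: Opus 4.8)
The plan is to mimic the argument for Theorem \ref{thm:<n>_est}, exploiting the extra cancellations produced by the second spatial average. First I would derive the evolution equation for the double average. Starting from the $x$-averaged density equation \eqref{ppPKS_<n>}, I average once more over $z$ (for \eqref{n_xz_est_dff}) or over $y$ (for \eqref{nxy_est_dff}). Since $u_A=u_A(t,y)$ and $\lan\pa_x n\ran^x\equiv 0$, the transport term is already absent from \eqref{ppPKS_<n>}; averaging over $z$ then annihilates — by periodicity in $z$ — the divergence piece $\pa_z(\lan n\ran\,\pa_z\lan\cc\ran)$, the $\pa_z(n_\nq\,\pa_z\cc_\nq)$ part of the remainder nonlinearity, and the $\pa_{zz}$ part of $\de_{y,z}$. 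One is left with
\begin{align*}
\pa_t\lan\lan n\ran\ran^{x,z}+\frac1A\pa_y\lan\lan n\ran\,\pa_y\lan\cc\ran\ran^{z}+\frac1A\pa_y\lan\lan n_\nq\,\pa_y\cc_\nq\ran\ran^{x,z}=\frac1A\pa_{yy}\lan\lan n\ran\ran^{x,z},
\end{align*}
together with the $y\leftrightarrow z$ analogue for $\lan\lan n\ran\ran^{x,y}$. Then I would run the $\dot H^j_y$ (resp. $\dot H^j_z$) energy identity for $0\le j\le M$ and sum; the diffusion term supplies $-\tfrac1A\sum_j\|\pa_y^{j+1}\lan\lan n\ran\ran^{x,z}\|_{L^2_y}^2$, exactly the first term on the right of \eqref{n_xz_est_dff}.

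Next I would bound the two nonlinear contributions, reusing the machinery already set up in the proof of Theorem \ref{thm:<n>_est}. For the zero-mode self-interaction $\tfrac1A\pa_y\lan\lan n\ran\,\pa_y\lan\cc\ran\ran^{z}$ I integrate by parts in $y$, apply the one-dimensional Sobolev product estimate together with Jensen's inequality (to pass from the double average to the $H^M_{y,z}$ norms of $\lan n\ran^x$ and $\na\lan\cc\ran^x$), absorb a fraction of the dissipation, and invoke the a priori bound on $\|\na\lan\cc\ran\|_{H^M_{y,z}}$ over $[0,A^{1/3+\te}]$ coming from the heat equation \eqref{ppPKS_<c>} with source $\lan n\ran$, exactly as in \eqref{na_c_0_est}--\eqref{T_NL_0_1_est}; this yields the second term on the right of \eqref{n_xz_est_dff}. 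For the remainder term $\tfrac1A\pa_y\lan\lan n_\nq\,\pa_y\cc_\nq\ran\ran^{x,z}$ I integrate by parts in $y$, use the commutation identity $\pa_y^j\lan g\ran^x=\lan\Gamma_{y;t}^j g\ran^x$ to rewrite $\pa_y^j\lan\lan n_\nq\,\pa_y\cc_\nq\ran\ran^{x,z}=\lan\lan\Gamma_{y;t}^j(n_\nq\,\pa_y\cc_\nq)\ran\ran^{x,z}$, expand by the Leibniz rule, apply the gliding-regularity product estimate \eqref{Prd_est_gld}, and finally invoke the gliding gradient estimate \eqref{Gld_Reg_grd} to trade $\na_{y,z}\cc_\nq$-derivatives for $\Gamma_{y;t}^{ijk}\cc_\nq$ plus the $t^2$-weighted $\Gamma_{y;t}^{(i+1)jk}\cc_\nq$; the $t^2$ weight is precisely the cost of replacing $\pa_y$ by $\Gamma_{y;t}-B^{(1)}\pa_x$ with $\|B^{(1)}\|_{L^\infty_{t,y}}\lesssim t$. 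This produces the last line of \eqref{n_xz_est_dff}. The estimate \eqref{nxy_est_dff} follows by the mirror-image computation with $y$ and $z$ interchanged; the only point to note is that in phase $\mathfrak a$ the operator $\pa_z$ commutes with $\pa_t+u_A\pa_x$, so the $z$-average requires no $\Gamma$-correction, while the $\Gamma_{y;t}$-structure of $n_\nq$ and $\cc_\nq$ is untouched.

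The hard part here is not an estimate but accounting: keeping careful track of where the transient weight $t^2$ enters (each swap of $\pa_y$ for $\Gamma_{y;t}-B^{(1)}\pa_x$ costs a factor $\lesssim t$, which squares in the energy), and checking that the overall $\tfrac1A$ prefactor, combined with the $A$-independent control of $t^2$ on the horizon and of $\sup_{s\le A^{1/3+\te}}\|\lan n\ran^x(s)\|_{H^M_{y,z}}$, makes the net nonlinear contribution to $\sum_j\|\pa_y^j\lan\lan n\ran\ran^{x,z}\|_{L^2_y}^2$ of size $O(A^{-1/3+\te})$ once integrated over $[0,A^{1/3+\te}]$ — which is exactly the remark following the statement. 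No genuinely new difficulty beyond Theorem \ref{thm:<n>_est} is expected to arise.
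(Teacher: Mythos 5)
Your proposal is correct and follows essentially the same route as the paper: derive the evolution equation for the double average (the $\pa_z$-divergence and $\pa_{zz}$ pieces drop out by periodicity), run the $\dot H^j_y$ energy estimate, bound the zero-mode self-interaction via Jensen/Minkowski (Lemma \ref{lem:F_0andF}), the Sobolev product estimate and the heat-semigroup bound \eqref{na_c_0_est}, and bound the fluctuation term via $\pa_y^j\lan\lan\cdot\ran\ran^{x,z}=\lan\lan\Gamma_{y;t}^j\cdot\ran\ran^{x,z}$, the gliding product estimate \eqref{Prd_est_gld} and the gradient estimate \eqref{Gld_Reg_grd}, which is where the $t^2$ weight enters. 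The only cosmetic difference is that the paper further splits the zero-mode nonlinearity into the double-average part and the $z$-fluctuation of the $x$-averages before estimating, whereas you treat it as a single term; the resulting bound is the same.
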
 
\begin{proof} We focus on the proof of estimate \eqref{n_xz_est_dff}. The proof of the other inequality is similar\myc{(\bf check!)}. 
First, we decompose the solution $n,\, \cc$ into three parts:
\begin{align}\label{n_cc_dcmp}
n=\lan\lan n\ran\ran^{x,z}+(\lan n\ran^x)^z_\nq+n^x_\nq,\quad \cc =\lan \lan \cc\ran\ran^{x,z}+(\lan \cc\ran^x)^z_\nq+\cc_\nq^x.
\end{align}  Next, we take the $x,z$-average of the nonlinearity in the equation  \eqref{ppPKS_<n>}
\begin{align}
\lan \lan \na\cdot(n\na \cc )\ran\ran^{x,z}=&\lan \na_{y,z}\cdot \lan  n\na_{y,z}\cc\ran^x\ran^{z}= \pa_{y}\lan \lan n\pa_{y}\cc \ran\ran^{x,z}\\
=&\pa_y\lf \lan \lf \lan\ \left[ \lan \lan n\ran\ran^{x,z}+(\lan n\ran^x)^z_\nq+n^x_\nq \right] \, \left[\pa_y\lan \lan \cc\ran\ran^{x,z}+\pa_y(\lan \cc\ran^x)^z_\nq+\pa_y\cc_\nq^x \right]\ \rg\ran \rg\ran^{x,z}\\
=&\pa_y \left( \lan \lan n\ran\ran^{x,z}\ \pa_y\lan \lan \cc\ran\ran^{x,z}\right)+\pa_y\lf\lan  (\lan n\ran^x)^z_\nq\ \pa_y (\lan\cc\ran^x)^z_\nq \rg\ran^{z} +\pa_y\left\lan\left\lan   n_\nq^x\pa_y \cc_\nq^x\right \ran\right\ran^{x,z}.
\end{align}
Hence we end up with the following $\lan\lan n\ran\ran^{x,z}$-equation:
\begin{align}
\pa_t\lan\lan n\ran\ran^{x,z}=\frac{1}{A}\pa_{yy}\lan\lan n\ran\ran^{x,z}-\frac{1}{A}\pa_y  \left( \lan \lan n\ran\ran^{x,z}\ \pa_y\lan \lan \cc\ran\ran^{x,z}\right) -\frac{1}{A}\pa_y\lf\lan  (\lan n\ran^x)^z_\nq\ \pa_y (\lan\cc\ran^x)^z_\nq \rg\ran^{z} -\frac{1}{A}\pa_y \left\lan\left\lan   n_\nq^x\pa_y \cc_\nq^x\right \ran\right\ran^{x,z}. 
\end{align} 
We implement direct energy estimate of the Sobolev norm to derive that
\begin{align}
\label{dblTNL00_123}
\frac{d}{dt}&\frac{1}{2}\sum_{j=0}^M\|\pa_{y}^j\lan\lan n\ran\ran^{x,z}\|_{L_y^2}^2\\
=&-\frac{1}{A}\sum_{j=0}^M\| \pa_{y}^{j+1}  \lan\lan n\ran\ran^{x,z}\|_{L_y^2}^2+\frac{1}{A}\sum_{j=0}^M\int \pa_{y}^{j+1}\lan\lan n\ran\ran^{x,z}\ \pa_y^j\bigg(\lan\lan n\ran\ran^{x,z} \ \pa_y \lan\lan \cc\ran\ran^{x,z}\bigg)dy\\
&+\frac{1}{A}\sum_{j=0}^M\int \pa_y^{j+1} \lan\lan n\ran\ran^{x,z} \ \pa_y^j\lf\lan  (\lan n\ran^x)^z_\nq\ \pa_y (\lan\cc\ran^x)^z_\nq \rg\ran^{z}  dy+\frac{1}{A}\sum_{j=0}^M\int \pa_y^{j+1} \lan\lan n\ran\ran^{x,z} \  \lan  \lan n_\nq^x \ \pa_y \cc_\nq^x  \ran \ran^{x,z} dy \\
=:&-\frac{1}{A}\sum_{j=0}^M\| \pa_{y}^{j+1} \lan\lan n\ran\ran^{x,z} \|_{L_y^2}^2+T_1+T_2+T_3. 
\end{align}
The estimates for the $T_1$ term and $T_2$ term are similar to the estimate \eqref{T_NL_0_1_est}. We apply the H\"older inequality, Young's inequality, Sobolev product estimate ($M\geq 2$),  the integral estimate \eqref{na_c_0_est} \myc{(We need an 1d-$H^M$ version)} and Lemma \ref{lem:F_0andF} to obtain
\begin{align}\label{dblTNL012est}\quad 
|T_1+T_2|\leq &\frac{1}{4A}\sum_{j=0}^M\|\pa_y^{j+1} \lan\lan n\ran\ran^{x,z}\|_{L_{y}^2}^2+\frac{C}{A}\|\lan\lan n\ran\ran^{x,z}\|_{H_{y}^M}^2\|\pa_y \lan\lan \cc \ran\ran^{x,z}\|_{H_{y}^M}^2+\frac{C}{A}\|(\lan n\ran^{x})_\nq^{z}\|_{H_{y,z}^M}^2\|\pa_y (\lan \cc \ran^{x})^z_{\nq}\|_{H_{y,z}^M}^2\\
\leq &\frac{1}{4A}\sum_{j=0}^M\|\pa_y^{j+1}\lan\lan n\ran\ran^{x,z} \|_{L_y^2}^2+\frac{C}{A}\|\nz^x\|_{H_{y,z}^M}^2\left(\sup_{s\in[0, t]}\|\nz^x(s)\|_{H_{y,z}^{M}}^2 +\|\pa_y \lan \cc_{\text{in}}\ran^x\|_{H_{y,z}^M}^2\right).
\end{align}\myc{\begin{align}\sum_{j=0}^M\|\pa_y^j  &\lf\lan  (\lan n\ran^x)^z_\nq\ \pa_y (\lan\cc\ran^x)^z_\nq \rg\ran^{z} \|_{L_y^2}=\sum_{j=0}^M\| \lf\lan \pa_y^j ( (\lan n\ran^x)^z_\nq\ \pa_y (\lan\cc\ran^x)^z_\nq) \rg\ran^{z} \|_{L_y^2}\underbrace{\lesssim}_{\text{Lemma }\ref{lem:F_0andF}} \|   \pa_y^j ( (\lan n\ran^x)^z_\nq\ \pa_y (\lan\cc\ran^x)^z_\nq)   \|_{L_{y,z}^2}\\
\underbrace{\lesssim}_{\text{Product rule}}&\|(\lan n\ran^x)^z_\nq\|_{H^M}\|\pa_y (\lan\cc\ran^x)^z_\nq   \|_{H^M}\underbrace{\lesssim}_{\text{Fourier Side}}\|\lan n\ran^x\|_{H^M}\|\pa_y \lan\cc\ran^x   \|_{H^M}\\ &\underbrace{\lesssim}_{\eqref{na_c_0_est}}\|\lan n\ran^x\|_{H^M}\lf(\sup_{s\in[0,t]}\|\lan n(s)\ran^x\|_{H^M}+\|\pa_y \lan\cc_{\text{in}}\ran^x   \|_{H^M}\rg).
\end{align}
Well, to get the $\|\pa_y\lan \cc_{\text{in}}\ran^x\|_{H^M},  $ we need to use the estimate \eqref{na_c_0_est} with zero initial data, and then propagate the $\pa_y\pa_y^j\pa_z^k\lan \cc_{\text{in}}\ran$ initial condition with the heat equation. }
The estimate of the $T_3$ term in \eqref{dblTNL00_123} is similar to  \eqref{T_NL_0_2_est}. We invoke the product estimate \eqref{Prd_est_gld},  the fact that $\pa_y^j\lan\lan f\ran\ran^{x,z}=\lan\lan \Gamma_{y;t}^jf\ran\ran^{x,z}$, and the gradient estimate \eqref{Gld_Reg_grd} to estimate
\begin{align}  \label{dblTNL03est}
|T_3| =&\bigg|\frac{1}{A}\sum_{j=0}^M\int \pa_y^{j+1} \lan\lan n\ran\ran^{x,z} \ \lan\lan\Gamma_{y;t}^j(n_\nq^x  \pa_{y}  \cc_{\nq}^x)\ran\ran^{x,z} dy\bigg|\leq \frac{1}{8A}\sum_{j=0}^M\lf(\|\pa_y^{j+1} \lan\lan n\ran\ran^{x,z}\|_{L^2}^2+{C}\|\Gamma_{y;t}^{j} (n_\nq^x \pa_{y} \cc_\nq^x)\|_{L^2}^2\rg)\\
&\leq\frac{1}{8A}\sum_{j=0}^M\|\pa_y^{j+1} \lan\lan n\ran\ran^{x,z}\|_{L^2}^2+\frac{C}{A}\left(\sum_{|i,j,k|=0}^{M}\|\Gamma_{y;t}^{ijk}  \pa_{y} \cc_\nq \|_{L^2}^2\right)\left(\sum_{|i,j,k|=0}^{M}\|\Gamma_{y;t}^{ijk}  n_\nq\|_{L^{2}}^2\right)\\
\leq&  \frac{1}{8A}\sum_{j=0}^M\|\pa_y^{j+1} \lan\lan n\ran\ran^{x,z}\|_{L^2}^2+\frac{C}{A}\bigg(\sum_{|i,j,k|=0}^{M+1}\| \Gamma_{y;t}^{ijk} \cc_\nq\|_{L^2}^2+ {t}^2 \sum_{ |i,j,k|=0}^{ M }  \|\Gamma_{y;t}^{(i+1)jk} \cc_\nq\|_{L^2}^2\bigg) \left(\sum_{|i,j,k|=0}^ M\|\Gamma_{y;t}^{ ij k } n_\nq\|_{L^{2}}^2\right).
\end{align}

Now combining the decomposition \eqref{dblTNL00_123}, and the estimates \eqref{dblTNL012est}, \eqref{dblTNL03est}, we end up with the result \eqref{n_xz_est_dff}. 
\end{proof}
\ifx

\myb{ {\bf Previous:}
Now we decompose it into two parts:
\begin{align}
T_{0\nq}=\frac{1}{A}\sum_{|j,k|=0}^ {M}\int \na_{y,z}\pa_y^j\pa_z^k \nz\cdot \pa_y^j\pa_z^k\lan \na  (c_{\nq}+d_\nq) n_\nq\ran dV=:T_{0\nq; c_\nq}+T_{0\nq;d_\nq}.\label{T_0nqcd}
\end{align}
To estimate the term $T_{0\nq;c_\nq}$, 
Now we apply  the estimate of the gradient \eqref{Gld_Reg_grd}, and 
the bootstrap hypothesis \eqref{HypED} to obtain that 
\begin{align}
|T&_{0\nq;c_\nq}|\\
\leq&  \frac{1}{8A}\sum_{|j,k|=0}^ {M}\|\na\pa_y^j\pa_z^k \nz\|_{L_{y,z}^2}^2\\
&+\frac{C   }{A}\bigg(\sum_{|i,j,k|=0}^{M+1}\|\pa_x^i\Gamma_y^j\pa_z^k c_\nq\|_{L_{x,y,z}^2}+\frac{t}{A^{1/3}}\sum_{ |i,j,k|\leq M } A^{1/3}\|\pa_x^{i+1}\Gamma_y^j\pa_z^k c_\nq\|_{L_{x,y,z}^2}\bigg)^2 \left(\sum_{|i,j,k|=0}^{M}\|\pa_x^i\Gamma_y^{j}\pa_z^k  n_\nq\|_{L^{2}_{x,y,z}}^2\right)\\
\leq &   \frac{1}{8A}\sum_{|j,k|=0}^ {M}\|\na\pa_y^j\pa_z^k \nz\|_{L_{y,z}^2}^2 + \frac{C(C_{ED}  ,\myr G)}{A}(\|n_{\neq}^x(0)\|_{H_{x,y,z}^M}^4+\|\cc_\nq^x(0)\|_{H^{M+1}_{x,y,z}}^4+\myr 1)e^{-\frac{4\delta t}{A^{1/3}}} (\Phi(t))^{-4M-\myr 4}.\label{T_0nq_cnq}
\end{align} 
Next we consider the term $T_{0\nq;d_\nq}$ in \eqref{T_0nqcd}. The treatment is similar to the estimate for the $T_{0\nq;c_\nq}$ term. Here we apply the product estimate \eqref{Prd_est_gld}, the fact that $\pa_y^j\lan f\ran^x=\lan \Gamma_y^jf\ran^x$, the estimate of the gradient \eqref{Gld_Reg_grd}, the enhanced dissipation of the $d_\nq$,  and 
 bootstrap hypothesis \eqref{HypED} to derive that
\begin{align}
|T_{0\nq;d_\nq}|\leq&\frac{1}{8A}\sum_{|j,k|=0}^ {M}\|\na\pa_y^j\pa_z^k \nz\|_{L_{y,z}^2}^2+\frac{C}{A}\left(\sum_{|i,j,k|=0}^{M}\|\pa_x^i\Gamma_y^{j}\pa_z^k  \na_{y,z} d_\nq \|_{L^2_{x,y,z}}^2\right)\left(\sum_{|i,j,k|=0}^{M}\|\pa_x^i\Gamma_y^{j}\pa_z^k  n_\nq\|_{L^{2}_{x,y,z}}^2\right)\\
\leq&  \frac{1}{8A}\sum_{|j,k|=0}^ {M}\|\na\pa_y^j\pa_z^k \nz\|_{L_{y,z}^2}^2\\
&+\frac{C }{A}\bigg(\sum_{|i,j,k|=0}^{M+1}\|\pa_x^i\Gamma_y^j\pa_z^k d_\nq\|_{L_{x,y,z}^2}+ {t} \sum_{ |i,j,k|\leq M }  \|\pa_x^{i+1}\Gamma_y^j\pa_z^k d_\nq\|_{L_{x,y,z}^2}\bigg)^2  \left(\sum_{|i,j,k|=0}^{M}\|\pa_x^i\Gamma_y^{j}\pa_z^k  n_\nq\|_{L^{2}_{x,y,z}}^2\right)\\
\leq&\frac{1}{8A}\sum_{|j,k|=0}^ {M}\|\na\pa_y^j\pa_z^k \nz\|_2^2 +\frac{C(C_{ED}  ,\myr G)}{A}t^2\|\cc_{\text{in};\nq}\|_{H^{M+1}}^2(\|n_{\text{in};\nq}\|_{H^M}^2+\|\cc_{\text{in};\nq}\|_{H^{M+1}}^2+\myr 1)e^{-\frac{2\delta t}{A^{1/3}}-\frac{2\delta_d t}{A^{1/3}}}\Phi^{-2M}(t).\label{T_0nq_dnq}
\end{align} 
Combining \eqref{T_0nqcd}, \eqref{T_00_est}, \eqref{T_0nq_cnq}, \eqref{T_0nq_dnq}, we have
\begin{align}
\frac{d}{dt}&\frac{1}{2}\sum_{|j,k|=0}^ {M}\|\pa_{y}^j\pa_{z}^k\nz\|_{L^2}^2\\
 \leq &-\frac{1}{2A}\sum_{|j,k|=0}^ {M}\|\na\pa_y^j\pa_z^k \nz\|_2^2+\frac{C}{A}\|\nz\|_{H_{y,z}^M}^2\left(\sup_{\tau\in[0, t]}\|\nz(\tau)\|_{H_{y,z}^{M}}^2 +\|\na \lan \cc_{\text{in}}\ran\|_{H_{y,z}^M}^2\right)\\
&+ \frac{C(C_{ED} ,\myr G)}{A}(\|n_{\text{in};\neq}^x \|_{H_{x,y,z}^M}^4+\|\cc_{\text{in};\nq}^x\|_{H^{M+1}_{x,y,z}}^4+\myr 1)e^{-\frac{4\delta t}{A^{1/3}}} (\Phi(t))^{-4M-\myr 4}\\
&+\frac{C(C_{ED} ,\myr G)}{A}t^2\|\cc_{\text{in};\nq}^x\|_{H^{M+1}}^2(\|n_{\text{in};\nq}^x\|_{H^M}^2+\|\cc_{\text{in};\nq}^x\|_{H^{M+1}}^2+\myr 1)e^{-\frac{2\delta t}{A^{1/3}}-\frac{2\delta_d t}{A^{1/3}}}\Phi^{-2M-\myr 2 }(t).
\end{align}
Now we integrate the above differential inequality and get
\begin{align}
\sup_{t\in[0, T]}&\|\lan n\ran\|_{H^M_{y,z}}^2\leq \|\lan n_{\text{in}}\ran\|_{H^M_{y,z}}^2+\frac{CT}{A}\sup_{t\in[0,T]}\|\lan n\ran \|_{H^M_{y,z}}^2\left(\sup_{t\in[0,T]}\|\lan n\ran \|_{H^M_{y,z}}^2+\|\na \lan \cc_{\text{in}}\ran\|_{H^M_{y,z}}\right)+\int_0^T \mathcal{G}(s)ds,\quad T\in[0, A^{1/3+\te}], \label{sup_lan_n_ran_HM}\\
\mathcal{G}:=& \frac{C(C_{ED} ,\myr G)}{A}(\|n_{\text{in};\neq}^x \|_{H_{x,y,z}^M}^4+\|\cc_{\text{in};\nq}^x\|_{H^{M+1}_{x,y,z}}^4+\myr 1)e^{-\frac{4\delta t}{A^{1/3}}} (\Phi(t))^{-4M-\myr 4}\\
&+\frac{C(C_{ED} ,\myr G)}{A}t^2\|\cc_{\text{in};\nq}^x\|_{H^{M+1}}^2(\|n_{\text{in};\nq}^x\|_{H^M}^2+\|\cc_{\text{in};\nq}^x\|_{H^{M+1}}^2+\myr 1)e^{-\frac{2\delta t}{A^{1/3}}-\frac{2\delta_d t}{A^{1/3}}}\Phi^{-2M-\myr 2}(t).
\end{align}
Recall that $T\leq A^{1/3+\te}$, so we have 
\begin{align}
\int_0^T\mathcal{G}(s)ds\leq \left(\frac{1}{A^{2/3}}+\|\cc_{\text{in};\nq}^x\|_{H^{M+1}}^2\right)C(C_{ED} , \|n_{\text{in};\nq}^x\|_{H^M}^2+\|\cc_{\text{in};\nq}^x\|_{H^{M+1}}^2,\myr G,\delta^{-1},\delta^{-1}_d ).
\end{align}
Now by choosing $A$ large enough in \eqref{sup_lan_n_ran_HM}, we can apply a barrier argument (?) to see that 
\begin{align}\label{sup_lan_n_ran_HM_goal}
\sup_{t\in[0, T]}&\|\lan n\ran\|_{H^M_{y,z}}^2\\
\leq &C  \|\lan n_{\text{in}}\ran\|_{H^M_{y,z}}^2+ \left(\frac{1}{A^{2/3}}+\|\cc_{\text{in};\nq}^x\|_{H^{M+1}}^2\right)C(C_{ED} ,  \|n_{\text{in};\nq}^x\|_{H^M},\|\cc_{\text{in};\nq}^x\|_{H^{M+1}},\myr G,\delta^{-1},\delta^{-1}_d ).
\end{align}}
\textcolor{red}{In the first alternative shear process, the $\lan n\ran$ becomes large. But after that, $\|\cc_{\nq}(T_\star)\|_{H^{M+1}}\leq \frac{1}{A^{...}}$, and we can use $A$ to get smallness. } {\color{blue} \textbf{Previous:} 
\begin{lem}
Under the assumption \eqref{Hypotheses}, the following estimates are satisfied given that $A^{-1}$ and $\ep$ are chosen small enough
\begin{align}
\|\lan\lan n \ran\ran^{x,z}(t)\|_{H_y^m}+\|\lan \lan n\ran\ran^{x,y}(t)\|_{H_z^m}\leq C(\| \lan\lan n\ran\ran^{x,z}(0)\|_{H_y^m}+\| \lan\lan n\ran\ran^{x,y}(0)\|_{H_z^m}+\ep+\frac{1}{A^{1/12}}),\\
\|\lan \lan \cc\ran\ran^{x,z}(t)\|_{H_y^{m+1}}+\|\lan \lan \cc\ran\ran^{x,y}(t)\|_{H_z^{m+1}}\leq C(\|\lan\lan \cc\ran\ran^{x,z}(0)\|_{H_y^{m+1}}+\|\lan\lan \cc\ran\ran^{x,y}(0)\|_{H_z^{m+1}}+\frac{1}{A^{1/12}}),\quad m\leq M.
\end{align}
\end{lem}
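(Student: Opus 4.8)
The plan is to integrate the two differential inequalities of Theorem~\ref{thm:avg_yz} in time, feeding in the enhanced-dissipation and $x$-average controls supplied by the bootstrap hypotheses \eqref{Hypotheses}, and then to recover the chemical double averages from a forced one-dimensional heat equation. I only describe the $\lan\lan n\ran\ran^{x,z}$, $\lan\lan\cc\ran\ran^{x,z}$ case; the $\lan\lan n\ran\ran^{x,y}$, $\lan\lan\cc\ran\ran^{x,y}$ case is identical after interchanging $y$ and $z$ and using the companion inequality \eqref{nxy_est_dff} of Theorem~\ref{thm:avg_yz}.

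\textbf{The cell double average.} Integrate \eqref{n_xz_est_dff} in time and discard the nonpositive dissipation term. The key structural point is that the remaining right-hand side no longer involves $\lan\lan n\ran\ran^{x,z}$ itself, so no Gr\"onwall is needed and direct integration suffices. Of the two forcing terms (each carrying a $\tfrac1A$ prefactor): the first, an $x$-average term, is controlled via \eqref{Hyp_<n>}--\eqref{Hyp_<c>} and the $x$-average bound of Theorem~\ref{thm:<n>_est}, which keep $\|\lan n\ran^x\|_{H_{y,z}^M}$ and $\|\na\lan\cc\ran^x\|_{H_{y,z}^M}$ bounded uniformly by data-dependent constants, so against $\tfrac1A$ over a window of length $\lesssim A^{1/3+\te}$ this contributes at most $CA^{-2/3+\te}$; the second, the remainder nonlinearity, is controlled by converting the functional bound $\mathbb L$ of \eqref{HypED} --- and, on the growth window $[0,T_h]$ with $T_h=A^{1/3+\te/2}$, Proposition~\ref{pro:prp_FM} --- into pointwise control of the bare gliding norms $\sum_{|i,j,k|\le M+1}\|\Gamma_{y;t}^{ijk}\cc_\nq\|_{L^2}^2$ and $\sum_{|i,j,k|\le M}\|\Gamma_{y;t}^{ijk}n_\nq\|_{L^2}^2$, at the cost of powers of $\Phi^{-1}=1+t^3/A$ to strip the $\Phi^{2j}$ weights; the integrand is then $\tfrac1A$ times a polynomial in $t$ times $e^{-c t/A^{1/3}}$ times the squared initial remainder data, and since $\|\cc_{\mathrm{in};\nq}\|_{H^{M+1}}\le\ep$ while $\|n_{\mathrm{in};\nq}\|_{H^M}$ is a fixed constant, the resulting Gaussian-type integrals are fixed powers of $A$, contributing at most $C(\ep^2+A^{-1/12})$. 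Hence $\|\lan\lan n\ran\ran^{x,z}(t)\|_{H_y^m}^2\le\|\lan\lan n\ran\ran^{x,z}(0)\|_{H_y^m}^2+C\ep^2+CA^{-1/12}$ for every $m\le M$, and taking square roots gives the first inequality.

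\textbf{The chemical double average.} Averaging \eqref{ppPKS_<c>} in $z$ shows that $\lan\lan\cc\ran\ran^{x,z}$ solves $\pa_t\lan\lan\cc\ran\ran^{x,z}=\tfrac1A\pa_{yy}\lan\lan\cc\ran\ran^{x,z}+\tfrac1A(\lan\lan n\ran\ran^{x,z}-\overline{n})$, with $\overline{n}$ constant in time by mass conservation. By Duhamel and contractivity of the heat semigroup on $H_y^{m+1}$, the data term contributes exactly $\|\lan\lan\cc_{\mathrm{in}}\ran\ran^{x,z}\|_{H_y^{m+1}}$; for the forcing I would use $\|e^{\tau\pa_{yy}/A}g\|_{H_y^{m+1}}\le C(1+(\tau/A)^{-1/2})\|g\|_{H_y^m}$ together with $\pa_y\overline{n}=0$ to bound $\tfrac1A\int_0^t(1+\sqrt{A/(t-s)})\,\|\lan\lan n\ran\ran^{x,z}(s)\|_{H_y^m}\,ds\le C(A^{-2/3+\te}+A^{-1/3+\te/2})\sup_{[0,t]}\|\lan\lan n\ran\ran^{x,z}\|_{H_y^m}$. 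Inserting the previous step's bound (the $\ep$ there is now multiplied by $A^{-1/3+\te/2}$ and hence absorbed into $A^{-1/12}$ for $A$ large, so no $\ep$ survives) produces the claimed chemical estimate; the $(x,y)$ versions follow verbatim, and adding the four inequalities yields the statement.

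\textbf{Main obstacle.} The energy and Duhamel manipulations are routine; the care goes into the remainder-nonlinearity term of the first step. There one must (i) pass from the $\Phi$- and $G$-weighted bootstrap functional $\mathbb L$ (equivalently $\mathbb F$) to the bare $\Gamma$-derivative norms of $n_\nq$ and $\cc_\nq$, which costs the powers of $\Phi^{-1}=1+t^3/A$; (ii) split the time integral at $T_h=A^{1/3+\te/2}$, since below $T_h$ there is no exponential decay and one must rely instead on the short length of that window against the $\tfrac1A$ prefactor (Proposition~\ref{pro:prp_FM}), while above $T_h$ the decay of \eqref{HypED} takes over; and (iii) verify that each resulting integral $\tfrac1A\int_0^\infty t^{k}(1+t^3/A)^{\ell}e^{-c t/A^{1/3}}\,dt$ is $\le A^{-1/12}$ up to constants, which is precisely the constraint that forces $\te$ small, hence $M$ large. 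Beyond that, the argument follows the template already used in the linear theory and in the proof of Theorem~\ref{thm:avg_yz}.
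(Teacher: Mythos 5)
Your proposal is correct and follows essentially the same route as the paper: it integrates the double-average energy inequality of Theorem~\ref{thm:avg_yz} in time, using the bootstrap hypotheses \eqref{Hypotheses} together with Proposition~\ref{pro:prp_FM} and the enhanced dissipation of the remainders to control the forcing, exactly as the paper does. For the chemical double averages $\lan\lan \cc\ran\ran$, your Duhamel argument for the forced one-dimensional heat equation is precisely the content of Lemma~\ref{Lem:gc0Lpest}, which is the tool the paper invokes at this step.
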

\begin{proof} We decompose the solution $n,\, c$ into three parts:
\begin{align}
n=\lan\lan n\ran\ran^{x,z}+(\lan n\ran^x)^z_\nq+n^x_\nq,\quad \cc =\lan \lan \cc\ran\ran^{x,z}+(\lan \cc\ran^x)^z_\nq+c_\nq^x+d_\nq^x.
\end{align}  We have the following relations
\begin{align}
\lan \lan \na\cdot(\na \cc n)\ran\ran^{x,z}=&\lan \lan \pa_y(\pa_y\cc n)\ran\ran^{x,z}\\
=&\pa_y \left(\pa_y\lan \lan \cc\ran\ran^{x,z} \lan \lan n\ran\ran^{x,z}\right)+\pa_y\lan \pa_y (\lan\cc\ran^x)^z_\nq (\lan n\ran^x)^z_\nq\ran^z +\pa_y\left\lan\left\lan (\pa_y c_\nq^x+\pa_y d_\nq^x) n_\nq^x\right \ran^x\right\ran^z
\end{align}
Similar to the energy estimates carried out in the previous part of the section, we have that the $\lan \lan n\ran\ran^{x,z}$ is bounded in $H^m_y$. 
Through Lemma \ref{...}, we derive the following estimate for  the $\lan \lan c\ran\ran^{x,z}$....
\end{proof}}
\fi

\subsection{The Regularity Estimates of the Remainder}\label{Sec:NLEDreg}
The goal of this subsection is to derive the following theorem which provides the necessary regularity estimates.
\begin{theorem}\label{thm:F_M_reg}
Consider the solutions $n_\nq(t_r+\tau), \, c_\nq^{t_r}(\tau)=\cc_\nq(t_r+\tau)-S_{t_r}^{t_r+\tau}\cc_\nq,\, d_\nq^{t_r}(\tau)=S_{t_r}^{t_r+\tau}\cc_\nq$ to the equations \eqref{ppPKS_neq} initiated from the reference time $t_r\in\{0, T_h=A^{1/3+\te/2}\}$. Then the following estimate holds for all $\underline{ t= t_r+\tau}\in[t_r,A^{\frac{1}{3}+\te}]$,
\begin{align}
\label{F_M_reg}
\frac{d}{d\tau}&\FM [t_r+\tau, n_\nq, c^{t_r}_\nq]
\\ \leq&   \myr{\frac{ C  }{  {G}A^{1/3}}   \left(\frac{G}{A^{1/6}}+\frac{  \Phi(t_r+\tau)}{G}+\frac{A^{1/3}}{A^{2/3}+(t_r+\tau)^2}+\exp\lf\{-\frac{\delta_{\mathcal Z}}{4A^{1/3}} \tau\rg\}\right) \FM}  \\
& \myb{+ \frac{C_{G}}{A^{1/3}}\ \frac{\FM}{A^{1/2}\Phi(t_r+\tau)^{ 4M+4}}\ \bigg(1+{\myc{???}}\FM+ \|\lan n\ran\|_{H^M}^2+ \|\na\lan   \cc\ran\|_{ H^{M}}^2  \bigg)}\\
&\myb{+\frac{C_G}{A^{1/3}{Q^2}}\frac{\exp\lf\{-\frac{\delta_{\mathcal Z}}{A^{1/3}} \tau\rg\}}{ \Phi(t_r+\tau)^{ 4M+4}}\ \lf(Q^2\sum_{|i,j,k|=0}^{M+1}G^{2i}\Phi(t_r)^{2j}\|\pa_x^i\Gamma_{y;t_r}^j\pa_z^k\cc_{\neq}(t_r)\|_{L^{2}}^2\rg) \lf(\FM+\|\lan n\ran\|_{H^M}^2\rg)}. 
\end{align}
Here the constant $C$ only depends on $M, \,\|u_A\|_{L^\infty_tW^{M+3,\infty}}$ and $C_G$ only depends on $G,\ M, \,\|u_A\|_{L^\infty_tW^{M+3,\infty}}$. 

\ifx\myr{For checking purpose only: The last two lines originally have the form:
\begin{align}\ &\myr{+ \frac{\FM [t_r+\tau] }{A^{1/3}}\ C(  {G} )\ \bigg(\frac{\Phi(t_r+\tau)^{-4M-4}}{A^{2/3}}+\Phi^{-2M-4}\frac{\|\lan n\ran^x\|_{H^M}^2+ \|\na\lan   \cc\ran^x\|_{ H^{M}}^2}{A^{1/2 }}  \bigg)}\\
+&\myr{\frac{CG^{6+2M} }{A^{1/3}}\lf(1+\frac{(t_r+\tau)^4}{A^{4/3}}
\rg)e^{-\frac{\delta_{\mathcal Z}}{A^{1/3}} \tau}\ \Phi^{-4M-2}\lf(\sum_{|i,j,k|=0}^{M+1}G^{2i}\Phi(t_r)^{2j}\|\pa_x^i\Gamma_{y;t_r}^j\pa_z^k\cc_{\neq}(t_r)\|_{L^{2}}^2\rg) \FM} \\
+&\myr{\frac{CG^{6+2M} }{A^{1/3}} \lf(1+\frac{(t_r+\tau)^4}{A^{4/3}}\rg) e^{-\frac{\delta_{\mathcal Z}}{A^{1/3}} \tau}\ \Phi^{-2M-2}\lf(\sum_{|i,j,k|=0}^{M+1}G^{2i}\Phi(t_r)^{2j}\|\pa_x^i\Gamma_{y;t_r}^j\pa_z^k\cc_{\neq}(t_r)\|_{L^{2}}^2\rg) \|\lan n\ran\|_{H^M}^2}\\
\end{align}}\fi
\ifx
\myb{{\bf Previous:}
\begin{align}
\frac{d}{dt}\FM &(t_\star+t)\\
\leq&   \frac{1}{  {G}}C     \left(\frac{1}{G A^{1/3}(1+\frac{(t_\star+t)^3}{A})}+\frac{1}{A^{2/3}+(t_\star+t)^2}+e^{-\delta_d (t_\star+t)/A^{1/3}}\right) \FM(t_\star+t)  \\
&+\frac{1}{A^{5/6}}C( \myr {G})   \Phi^{-4M-4}(t_\star+t)\ F_M^2(t_\star+t)    \\
&+ \frac{\FM (t_\star+t) }{A^{1/3}}\ C( \myr{G} )\ \bigg(\frac{\Phi^{-4M-4}(t_\star+t)}{A^{2/3}}+\Phi^{-2M-4}\frac{\|\lan n\ran^x\|_{H^M}^2+ \|\na\lan   c\ran^x\|_{ H^{M+1}}^2}{A^{1/2 }}  +\|\cc_{\text{in};\nq}\|_{H^{M+1}}^2 e^{-\frac{\delta_ d  (t_\star+t)}{8A^{1/3}}}\bigg)\\
\\
+&C( \myr{G} ,\delta_ d^{-1})\frac{\|\lan n\ran\|_{H^M}^2 }{A^{1/3}}e^{-\frac{\delta_ d  (t_\star+t)}{8A^{1/3}}}\min\left\{\ep_I^2\ F_M(t_\star+t),\ \|\cc_{\text{in};\nq}\|_{H^{M+1}}^2  \right\}.
\end{align}}\fi
\end{theorem}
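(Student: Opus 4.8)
\textbf{Proof strategy for Theorem \ref{thm:F_M_reg}.}

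The plan is to differentiate the functional $\FM$ in $\tau$ term by term and to collect the resulting error contributions into the three groups appearing on the right-hand side of \eqref{F_M_reg}. Recall that $\FM$ is a sum of three blocks: the $n_\nq$-block weighted by $G^{4+2i}\Phi^{2j}$, the deviation block for $c_\nq^{t_r}=\cc_\nq-S_{t_r}^{t_r+\tau}\cc_\nq$ weighted by $G^{2i}\Phi^{2j+2}(A^{2/3}\mathbbm{1}_{j<M+1}+\mathbbm{1}_{j=M+1})$, and the static ``reservoir'' block $Q^2\sum\|\Gamma_{y;t_r}^{ijk}\cc_\nq(t_r)\|_{L^2}^2\exp\{-\delta_{\mathcal Z}\tau/(2A^{1/3})\}$. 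The reservoir block has no spatial dynamics, so its $\tau$-derivative is simply $-\frac{\delta_{\mathcal Z}}{2A^{1/3}}$ times itself, which is a good (dissipative) term we discard or use to absorb other contributions. So the work concentrates on the first two blocks.

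First I would treat the $n_\nq$-block. Applying $\Gamma_{y;t_r+\tau}^{ijk}$ to \eqref{ppPKS_n_neq} and testing against $G^{4+2i}\Phi^{2j}\Gamma_{y;t_r+\tau}^{ijk}n_\nq$, one gets: (i) the good dissipation term $-\frac1A\|\na\Gamma^{ijk}n_\nq\|_2^2$; (ii) the time-weight derivative $2j\Phi^{2j-1}\Phi'\,G^{4+2i}\|\Gamma^{ijk}n_\nq\|_2^2$, which since $\Phi'=-\frac{3t^2}{A\Phi^2}\le 0$ is favorable; (iii) the linear commutator term $\frac1A\int\Gamma^{ijk}n_\nq[\Gamma_y^j,\pa_{yy}]\pa_x^i\pa_z^kn_\nq$, which is handled exactly by the key Lemma \ref{lem:cm_L_trm} with $\mH=\Gamma^{ijk}n_\nq$, $\mG_{ijk}=\Gamma^{ijk}n_\nq$ — this produces the $\frac{\Phi}{G A^{1/3}}$ and $\frac{1}{G(A^{2/3}+t^2)}$ coefficients in the first line of \eqref{F_M_reg}, with $\frac18$ of the dissipation absorbed; and (iv) the nonlinear terms $\frac1A\na\cdot(n_\nq\na\lan\cc\ran)$, $\frac1A\na\cdot(\lan n\ran\na\cc_\nq)$, $\frac1A\na\cdot(n_\nq\na\cc_\nq)_\nq$. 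For the nonlinear terms I would split the chemical as $\cc_\nq=c_\nq^{t_r}+d_\nq^{t_r}$; the $c_\nq^{t_r}$ (deviation) contributions are bounded using the gliding-norm product estimate (the ``$\text{Prd\_est\_gld}$'' inequality invoked earlier) and fed back into $\FM$ with $A^{-1/2}\Phi^{-4M-4}$-type weights — giving the second group of terms; the $d_\nq^{t_r}$ (passive-scalar) contributions are bounded using Theorem \ref{thm:ED_gldrg}, in particular the decay estimate \eqref{Z_implc}, so $\|d_\nq^{t_r}\|_{\Z^{M+1}}^2$ is controlled by $\|\cc_\nq(t_r)\|^2$ times $\exp\{-\delta_{\mathcal Z}\tau/A^{1/3}\}$ — this produces the third group of terms (the ones with the reservoir factor $Q^2\sum G^{2i}\Phi(t_r)^{2j}\|\Gamma_{y;t_r}^{ijk}\cc_\nq(t_r)\|^2$). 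Dividing by $A$ and absorbing the remaining derivative losses into the $\frac{1}{A^{1/3}}$ prefactor (using $\frac1A = \frac{1}{A^{1/3}}\cdot\frac{1}{A^{2/3}}$ and $\frac{1}{A^{2/3}}\le\frac{1}{A^{1/6}}$ crudely, or the $A^{1/3}/(A^{2/3}+t^2)$ structure) yields the stated coefficients.

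Next I would treat the deviation block. The key point is that $c_\nq^{t_r}$ solves $\pa_\tau c_\nq^{t_r}+u_A\pa_x c_\nq^{t_r}=\frac1A\de c_\nq^{t_r}+\frac1A n_\nq$ (subtracting \eqref{ppPKS_d_neq} from \eqref{ppPKS_c_1}), so applying $\Gamma^{ijk}$ and testing against the weighted $c_\nq^{t_r}$ gives: the good dissipation, the time-weight derivative, a commutator term (again Lemma \ref{lem:cm_L_trm}), and the forcing term $\frac1A\int \Gamma^{ijk}c_\nq^{t_r}\cdot\Gamma^{ijk}n_\nq$. The forcing is Cauchy–Schwarzed: $\frac1A\|\Gamma^{ijk}c_\nq^{t_r}\|_2\|\Gamma^{ijk}n_\nq\|_2$, and here the extra $A^{2/3}$-weight on the $c$-block (for $j\le M$) is exactly designed so that $A^{2/3}\cdot\frac1A = A^{-1/3}$ and the $n_\nq$-side can be bounded by $\FM$; for $j=M+1$ there is no $A^{2/3}$ but also one fewer derivative on $n_\nq$ is needed, keeping the count consistent. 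The upshot is a contribution $\lesssim \frac{1}{GA^{1/3}}\FM$, fitting the first line. The main obstacle, as in the linear theory, is the commutator analysis — getting Lemma \ref{lem:cm_L_trm} to apply one must verify the structural relations \eqref{G_cmp_rl} hold for the chosen $\mH,\mG$ families, and track the combinatorial/derivative-count bookkeeping so that all weights $G^{2i},\Phi^{2j}$ land where \eqref{F_M_reg} claims; a secondary subtlety is the ambiguity flagged in the paper's footnote, namely that the functional's $c$-block is tied to a particular decomposition of $\cc_\nq$, so one must be careful that the reference time $t_r$ used in $\Gamma_{y;t_r+\tau}$ matches the one defining $c_\nq^{t_r}$ and $d_\nq^{t_r}$ throughout. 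Once all error terms are assembled and the dissipation terms ($\tfrac18$ from each block after the commutator absorption) are discarded, collecting like coefficients gives precisely \eqref{F_M_reg}.
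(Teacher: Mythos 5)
Your overall route---differentiating the three blocks of $\FM$, treating the diffusion commutators with Lemma \ref{lem:cm_L_trm}, splitting $\cc_\nq=c_\nq^{t_r}+d_\nq^{t_r}$, using the linear enhanced dissipation \eqref{Z_implc} for the $d$-part and feeding the $c$-part back into $\FM$ via the gliding product and gradient estimates---is exactly the paper's (Lemmas \ref{lem:est_nNL} and \ref{lem:est_ctrm} package precisely these steps). The gap is in your treatment of the forcing term in the deviation block. The equation for $\Gamma_{y;t}^{ijk}c_\nq^{t_r}$ carries the source $\frac{1}{A}\Gamma_{y;t}^{ijk}n_\nq$ at the \emph{same} multi-index, while the $c$-block of $\FM$ runs over $|i,j,k|\le M+1$ and the $n$-block only controls $|i,j,k|\le M$. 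Hence for every top-order index $|i,j,k|=M+1$---not only $j=M+1$---your direct Cauchy--Schwarz step, with the $n$-factor bounded by $\FM$, is unavailable: $\Gamma_{y;t}^{ijk}n_\nq$ then involves $M+1$ derivatives of $n_\nq$, which $\FM$ does not see, and the extra $A^{2/3}$ weight buys powers of $A$, not derivatives. Your aside that at $j=M+1$ one fewer derivative on $n_\nq$ is needed is asserted rather than obtained: the source at $(i,j,k)=(0,M+1,0)$ is literally $\Gamma_y^{M+1}n_\nq$.

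The missing step is an integration by parts inside the forcing integral, which is the content of \eqref{est_c_term}: when $j=M+1$ (so $i=k=0$) one moves $\Gamma_y=\pa_y+B^{(1)}\pa_x$ onto the $c$-side, paying a fraction of the dissipation $\frac{1}{A}\|\na\Gamma_{y;t}^{ijk}c_\nq^{t_r}\|_2^2$ and controlling the factor $B^{(1)}\sim t$ with the $\Phi^{2j+2}$ weight; when $j\le M$ and $|i,j,k|=M+1$ one has $i+k\ge 1$ and instead moves a $\pa_x$ (if $i\ge1$) or $\pa_z$ (if $i=0$) onto the $c$-side, again absorbing into the dissipation. Only after this derivative transfer does the remaining $n$-factor sit at order at most $M$ and get dominated by $\FM$, producing the $\frac{C\Phi^2}{G^2A^{1/3}}\FM$ contribution that appears in the first line of \eqref{F_M_reg}. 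With that repair (and the corresponding bookkeeping of the $G$ and $\Phi$ weights), your argument coincides with the paper's proof; the rest of your outline, including the reservoir block and the $n$-block nonlinear estimates, matches Lemma \ref{lem:est_nNL} and requires only the explicit use of \eqref{Gld_Reg_grd} to convert $\na c_\nq^{t_r}$ and $\na d_\nq^{t_r}$ into $\Gamma$-norms.
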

Before presenting the proof of the theorem, we first decompose the time derivative $d\ \FM[t_r+\tau]/ d\tau$ and identify the terms to be estimated. Then we provide several lemmas to provide necessary bounds on these terms. The proofs of these lemmas will be postponed to the end of this subsection. Finally, after all the preparations are ready, the proof of Theorem \ref{thm:F_M_reg} is straightforward.

 The time evolution of the functional $\FM$ \eqref{F_M} on $t=t_r+\tau\in [t_r,T_\star]\subset[0,A^{1/3+\te}]$ has three components:
\begin{align}\label{ddtau_FM}\qquad
\frac{d}{d\tau}\FM=&\frac{d}{d\tau}\lf(\sum_{|i,j,k|=0}^{M}\Phi^{2j} G^{4+2i}\|\Gamma_{y;t_r+\tau}^{ijk} n_\nq(t_r+\tau)\|_{L^2}^2\rg)\\
&+\frac{d}{d\tau}\lf(\sum_{|i,j,k|=0}^{M+1}\Phi^{2j{+2}}G^{2i}(A^{2/3}\mathbbm{1}_{j<M+1}+\mathbbm{1}_{j=M+1})\| \Gamma_{y;t_r+\tau}^{ijk} (\cc_\nq(t_r+\tau)-S_{t_r}^{t_r+\tau}\cc_{\nq})\|_{L^2}^2\rg)\\
&-\frac{\delta_{\mathcal Z} Q^2}{2A^{1/3}}\sum_{|i,j,k|=0}^{M+1}\|\Gamma_{y;t_r}^{ijk} \cc_{\nq}(t_r)\|^2_{L^{2} }\exp\lf\{-\frac{\delta_{\mathcal Z}  \tau}{2A^{1/3}}\rg\}=:\pa F_1+\pa F_2-\pa F_3.
\end{align} For the sake of simplicity, we use the notation $\Gamma_y$ to represent $\Gamma_{y;t_r+\tau}$ and $\Gamma_{y;t}^{ijk}$ to represent $\pa_x^i\Gamma_{y;t_r+\tau}^j\pa_z^k$. We begin by considering the $\pa F_1$ term in \eqref{ddtau_FM}. 
The equation for the higher gliding derivatives of $n_\nq$ can be expressed as follows with the help of the equation \eqref{ppPKS_n_neq},
\begin{align}
\pa_\tau &\gt n_{\neq}+u_A(t_r+\tau,y) \pa_x^{i+1}\Gamma_y^j\pa_z^k n_{\neq}-\frac{1}{A}\de\gt n_{\neq}\\
&=\frac{1}{A}[\Gamma_y^j,\pa_{yy}]\pa_x^i\pa_z^jn_\nq-\frac{1}{A}\gt\lf(\na \cdot( n_{\neq}\na \lan \cc\ran)+\na \cdot( \lan n\ran\na \cc_{\neq} )+\na\cdot(n_{\neq}\na \cc_{\neq})_{\neq}\rg),\qquad t=t_r+\tau. 
\end{align}Combining the equation and a direct $L^2$-energy estimate yields that  
\begin{align} \label{TNL_123_vf} \qquad
\frac{1}{2}\pa F_1\leq & -\frac{1}{A}\sum_{|i,j,k|=0}^{M}G^{4+2i} \Phi^{2j}\|\na (\gt n_{\nq})\|_2^2+\sum_{|i,j,k|=0}^{M}\frac{G^{4+2i} }{A}\Phi^{2j}\int \gt n_\nq \ [\Gamma_y^j,\pa_{yy}]\pa_x^i\pa_z^k n_\nq dV\\
&+\sum_{|i,j,k|=0}^{M}\frac{G^{4+2i} }{A}\Phi^{2j}\int \na\gt n_{\neq}\cdot \palt( n\na \cc_{\neq}+n_\nq \na \lan \cc\ran) dV\\
&-\sum_{|i,j,k|=0}^{M}\frac{G^{4+2i} }{A}\Phi^{2j}\int \gt n_\nq  \ [\Gamma_y^j,\pa_y]\pa_x^i\pa_z^k(n\na \cc_\nq +n_\nq\na\lan \cc\ran)dV\\
=:&-\mathfrak {D}_n+ T_{n;1}^{NL;R}+T_{n;2}^{NL;R}+T_{n;3}^{NL;R}.
\end{align}
Next we recall the definition $c_\nq^{t_r}(\tau)=\cc_\nq(t_r+\tau)-S_{t_r}^{t_r+\tau}\cc_\nq(t_r)$ together with \eqref{ppPKS_c_neq}, and express the equation satisfied by $\gt c_\nq^{t_r}$ as follows,
\begin{align}
\pa_\tau \gt c_{\neq}^{t_r}+u_A(t_r+\tau,y)\Gamma_{y;t}^{(i+1)jk} c_{\neq}^{t_r}-\frac{1}{A}\de\gt c_{\neq}^{t_r}=&\frac{1}{A}[\Gamma_{y;t}^j,\pa_{yy}]\pa_{x}^i\pa_z^k c_\nq^{t_r}+\frac{1}{A}\gt n_{\neq},\\
\quad \pa_x^i\pa_{y;t_r}^j\pa_z^k c_{\neq}^{t_r}(\tau=0)=&0,\quad t=t_r+\tau.
\end{align}
\myc{Because $c_\nq^{t_r}(\tau=0)\equiv 0$, and all the spatial derivatives are identically zero. }Now a direct computation yields an expression for the   $\pa F_2$ term in \eqref{ddtau_FM}:
\begin{align} \label{T_c_com_n}
\frac{1}{2}\pa F_2
\leq&\sum_{|i,j,k|=0}^{M+1}(A^{2/3}\mathbbm{1}_{j\leq M}+\mathbbm{1}_{j=M+1})\bigg(-\frac{\Phi^{2j\myr{+2}}G^{2i}}{A}\|\na \gt c_\nq^{t_r}\|_2^2\\
&\qquad\qquad+\frac{G^{2i}\Phi^{2j\myr{+2}}}{A}\int \gt c_\nq^{t_r}\ [\Gamma_y^j ,\pa_{yy}]\pa_x^i\pa_z^k c_\nq^{t_r} dV+\frac{G^{2i}\Phi^{2j\myr{+2}}}{A}\int \gt c_\nq^{t_r}\ \gt n_\nq dV\bigg)\\
=:&-\mathfrak {D}_c+T_{c;1}^{NL;R}+T_{c;2}^{NL;R}.
\end{align} \myc{There is a negative term: $(\myr{2j+2})\Phi^{\myr {2j+1}}\Phi'G^{2i}\|\palt c_\nq\|_2^2=-CK_c\leq 0$. But I didn't find the use of it. }
\myc{\bf Double check $T_{c;1}^{NL;R}$, there is extra $\Phi^2$!}
We note that the proof of Theorem \ref{thm:F_M_reg} is completed once suitable estimates are provided for the $T_{n;1}^{NL;R}, \ T_{n;2}^{NL;R}, \ T_{n;3}^{NL;R}$ terms in \eqref{TNL_123_vf}, and the $T_{c;1}^{NL;R}, \ T_{c;2}^{NL;R}$ terms in \eqref{T_c_com_n}. Next we collect two lemmas that provides bounds for these terms. The proof of these two lemmas will be postponed to the end of this subsection.  
\begin{lem}\label{lem:est_nNL}[\myb{Lemma of $T_i^{NL;R}$ and $T_i^{NL;D}$.}]
Consider functions $\{\mathfrak H_{ijk}\}_{|i,j,k|\leq M}\subset H^1(\Torus^3)$. \myc{We don't need to put extra constraints on the functions because we have already use the average zero property of the $n_\nq,$ etc., to decompose the chemical gradient into two parts. } The following estimates hold
\begin{align}\label{T_n_2_NL}\qquad 
&\lf|\sum_{|i,j,k|=0}^{M}\frac{G^{4+2i} }{A}\Phi^{2j}\int \na\mathfrak{H}_{ijk} \cdot \gt\lf( n\na\cc_{\neq}+n_\nq \na \lan \cc\ran\rg) dV\rg|\\
&\leq \sum_{|i,j,k|=0}^{M}\frac{G^{4+2i}\Phi^{2j}}{4A}\|\na \mathfrak H_{ijk}\|_2^2+\frac{C(G)}{A\Phi^{4M+4}}\lf( \FM\rg)^2+\frac{ C(G)}{A\Phi^{2M+4}}( \|\lan n\ran\|_{H^{M}}^2+ \|\na\lan \cc\ran\|_{H^{M}}^2)\FM\\
&\quad+ \frac{C}{A^{1/3}Q^2}\left(\FM +\|\lan n \ran\|_{H^{M}}^2\right)\left(Q^2\sum_{|i,j,k|=0}^{M+1}{G^{2i}\Phi(t_r)^{2j}}\| \Gamma_{y;t_r}^{ijk}\cc_\nq(t_r)\|_{L^{2}}^2\right)\frac{\exp\lf\{-\frac{2\delta_{\mathcal Z} \tau}{A^{1/3}}\rg\}}{\Phi^{4M+3}};\\ \label{T_n_3_NL}
&\lf|\sum_{|i,j,k|=0}^{M}\frac{G^{4+2i} }{A}\Phi^{2j}\int\mathfrak H_{ijk} \ [\Gamma_y^j,\pa_y]\pa_x^i\pa_z^k(n\na\cc_\nq +n_\nq\na\lan \cc\ran)dV\rg|\\
&\leq  \sum_{|i,j,k|=0}^{M}\frac{CG^{4+2i}\Phi^{2j}}{A^{1/3}}\lf(\frac{1}{A^{1/6}}+\frac {\exp\lf\{-\frac{\delta_{\mathcal Z}}{A^{1/3}} \tau\rg\}}{G^2}\rg)\|  \mathfrak H_{ijk}\|_2^2+\frac{ C(G) }{A^{5/6}\Phi^{ 4M+4}}\lf(\FM\rg)^2 \\
&\quad+\frac{C(G)}{ A^{5/6}\Phi^{2M+4}}(\norm{n}_{H^M}^2+\norm{\na \lan\cc\ran }_{H^{M}}^2)\FM\\
&\quad+\frac{C(G) \exp\lf\{-\frac{\delta_{\mathcal Z}}{A^{1/3}} \tau\rg\}}{A^{1/3}\Phi^{2M+4}}\ \lf(\sum_{|i,j,k|=0}^{M+1}G^{2i}\Phi(t_r)^{2j}\|\Gamma_{y;t_r}^ {ijk}\cc_{\neq}(t_r)\|_{L^{2}}^2\rg) \lf(\Phi^{-2M}\FM +\|\lan n\ran\|_{H^M}^2\rg).
\end{align}
Here $\Phi=\Phi(t)=\Phi(t_r+\tau). $ 

\end{lem}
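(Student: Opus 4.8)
The plan is to estimate the two nonlinear contributions by treating them as products of functions in the gliding-regularity spaces and then carefully tracking all the polynomial-in-$t$ and power-of-$G$, power-of-$A$ weights. First I would recall the two elementary ingredients that the rest of the paper surely supplies: the product estimate \eqref{Prd_est_gld} for $\Gamma$-adapted Sobolev norms (so that $\|\Gamma_y^{ijk}(fg)\|_{L^2}$ is controlled by a product of the $\Z^M$-type norms of $f$ and $g$), and the gradient estimate \eqref{Gld_Reg_grd} for $\na\cc_\nq$, which converts a $\Gamma_y$-norm of $\cc_\nq$ into the combination $\sum\|\Gamma_y^{ijk}\cc_\nq\|_2^2 + t^2\sum\|\Gamma_y^{(i+1)jk}\cc_\nq\|_2^2$ appearing throughout. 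Then I would split $\cc_\nq = c_\nq^{t_r} + S_{t_r}^{t_r+\tau}\cc_\nq(t_r)$ and $n = \lan n\ran + n_\nq$, so that each term in \eqref{T_n_2_NL} and \eqref{T_n_3_NL} becomes a sum of products of: the ``deviation'' pieces $n_\nq$ and $c_\nq^{t_r}$ (which live in $\FM$ with the right $\Phi^{2j}$, $G^{2i}$, and $A^{2/3}$ weights), the passive-scalar piece $S_{t_r}^{t_r+\tau}\cc_\nq(t_r)$ (whose $\Z$-norm decays by the linear Theorem \ref{thm:ED_gldrg} at rate $\delta_{\mathcal Z}A^{-1/3}$, producing the $\exp\{-2\delta_{\mathcal Z}\tau/A^{1/3}\}$ factors and the $Q^2\sum G^{2i}\Phi(t_r)^{2j}\|\Gamma_{y;t_r}^{ijk}\cc_\nq(t_r)\|^2$ factors), and the $x$-average pieces $\lan n\ran$, $\na\lan\cc\ran$ (which contribute the $\|\lan n\ran\|_{H^M}^2$, $\|\na\lan\cc\ran\|_{H^M}^2$ factors).

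For \eqref{T_n_2_NL}, the structure is $\frac{1}{A}\int \na\mH_{ijk}\cdot\Gamma_y^{ijk}(\text{bilinear})$, so I would first absorb half of it via Young's inequality into $\frac{G^{4+2i}\Phi^{2j}}{4A}\|\na\mH_{ijk}\|_2^2$, leaving $\frac{C(G)}{A}\Phi^{2j}\|\Gamma_y^{ijk}(n\na\cc_\nq + n_\nq\na\lan\cc\ran)\|_2^2$. I would then apply \eqref{Prd_est_gld} to turn this into products of the appropriate norms; the diagonal ``deviation $\times$ deviation'' term gives $\frac{C(G)}{A\Phi^{4M+4}}(\FM)^2$ after inserting the $\Phi^{-4M-4}$ needed to compensate the missing weights (the crux being to check that the worst-case distribution of $i,j,k$ among the factors produces exactly $\Phi^{-(4M+4)}$, using $\Phi\le 1$ monotonicity), the ``deviation $\times$ $x$-average'' term gives $\frac{C(G)}{A\Phi^{2M+4}}(\|\lan n\ran\|_{H^M}^2 + \|\na\lan\cc\ran\|_{H^M}^2)\FM$, and the terms involving $S_{t_r}^{t_r+\tau}\cc_\nq(t_r)$ give the last line with its $\exp\{-2\delta_{\mathcal Z}\tau/A^{1/3}\}/\Phi^{4M+3}$ weight and a gain of $A^{-1/3}Q^{-2}$ from trading one power of $A^{-1}$ against $A^{2/3}$ (coming either from the $A^{2/3}$-weight inside $\FM$ or from the $t^2/A\lesssim A^{2\zeta}\Phi^{-1}$ bound on the gradient correction in the relevant regime). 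For \eqref{T_n_3_NL}, the key difference is the commutator $[\Gamma_y^j,\pa_y]$, which by the computation in Lemma \ref{lem:cm_L_trm} (or rather its $\pa_y$-analogue, i.e.\ sums of $B^{(m)}\pa_x\Gamma_y^\ell$ terms with $\ell\le j-1$) costs a factor $\int_0^t\pa_y^m u\,\lesssim t$, hence $\lesssim A^{1/3+\zeta}$, but also \emph{lowers} the $\Gamma_y$-order by one so one picks up a factor $\Phi^{-1}$ and a $G^{-1}$; carefully balancing the $t/A^{1/3}\le A^{\zeta}$ against $\Phi$ and using $\zeta$ small gives the $A^{-1/6}$ and $A^{-5/6}$ gains displayed. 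Here there is no $\na\mH_{ijk}$ to absorb into dissipation, so the leading term is $\frac{CG^{4+2i}\Phi^{2j}}{A^{1/3}}(A^{-1/6}+G^{-2}e^{-\delta_{\mathcal Z}\tau/A^{1/3}})\|\mH_{ijk}\|_2^2$, the $e^{-\delta_{\mathcal Z}\tau/A^{1/3}}$ again coming from the passive-scalar part of $\na\cc_\nq$.

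I would carry this out in the order: (1) state the product and gradient lemmas being invoked; (2) do the Young absorption for \eqref{T_n_2_NL}; (3) expand the bilinear form into the four types of products and bound each, tracking weights; (4) repeat for \eqref{T_n_3_NL}, now inserting the commutator expansion and the $t\lesssim A^{1/3+\zeta}$ bound; (5) collect terms. The main obstacle is bookkeeping: making sure the $\Phi^{2j}$, $G^{2i}$ weights on the left reassemble, after the product rule distributes derivatives, into exactly the $\FM$ (respectively $(\FM)^2$, respectively $\FM$ times an $x$-average norm) with the advertised negative powers $\Phi^{-4M-4}$, $\Phi^{-2M-4}$ and the advertised $A$-gains — in particular verifying that whenever a derivative lands on $n_\nq$ or $c_\nq^{t_r}$ the $j$-index is still $\le M$ (so the $A^{2/3}$-weight, not the top-order weight, applies to $c_\nq^{t_r}$) and that the missing $G$-powers are harmless because $C(G)$ is allowed to absorb them. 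A secondary subtlety is the term where $\cc$ carries the top derivative $j=M+1$: since $n_\nq\na\lan\cc\ran$ and $n\na\cc_\nq$ only ever involve $\le M+1$ derivatives of $\cc$ with at least one derivative on $\cc$, the product rule never forces $M+1$ gliding derivatives on a single undifferentiated-enough factor, so one stays inside the $H^M\times H^{M+1}$ budget; I would remark on this explicitly rather than grind it out.
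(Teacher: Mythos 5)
Your plan follows essentially the same route as the paper's proof: split the bilinear terms via $n=\lan n\ran+n_\nq$ and $\cc_\nq=c_\nq^{t_r}+S_{t_r}^{t_r+\tau}\cc_\nq(t_r)$, absorb $\na\mathfrak H_{ijk}$ by Young's inequality for \eqref{T_n_2_NL}, control each product with the gliding product estimate \eqref{Prd_est_gld} and the gradient estimate \eqref{Gld_Reg_grd}, use the linear enhanced dissipation \eqref{Gld_Rg_ED} for the passive-scalar piece to produce the $\exp\{-\delta_{\mathcal Z}\tau/A^{1/3}\}$ and $Q^2\sum G^{2i}\Phi(t_r)^{2j}\|\Gamma_{y;t_r}^{ijk}\cc_\nq(t_r)\|_2^2$ factors, and expand $[\Gamma_y^j,\pa_y]$ into $B^{(m)}\pa_x\Gamma_y^\ell$ terms (the relation \eqref{cm_yt_j_y}, as you note) with $B^{(m)}\lesssim t$ traded against the $\Phi$-weights for \eqref{T_n_3_NL}. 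This is exactly the paper's argument, with the remaining work being the same weight bookkeeping you identify.
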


\begin{lem}\label{lem:est_ctrm}
Consider functions $\{\mathfrak H_{ijk}\}_{|i,j,k|\leq M+1}\subset H^1$. There exists a constant $C$, which depends only on $M, \, \myr{\|\pa_yu_A\|_{L^\infty_t W^{M+2,\infty}}}$, such that the following estimates  hold
\ifx
\footnote{
\myb{Siming: The following part is already introduced in the main text: It will be ideal to prove the estimate like this in the linear part: 
\begin{align}
\lf|T_{c;1}^{NL}\rg|:=&\lf|\sum_{|i,j,k|=0}^{M}\frac{G^{2i}\Phi^{2j}}{A}\int \mH\ [\Gamma_y^j,\pa_{yy}]\mathfrak{G}_{ik} dV\rg|\\
\leq & \frac{1}{8A}\sum_{|i,j,k|=0}^{M}{G^{2i}\Phi^{2j}}\|\na \mH \|_2^2 \ + \frac{C }{GA^{1/3}(1+\frac{t^2}{A^{2/3}})} \sum_{|i,j,k|=0}^{M}G^{2i}\Phi^{2j}\|\mH\|_2^2\\
&+\left(\frac{C\Phi}{A^{1/3}G^2}+\frac{C}{G(A^{2/3}+t^2)}\right) \sum_{|i,j,k|=0}^{M}G^{2i}\Phi^{2j}\|\Gamma_y^j \mathfrak G_{ik} \|_{2}^2. 
\end{align}}}\fi
\begin{align} 
\label{T_c_com}\ \ &\sum_{|i,j,k|=0}^{M+1}(A^{2/3}\mathbbm{1}_{j\leq M}+\mathbbm{1}_{j=M+1})\frac{G^{2i}\Phi^{2j+2}}{A}\bigg|\int \mf H_{ijk}\ [\Gamma_{y;t}^j ,\pa_{yy}]\pa_x^i\pa_z^k c_\nq^{t_r} dV\bigg|\\
 &\leq\frac{1}{8A}\sum_{|i,j,k|=0}^{M+1}(A^{2/3}\mathbbm{1}_{j\leq M}+\mathbbm{1}_{j=M+1}) {G^{2i}\Phi^{2j+2}} \|\na \mf H_{ijk}\|_{L^2}^2\\
&\quad+  \left(\frac{C\Phi}{G^2 A^{1/3}}+\frac{C}{G(A^{2/3}+  t^2)}\right)\sum_{|i,j,k|=0}^{M+1}(A^{2/3}\mathbbm{1}_{j\leq M}+\mathbbm{1}_{j=M+1}) {G^{2i}\Phi^{2j+2}}(\|\mf H_{ijk}\|_{L^2}^2+\|\gt c_\nq^{t_r}\|_{L^2}^2);  
\end{align}
\begin{align}
\label{est_c_term}
\sum_{|i,j,k|=0}^{M+1}&(A^{2/3}\mathbbm{1}_{j\leq M}+\mathbbm{1}_{j=M+1})  \frac{ G^{2i}\Phi^{2j+2}}{A}\lf|\int \mH \ \gt n_\nq dV \rg|\\
\leq &\frac{1}{4A}\sum_{|i,j,k|=0}^{M+1}(A^{2/3}\mathbbm{1}_{j\leq M}+\mathbbm{1}_{j=M+1})G^{2i}\Phi^{2j+2}\|\na \mH\|_2^2+\frac{C\Phi^2}{G^2A^{1/3}}\FM.
\end{align}\myc{Is the first estimate sensitive to the starting time?} \myc{This is indeed a valid problem. The $c_\nq^{t_r}(\tau)$ starts from $\tau=0\Leftrightarrow t_r$ and the corresponding vector fields that acts on it is $\Gamma_{y;t_r+\tau}$. But the $B^{(m)}$'s in the commutators are always using the time $t=t_r+\tau$, which is the time of the weight $\Phi(t)$. In the proof, we use the $\Phi's$ to control the extra $t$'s coming out from the $[\pa_{yy}, \Gamma_{y;t}^j]$. So the resulting bound should also be $\Phi(t),\, \frac{1}{A^{2/3}+t^2}$. So the first estimate seems not to be sensitive to the starting time.}
Here $\Phi=\Phi(t_r+\tau)$.
\end{lem}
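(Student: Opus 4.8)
## Proof Plan for Lemma~\ref{lem:est_ctrm}

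The proof is entirely parallel to the proof of the \emph{key Lemma~\ref{lem:cm_L_trm}}, with two modifications: (i) we carry along the extra spectral weight $A^{2/3}\mathbbm{1}_{j\le M}+\mathbbm{1}_{j=M+1}$ as a passenger coefficient, and (ii) we carry along the extra time-weight factor $\Phi^2$ (since the chemical part of $\FM$ uses $\Phi^{2j+2}$ rather than $\Phi^{2j}$). Neither modification affects the mechanism: the commutator $[\Gamma_{y;t}^j,\pa_{yy}]$ produces the same three families of terms $T_1,T_2,T_3$ as in \eqref{Tcm_est_123}, involving the multipliers $B^{(m)}(t,y)=\int_0^t\pa_y^m u(s,y)\,ds$, which obey $|B^{(m)}(t,y)|\le \|u\|_{L^\infty_t W^{M+2,\infty}}\,t$ for $m\le M+2$. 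First I would fix $j\ge 1$ (the $j=0$ case is trivial), expand $[\Gamma_{y;t}^j,\pa_{yy}]\pa_x^i\pa_z^k c_\nq^{t_r}$ via \eqref{cm_yt_j_yy} into the sum over $0\le\ell\le j-1$ of the three operator families, and then — exactly as in the proof of Lemma~\ref{lem:cm_L_trm} — treat the leading case $\ell=j-1$ separately (using the cancellation $-2B^{(2)}\pa_x\Gamma_y+\pa_y(B^{(1)})^2\pa_{xx}=-2B^{(2)}\pa_{xy}$ followed by integration by parts to land half a derivative on $\mf H_{ijk}$), and the remaining cases $0\le\ell\le j-2$ by brute-force H\"older plus Young. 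At each step, wherever Lemma~\ref{lem:cm_L_trm} produced a factor $\Phi^{j-\ell-1}t/A^{1/3}$ or $\Phi^{j-\ell}t^2/A^{2/3}$, the definition $\Phi(t)=(1+t^3/A)^{-1}$ bounds it by a constant times $\Phi/A^{1/3}$ times a bounded quotient, yielding exactly the coefficients $C\Phi/(G^2A^{1/3})+C/(G(A^{2/3}+t^2))$ appearing on the right of \eqref{T_c_com}. The spectral weight $(A^{2/3}\mathbbm{1}_{j\le M}+\mathbbm{1}_{j=M+1})$ rides along unchanged: when $\ell\le j-1<M+1$ the coefficient attached to $\mf G_{i'j'k'}$ with $j'\le j-1$ is at most the one attached to $\mf H_{ijk}$ up to the universal ratio $A^{2/3}/A^{2/3}=1$ (or $1/A^{2/3}\le 1$ in the single top case $j=M+1$), so no new factors of $A$ are generated — this is the one bookkeeping point that needs a sentence of care. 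The extra $\Phi^2$ multiplies every term uniformly, so it too is just carried through.

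For the second estimate \eqref{est_c_term}, the argument is a one-line Cauchy–Schwarz / Young split: for each multi-index,
\begin{align}
\frac{G^{2i}\Phi^{2j+2}}{A}\Big|\int \mf H_{ijk}\,\gt n_\nq\,dV\Big|
\le \frac{G^{2i}\Phi^{2j+2}}{4A}\|\na\mf H_{ijk}\|_2\|\gt n_\nq\|_2\,,
\end{align}
wait — more precisely I would \emph{not} integrate by parts here; instead I write $\frac{G^{2i}\Phi^{2j+2}}{A}\int\mf H_{ijk}\,\gt n_\nq\,dV \le \frac{G^{2i}\Phi^{2j+2}}{4A}\|\na\mf H_{ijk}\|_2^2 + \frac{G^{2i}\Phi^{2j+2}}{A}\|\gt n_\nq\|_2^2$ using Poincar\'e on the zero-$x$-average function $\mf H_{ijk}$ (recall $i$ runs so that $\pa_x^i$ with the mean-zero structure lets us bound $\|\gt n_\nq\|_2\lesssim\|\na\gt n_\nq\|_2$); then I compare the $n$-contribution to a term of $\FM$. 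To match the stated bound $\frac{C\Phi^2}{G^2A^{1/3}}\FM$ I would exploit the gain $A^{2/3}\cdot A^{-1}=A^{-1/3}$ from the spectral weight on the $j\le M$ block (and observe that the single $j=M+1$ block without the $A^{2/3}$ is of lower order and absorbed), together with the fact that the $n$-component of $\FM$ carries weight $G^{4+2i}$ versus the $G^{2i}$ here, releasing a factor $G^{-2}$. Thus $\frac{A^{2/3}G^{2i}\Phi^{2j+2}}{A}\|\gt n_\nq\|_2^2 = \frac{\Phi^2}{A^{1/3}G^2}\cdot G^{4+2i}\Phi^{2j}\|\gt n_\nq\|_2^2 \le \frac{C\Phi^2}{A^{1/3}G^2}\FM$, summing over $|i,j,k|\le M$.

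The main obstacle — really the only subtlety — is the first estimate's \emph{self-referential} structure: on the right-hand side of \eqref{T_c_com} the terms $\|\gt c_\nq^{t_r}\|_{L^2}^2$ appear with the \emph{same} weight $(A^{2/3}\mathbbm{1}_{j\le M}+\mathbbm{1}_{j=M+1})G^{2i}\Phi^{2j+2}$ that defines the chemical part of $\FM$. This is intentional and harmless: it means the commutator is controlled by $\FM$ itself (to be absorbed by the dissipation and the Gr\"onwall argument in Theorem~\ref{thm:F_M_reg}), not that anything circular is happening within this lemma. The one place care is genuinely required is keeping track of which index $j'$ the quantity $\mf G_{i'j'k'}=\Gamma_{y;t}^{i'j'k'}c_\nq^{t_r}$ carries after the commutator expansion: since $[\Gamma_{y;t}^j,\pa_{yy}]$ only ever produces $\Gamma_{y;t}^\ell$ with $\ell\le j-1$, and $\pa_x$ raises $i$ but never $j$, every $\mf G$-term has $j'\le j\le M+1$, so it still lies inside the $\FM$ sum; and when $j'=j-1$ the attached weight $\Phi^{2(j-1)+2}=\Phi^{2j}\le\Phi^{2j}$ is dominated by $\Phi^{2j+2}\cdot\Phi^{-2}$, and the compensating $\Phi^{-2}$ is exactly matched by the $\Phi^2$ bonus in the coefficient — so after the final bookkeeping everything closes with the constants as stated. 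I would present the $\ell=j-1$ computation in full (it is short, two integrations by parts) and then remark that the $0\le\ell\le j-2$ estimates are verbatim copies of \eqref{Case_b_T1}, \eqref{Case_b_T2}, \eqref{T3} with the passenger weights inserted, which finishes the proof.
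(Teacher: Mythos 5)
Your treatment of \eqref{T_c_com} is essentially the paper's: the paper simply invokes Lemma \ref{lem:cm_L_trm} with $\mG_{ijk}=\gt c_\nq^{t_r}$ and carries the passenger weights $(A^{2/3}\mathbbm{1}_{j\le M}+\mathbbm{1}_{j=M+1})\Phi^2$ through, exactly as you describe, and your bookkeeping remark about the lower-order $\Phi^{2j'}$ weights being upgraded by the extra $\Phi^2$ is the right (and only) point of care there.

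For \eqref{est_c_term}, however, your explicit decision \emph{not} to integrate by parts creates a genuine gap. After the plain Cauchy--Schwarz/Young split, the right-hand side contains $\|\pa_x^i\Gamma_{y;t}^j\pa_z^k n_\nq\|_{L^2}^2$ for all $|i,j,k|\le M+1$, but the $n$-component of $\FM$ only controls gliding derivatives of $n_\nq$ up to total order $M$ (the sum in \eqref{F_M} stops at $M$ for $n_\nq$; only the chemical part goes to $M+1$). Hence every term with $|i,j,k|=M+1$ --- in particular the block $j=M+1$, $i=k=0$, which you dismiss as ``of lower order and absorbed'' --- is simply not bounded by $\FM$, and your estimate cannot close; you also quietly restrict the final sum to $|i,j,k|\le M$, which is not what the lemma asserts. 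The integration by parts is precisely the device the paper uses to fix this: for $j=M+1$ one writes $\Gamma_y=\pa_y+B^{(1)}\pa_x$ and moves one $\Gamma_y$ onto $\mH$, producing $\|\na\mH\|_2^2$ with the small prefactor plus a factor $B^{(1)}\sim t$ whose growth is absorbed by the $\Phi$-weights (yielding the $\frac{C\Phi^3}{G^4A^{1/3}}\FM$ contribution), while for $j\le M$ with $|i,j,k|=M+1$ one moves a $\pa_x$ (or $\pa_z$ when $i=0$) onto $\mH$, so that the remaining derivative count on $n_\nq$ drops to $M$ and the $A^{2/3}/A=A^{-1/3}$ gain plus the $G^{2i}$ versus $G^{4+2i}$ mismatch give the stated $\frac{C\Phi^2}{G^2A^{1/3}}\FM$. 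A secondary point: your Poincar\'e step uses a zero-average property of $\mH$ that is not among the lemma's hypotheses (it holds in the application, but the paper's route via integration by parts does not need it). With the integration by parts reinstated in both cases, your argument would coincide with the paper's.
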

With these two lemma, we can complete the proof of Theorem \ref{thm:F_M_reg}.
\begin{proof}[Proof of Theorem \ref{thm:F_M_reg}]
We estimate each term in \eqref{TNL_123_vf} and \eqref{T_c_com_n}. 
We note that the commutator term $T_{n;1}^{NL;R} $ in \eqref{TNL_123_vf} is estimated in Lemma \ref{lem:cm_L_trm}. By setting $\mH=\gt n_\nq$ in \eqref{T_n_2_NL}, \eqref{T_n_3_NL}, we obtain the estimates for the $T_{n;2}^{NL;R}$ and $T_{n;3}^{NL;R}$ terms in \eqref{TNL_123_vf}. By setting $\mH=\gt c_\nq^{t_r}$ in \eqref{T_c_com} and \eqref{est_c_term}, we obtain the bounds for $T_{c;1}^{NL;R},\ T_{c;2}^{NL;R}$ terms in \eqref{T_c_com_n}. Combining the estimates stated above and recalling the decomposition \eqref{ddtau_FM}, we obtain \eqref{F_M_reg}. 
\end{proof}  
\begin{proof}[Proof of Lemma \ref{lem:est_nNL}] 
{\bf Proof of \eqref{T_n_2_NL}:}
We decompose the left hand side of \eqref{T_n_2_NL} into four parts:
\begin{align}\label{T2_NL_1234}
\sum_{|i,j,k|=0}^{M}&\frac{G^{4+2i} }{A}\Phi^{2j}\int \na\mH\cdot \gt( n\na c_{\neq}+n\na d_{\neq}+n_\nq \na \lan \cc\ran) dV 
=: T_{1}+T_{2}+T_{3}. 
\end{align}
Here we drop the $(\cdots)^{t_r}$ in the $c_\nq^{t_r}$ and $d_\nq^{t_r}. $

We first estimate the $T_{1},\, T_2$ terms with the product estimate \eqref{Prd_est_gld} \myc{(The estimate seems to apply equally well for general $f,\ g$ because we include all the lower order terms. One does not need to enforce the average zero condition?)} as follows
\begin{align}
|T_{1}|\leq&C\sum_{|i,j,k|=0}^{M}\frac{G^{4+2i}\Phi^{2j}}{A}\|\na\mH\|_2\left(\sum_{|i,j,k|=0}^{M}\|\gt\na c_\nq \|_2\right)\left(\sum_{|i,j,k|=0}^{M}\|\gt n\|_2\right)
\end{align}Next we recall the estimate \eqref{Gld_Reg_grd}, the fact that $\frac{t^2}{A^{2/3}}\leq C\Phi^{-2/3}$ \eqref{varphi}, and the definition of $\FM$ \eqref{F_M} to derive that
\begin{align}\label{T_21} 
|T_{1}|\leq &\sum_{|i,j,k|=0}^{M}\frac{G^{4+2i}\Phi^{2j}}{12A}\|\na \mH\|_2^2\\
&+\frac{C(G)}{A\Phi^{4M+4}}\left(\sum_{|i,j,k|=0}^{M+1}G^{2i}\Phi^{2j\myr{+2}}\|\gt c_\nq\|_2^2+ \Phi^{-2/3}\sum_{|i,j,k|=0}^{M}A^{2/3}G^{2i}\Phi^{2j\myr{+2}}\|\Gamma_{y;t}^{(i+1)jk} c_\nq\|_2^2\right)\\
&\qquad\qquad\qquad \times \left(\sum_{|i,j,k|=0}^{M} {G^{4+2i}\Phi^{2j}}  \|\gt n_\nq\|_2^2+\Phi^{2M}\|\lan n\ran\|_{H^M}^2\right)\\
\leq &\sum_{|i,j,k|=0}^{M}\frac{G^{4+2i}\Phi^{2j}}{12A}\|\na \mH\|_2^2\ +\ \frac{C( G)}{A\Phi^{4M+4}}\lf( \FM\rg)^2 \ +\ \frac{C(G)  \|\lan n\ran\|_{H^{M}}^2}{A\Phi^{2M+4}}\ \FM.
\end{align}
\ifx We estimate the term $T_{2}$ using the product estimate \eqref{Prd_est_gld} (?) and the estimate \eqref{Gld_Reg_grd} as follows, 
 \begin{align}\label{T_24}
|T_{2}| 
\leq&\sum_{|i,j,k|=0}^{M}\frac{CG^{4+2i}}{A}\Phi^{2j}\|\na \mH\|_2\left(\sum_{|i,j,k|=0}^{M}\|\palt  \na c_\nq\|_2\right)\|\lan n\ran\|_{H^M}\\
\leq&\sum_{|i,j,k|=0}^{M}\frac{G^{4+2i}}{12A}\Phi^{2j}\|\na \mH\|_2^2\\
&+\frac{C ( G)}{A}\frac{\|\lan n\ran\|_{ H^M }^2}{ \Phi^{2M\myr{+2}}} \ \bigg(\Phi^{-2}\sum_{|i,j,k|=0}^{M+1}G^{ 2i}\Phi^{2j\myr{+2}} \|\pa_x^i\Gamma_y^{j}\pa_z^k c_\nq\|_2^2 \\
&\quad\quad\quad\quad\quad\quad\quad\quad\quad\quad\quad\quad+\frac{(t_r+\tau)^2}{A^{2/3}}\sum_{|i,j,k|=0}^{M} A^{2/3}G^{2(i+1)}\Phi^{2j\myr{+2}}\|\pa_x^{i+1}\Gamma_y^{j}\pa_z^{k} c_\nq\|_2^2\bigg)\\
\leq & \sum_{|i,j,k|=0}^{M}\frac{G^{4+2i}}{12A}\Phi^{2j}\ \|\na \mH\|_2^2\ +\ \frac{C(G)  \|\lan n\ran\|_{H^{M}}^2}{A\Phi^{2M+4}}\ \FM.
\end{align}
\fi 
Next we consider the $T_{2}$ term in \eqref{T2_NL_1234}, which contains the $\na d_\nq=\na S_{t_r}^{t_r+\tau}\cc_\nq$. We apply the linear  estimate \eqref{Gld_Rg_ED} (with $G$ chosen large enough), and the gradient estimate  \eqref{Gld_Reg_grd} to obtain 
\begin{align}\label{palt_na_d_nq}  
\sum_{|i,j,k|=0}^{M} \|\gt \na S_{t_r}^{t_r+\tau}\cc_\nq\|_2
  \leq& \frac{C}{(\Phi(t_r+\tau))^{M}}\bigg(\frac{1}{\Phi(t_r+\tau)}\sum_{|i,j,k|=0}^{M+1}G^{i}(\Phi(t_r+\tau))^{j}\|\Gamma_{y;t_r+\tau}^{ijk} S_{t_r}^{t_r+\tau}\cc_\nq\|_2 \\
&\qquad\qquad\qquad\quad+\ (t_r+\tau)\  \sum_{|i,j,k|=0}^{M} G^{i+1}(\Phi(t_r+\tau))^{j} \| \Gamma_{y;t_r+\tau}^{(i+1)jk} S_{t_r}^{t_r+\tau}\cc_\nq\|_2\bigg)\\
\leq& C\frac{1+t_r+\tau}{(\Phi(t_r+\tau))^{M+1}} \sum_{|i,j,k|=0}^{M+1}G^i\Phi(t_r)^j\|\Gamma_{y,t_r}^{ijk}\cc_\nq(t_r)\|_{L^{2}}\exp\lf\{-\frac{\delta_{\mathcal Z} }{A^{1/3}}\tau\rg\}. 
\end{align}
Here the notation $\Phi(\cdots)$ represents the value of the function $\Phi$ at the given time. 
This estimate, when combined with the product estimate \eqref{Prd_est_gld} \myc{(??)}, and the H\"older, Young inequalities, yields that 
\begin{align}\label{T_22}  \qquad
|T_{2}|\leq & \sum_{|i,j,k|=0}^{M}\frac{G^{4+2i}\Phi^{2j}}{12A}\|\na \mH\|_2^2\\
&+ \frac{C}{Q^2 A^{1/3}}\left(\sum_{|i,j,k|=0}^{M}G^{4+2i}\Phi^{2j}\|\gt n_\nq\|_{2} ^2 +\|\lan n \ran\|_{H^{M}}^2\right)\\
&\quad\times Q^2\left(\sum_{|i,j,k|=0}^{M+1}\myr{G^{2i}\Phi(t_r)^{2j}}\|\Gamma_{y;t_r}^{ ijk}\cc_\nq(t_r)\|_{L^{2}}^2\right)\exp\lf\{-\frac{2\delta_{\mathcal Z}}{A^{1/3}} \tau\rg\}\frac{(1+t_r+\tau)^2}{A^{2/3}}\Phi^{-4M-2}\\
\leq &  \sum_{|i,j,k|=0}^{M}\frac{G^{4+2i}\Phi^{2j}}{12A}\|\na \mH\|_2^2\\
&+\frac{C(G)\exp\lf\{-\frac{2\delta_{\mathcal Z}}{A^{1/3}} \tau\rg\}}{A^{1/3}Q^2\Phi^{4M+3}}\left(Q^2\sum_{|i,j,k|=0}^{M+1}\myr{G^{2i}\Phi(t_r)^{2j}}\|\Gamma_{y;t_r}^{ijk}\cc_\nq(t_r)\|_{L^{2}}^2\right)\lf(\FM+\|\lan n\ran\|_{H^M}^2\rg) .
\end{align}
\myc{$(\frac{1+t}{A^{1/3}})^m\leq (1+t/A^{1/3})^m\leq C_m (1+(t/A^{1/3})^m)\leq C_m (\Phi^{-m/3}+(1+t^3/A)^{m/3})=C_m \Phi^{-m/3}$.}
Similar to the estimate of $T_1$, we estimate $T_{3}$ term with the product estimate \eqref{Prd_est_gld} \myc{(?)} as follows 
\begin{align}\label{T_23} 
|T_{3}|
\leq&\sum_{|i,j,k|=0}^{M}\frac{G^{4+2i}}{A}\Phi^{2j}\|\na \mH\|_2\ \|\na  \lan  \cc\ran\|_{H^M} \bigg( \sum_{|i,j,k|=0}^{M}\|\gt n_\nq\|_{ 2}\bigg)\\
\leq&\sum_{|i,j,k|=0}^{M}\frac{G^{4+2i}}{12A}\Phi^{2j}\|\na \mH\|_2^2+\frac{C(G)} {A \Phi^{2M}}\|\na  \lan  \cc\ran\|_{H^M} ^2 \left( \sum_{|i,j,k|=0}^{M}G^{4+2i}\Phi^{2j}\|\gt n_\nq\|_2^2\right) \\
\leq & \sum_{|i,j,k|=0}^{M}\frac{G^{4+2i}}{12A}\Phi^{2j}\ \|\na \mH\|_2^2\ +\ \frac{C(G)}{A  \Phi^{ 2M}}\  \|\na\lan \cc\ran\|_{H^{M}}^2\FM. 
\end{align} 

Combining \eqref{T2_NL_1234} and \eqref{T_21}, \eqref{T_22}, \eqref{T_23}, we obtain the estimate \eqref{T_n_2_NL}.
\ifx
\myb{Now we choose the $\delta_\dagger$ small such that $\delta_\dagger\leq \frac{1}{8}\delta_d$, then
\begin{align}
|T_{24}^{NL;R}|
\leq &\sum_{|i,j,k|=0}^{M}\frac{G^{4+2i}\Phi^{2j}}{12A}\|\na \palt n_\nq\|_2^2
 + \frac{1}{A^{1/3}} C(\delta_d^{-1})\|\lan n\ran\|_{H^M}^2 e^{-\frac{\delta_d (t_\star+t)}{8A^{1/3}}}\min\left\{ \ep_I^2 F_M, \ \|\cc_{\text{in};\nq}\|_{H^{M+1}}^2 \right\} \\ & 
  +\frac{1}{A^{1/3}}C( G, \delta_d^{-1})e^{-\frac{\delta_d (t_\star+t)}{8A^{1/3}}}\|\cc_{\text{in};\nq}\|_{H^{M+1}}^2 F_M.
\end{align}}\fi

\noindent 
{\bf Proof of \eqref{T_n_3_NL}:}
Now we estimate the left hand side of \eqref{T_n_3_NL}. We decompose it as three terms 
\begin{align} \label{TNL_3i} 
-\sum_{|i,j,k|=0}^{M}\frac{G^{4+2i} }{A}\Phi^{2j}\int \mH   \ [\Gamma_y^j,\pa_y]\pa_x^i\pa_z^k(n\na c_\nq +n\na d_\nq+n_\nq\na\lan \cc\ran)dV=:\sum_{\ell= 4}^6T_{\ell}.
\end{align}
The $T_{4}$ term can be estimated with the commutator relation \eqref{cm_yt_j_y}, the regularity estimate \eqref{Gld_Reg_grd}, and the product estimate \eqref{Prd_est_gld} as follows:
 \begin{align}
| T_{4}|=&\bigg|\sum_{|i,j,k|=0}^{M}\frac{G^{4+2i}}{A}\Phi^{2j}\sum_{\ell=0}^{j-1}\binom{j} {\ell}\int \mH \ B^{(j-\ell+1)}\pa_x\Gamma_y^{\ell}\pa_x^{i}\pa_z^k(n \na c_\nq)dV\bigg|\\
\leq&\frac{C(G)}{ A}\sum_{|i,j,k|=0}^{M}G^{4+2i}\Phi^{2 j}\ \|  \mH\|_2  \ \myr{(t_r+\tau)}\ \sum_{\ell=0}^{j-1} \|\Gamma_{y;t}^{{(i+1)}\ell k}( n \na c_\nq)\|_2.
\end{align}
To estimate the last factor, we apply the product estimate \eqref{Prd_est_gld} and the fact that $\|f_\nq \|_2\leq \|\pa_x f_\nq\|_2$ to derive the following general estimate: 
\begin{align}
&\mathbbm{1}_{|i,j,k|\leq M}\sum_{\ell =0}^{ j-1 }\|\Gamma_{y;t}^{(i+1)\ell k} (n \na f_\nq)\|_2\leq\mathbbm{1}_{|i,j,k|\leq M}\sum_{\ell =0}^{ j-1 }\lf(\|\Gamma_{y;t}^{i \ell k } (\pa_x n_\nq \na f_\nq)\|_2+\| \Gamma_{y;t}^{i\ell k} (n \pa_x \na f_\nq)\|_2\rg)\\
 &\leq C\left(\sum_{|i,j,k|=0}^{M-1}\|\gt \pa_x n_\nq\|_2\right)\left(\sum_{|i,j,k|=0}^{M-1}\|\gt \na  f_\nq\|_2\right)+C\left(\sum_{|i,j,k|=0}^{M-1}\|\gt n\|_2\right)\left(\sum_{|i,j,k|= 0}^{M-1}\|\gt \pa_x \na f_\nq\|_2\right)\\
&\leq C\left(\sum_{|i,j,k|=0}^{M}\|\gt  n_\nq\|_2+\|\nz\|_{H^{M-1}}\right)\left(\sum_{|i,j,k|=0}^{M -1}\| \Gamma_{y;t}^{(i+1)jk}  \na  f_\nq\|_2\right). 
\end{align}
Now we invoke the gradient estimate \eqref{Gld_Reg_grd} to derive the following
\begin{align}\label{n_naf_est} \mathbbm{1}_{|i,j,k|\leq M}\sum_{\ell =0}^{ j-1 }\|\Gamma_{y;t}^{(i+1)\ell k} (n \na f_\nq)\|_2\leq C\left(\sum_{|i,j,k|=0}^{M}\|\gt  n_\nq\|_2+\|\nz\|_{H^{M-1}}\right)\left(\sum_{|i,j,k|=0}^{M }\| \Gamma_{y;t}^{(i+1)jk}  f_\nq\|_2\right)\myr{(1+ t)}.
\end{align}
\myc{Here we rewrite $\|\Gamma_{y;t}^{(i+1)jk}\na f_\nq\|_2$ as $\|\Gamma_{y;t}^{ i jk}\na (\pa_xf_\nq)\|_2$. Then we apply the result \eqref{Gld_Reg_grd} to obtain that 
\begin{align*}
\sum_{|i,j,k|=0}^{M-1}\lf\|\Gamma_{y;t}^{ijk}\na(\pa_x f_\nq)\rg\|_{L^2}
\leq& \sum_{|i,j,k|=0}^{ M}\lf\|\Gamma_{y;t}^{ijk}(\pa_x f_\nq)\rg\|_{L^2}+C t  \sum_{ |i,j,k|=0}^ { M-1} \lf \|  \Gamma_{y;t}^{(i+1)jk} (\pa_x f_\nq)\rg\|_{L^2}\\
\leq & C(1+t)\sum_{|i,j,k|=0}^{M}   \lf \|  \Gamma_{y;t}^{(i+1)jk}   f_\nq \rg\|_{L^2}.
\end{align*}
}
Combining the estimate \eqref{n_naf_est}, the bound $(\frac{1+t}{A^{1/3}})^4\leq C\Phi^{-2}$ \eqref{varphi}, and the definition of $\FM$ \eqref{F_M} yields the bound 
\begin{align}
\label{T_31}  |T_{4}|
\leq& \sum_{|i,j,k|=0}^{M}\frac{G^{4+2i}}{A^{1/2}}\Phi^{2j}\|  \mH\|_2^2+\frac{C( G )\myr{(1+t)^4}}{A^{{3/2}+2/3} \Phi^{4M+2}}\bigg(\sum_{|i,j,k|=0}^ {M}G^{2i}\Phi^{2j\myr{+2}}A^{2/3}\|\Gamma_{y;t}^{(i+1)jk} c_\nq\|_2^2\bigg) \\
&\qquad\qquad\qquad\qquad\qquad\qquad\times \bigg( \sum_{|i,j,k|=0}^{M}G^{4+2i}\Phi^{2j}\|  \gt n_\nq\|_2^2+\Phi^{2M}\|\nz\|_{H^M}^2\bigg)\\
\leq& \sum_{|i,j,k|=0}^{M}\frac{G^{4+2i}}{A^{1/2}}\Phi^{2j}\|  \mH\|_2^2+\frac{ C(  G   )}{A^{5/6} \Phi^{4M+4}}\lf(\FM\rg)^2+\frac{C( G)}{A^{5/6}\Phi^{2M\myr{+ 4 }}}\ \FM\ \|\lan n\ran\|_{H^M}^2 .
 \end{align}

\myc{Next we recall the commutator relation \eqref{cm_yt_j_y}, the product estimate \eqref{Prd_est_gld} (?), the regularity of function gradient \eqref{Gld_Reg_grd}, and estimate as follows
\begin{align} \qquad
&\bigg|\sum_{|i,j,k|=0}^{M}\frac{G^{4+2i}}{A}\Phi^{2j}\sum_{\ell=0}^{j-1}\left(\begin{array}{rr}j\\ \ell\end{array}\right)\int \mH \ B^{(j-\ell+1)}\pa_x \Gamma_y^{\ell}\pa_x^{i}\pa_z^k(\lan n\ran \na c_\nq)dV\bigg|\\
\leq&C \sum_{|i,j,k|=0}^{M}\frac{  G^{4+2i}\Phi^{j }}{ A}\sum_{\ell=0}^{j-1}\ \Phi^{j} \|  \mH\|_2\ \myr{(t_r+\tau)} \ \|\Gamma_y^{\ell}\pa_z^k(\lan n\ran \na \pa_{x}^{i+1}c_\nq)\|_2\\
\leq& \sum_{|i,j,k|=0}^{M}\frac{G^{4+2i}}{A^{1/2}}\Phi^{2j}\|  \mH\|_2^2+\frac{C( G)}{\Phi^{2M\myr{+2} }}\sum_{|i,j,k|=0}^{M}\frac{ (t_r+\tau)^4G^{2i+2}\Phi^{2j\myr{+2}}}{A^{{3/2}+2/3}}\ \lf(A^{2/3}\|\pa_x^{i+1}\Gamma_y^j\pa_z^k c_\nq\|_2^2\rg)\ \|\lan n\ran\|_{H^M}^2\\
\leq& \sum_{|i,j,k|=0}^{M}\frac{G^{4+2i}}{A^{1/2}}\Phi^{2j}\|  \mH\|_2^2+\frac{C( G)}{A^{5/6}\Phi^{2M\myr{+ 4 }}}\ \FM\ \|\lan n\ran\|_{H^M}^2.
\end{align}}
Now we recall the estimates $\eqref{n_naf_est}_{f_\nq=d_\nq}$, \eqref{Gld_Reg_grd}, the enhanced dissipation \eqref{palt_na_d_nq}, and obtain the bound
\begin{align}\label{T_32}\quad
|T_{5}|\leq& \sum_{|i,j,k|=0}^{M}\frac{G^{4+2i}}{A^{1/3}G^2}\Phi^{2j}\|  \mH\|_2^2e^{-\frac{\delta_{\mathcal Z}}{A^{1/3}} \tau}  +\frac{C( G )\myr{(1+t)^4}}{A^{5/3} \Phi^{4M+2}}\bigg(\sum_{|i,j,k|=0}^ {M+1}G^{2i}\Phi^{2j} \|\Gamma_{y;t}^{ijk} S_{t_r}^{t_r+\tau}\cc_\nq\|_2^2\bigg)e^{\frac{\delta_{\mathcal Z}}{A^{1/3}} \tau} \\
&\qquad\qquad\qquad\qquad\qquad\qquad\times \bigg( \sum_{|i,j,k|=0}^{M}G^{4+2i}\Phi^{2j}\|  \gt n_\nq\|_2^2+\Phi^{2M}\|\nz\|_{H^M}^2\bigg)\\
\leq &\sum_{|i,j,k|=0}^{M}\frac{G^{4+2i}\Phi^ {2j}}{G^2A^{1/3}}\| \mH\|_{L^2}^2 \exp\lf\{-\frac{\delta_{\mathcal Z}}{A^{1/3}} \tau\rg\} \\
\quad&+\frac{C(G) \exp\lf\{-\frac{\delta_{\mathcal Z}}{A^{1/3}} \tau\rg\}}{Q^2A^{1/3}\Phi^{2M+4}}\ \lf(Q^2\sum_{|i,j,k|=0}^{M+1}G^{2i}\Phi(t_r)^{2j}\|\Gamma_{y;t_r}^{ijk}\cc_{\neq}(t_r)\|_{L^{2}}^2\rg) \lf(\Phi^{-2M}\FM +\|\lan n\ran\|_{H^M}^2\rg).
\end{align}

Similar to the estimate of $T_4$, we estimate the $T_{6}$ as follows{}
\begin{align}\label{T_33} \qquad
|T_{6}|
\leq&C\sum_{|i,j,k|=0}^{M}\frac{  G^{4+2i}}{A}\Phi^{2j}\|\mH\|_2 \sum_{\ell=0}^{j-1}\binom{j}{\ell}\ \myr{(t_r+\tau)}\ \|\pa_x^{i+1}\Gamma _y^\ell \pa_z^k(n_\nq \na \lan \cc\ran )\|_2\\
\leq&\sum_{|i,j,k|=0}^{M}\frac{G^{4+2i}}{A^{1/2}}\Phi^{2j}\|\mH\|_2^2+ \frac{C }{A^{3/2}}(t_r+\tau)^2\Phi^{-2M}\left(\sum_{|i,j,k|=0}^{M}G^{4+2i}\Phi^{2j}\|\gt n_\nq\|_2^2\right)\|\na \lan\cc\ran\|_{H^{M}}^2\\
\leq&\sum_{|i,j,k|=0}^{M}\frac{G^{4+2i}}{A^{1/2}}\Phi^{2j}\|\mH\|_2^2+\frac{C( G)}{A^{5/6} \Phi^{2M+2}}\ \lf(\sum_{|i,j,k|=0}^{M} G^{4+2i} \Phi^{2j}\|\gt n_\nq\|_2^2\rg)\ \norm{\na \lan\cc\ran }_{\myr{H^{M}}}^2.&
\end{align}
\ifx
 \myr{last line previous\begin{align}
&+\frac{CG^{6+2M} }{A^{1/3}}\frac{(t_r+\tau)^4}{A^{4/3}} e^{-\frac{\delta_{\mathcal Z}}{A^{1/3}} \tau}\ \Phi^{-2M-2}\lf(\sum_{|i,j,k|=0}^{M+1}G^{2i}\Phi(t_r)^{2j}\|\pa_x^i\Gamma_{y;t_r}^j\pa_z^k\cc_{\neq}(t_r)\|_{L^{2}}^2\rg) \lf(\Phi^{-2M}\FM\rg) \\
&+\myr{\frac{CG^{6+2M} }{A^{1/3}} \frac{(t_r+\tau)^4}{A^{4/3}} e^{-\frac{\delta_{\mathcal Z}}{A^{1/3}} \tau}\ \Phi^{-2M-2}\lf(\sum_{|i,j,k|=0}^{M+1}G^{2i}\Phi(t_r)^{2j}\|\pa_x^i\Gamma_{y;t_r}^j\pa_z^k\cc_{\neq}(t_r)\|_{L^{2}}^2\rg) \|\lan n\ran\|_{H^M}^2}.
\end{align}}\fi
Combining \eqref{T_31}, \eqref{T_32}, \eqref{T_33} and the decomposition \eqref{TNL_3i}, we obtain the estimate \eqref{T_n_3_NL}. This concludes the proof.
\ifx
\footnote{\bf\color{red} To extend it to all initial data, we need to track the time for the size of $n_\nq, d_\nq$ impact the dynamics. Maybe on the time interval $[0, A^{1/3+\te}]$, it is OK. The observation is the following. Even though the $d_\nq$ will have nontrivial impact on the $\lan n\ran$, but the size of $\lan n\ran$ will take more time to influence the $\na \lan c\ran$ due to the linear integration formula. So on time interval $[0, A^{1/3+\ep}]$, $\|\na \lan c\ran\|_2\approx C(n_{in}, c_{in})$. Then we can choose the $G$ to be depending on the initial data to control it. In this case, we have no dependence of $G$ on the $C_{\lan c\ran}$?? Also, if $\lan n\ran \sim t^3/A$, but if we plug it into the equation for $n_\nq$, it might be fine. Because we see expressions like $\frac{\varphi^j\|\na \cc _{in}\|^4t^2}{A}\frac{t^{3}}{A}e^{-t\delta/A^{1/3}}+\frac{\|\lan n_{in}\ran\|^2\|\na \cc_{in}\|^2  t^2}{A}e^{-\delta t/A^{1/3}}$. But there might be $\|(\pa_y^t)^M n_\nq\|_2$ here?}\fi
\end{proof}

\begin{proof}[Proof of Lemma \ref{lem:est_ctrm}]
Application of the estimates in Lemma \ref{lem:cm_L_trm} yield that,  
\begin{align} 
&\sum_{|i,j,k|=0}^{M+1}(A^{2/3}\mathbbm{1}_{j\leq M}+\mathbbm{1}_{j=M+1})\frac{G^{2i}\Phi^{2j+2}}{A}\bigg|\int \mf H_{ijk}\ [\Gamma_{y;t}^j ,\pa_{yy}]\pa_x^i\pa_z^k c_\nq^{t_r} dV\bigg|\\
& \leq   \frac{A^{2/3}}{8A}\sum_{\substack{|i,j,k|\leq M+1\\ j\leq M}}G^{2i}\Phi^{2j+2}\|\na \mf H_{ijk}\|_{L^2}^2+ \frac{1}{8A}\Phi^{2M+4}\|\na \mf H_{0(M+1)0}\|_{L^2}^2\\
&\quad+\sum_{\substack{|i,j,k|\leq M+1\\ \myr{j\leq M}}}A^{2/3}\left(\frac{C\Phi}{G^2 A^{1/3} }+\frac{C}{G(A^{2/3}+t^2)}\right)G^{2i}\Phi^{2j+2}(\| \mf H_{ijk}\|_{L^2}^2+\| \gt c_\nq^{t_r}\|_{L^2}^2)\\
&\quad +\left(\frac{C\Phi}{G^2 A^{1/3} }+\frac{C}{G(A^{2/3}+t^2)}\right)\Phi^{2M+4}\| \mf H_{0(M+1)0}\|_{L^2}^2\\
&\quad+\sum_{\substack{|i,j,k|\leq M+1\\ j\leq M}}\left(\frac{C\Phi}{G^2 A^{1/3} }+\frac{C}{G(A^{2/3}+t^2)}\right)G^{2i}\Phi^{2j+2}\| \gt c_\nq^{t_r}\|_{L^2}^2\\
\leq&\frac{1}{8A}\sum_{|i,j,k|=0}^{M+1}(A^{2/3}\mathbbm{1}_{j\leq M}+\mathbbm{1}_{j=M+1}) {G^{2i}\Phi^{2j+2}} \|\na \mf H_{ijk}\|_{L^2}^2\\
&\quad+  \left(\frac{C\Phi}{G^2 A^{1/3}}+\frac{C}{G(A^{2/3}+t^2)}\right)\sum_{|i,j,k|=0}^{M+1}(A^{2/3}\mathbbm{1}_{j\leq M}+\mathbbm{1}_{j=M+1}) {G^{2i}\Phi^{2j+2}}(\|\mf H_{ijk}\|_{L^2}^2+\|\gt c_\nq^{t_r}\|_{L^2}^2).  
\end{align}  This is consistent with \eqref{T_c_com}.

Now we  focus on \eqref{est_c_term}. We distinguish between two cases, $j=M+1$ and $j\leq M$. In the first case, we recall the definitions of $\Phi(t_r+\tau)=(1+(t_r+\tau)^3/A)^{-1}$ \eqref{varphi} and $\FM$ \eqref{F_M}, and observe  that no $\pa_x$ is present ($i=0$). Hence, the left hand side of \eqref{est_c_term} can be estimated as follows 
\begin{align}\label{T_c_j=M+1}\quad \
&\bigg|\frac{\Phi^{\myr{2M+4}}}{A}\int  \mathfrak{H}_{0(M+1)0}\ \Gamma_y^{M+1} n_\nq dV\bigg| 
= \bigg|- \frac{\Phi^{2M+4}}{A}\int (\pa_y+B^{(1)}\pa_x) \mathfrak{H}_{0(M+1)0} \ \Gamma_y^M n_\nq dV\bigg| \\
&\leq\frac{\Phi^{\myr{2M+4}}}{12A}\|\na \mathfrak{H}_{0(M+1)0}\|_2^2+\frac{C  \Phi^4 }{G^4A^{1/3}}\frac{(t_r+\tau)^2}{A^{2/3}}(G^4\Phi^{2M}\|\Gamma_y^M n_\nq\|_2^2)
\leq \frac{\Phi^{\myr{2M+4}}}{12A}\|\na  \mathfrak{H}_{0(M+1)0}\|_2^2+\frac{C\Phi^3}{G^4 A^{1/3}}\FM.
\end{align} 
\ifx\myr{Previous: \begin{align}
|\mathbbm{1}_{j=M+1}T_{c;2}^{NL;R}|=&\bigg|\frac{\Phi^{\myr{2M+4}}}{A}\int \Gamma_y^{M+1}c_\nq\ \Gamma_y^{M+1} n_\nq dV\bigg| 
= \bigg|- \frac{\Phi^{2M+4}}{A}\int (\pa_y+B^{(1)}\pa_x)\Gamma_y^{M+1}c_\nq \ \Gamma_y^M n_\nq dV\bigg| \\
\leq&\frac{\Phi^{\myr{2M+4}}}{12A}\|\na \Gamma_y^{M+1}c_\nq\|_2^2+\frac{C  \Phi^4 }{G^4A^{1/3}}\frac{(t_r+\tau)^2}{A^{2/3}}(G^4\Phi^{2M}\|\Gamma_y^M n_\nq\|_2^2)\\
\leq&\frac{\Phi^{\myr{2M+4}}}{12A}\|\na \Gamma_y^{M+1}c_\nq\|_2^2+\frac{C\Phi^3}{G^4 A^{1/3}}\FM.  
\end{align} }\fi
In the $j\leq M$ case, we invoke the definitions of $\Phi$ \eqref{varphi}, $\FM$ \eqref{F_M}, and distinguish between the $i=0$ and $i\neq 0$ cases. As a result, the left hand side of \eqref{est_c_term} is bounded as follows 
\begin{align}\label{T_c_j_leq_M}
\bigg|\sum_{|i,j,k|=0}^{M+1}&\mathbbm{1}_{j< M+1}\frac{G^{2i }\Phi^{2j\myr{+2}}}{A^{1/3}}\int\bigg(\mathbbm{1}_{i\neq 0}\,\pa_x\mH \,\pa_x^{i-1}\Gamma_y^j\pa_z^k n_\nq+\mathbbm{1}_{i=0}\,\pa_z\mH\, \pa_x^i\Gamma_y^j\pa_z^{k-1} n_\nq\bigg)dV\bigg| \\
 \leq &\sum_{\substack{|i,j,k|\leq M+1\\ j< M+1}}\frac{G^{2i }\Phi^{2j\myr{+2}}}{12A^{1/3}}\|\na \mH\|_2^2\\
 &+\frac{C}{G^2A^{1/3}}\myr{\Phi^2}\sum_{\substack{|i,j,k|\leq M+1\\ j< M+1}}\left(G^{4+2(i-1)}\Phi^{2j}\|\pa_x^{i-1}\Gamma_y^j\pa_z^{k} n_\nq\|_2^2+G^{4 +2i}\Phi^{2j}\|\pa_x^{i}\Gamma_y^j\pa_z^{k-1} n_\nq\|_2^2\right)\\
 \leq &\sum_{|i,j,k|=0}^{ M+1}\frac{G^{2i }\Phi^{2j\myr{+2}}}{12A}A^{2/3}\mathbbm{1}_{j< M+1}\|\na \mH\|_2^2+\frac{C\myr{\Phi^2}}{G^2A^{1/3}} \FM. 
\end{align}
Combining \eqref{T_c_j=M+1} and \eqref{T_c_j_leq_M} yields \eqref{est_c_term}.\ifx
\myr{Previous:\begin{align}
|\mathbbm{1}&_{j< M+1}T_{c;2}^{NL;R}|
\\=&\bigg|\sum_{|i,j,k|=0}^{M+1}\mathbbm{1}_{j< M+1} \frac{G^{2i }\Phi^{2j\myr{+2}}A^{2/3}}{A}\int \palt c_\nq \ \palt n_\nq dV\bigg|\\
=&\bigg|\sum_{|i,j,k|=0}^{M+1}\mathbbm{1}_{j< M+1}\frac{G^{2i }\Phi^{2j\myr{+2}}}{A^{1/3}}\int\bigg(\mathbbm{1}_{i\neq 0}\,\pa_x^{i+1}\Gamma_y^j\pa_z^k c_\nq \,\pa_x^{i-1}\Gamma_y^j\pa_z^k n_\nq+\mathbbm{1}_{i=0}\,\pa_x^i\Gamma_y^j\pa_z^{k+1}c_\nq\, \pa_x^i\Gamma_y^j\pa_z^{k-1} n_\nq\bigg)dV\bigg| \\
 \leq &\sum_{\substack{|i,j,k|\leq M+1\\ j< M+1}}\frac{G^{2i }\Phi^{2j\myr{+2}}}{12A^{1/3}}\|\na (\palt c_\nq)\|_2^2\\
 &+\frac{C}{G^2A^{1/3}}\myr{\Phi^2}\sum_{\substack{|i,j,k|\leq M+1\\ j< M+1}}\left(G^{4+2(i-1)}\Phi^{2j}\|\pa_x^{i-1}\Gamma_y^j\pa_z^{k} n_\nq\|_2^2+G^{4 +2i}\Phi^{2j}\|\pa_x^{i}\Gamma_y^j\pa_z^{k-1} n_\nq\|_2^2\right)\\
 \leq & \frac{1}{12}D_c+\frac{C}{G^2A^{1/3}}\myr{\Phi^2} \FM. 
\end{align}}\fi
\myc{ There is an implicit choice of $G$ here. But since the constant in the second term depends only on $M$, the choice of $G$ is still independent of the data $n,\cc$.}
\end{proof}
\ifx
Combining all the estimates we developed so far (\eqref{TNL_123_vf}, \eqref{T2_NLR_1234}, \eqref{T_NLR_21}, \eqref{T_22R}, \eqref{T_23R}, \eqref{T_24R}, \eqref{T_31R}, \eqref{T_32R}, \eqref{T_33R}, \eqref{T_34R}, \eqref{T_c_com_n}, \eqref{T_c_com},   \eqref{T_c_j=M+1}, \eqref{T_c_j_leq_M}), we have obtained \eqref{F_M_reg}.

We can take the $G$ large compared to $M,\ \|u_y\|_{L^\infty_t W_y^{M+1, \infty}}, \ \delta_d^{-1}$, then $\ep_I$  large compared to $G, \ M,\|u_y\|_{L^\infty_t W_y^{M+1, \infty}},\ \delta_d^{-1}$ and $\|\lan n\ran \|_{H^{M}}\leq 4C_{\lan n\ran ; H^M}$, and finally take $\|\cc_{\text{in};\nq}\|_{H^{M+1}}^{-1}, A$  large enough compared to all the previous chosen or specified constants, and  bootstrap constants in \eqref{Hypotheses} so that 
\begin{align}
F_M(t_\star+t)\leq 2F_M(t_\star),\quad \forall t\in[0, \delta_{\mathcal Z}^{-1}A^{1/3}]. 
\end{align} \fi
\subsection{The Decay Estimates of the Remainder}
\label{Sec:NL_EDdc} In this subsection, we fixed an arbitrary time $t_\star\in [t_r, T_\star-\delta^{-1}A^{1/3}]$ and estimate the deviation between the solutions to the system \eqref{ppPKS_neq} and the passive scalar solutions $\lf(S_{t_\star}^{t_\star+\tau}\lf(\Gamma_{y;t_\star}^{ijk} n_\nq\rg),\, S_{t_\star}^{t_\star+\tau}\lf(\Gamma_{y;t_\star}^{ijk}  c_\nq^{t_r}\rg)\rg)$ initiated from time $t_\star$ \myr{on the time interval $t_\star+\tau\in [t_\star, t_\star+\delta^{-1}A^{1/3}]\subset [t_r, T_\star]$}. To this end, we define the variations and the functional that measures them 
\begin{align}
\mathbb{V}_{ijk}^{n}(t_\star+\tau):=&  \Gamma_{y;t_\star+\tau}^{ijk} n_\nq-S_{t_\star}^{t_\star+\tau}\lf( \Gamma_{y;t_\star}^{ijk} n_\nq(t_\star)\rg);\label{V_n_ijk}\\
\mathbb{V}_{ijk}^c(t_\star+\tau):=&\Gamma_{y;t_\star+\tau}^{ijk} c_\nq^{t_r}- \myr{S_{t_\star}^{t_\star+\tau}\lf(\Gamma_{y;t_\star}^{ijk} c_\nq^{t_r}(t_\star-t_r)\rg)};\label{V_c_ijk}\\
\mathbb{D}_{G,Q}^{t_r;M}[t_\star+\tau]:=&\sum_{|i,j,k|=0}^{M}G^{4+2i}\Phi^{2j} \lf\|\mathbb V_{ijk}^n\rg\|_2^2+\sum_{ |i,j,k|=0}^{ M+1}G^{2i}\lf(A^{2/3}\mathbbm{1}_{j\leq M}+\mathbbm{1}_{j=M+1}\rg)\Phi^{2j+2}\lf\|\mathbb{V}_{ijk}^c\rg\|_2^2.\label{D_M}
\end{align}
Here the parameter $\tau$ is the time increment from $t_\star$ (instead of $t_r$). Since the argument in the function $c_\nq^{t_r}$ is the time increment from the reference time $t_r$, we have to write $c_\nq^{t_r}(t_\star-t_r)$ when invoking the function value at time $t_\star$.  
 \myc{We have to be careful when we use the $c_\nq^{t_r}$. We have to specify the reference time. The argument is the increment with respect to the reference time. Since we cannot use $\tau$ here, we have to use $t_\star-t_r$. }
To estimate the quantity $\mathbb{D}_{G, Q}^{t_r;M}$  \eqref{D_M}, we write down the equations of the variations $\mathbb{V}_{ijk}^{n}\ \eqref{V_n_ijk}, \, \mathbb{V}_{ijk}^{c}$ \eqref{V_c_ijk}:
\begin{subequations}\label{V_eq}
\begin{align}\label{Vn_eq}
\pa_\tau 
\mathbb{V}_{ijk}^{n}+u_A(t_\star+\tau,y)\pa_x
\mathbb{V}_{ijk}^{n}-\frac{1}{A}\de\mathbb{V}_{ijk}^{n}
=& \frac{1}{A}[\Gamma_y^j, \pa_{yy}]\pa_x^i\pa_z^k n_\nq-\frac{1}{A}\gt\lf( \na\cdot(n\na \cc)\rg)_\nq,\\
\label{Vc_eq}\pa_\tau 
\mathbb{V}_{ijk}^{c}+u_A(t_\star+\tau,y)\pa_x
\mathbb{V}_{ijk}^{c}-\frac{1}{A}\de\mathbb{V}_{ijk}^{c}
=& \frac{1}{A}[\Gamma_y^j, \pa_{yy}]\pa_x^i\pa_z^k c_\nq+\frac{1}{A}\gt n_\nq\\
\mathbb{V}_{ijk}^n(\tau=0)=\mathbb{V}_{ijk}^c(\tau=0)=&0,\quad [t_\star, t_\star+\tau]\subset[t_\star,t_\star+\delta^{-1}A^{1/3}].
\end{align}\myr{Here $\delta$ is defined in \eqref{Chc_del} and the current time is $t=t_\star+\tau$.}
\end{subequations}
The estimate of the functional $\DD$ is collected in the next theorem. 
\begin{theorem}\label{thm:DD_est} There exist constants $C$ and $C_G$ such that the following estimate holds
\begin{align}\label{DD_est}\qquad
\frac{d}{d\tau}&\mathbb{D}_{G,Q}^{t_r;M} [t_\star+\tau,
\mathbb{V}_{ijk}^{n},\mathbb{V}_{ijk}^{c}]\\
\leq&   \frac{C}{  {G}}  \left(\frac{\Phi}{G A^{1/3}}+\myr{\frac{G}{A^{1/2}}}+\frac{1}{A^{2/3}+(t_\star+\tau)^2}+\frac{1}{A^{1/3}}\exp\lf\{-\frac{\delta_{\mathcal Z}}{4A^{1/3}} (t_\star+\tau-\myr{t_r})\rg\}\right) \mathbb{D}_{G,Q}^{t_r;M} \\
& +\frac{C}{  {G}}  \left(\frac{\Phi}{G A^{1/3}}+\frac{1}{A^{2/3}+(t_\star+\tau)^2}\right)\FM\\ 
&+ \frac{C_{G}}{A^{1/3}}\ \frac{\FM}{A^{1/2}\Phi^{ 4M+4}}\ \bigg(1+{\myc{??}}\FM+ \|\lan n\ran\|_{H^M}^2+ \|\na\lan   \cc\ran\|_{ H^{M}}^2  \bigg)
\\ &+\frac{C_G}{A^{1/3}}\frac{\exp\lf\{-\frac{\delta_{\mathcal Z}}{A^{1/3}} (t_\star+\tau-t_r)\rg\}}{ \Phi^{ 4M+4}}\ \lf(\sum_{|i,j,k|=0}^{ M+1}G^{2i}\Phi(t_r)^{2j}\|\Gamma_{y;t_r}^{ijk}\cc_{\neq}(t_r)\|_{L^{2}}^2\rg) \lf(\FM+\|\lan n\ran\|_{H^M}^2\rg).
\end{align}
Here $\Phi=\Phi(t_\star+\tau), \, \FM=\FM[t_\star+\tau],\, \mathbb{D}_{G,Q}^{t_r;M}=\mathbb{D}_{G,Q}^{t_r;M}  [t_\star+\tau]   $. 
Here the constant $C$ only depends on $M, \,\|u_A\|_{L^\infty_tW^{M+3,\infty}}$ and $C_G$ only depends on $G,\ M, \,\|u_A\|_{L^\infty_tW^{M+3,\infty}}$. 
\end{theorem}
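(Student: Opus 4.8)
The plan is to run essentially the same energy-estimate scheme used to prove Theorem \ref{thm:F_M_reg}, but now applied to the difference functional $\DD$ built from the variations $\VV^n,\,\VV^c$ which satisfy the homogeneous-data equations \eqref{V_eq}. The starting point is to differentiate $\mathbb{D}_{G,Q}^{t_r;M}$ in $\tau$. Since $\VV^n_{ijk}$ and $\VV^c_{ijk}$ solve passive-scalar equations forced by (i) the commutator $\frac1A[\Gamma_y^j,\pa_{yy}]\pa_x^i\pa_z^k(\cdot)_\nq$ and (ii) the nonlinear term $-\frac1A\gt(\na\cdot(n\na\cc))_\nq$ (respectively $+\frac1A\gt n_\nq$), the time derivative splits into a good dissipation term $-\mathfrak{D}$, a term coming from $\frac{d}{d\tau}\Phi^{2j}$ (which is non-positive since $\Phi'<0$, hence discardable), a linear-commutator contribution, and a nonlinear contribution. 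A subtlety to record carefully: the vector fields here are built from the reference time $t_\star$, i.e.\ $\Gamma_{y;t_\star+\tau}$, so the relevant $B^{(m)}$ in the commutator identity \eqref{cm_yt_j_yy} are $\int_0^\tau\pa_y^m u_A(t_\star+s,y)\,ds$, whose size is $O(\tau)\le O((t_\star+\tau))$; this is exactly what makes the $\Phi(t_\star+\tau)$ weights continue to absorb the growth, just as in Step \#3 of the proof of Theorem \ref{thm:ED_gldrg}.

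The key steps, in order: First, apply Lemma \ref{lem:cm_L_trm} with $\mH=\VV^n_{ijk}$ (resp.\ $\VV^c_{ijk}$) and $\mG_{ijk}=\gt n_\nq$ (resp.\ $\gt c_\nq^{t_r}$) — one checks the compatibility relations \eqref{G_cmp_rl} hold for these choices — to bound the linear commutator terms by $\tfrac18\mathfrak D$ plus $\big(\tfrac{C\Phi}{G^2A^{1/3}}+\tfrac{C}{G(A^{2/3}+(t_\star+\tau)^2)}\big)$ times $\DD$ \emph{and} the same coefficient times $\FM$ (the latter because the $\mG$'s are full solution pieces, not variations). This produces the first two lines of \eqref{DD_est}. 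Second, for the nonlinear forcing $\gt(\na\cdot(n\na\cc))_\nq$ tested against $\na\VV^n_{ijk}$, reuse verbatim the estimates of Lemma \ref{lem:est_nNL}: set $\mH=\VV^n_{ijk}$ in \eqref{T_n_2_NL} and \eqref{T_n_3_NL}. Because $\sum G^{4+2i}\Phi^{2j}\|\na\mH\|_2^2$ is absorbed into $\mathfrak D$, the remaining terms are exactly the $\FM$-quadratic, the $\FM\times(\|\lan n\ran\|_{H^M}^2+\|\na\lan\cc\ran\|_{H^M}^2)$, and the $\exp\{-\delta_{\mathcal Z}\tau_{t_r}/A^{1/3}\}$-weighted $d_\nq$-term, giving lines three and four of \eqref{DD_est}. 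Third, for the $\gt n_\nq$ forcing in the $\VV^c$ equation tested against $\VV^c_{ijk}$, apply the bound \eqref{est_c_term} of Lemma \ref{lem:est_ctrm} with $\mH=\VV^c_{ijk}$; the non-$\mathfrak D$ remainder is $\tfrac{C\Phi^2}{G^2A^{1/3}}\FM$, which folds into the second line. Finally, collect: choose $G$ large compared to $M,\delta_{\mathcal Z}^{-1},\|u_A\|_{L_t^\infty W^{M+3,\infty}}$ so the $\tfrac{1}{G}$ and $\tfrac{1}{G^2}$ factors are harmless, relabel $(t_\star+\tau)$-dependent decay exponents using $t_\star+\tau-t_r\le t_\star+\tau$ and the already-established linear ED $\|S_{t_r}^{t_\star+\tau}\cc_\nq\|_{\Z^{M+1}}^2\lesssim e^{-\delta_{\mathcal Z}(\tau+t_\star-t_r)/A^{1/3}}(\cdots)$ from \eqref{Gld_Rg_ED}, and arrive at \eqref{DD_est}.

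The main obstacle — and where care is genuinely needed — is bookkeeping the two distinct time origins. The functional $\FM$ and the decomposition $\cc_\nq=c_\nq^{t_r}+S_{t_r}^{t_r+\tau}\cc_\nq$ are anchored at $t_r$, but the variations $\VV$ and the passive semigroup $S_{t_\star}^{t_\star+\tau}$ are anchored at $t_\star\ge t_r$. One must check that the nonlinear-term estimates from Lemma \ref{lem:est_nNL}, which were stated in $\Gamma_{y;t_r+\tau}$ variables, remain valid after the change of vector field to $\Gamma_{y;t_\star+\tau}$ — this is true because those lemmas only used: (a) the commutator relations \eqref{cm_yt_j_y}, \eqref{cm_yt_j_yy}, which hold for $\Gamma_{y;t}$ with \emph{any} reference time; (b) the product estimate \eqref{Prd_est_gld} and gradient estimate \eqref{Gld_Reg_grd}, which are likewise reference-time agnostic; and (c) the crucial inequality $\tfrac{(t_r+\tau)^2}{A^{2/3}}\le C\Phi(t_r+\tau)^{-2/3}$, whose analogue $\tfrac{(t_\star+\tau)^2}{A^{2/3}}\le C\Phi(t_\star+\tau)^{-2/3}$ holds with the same constant since $\Phi$ depends only on its own argument. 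The one place this is delicate is the appearance of $S_{t_r}^{t_r+\tau}\cc_\nq=d_\nq^{t_r}$ inside the nonlinearity: its $\gt$-derivatives (with $\Gamma$ at origin $t_\star$) must still be controlled by the linear ED in \eqref{palt_na_d_nq}; since $d_\nq^{t_r}$ is itself a passive-scalar solution, one applies \eqref{Gld_Rg_ED} with \emph{starting time} $t_\star$, picking up $e^{-\delta_{\mathcal Z}\tau/A^{1/3}}$ from $t_\star$, times the already-gained $e^{-\delta_{\mathcal Z}(t_\star-t_r)/A^{1/3}}$ factor from $t_r$ to $t_\star$ — their product is $e^{-\delta_{\mathcal Z}(t_\star+\tau-t_r)/A^{1/3}}$, which is precisely the exponent in the last line of \eqref{DD_est}. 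Once this synchronization is handled, the estimate assembles routinely, with no new analytic input beyond Lemmas \ref{lem:cm_L_trm}, \ref{lem:est_nNL}, \ref{lem:est_ctrm} and Theorem \ref{thm:ED_gldrg}.
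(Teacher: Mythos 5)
Your proposal is correct and follows essentially the same route as the paper: differentiate $\DD$, use the variation equations \eqref{V_eq}, absorb the gradient terms into the dissipation, and bound the commutator and nonlinear forcings by applying Lemma \ref{lem:cm_L_trm} together with Lemma \ref{lem:est_nNL} (with $\mH=\VV^n_{ijk}$) and Lemma \ref{lem:est_ctrm} (with $\mH=\VV^c_{ijk}$), exactly as in the paper's estimates \eqref{T_nc1_NLD}--\eqref{T_c2_NLD}. Your extra care about the two time origins $t_r$ versus $t_\star$ and the resulting exponent $e^{-\delta_{\mathcal Z}(t_\star+\tau-t_r)/A^{1/3}}$ is consistent with how the paper states and uses the bound.
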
\begin{proof}
We first decompose the time evolution as follows 
 \begin{align}\label{ddtau_DD}\qquad
\frac{d}{d\tau}\DD=&\frac{d}{d\tau}\lf(\sum_{|i,j,k|=0}^ M\Phi^{2j} G^{4+2i}\|\mathbb{V}_{ijk}^{n}\|_{2}^2+\sum_{|i,j,k|=0}^ { M+1}\Phi^{2j{+2}}G^{2i}\lf(A^{2/3}\mathbbm{1}_{j<M+1}+\mathbbm{1}_{j=M+1}\rg)\|
\mathbb{V}_{ijk}^{c}\|_{2}^2\rg)\\
 =:&\pa D_1+\pa D_2.
\end{align}
Application of the $\VV^n$-equation \eqref{Vn_eq} yields that the $\pa D_1$ term in \eqref{ddtau_DD} can be decomposed as follows
\begin{align}\label{pa_D_1_123}
\frac{1}{2}\pa D_1\leq &-\sum_{|i,j,k|=0}^{M}\frac{G^{4+2i} \Phi^{2j}}{A}\|\na \VV^n\|_2^2+\sum_{|i,j,k|=0}^{M}\frac{G^{4+2i} }{A}\Phi^{2j}\int \VV^n \  [ \Gamma_y^j,\pa_{yy}]\pa_x^i\pa_z^k n_\nq dV\\
&+\sum_{|i,j,k|=0}^{M}\frac{G^{4+2i} }{A}\Phi^{2j}\int \na\VV^n \cdot \palt\lf( n\na  \cc_{\neq}+n_\nq \na \lan \cc\ran\rg) dV\\
&-\sum_{|i,j,k|=0}^{M}\frac{G^{4+2i} }{A}\Phi^{2j}\int \VV^n   \ [\Gamma_y^j,\pa_y]\pa_x^i\pa_z^k\lf(n\na\cc_\nq +n_\nq\na\lan \cc\ran\rg)dV\\
=:&-\mf D_{\mathbb V^n}+T_{n;1}^{NL;D}+T_{n;2}^{NL;D}+T_{n;3}^{NL;D}.
\end{align}
\myc{$\cc_\nq(t_\star+\tau)= c^{t_r}_\nq(t_\star+\tau-t_r)+ S_{t_r}^{t_\star+\tau}\cc_\nq(t_r)$.}
Similarly, the $\pa D_2$-term in \eqref{ddtau_DD} can be decomposed with the equation \eqref{Vc_eq} as follows
\begin{align} \label{pa_D_2_12}
\frac{1}{2}\pa D_2 \leq&\sum_{|i,j,k|=0}^ { M+1}(A^{2/3}\mathbbm{1}_{j<M+1}+\mathbbm{1}_{j=M+1})\bigg(-\frac{\Phi^{2j{+2}}G^{2i}}{A}\|\na  \VV^c\|_2^2\\
&+\frac{G^{2i}\Phi^{2j\myr{+2}}}{A}\int \VV^c\ [\Gamma_y^j ,\pa_{yy}]\pa_x^i\pa_z^k c_\nq dV+\frac{G^{2i}\Phi^{2j\myr{+2}}}{A}\int \VV^c\ \palt n_\nq dV\bigg)\\
=:&-\mf D_{\mathbb{V}^c}+T_{c;1}^{NL;D}+T_{c;2}^{NL;D}.
\end{align}
First of all, we invoke Lemma  \ref{lem:cm_L_trm} and  Lemma \ref{lem:est_ctrm} to show that  the commutator terms $T_{n;1}^{NL;D}$ in \eqref{pa_D_1_123} and $T_{c;1}^{NL;D}$ in \eqref{pa_D_2_12}  are bounded as follows:
\begin{align}\label{T_nc1_NLD}
|T_{n;1}^{NL;D}|+|T_{c;1}^{NL;D}|\leq\frac{1}{8} \mf D_{\mathbb{V}^n}+\frac{1}{8}\mf D_{\mathbb{V}^c}+\lf(\frac{\Phi}{GA^{1/3}}+\frac{1}{A^{2/3}+(t_\star+\tau)^2}\rg)\myr{(\mathbb{D}_{G, Q}^{t_r;M}+\FM)}. 
\end{align}
Now application of Lemma \ref{lem:est_nNL} yields that the $T_{n;2}^{NL;D}+T_{n;3}^{NL;D}$ terms are bounded as follows  
\begin{align}\label{T_n23_NLD}\qquad
&|T_{n;2}^{NL;D}+T_{n;3}^{NL;D}|\\
&\leq  \sum_{|i,j,k|=0}^{M}\frac{G^{4+2i}\Phi^{2j}}{4A}\|\na \mathbb V^n_{ijk}\|_2^2 +\sum_{|i,j,k|=0}^{M}\frac{CG^{4+2i}\Phi^{2j}}{A^{1/3}}\lf(\frac{1}{A^{1/6}}+\frac {\exp\lf\{-\frac{\delta_{\mathcal Z}}{A^{1/3}} (t_\star+\tau-t_r)\rg\}}{G^2}\rg)\|  \mathbb V^n_{ijk}\|_{L^2}^2\\ 
 & \quad +\frac{C(G)}{ A^{5/6}\Phi^{4M+4}}\lf( \FM\rg)^2+\frac{C(G)}{ A^{5/6}\Phi^{2M+4}}(\norm{n}_{H^M}^2+\norm{\na \lan\cc\ran }_{H^{M}}^2)\FM  \\
&\quad+\frac{C(G) \exp\lf\{-\frac{\delta_{\mathcal Z}}{A^{1/3}} (t_\star+\tau-t_r)\rg\}}{A^{1/3}\Phi^{4M+4}}\ \lf(\sum_{|i,j,k|=0}^{M+1}G^{2i}\Phi(t_r)^{2j}\|\Gamma_{y;t_r}^{ijk}\cc_{\neq}(t_r)\|_{L^{2}}^2\rg) \lf(\FM +\|\lan n\ran\|_{H^M}^2\rg).
\end{align}
This is consistent with the estimate \eqref{DD_est}. 

Application of Lemma \ref{lem:est_ctrm} yields that the $|T_{c;2}^{NL;D}|$ is bounded as follows:
\begin{align}\label{T_c2_NLD}
|T_{c;2}^{NL;D}|
\leq &\frac{1}{4}\mf D_{\mathbb{V}^c}+\frac{C\Phi^2}{G^2A^{1/3}}\FM. 
\end{align}
Combining the decomposition \eqref{pa_D_1_123}, \eqref{pa_D_2_12} and the estimates \eqref{T_nc1_NLD}, \eqref{T_n23_NLD}, \eqref{T_c2_NLD}, we obtain \eqref{DD_est}.
\end{proof} 

\subsection{Conclusion}\label{Sec:NL_Con}
In this section, we prove Proposition \ref{Pro:main}, Proposition \ref{Pro:2}, and Proposition \ref{pro:prp_FM}. We start with the proof of Proposition \ref{Pro:main}.
\begin{proof}[Proof of Proposition \ref{Pro:main}]
First of all, we specify the reference time $t_r=0$. To prove the  proposition, we use a bootstrap argument.  Assume that $[0,T_\star]\subset[0, A^{1/3+\te}]$ is the largest interval on which the following hypotheses hold:\begin{subequations}
\begin{align}
\FM[t, n_\nq,\cc_\nq]\leq& 2C_{ED}\ \FM [0,n_\nq,\cc_\nq]\ \exp\lf\{-\frac{2\delta t}{A^{1/3}}\rg\},\quad\forall\ t\in [0, T_\star],\label{HypED_1}\\
\|\lan n\ran\|_{L_t^\infty([0,T_\star];H_{y,z}^M)}\leq& 2\BB_{\lan n\ran;H^M},\quad
\|\na \lan \cc\ran\|_{L_t^\infty([0,T_\star];H_{y,z}^{M})}\leq 2\BB_{\lan \cc\ran;H^{M+1}}.\label{Hyp_<n>_<c>}
\end{align}\end{subequations}
Here,
\begin{align}\label{BB<n><c>}
\BB_{\lan n\ran;H^M}:= 2\|\lan n_{\text{in}}\ran\|_{H^M}+2,\quad\BB_{\lan \cc\ran;H^{M+1}}:=\myr{  {A^{-1/5}}+2\|\na\lan \cc_{\mathrm{in}}\ran\|_{H^{M}}}. 
\end{align}
The parameter $\delta$ is chosen in \eqref{Chc_del}. By the local well-posedness of the equation \eqref{ppPKS} in $H^M,\, M\geq 3$ (Theorem \ref{thm:lcl_exst}), we have that the interval $[0,T_\star]$ is non-empty.  The \underline{\emph{goal}} is to prove that  given the assumption \eqref{chc_GAQ} and the hypotheses \eqref{HypED_1}, \eqref{Hyp_<n>_<c>}, the following stronger estimates hold\begin{subequations}
\begin{align}\label{ConED_1}
\FM[t, n_\nq,\cc_\nq]\leq C_{ED}\ \FM [0,n_\nq,\cc_\nq]& \ \exp\lf\{-\frac{2\delta t}{A^{1/3}}\rg\} \leq  C(\|n_{\mathrm{in};\neq}\|_{H^M}^2+ 1)\exp\lf\{-\frac{2\delta t}{A^{1/3}}\rg\},\quad\forall\ t\in [0, T_\star],\\
\|\lan n\ran\|_{L_t^\infty([0,T_\star];H_{y,z}^M)}\leq& \BB_{\lan n\ran;H^M},\qquad 
\| \na\lan \cc\ran\|_{L_t^\infty([0,T_\star];H_{y,z}^{M})}\leq\BB_{\lan \cc\ran;H^{M+1}}. \label{Con_<n><c>}
\end{align}\end{subequations}
As a consequence of this bootstrap argument and  Theorem \ref{thm:lcl_exst}, the estimates \eqref{ConED},  \eqref{Con_<n>}, \eqref{Con_<grd_c>} hold on the time horizon $[0, A^{1/3+\te}]$. 

First we prove the enhanced dissipation estimate \eqref{ConED_1}. We choose an arbitrary starting time $t_\star\in[0, A^{1/3+\te}]$ and consider a time interval $t=t_\star+\tau\in [t_\star, t_\star+\delta^{-1}A^{1/3}]\subset [0, A^{1/3+\te}]$.  It is enough to derive the regularity estimate \eqref{Rg_est_NL} and the decay estimate \eqref{Decay_est_NL}.

To prove the regularity estimate \eqref{Rg_est_NL}, we invoke Theorem \ref{thm:F_M_reg}. Combining the bootstrap hypothesis \eqref{HypED_1}, \eqref{Hyp_<n>_<c>},  we refine the estimate \eqref{F_M_reg} as follows
\begin{align}\label{ddt_FM_lg}\qquad
\frac{d}{dt}&\FM [t]
\\ \leq&     \frac{ C  }{  {G}A^{1/3}}   \left(\frac{  \Phi(t)}{G}+\myr{\frac{G}{A^{1/6}}}+\frac{A^{1/3}}{A^{2/3}+t^2}+\exp\lf\{-\frac{\delta_{\mathcal Z}}{4A^{1/3}} t\rg\}\right) \FM  \\ 
& {+ \frac{C _{G} }{A^{1/3}}\ \frac{\FM  }{A^{1/2}\Phi ^{ 4M+4}}\ \bigg(1+ C_{ED}\FM[0] + \BB_{\lan n\ran;H^M}^2+ \BB_{ \lan   \cc\ran;H^{M+1}}^2  \bigg)}\\
& +\frac{C_G}{Q^2A^{1/3}}\myr{ \exp\lf\{-\frac{\delta_{\mathcal Z}}{4A^{1/3}} t\rg\}  }\ \underbrace{ \lf(Q^2\sum_{|i,j,k|=0}^ { M+1}  \|\Gamma_{y;0}^{ijk}\cc_{\text{in};\neq}\|_{L^{2}}^2\myr{\exp\lf\{-\frac{\delta_{\mathcal Z} }{2A^{1/3}} t\rg\}}\rg)}_{\leq \FM[t],\quad\text{by } \eqref{F_M}_{t_r=0} } \lf(C_{ED}\FM[0]+\BB_{\lan n\ran;H^M}^2\rg).
\end{align}
\myc{Based on the new definition of $\FM,$ I adjust the last line. }
Note that the choice of $\te$ \eqref{te_M_chc} guarantees that $\frac{1}{A^{1/2 }\Phi^{4M+4}}\bigg|_{t\leq A^{1/3+\te}}\leq \frac{1}{A^{1/4}}$. We take the parameters $A$ and $Q$ large enough \myc{compared to the bootstrap bounds in \eqref{HypED_1}, \eqref{Hyp_<n>_<c>} } such that the following estimate holds 
\begin{align}\label{ddt_FM_smp}\frac{d}{dt}\FM[t]\leq \frac{ C  }{  {G}A^{1/3}}   \left( {  \Phi(t)} +\frac{A^{1/3}}{A^{2/3}+t^2}+\exp\lf\{-\frac{\delta_{\mathcal Z}}{4A^{1/3}} t\rg\}+\frac{1}{A^{1/6}}\right) \FM[t]. 
\end{align}
Now we have that if $G$ is chosen larger than a constant depending only on $M, \, \|u_A\|_{L_t^\infty W^{M+3,\infty}},\ \delta_{\mathcal Z}^{-1}$, the following inequalities hold
\begin{align}  \label{FM_reg_est} \qquad\FM[t_\star+\tau]\leq&\ 2\ \FM[t_\star],\quad \forall \tau\in[0, \delta^{-1}A^{1/3}];\quad
\FM[t]\leq  \ 2\ \FM[0], \quad \forall t\in[0,T_\star)\subset[0, A^{1/3+\te}].
\end{align}
These are the regularity estimates that we are after.

Next we derive the decay estimate \eqref{Decay_est_NL}. To this end we apply Theorem \ref{thm:DD_est}. We observe that the functional $\FM$ can be decomposed as follows:
\begin{align} \label{pf_Decay_st1} \ \
\FM [t_\star+\tau]
\leq &2\DD[t_\star+\tau]+\bigg( 2\sum_{|i,j,k|=0}^{M}G^{2i+4}\Phi(t_\star+\tau)^{2j}\| S_{t_\star}^{t_\star+\tau} (\pa_x^i\Gamma_{y;t_\star}^j\pa_z^k n_\nq(t_\star))\|_2^2\\
&+ 2\sum_{|i,j,k|=0}^{M+1}\lf(A^{2/3}\mathbbm{1}_{j<M+1}+\mathbbm{1}_{j=M+1}\rg)G^{2i}\Phi(t_\star+\tau)^{2j+2}\| S_{t_\star}^{t_\star+\tau} (\Gamma_{y;t_\star}^{ijk} c_\nq^{t_r}(t_\star-t_r))\|_2^2\\
&+ \myr{Q^2}\sum_{|i,j,k|=0}^{M+1}\|\pa_x^i\Gamma_{y;t_r}^j\pa_z^k \cc_{\text{in};\nq}\|^2_{{2} }\myr{\exp\lf\{- \frac{ \delta_{\mathcal Z}}{2A^{1/3}}\tau\rg\}}\bigg)=:2\DD[t_\star+\tau]+\mathcal{R}[t_\star+\tau].
\end{align}
Here we recall that the reference time is $t_r=0$.  \myc{We need to check the $c_\nq^{t_r=T_h}(t_\star-t_r)$ in the proof of the gluing proposition \ref{Pro:2}. } 
We note that thanks to the $L^2$-enhanced dissipation estimate \eqref{ED_tdsh_intr}, as long as we set $\tau=\delta^{-1}A^{1/3}$, with $ \delta\in(0,\delta_{\mathcal Z})$ chosen small enough \eqref{Chc_del} \myc{(Check once)}, we have that 
\begin{align} \label{pf_Decay_st2}
\mathcal{R}[t_\star+\delta^{-1}A^{1/3}]\leq \frac{1}{16}\FM[t_\star]. 
\end{align}
Hence it is enough to estimate the $\DD[t_\star+\tau]{}$-term in  \eqref{pf_Decay_st1}. To this end, we recall the estimate \eqref{DD_est} from Theorem \ref{thm:DD_est}. Since the main portion of the terms in \eqref{DD_est} and \eqref{ddt_FM_lg} are similar, we choose the parameters as in \eqref{ddt_FM_smp} and apply the regularity estimate \eqref{FM_reg_est} to refine the estimate as follows
\begin{align}
 {\frac{d}{d\tau}}&\mathbb{D}_{G,Q}^{t_r;M} [t_\star+\tau]\\
\leq&  \frac{ C  }{  {G}A^{1/3}}   \left( {  \Phi} +\frac{G}{A^{1/6}}+\frac{A^{1/3}}{A^{2/3}+(t_\star+\tau)^2}+\exp\lf\{-\frac{\delta_{\mathcal Z}}{4A^{1/3}} (t_\star+\tau-\myr{t_r})\rg\}\right)\lf(\mathbb{D}_{G,Q}^{t_r;M}[t_\star+\tau]+ \FM[t_\star]\rg) .
\end{align}
\myc{
\begin{align}
\leq&   \frac{C}{  {G}}  \left(\frac{\Phi}{G A^{1/3}}+\frac{G}{A^{1/2}}+\frac{1}{A^{2/3}+(t_\star+\tau)^2}+\frac{1}{A^{1/3}}\exp\lf\{-\frac{\delta_{\mathcal Z}}{4A^{1/3}} (t_\star+\tau-\myr{t_r})\rg\}\right) \mathbb{D}_{G,Q}^{t_r;M}[t_\star+\tau]  \\ 
&+\frac{ C  }{  {G}A^{1/3}}   \left( {  \Phi(t)} +\frac{G}{A^{1/6}}+\frac{A^{1/3}}{A^{2/3}+(t_\star+\tau)^2}+\lf(1+\frac{1}{Q}\rg)\exp\lf\{-\frac{\delta_{\mathcal Z}}{4A^{1/3}} (t_\star+\tau-\myr{t_r})\rg\}\right) \FM[t_\star].
\end{align}}
Now we observe that as long as $G=G(M,\myr{\myc{?}\|u_A\|_{L_t^\infty W^{M+3,\infty}}},\delta_{\mathcal Z}^{-1})$ and $A$ are chosen large enough, we have 
\begin{align} \label{pf_Decay_st3}
&\DD[t_\star+\delta^{-1}A^{1/3}]\\
&\leq\int_0^{\delta^{-1} A^{1/3}} \exp\lf\{\frac{C}{  {G}} \int_{s}^{\delta^{-1} A^{1/3}} \left(\frac{\Phi(t_\star+\tau)}{ A^{1/3}}+\frac{G}{A^{1/2}}+\frac{1}{A^{2/3}+(t_\star+\tau)^2}+\frac{1}{A^{1/3}}\exp\lf\{-\frac{\delta_{\mathcal Z}}{4A^{1/3}} (t_\star+\tau-\myr{t_r})\rg\}\right)d\tau \rg\} \\
&\quad\quad\quad\quad\quad\times\frac{ C  }{  {G}A^{1/3}}   \left( {  \Phi(t_\star+s)} +\frac{G}{A^{1/6}}+\frac{A^{1/3}}{A^{2/3}+(t_\star+s)^2}+\exp\lf\{-\frac{\delta_{\mathcal Z}}{4A^{1/3}} (t_\star+s-\myr{t_r})\rg\}\right)ds \ \FM[t_\star]\\
 &\leq\frac{1}{16}\FM[t_\star].
\end{align}
Combining \eqref{pf_Decay_st1}, \eqref{pf_Decay_st2} and \eqref{pf_Decay_st3}, we have that
\begin{align}\label{FM_dec_est}
\FM [t_\star+\delta^{-1}A^{1/3}]\leq\frac{1}{2}\FM[t_\star]. 
\end{align}
Combining \eqref{FM_reg_est} and \eqref{FM_dec_est}, an argument as in \eqref{ED_argument} yields \eqref{ConED_1}. 

To conclude the proof of \eqref{ConED_1}, we further choose the $\ep$ \eqref{smll_c_nq} to be small enough compared to $Q$ and recall that $G=G(M,\|u_A\|_{L_t^\infty W^{M+3,\infty}},\delta_{\mathcal Z}^{-1})$ to obtain 
\begin{align}
\FM[0]=\sum_{|i,j,k|=0}^{M}G^{4+2i}\|\pa_x^i\pa_{y}^j\pa_z^k n_{\text{in};\nq}\|_{L^2}^2+ \myr{Q^2}\ep^2\leq C(M,\myr{\|u_A\|_{L_t^\infty W^{M+3,\infty}}},\delta_{\mathcal Z}^{-1})(\|n_{\text{in};\nq}\|^2_{H^M}+1). 
\end{align}

We conclude the proof by proving the estimates \eqref{Con_<n><c>}. Recall the main conclusions from Theorem \ref{thm:<n>_est},
\begin{align}\frac{d}{d t} \left(\sum_{|j,k|=0}^ {M}\|\pa_y^j\pa_z^k \nz (t)\|_{L ^2}^2\right) 
\leq &\frac{C}{A}\|\nz(t)\|_{H ^M}^2\left(\sup_{s\in[0, A^{1/3+\te}]}\|\nz(s)\|_{H ^{M}}^2 +\|\na \lan \cc_{\mathrm{in}}\ran\|_{H ^M}^2\right)\\
&+\frac{C}{A}\lf(\sum_{|i,j,k|=0}^{M+1}\|\Gamma_{y;t}^{ijk} \cc_\nq\|_{L^2}^2+ t^2\sum_{|i,j,k|=0}^{M} \|\Gamma_{y;t}^{(i+1)jk} \cc_\nq\|_{L^2}^2\rg) \left(\sum_{|i,j,k|=0}^{M}\|\Gamma_{y;t}^{ijk}  n_\nq\|_{L^{2}}^2\right).
\end{align}
Now we decompose the chemical density into two parts, i.e., $\cc_\nq(t)=c_\nq(t)+d_\nq(t), \quad d_\nq(t)=S_0^t \cc_{\text{in};\nq}$. Then we recall the definition of the functional $\FM$ \eqref{F_M} and estimate the above expression using the fact that $ {t^2}\leq C\Phi^{-1}{A^{2/3}}$ as follows:
\begin{align}\label{ddt_nz_HM}
\frac{d}{d t}&\left(\sum_{|j,k|=0}^ {M}\|\pa_y^j\pa_z^k \nz (t)\|_{L ^2}^2\right)\\
\leq &\frac{C}{A}\|\nz(t)\|_{H ^M}^2\left(\sup_{s\in[0, A^{1/3+\te}]}\|\nz(s)\|_{H ^{M}}^2 +\|\na \lan \cc_{\mathrm{in}}\ran\|_{H ^M}^2\right)\\
&+\frac{C}{A\Phi^{4M+4}}\lf(\sum_{|i,j,k|=0}^{M+1}G^{2i}\Phi^{2j+2}\|\Gamma_{y;t}^{ijk} c_\nq^{t_r}\|_{L^2}^2+\sum_{|i,j,k|=0}^{M} A^{2/3} G^{2i+2}\Phi^{2j+2}\|\Gamma_{y;t}^{(i+1)jk} c_\nq^{t_r}\|_{L^2}^2\rg) \FM\\
&+\frac{C}{A\Phi^{4M+2}}\lf(\sum_{|i,j,k|=0}^{M+1}G^{2i}\Phi^{2j}\|\Gamma_{y;t}^{ijk} d_\nq^{t_r}\|_{L^2}^2+ t^2\sum_{|i,j,k|=0}^{M}G^{2i}\Phi^{2j} \|\Gamma_{y;t}^{(i+1)jk} d_\nq^{t_r}\|_{L^2}^2\rg) \FM\\
\leq  &\frac{C}{A}\|\nz(t)\|_{H ^M}^2\left(\sup_{s\in[0, A^{1/3+\te}]}\|\nz(s)\|_{H ^{M}}^2 +\|\na \lan \cc_{\mathrm{in}}\ran\|_{H ^M}^2\right)+\frac{C}{A\Phi^{4M+4}}\lf(\FM\rg)^2\\
&+\frac{C}{A\Phi^{4M+2}}\lf(\sum_{|i,j,k|=0}^{M+1}G^{2i}\Phi^{2j}\|\Gamma_{y;t}^{ijk} d_\nq^{t_r}\|_{L^2}^2+ t^2\sum_{|i,j,k|=0}^{M}G^{2i}\Phi^{2j} \|\Gamma_{y;t}^{(i+1)jk} d_\nq^{t_r}\|_{L^2}^2\rg) \FM.
\end{align}  
Now we apply the linear enhanced dissipation estimate \eqref{Gld_Rg_ED} (given that $G$ is large enough), the smallness assumption \eqref{smll_c_nq} and the bootstrap assumptions \eqref{HypED_1}, \eqref{Hyp_<n>_<c>} to obtain that
\begin{align}\label{ddtnz_HM1}\ \ \ \
\frac{d}{dt}\left(\sum_{|j,k|=0}^ {M}\|\pa_y^j\pa_z^k \nz (t)\|_{L ^2}^2\right)\leq& \frac{C}{A}\mathcal{B}_{\lan n\ran;H^M}^2\lf(\mathcal{B}_{\lan n\ran;H^M}^2+\|\na \cc_{\text{in};\nq}\|_{H^M}^2\rg)+\frac{C\lf(\FM\rg)^2}{A\Phi^{4M+4}}\\
&+\frac{C}{A^{1/3}\Phi^{4M+3}} \lf(\sum_{|i,j,k|=0}^{ M+1}G^{2i}\|\Gamma_{y;0}^{ijk}\cc_{\text{in};\nq}\|_{2}^2\exp\lf\{-\delta_{\mathcal Z}\frac{t}{A^{1/3}}\rg\}\rg)\FM\\
\leq & {A^{-2/3}}+\frac{C(G,\delta_{\mathcal Z}^{-1})C_{ED}}{A^{1/3} } \ep^2\exp\lf\{-\frac{\delta_{\mathcal Z}}{2A^{1/3}}t\rg\}\FM[0].
\end{align}
Here in the last line, we have invoked the facts that  $M\leq \mathbb{M}$, $\|\Phi^{-1}\|_{L^\infty_t(0, A^{1/3+\te})}\leq C A^{3\te}$, and$\, 12\te(M+2)\leq \frac{1}{9}$  \eqref{te_M_chc}. Then we choose the $A$ to be large compared to the bootstrap bounds in \eqref{HypED_1}, \eqref{Hyp_<n>_<c>} and $\|\na \cc_{\text{in};\nq}\|_{H^{M}}$ to achieve the bound in \eqref{ddtnz_HM1}. Next we take the $A^{-1}$ and $\ep$ small enough compared to  the bootstrap bounds $C_{ED}, \, \FM[0]$ and other constants appeared in \eqref{ddtnz_HM1}, and integrate this differential inequality directly to obtain that 
\begin{align}
\sup_{t\in[0, A^{1/3+\te}]}\|\lan n(t)\ran\|_{H^M}^2\leq 2\|\lan n_{\text{in}}\ran\|_{H^M}^2+1. 
\end{align}
Since we choose the $\BB_{\lan n\ran; H^M}=2\|\lan n_{\text{in}}\ran\|_{H^M}+2$ in \eqref{BB<n><c>}, the first bootstrap conclusion in  \eqref{Con_<n><c>} follows. 

The second bootstrap conclusion in  \eqref{Con_<n><c>} is a direct consequence of the bootstrap hypothesis \eqref{Hyp_<n>_<c>}, the length of the time interval ($0\leq t\leq T_\star\leq A^{1/3+\te} \leq A^{1/2} $) and the estimate \eqref{Winklersgp2} \myc{Check!}
\begin{align}
\label{nacz_HM}
\sum_{|j,k|=0}^ {M}&\|\pa_y^j\pa_z^k\na \lan \cc\ran(t)\|_{2}^2
\\
\leq &
 2\sum_{|j,k|=0}^ {M}\lf\|\exp\lf\{\frac{t}{A}\de_{\Torus^2}\rg\}\pa_y^j\pa_z^k\na \lan \cc\ran(0)\rg\|_2^2+\frac{2}{A^2}
\sum_{|j,k|=0}^ {M}\lf\| \int_0^t \na \exp\lf\{\frac{t-s}{A}\de_{\Torus^2} \rg \}\pa_y^j\pa_z^k (\lan n\ran(s)-\overline{n}) ds\rg\|_2^2\\
\leq& 
2\sum_{|j,k|=0}^ {M}\|\pa_y^j\pa_z^k \na \lan \cc_{\text{in}}\ran\|_2^2+\frac{2}{A^2}
\sum_{|j,k|=0}^ {M}\lf(\int_0^{t }\lf\|\na\exp\lf\{\frac{t-s}{A}\de_{\Torus^2} \rg \}\rg\|_{L^2\rightarrow L^2}\sup_{s\in[0,T_\star)}\|\pa_y^j\pa_z^k(\lan n\ran(s)-\overline n)\|_{2} ds\rg)^2\\
\leq & 
2\sum_{|j,k|=0}^ {M}\|\pa_y^j\pa_z^k \na \lan \cc_{\text{in}}\ran\|_2^2+C \BB_{\lan n\ran; H^M}^2\lf(\frac{1}{A}\int_0^{t}\lf(\frac{t-s}{A}\rg)^{-1/2} ds\rg)^2\\
\leq & 
2\sum_{|j,k|=0}^ {M}\|\pa_y^j\pa_z^k \na \lan \cc_{\text{in}}\ran\|_2^2+C\BB_{\lan n\ran; H^M}^2\lf(\frac{t}{A}\rg)\leq  
2\|\na \lan \cc_{\text{in}}\ran\|_{H^M}^2+\frac{C\BB_{\lan n\ran; H^M}^2}{A^{1/2}}.
\end{align} \myc{$\leq \lf(2\|\na \lan \cc_{\text{in}}\ran\|_{H^M}+\frac{C\BB_{\lan n\ran; H^M}}{A^{1/4}}\rg)^2$}
Hence by taking $A$ large enough, we have obtained the second bound in \eqref{Con_<n><c>}.

\end{proof} 

\begin{proof}[Proof of Proposition \ref{Pro:2}\myc{ (!? Not sure.)}] The proof of the proposition is similar to the proof of Proposition \ref{Pro:main}. Hence we only highlight the main differences here.

First of all, we set the reference time $t_r=T_h=A^{1/3+\te/2}$ and consider the functional $\mathbb{L}_{G}^{M}:=\mathbb{F}_{G, A^{1/4}}^{T_h;M}$. Another adjustment in the proof is that the $c_\nq^{t_r}$ and $d_\nq^{t_r}$ are redefined with $t_r$ being $T_h=A^{1/3+\te/2}$.  Since we assume $M\leq \mathbb{M}$, the choice of $\zeta$  \eqref{te_M_chc} yields the following
\begin{align}\label{te_choice_M}
12\te(2+M)\leq \frac{1}{9}.
\end{align}  This estimate, together with the ``gluing'' condition \eqref{Glu_con},  yields that $\mathbb{L}_{G}^{M}[T_h]\leq C(\BB_1)$. \myc{$A^{1/4-1/3 }A^{3\te \times (M+2)}=A^{1/36-1/12}$} Now we apply the estimate \eqref{F_M_reg} to obtain that 
\begin{align}\label{ddt_FM_lg_2}
\frac{d}{d\tau}&\mathbb{L}_{G}^{M}[T_h+\tau]
\\ \leq&     \frac{ C  }{  {G}A^{1/3}}   \left(\frac{  \Phi(T_h+\tau)}{G}+\frac{G}{A^{1/2}}+\frac{A^{1/3}}{A^{2/3}+(T_h+\tau)^2}+\exp\lf\{-\frac{\delta_{\mathcal Z}}{4A^{1/3}} \tau\rg\}\right) \mathbb{L}_{G}^{M}    \\ 
& {+ \frac{C( {G} )}{A^{1/3}}\ \frac{\mathbb{L}_{G}^{M}  }{A^{1/2}\Phi ^{ 4M+4}}\ \bigg(1+ \mathbb{L}_{G}^{M}+\BB_{\lan n\ran^x;H^M}^2+ \BB_{ \lan   \cc\ran^x;H^{M+1}}^2  \bigg)}\\
& +\frac{C(G)}{A ^{5/6}}\frac{\exp\lf\{-\frac{\delta_{\mathcal Z}}{A^{1/3}} \tau\rg\}}{ \Phi^{ 4M+4}}\ \lf(A^{1/2}\sum_{|i,j,k|=0}^{M+1}\|\pa_x^i\Gamma_{y;T_h}^j\pa_z^k\cc_{\neq}(T_h)\|_{L^{2}}^2\rg) \lf(\mathbb{L}_{G}^{M} +\|\lan n\ran\|_{H^M}^2\rg)\\
 \leq&     \frac{ C  }{  {G}A^{1/3}}   \left(\frac{  \Phi(T_h+\tau)}{G}+\frac{G}{A^{1/6}}+\frac{A^{1/3}}{A^{2/3}+(T_h+\tau)^2}+\exp\lf\{-\frac{\delta_{\mathcal Z}}{4A^{1/3}} \tau\rg\}\right) \mathbb{L}_{G}^{M}   \\ 
& {+ \frac{C( {G} )}{A^{1/2}}\ {\mathbb{L}_{G}^{M}  }\ \bigg(1+\mathbb{L}_{G}^{M} + \BB_{\lan n\ran;H^M}^2+ \BB_{ \lan   \cc\ran;H^{M+1}}^2  \bigg)} +\frac{1}{A ^{1/2}}\exp\lf\{-\frac{\delta_{\mathcal Z}}{4A^{1/3}} \tau\rg\} \mathbb{L}_{G}^{M} 
 \lf(\mathbb{L}_{G}^{M} +\|\lan n\ran\|_{H^M}^2\rg).
\end{align}
\myc{Here do we need more on $\te?$ We have from \eqref{te_M_chc} that $12\te (M+2)\leq \frac{1}{9}, \, \mathbb{M}\geq M$. Hence we have that $A^{-1/3}\times \Phi^{-4M-4}\leq C_M A^{-1/3}A^{3\te(4M+4)}\leq C_M A^{-1/3+1/9}$. So we have extra inverse power of $A$ to absorb other factors. }Now a similar argument as the one in the previous proof of Proposition \ref{Pro:main} yields the regularity bound. The  estimate \eqref{FM_dec_est} can be derived in a similar fashion as before. When one derive the estimate for $\nz,$ the $d_\nq^{t_r=T_h}-$contribution in \eqref{ddt_nz_HM} needs to be estimated differently. {Combining the ``gluing'' condition \eqref{Glu_con}, the definition $d_\nq^{T_h}(\tau)=S_{T_h}^{T_h+\tau}[\cc_{\nq}(T_h)]$, and the linear enhanced dissipation \eqref{Gld_Rg_ED} yields that 
\begin{align}\frac{C}{A\Phi^{4M+2}}&\lf(\sum_{|i,j,k|=0}^{M+1}G^{2i}\Phi^{2j}\|\Gamma_{y;t}^{ijk} d_\nq^{t_r}\|_{L^2}^2+ t^2\sum_{|i,j,k|=0}^{M}G^{2i}\Phi^{2j} \|\Gamma_{y;t}^{(i+1)jk} d_\nq^{t_r}\|_{L^2}^2\rg)\mathbb{L}_{G}^{M} \\
\leq&\frac{CA^{3\te(4M+2)}}{A^{1/3}}\frac{1+(T_h+\tau)^2}{A^{2/3}}\sum_{|i,j,k|=0}^{M+1}G^{2i}\Phi(T_h+\tau)^{2j}\|\Gamma_{y;T_h+\tau}^{ijk} S_{T_h}^{T_h+\tau}[\cc_\nq(T_h)]\|_{L^2}^2\mathbb{L}_{G}^M\\
\leq&C \frac{A^{3\te(4M+3)}}{A^{1/3}}\sum_{|i,j,k|=0}^{M+1}G^{2i}\Phi(T_h)^{2j}\|\Gamma_{y;T_h}^{ijk}\cc_{\nq}(T_h)\|_{L^2}^2\exp\lf\{-\frac{\delta_{\mathcal Z}}{A^{1/3}}\tau \rg\}\mathbb{L}_{G}^M\\
\leq&\frac{C(G)A^{3\te(4M+5)}}{A }\lf(A^{2/3}\sum_{|i,j,k|=0}^{M+1}\Phi(T_h)^{2j+2}\|\Gamma_{y;T_h}^{ijk}\cc_{\nq}(T_h)\|_{L^2}^2\rg)\exp\lf\{-\frac{\delta_{\mathcal Z}}{A^{1/3}}\tau \rg\}\mathbb{L}_{G}^M\\
\leq &\frac{\mathcal{B}_1^2 }{A^{2/3}}\exp\lf\{-\frac{\delta_{\mathcal Z}}{A^{1/3}}\tau \rg\}\mathbb{L}_{G}^M.
\end{align}Here in the last line, we have used the choice of $\te$ \eqref{te_M_chc} and $M\leq \mathbb M$.   Hence, by picking $A$ large enough, we observe that the time integral contribution from the $d_\nq^{t_r}(\tau)=S_{t_r}^{t_r+\tau}\cc_{\nq}(t_r)$ is small. }The other arguments are similar and we omit the details.
\end{proof}
\begin{proof}[Proof of Proposition \ref{pro:prp_FM}] Here we observe that $t_r=0, \, t=0+\tau\in[0, A^{1/3+\te}]$, and $\|\Phi^{-4M-4}\|_{L^\infty_t(0, A^{1/3+\te})}\leq C A^{12\te (M+1)}\leq CA^{1/9}$ \eqref{te_M_chc}. 
Combining the estimates \eqref{F_M_reg}, \eqref{ddt_nz_HM}, and \eqref{ddtnz_HM1}, we have that
\begin{align}\label{dt_FM_<n>}\qquad
\frac{d}{dt} \lf(\|\lan n\ran\|_{H^M}^2+\FMZ\rg) 
\leq &\frac{C_1}{A}\|\nz\|_{H^M}^2\left(\sup_{s\in[0,t]}\|\nz(s)\|_{H^{M}}^2 +\|\na \lan \cc_{\mathrm{in}}\ran\|_{H^M}^2\right)\\
&+ {\frac{ C_2  }{  {G}A^{1/3}}   \left(\frac{  \Phi}{G}\myr{+\frac{G}{A^{1/6}}}+\frac{A^{1/3}}{A^{2/3}+t^2}+\exp\lf\{-\frac{\delta_{\mathcal Z}}{4A^{1/3}} t\rg\}\right) \FMZ}  +\frac{C_3({G}) }{A^{5/6}}  \frac{\left(\FMZ \right)^2   }{\Phi^{4M+4}}\  \\
&+ \frac{C_4( {G} )}{A^{1/3}}\ \frac{\FMZ  }{A^{1/2}\Phi^{ 4M+4}}\ \bigg(1+ \|\lan n\ran\|_{H^M}^2+\sup_{s\in[0, t]}\|\nz(s)\|_{H^{M}}^2 +\|\na \lan \cc_{\mathrm{in}}\ran\|_{H^M}^2  \bigg){}\\
&\myb{+\frac{C_5(\delta_{\mathcal Z}^{-1},G)}{A^{1/3}} \exp\lf\{-\frac{\delta_{\mathcal Z}}{2A^{1/3}}t \rg\} \| \cc_{\text{in};\neq}\|_{H^{M+1}}^2 \lf(\FMZ+\|\lan n\ran\|_{H^M}^2\rg)}.
\end{align}
\myc{Previously, \begin{align*}
\cdots\leq &\frac{C}{A}\|\nz(t)\|_{H^M}^2\left(\sup_{s\in[0, A^{1/3+\te}]}\|\nz(s)\|_{H^{M}}^2 +\|\na \lan \cc_{\mathrm{in}}\ran\|_{H^M}^2\right)+\frac{C}{A\Phi^{4M+4}}\lf(\FMZ\rg)^2\\
&+\frac{C}{A\Phi^{4M+2}}\lf(\sum_{|i,j,k|=0}^{M+1}G^{2i}\Phi^{2j}\|\Gamma_{y;t}^{ijk} d_\nq\|_{L^2}^2+ t^2\sum_{|i,j,k|=0}^{M}G^{2i}\Phi^{2j} \|\Gamma_{y;t}^{(i+1)jk} d_\nq\|_{L^2}^2\rg) \FMZ\\
&+ {\frac{ C  }{  {G}A^{1/3}}   \left(\frac{  \Phi(t)}{G}\myr{+\frac{G}{A^{1/6}}}+\frac{A^{1/3}}{A^{2/3}+t^2}+\exp\lf\{-\frac{\delta_{\mathcal Z}}{4A^{1/3}} t\rg\}\right) \FMZ[t]}  +\frac{C({G}) }{A^{5/6}}  \frac{\left(\FMZ[t] \right)^2   }{\Phi^{4M+4}}\  \\
&{+ \frac{C( {G} )}{A^{1/3}}\ \frac{\FMZ [t] }{A^{1/2}\Phi^{ 4M+4}}\ \bigg(1+ \|\lan n\ran\|_{H^M}^2+ \|\na\lan   \cc\ran\|_{ H^{M}}^2  \bigg)}\\
&{+\frac{C(G)}{A^{1/3}}\frac{\exp\lf\{-\frac{\delta_{\mathcal Z}}{A^{1/3}}t \rg\}}{ \Phi^{ 4M+4}}\ \lf(\sum_{|i,j,k|=0}^{M+1}G^{2i} \|\pa_x^i\pa_{y}^j\pa_z^k\cc_{\text{in},\neq}\|_{L^{2}}^2\rg) \lf(\FMZ+\|\lan n\ran\|_{H^M}^2\rg)} \leq...\
\end{align*}But by quoting \eqref{F_M_reg}, \eqref{ddt_nz_HM}, and \eqref{ddtnz_HM1}, we can avoid the $d_\nq. $}
{Here in the last estimate, we employ the chemical gradient $\na \cz$ estimate \eqref{na_c_0_est}. To prove the boundedness of the solution on $[0,A^{1/3+\te}]$, we apply a bootstrap argument. Assume that $[0,\wt T)\in[0,A^{1/3+\te}]$ is the largest time interval such that the following estimate holds 
\begin{align}\label{hyp}
\FMZ[t]+\|\lan n\ran(t)\|_{H^M}^2\leq 2(1+\FMZ[0]+\|\lan n_\text{in}\ran\|_{H^M}^2) \exp\lf\{1+\frac{2C_5(\delta_{\mathcal Z}^{-1},G)}{\delta_{\mathcal Z}}  \|\cc_{\text{in};\neq}\|_{H^{M+1}}^2\rg\}.
\end{align} 
We first take $A$ large compared to the right hand side of \eqref{hyp} and various constants in \eqref{dt_FM_<n>} to obtain that
\begin{align}
\frac{d}{dt}\lf(\FMZ+  \| \lan n\ran\|_{H^M}^2\rg)
 \leq&\lf(\FMZ+  \| \lan n\ran\|_{H^M}^2\rg)\bigg(\frac{\sum_{\ell=1}^4C_\ell 
 }{A^{1/2}}+\frac{C_2}{GA^{1/3}} \left(\frac{  \Phi(t)}{G}+\frac{A^{1/3}}{A^{2/3}+t^2}+\exp\lf\{-\frac{\delta_{\mathcal Z}}{4A^{1/3}} t\rg\}\right) \\
&\quad\quad\quad\quad\quad\quad\quad\quad\quad\quad+\frac{C_5}{A^{1/3}} \exp\lf\{-\frac{\delta_{\mathcal Z}}{2A^{1/3}}t \rg\} \|\cc_{\text{in};\neq}\|_{H^{M+1}}^2\bigg).
\end{align}
By integrating on $[0, \wt T]$, and taking $G\geq G_\mathbb{H}(M, \|u_A\|_{L_t^\infty W_y^{M+3,\infty}}), \, A\geq A_\mathbb{H}(M, \|u_A\|_{L_t^\infty W_y^{M+3}}, G,\|n_{\mathrm{in}}\|_{H^M},\|\cc_{\mathrm{in}}\|_{H^{M+1}})$ large enough, we have that
\begin{align}
\FMZ[t]+  \| \lan n\ran(t)\|_{H^M}^2\leq (\FMZ[0]+\|\lan n_\text{in}\ran\|_{H^M}^2) \exp\lf\{1+\frac{2C_5}{\delta_{\mathcal Z}}  \|\cc_{\text{in};\neq}\|_{H^{M+1}}^2\rg\}.
\end{align}
Since this estimate is stronger than \eqref{hyp} and $G$ can be chosen depending only on $M, \|u_A\|_{L_t^\infty W_y^{M+3,\infty}},\delta_{\mathcal Z}^{-1}$, standard bootstrap argument yields  the following estimate on $[0, A^{1/3+\te}]$
\begin{align}
\FMZ[t]+  \| \lan n\ran(t)\|_{H^M}^2\leq C(G)(\|n_\text{in}\|_{H^M}^2+\|\cc_{\mathrm{in};\nq}\|_{H^{M+1}}^2) \exp\lf\{1+{C(M, \|u_A\|_{L_t^\infty W_y^{M+3,\infty}},\delta_{\mathcal Z}^{-1})}  \|\cc_{\text{in};\neq}\|_{H^{M+1}}^2\rg\}.
\end{align} Combining this estimate with gradient estimate \eqref{na_c_0_est} and the argument in \eqref{nacz_HM} yields \eqref{est_pro}.
}

\end{proof}\myc{
\subsection{Previous}. }
\ifx
We choose $G$ large compared to $\|u_y\|_{L^\infty_t W^{M+1,\infty}},\ M,\ \delta^{-1}_d$, then $\ep_I$ small compared to $G,\ \|u_y\|_{L^\infty_t W^{M+1,\infty}},\ M,\ \delta^{-1}_d$ and $\|\lan n\ran \|_{H^{M}}\leq 4C_{\lan n\ran ; H^M}$. Finally we pick $A^{-1}, \|\cc_{\text{in};\nq}\|_{H^{M+1}}$ small compared to all the previously chosen/specified constants, and the bootstrap constants in \eqref{Hypotheses}. 
Direct calculation yields that 
\begin{align}\mathcal D_M (t_\star+\tau)\leq \frac{1}{16 }F_{M}(t_\star),\quad \tau\in[0,\delta_{\mathcal Z}^{-1}A^{1/3}].
\end{align} Hence we have 
\begin{align}
F_M(&t_\star+\delta_{\mathcal Z}^{-1}A^{1/3})\\
\leq& \mathcal D_M(t_\star+\delta_{\mathcal Z}^{-1}A^{1/3})+\sum_{|i,j,k|=0}^{M}G^{4+2i}\Phi^{2j} \| S_{t_\star}^{t_\star+\delta_{\mathcal Z}^{-1}A^{1/3}}\palt n_\nq\|_2^2\\
&+\sum_{ {|i,j,k|\leq M+1}}G^{2i}(\mathbbm{1}_{j\neq M+1}A^{2/3}+\mathbbm{1}_{j= M+1})\Phi^{2j+2}\| S_{t_\star}^{t_\star+\delta_{\mathcal Z}^{-1}A^{1/3}}\palt c_\nq\|_2^2+\frac{1}{\ep_I^2}\|\cc_{\mathrm{in};\nq}\|^2_{H^{M+1} }e^{-\delta_\dagger t_\star/A^{1/3}} e^{-\delta_{\mathcal Z}^{-1}\delta_\dagger}\\
\leq &\frac{1}{2}F_M(t_\star).
\end{align}
This concludes the proof of \eqref{Decay_est_NL}. 
\ifx
For the deviation estimate, we can carry out similar argument as in the $H^1$ case. Specifically speaking, the deviation follows the estimate 
\begin{align}
\frac{d}{dt}&\frac{1}{2}G^{4+2i} \varphi^j\|\pa_x^i(\pa_y^t)^j\pa_z^k n_\nq-S_{t_\star}^{t_\star+\tau}\pa^t_{ijk} n_\nq\|_2^2\\
=&-\frac{G^{4+2i} }{A}\varphi^j\|\na (\pa_x^i(\pa_y^t)^j\pa_z^k n_\nq-S_{t_\star}^{t_\star+\tau}\pa^t_{ijk} n_\nq)\|_2^2 +\frac{G^{4+2i}j\varphi^{j-1}\varphi'}{2} \|\na (\pa_x^i(\pa_y^t)^j\pa_z^k n_\nq-S_{t_\star}^{t_\star+\tau}\pa^t_{ijk} n_\nq)\|_2^2 \\
&-\frac{1}{A}G^{4+2i}\varphi^j \int (\pa_x^i(\pa_y^t)^j\pa_z^k n_\nq-S_{t_\star}^{t_\star+\tau}\pa^t_{ijk} n_\nq) \pa_x^i(\pa_y^t)^j\pa_z^k \na \cdot( (\na c_\nq+\na d_\nq) (\lan n\ran+n_\nq)+\na \lan \cc\ran n_\nq)dV\\
\leq & -\frac{1}{A}G^{4+2i} \varphi^j\|\na (\pa_x^i(\pa_y^t)^j\pa_z^k n_{\nq}-S_{t_\star}^{t_\star+\tau}\pa^t_{ijk} n_\nq)\|_2^2+\frac{G^{4+2i} \varphi^j }{A}\int (\palt n_\nq-S_{t_\star}^{t_\star+\tau}\pa^t_{ijk} n_\nq) [(\pa_y^t)^j,\pa_{yy}]\pa_x^i\pa_z^k n_\nq dV\\
&+\frac{G^{4+2i} }{A}\varphi^j\int \na_{x,y,z}(\palt n_{\neq}-S_{t_\star}^{t_\star+\tau}\pa^t_{ijk} n_\nq)\cdot \palt( n\na_{x,y,z}c_{\neq}+n_\nq \na_{y,z}\lan \cc\ran+n\na_{x,y,z}d_{\neq}) dV\\
&-\frac{G^{4+2i} }{A}\varphi^j\int (\palt n_\nq-S_{t_\star}^{t_\star+\tau}\pa^t_{ijk} n_\nq)  [(\pa_y^t)^j,\pa_y]\pa_x^i\pa_z^k(n\na c_\nq +n\na d_\nq+n_\nq\na\lan \cc\ran)dV\\
=:&-\frac{1}{A}G^{4+2i} \varphi^j\|\na(\palt n_\nq-S_{t_\star}^{t_\star+\tau}\pa^t_{ijk} n_\nq)\|_2^2+\sum_{i=1}^3T_i'. \label{palt_n-eta_T_123_vector_field}
\end{align} 
\ifx
We write the first term $T_1'$ as
\begin{align}
T_1'=&\frac{G^{4+2i}\varphi^j }{A}\int (\palt n_\nq-S_{t_\star}^{t_\star+\tau}\pa^t_{ijk} n_\nq) \sum_{\ell=0}^{j-1}\left(\begin{array}{rr}j\\ \ell\end{array}\right)(-2B^{(j-\ell+1)}\pa_x\pa_y^t+\pa_y^{(j-\ell)}(B^{(1)})^2\pa_{xx}-B^{(j-\ell+2)}\pa_x)(\pa_y^t)^\ell \pa_x^i\pa_z^k n_\nq dV\\
=:&T_{11}'+T_{12}'+T_{13}'.\label{T'_1123}
\end{align}
{\bf Here we need to distinguish between the $\ell=j-1$ and $\ell\leq j-2$ case. In the $\ell=j-1$ case, the $T_{11}'+T_{12}'$ has the following simplification
\begin{align}
T_{11}'+T_{12}'=&-\frac{G^{4+2i}\varphi^j}{A}\int (\palt n_\nq-S_{t_\star}^{t_\star+\tau}\pa^t_{ijk} n_\nq)j2B^{(2)}\pa_x\pa_y(\pa_{y}^t)^{j-1}\pa_x^i\pa^k_zn_\nq dV\\
\leq&\frac{G^{4+2i}\varphi^{j}}{12 A}\|\pa_y (\palt n_\nq-S_{t_\star}^{t_\star+\tau}\pa^t_{ijk} n_\nq)\|_2^2+\frac{CG^{4+2i+2}\varphi^{j-1}(\varphi t^2)}{AG^2}\|\pa_x^{i+1}(\pa_y^t)^{j-1}\pa_z^k n_\nq\|_2^2\\
\leq&\frac{G^{4+2i}\varphi^j}{12A}\|\na(\palt n_\nq-S_{t_\star}^{t_\star+\tau}\pa^t_{ijk} n_\nq)\|_2^2+\frac{CG^{4+2i+2}\varphi^{j-1}}{A^{1/3}G^2}\frac{t^2/A^{2/3}}{1+t^2/A^{2/3}}\|\pa_x^{i+1}(\pa_y^t)^{j-1}\pa_z^kn_\nq\|_2^2.
\end{align}
}
\ifx 
The $T_{11}'$ term need to be treated as follows
\begin{align}
T_{11}'=&\frac{G^{4+2i}\varphi^j}{A}\int (\palt n_\nq-\eta_\nq) j 2B^{(2)}\pa_x(\pa_y+\int_0^tu^{(1)}ds\pa_x )(\pa_y^t)^{j-1}\pa_x^i\pa_z^kn_\nq dV\\
&+\sum_{\ell=0}^{j-2}\left(\begin{array}{rr}j\\ \ell\end{array}\right)(-2B^{(j-\ell+1)})\pa_x^{i+1}(\pa_y^t)^\ell\pa_z^k  n_\nq dV\\
=& -\frac{G^{4+2i}\varphi^j}{A}j\int \pa_y(\palt n_\nq-\eta_\nq)(2B^{(2)}\pa_x^{i+1}(\pa_y^t)^{j-1}\pa_z^k n_\nq)dV\\
& +\frac{G^{4+2i}\varphi^j}{A}j\int (\palt n_\nq-\eta_\nq)(-2B^{(3)}\pa_x^{i+1}(\pa_y^t)^{j-1}\pa_z^k n_\nq+\pa_y(B^{(1)})^2\pa_{x}\pa_x^{i+1}(\pa_y^t)^{j-1}\pa_z^k n_\nq)dV\\
&+\sum_{\ell=0}^{j-2}\left(\begin{array}{rr}j\\ \ell\end{array}\right)(-2B^{(j-\ell+1)})\pa_x^{i+1}(\pa_y^t)^\ell\pa_z^k  n_\nq dV\\
\leq&\frac{1}{BA}G^{4+2i}\varphi^j\|\na(\palt n_\nq-\eta_\nq)\|_2^2+\frac{C(j,u_y)G^{4+2i}\varphi^{j-1}}{A}(\varphi t^2\|\pa_x^{i+1}(\pa_y^t)^{j-1}\pa_z^k n_\nq(0)\|_2^2)\\ 
&+\frac{1}{AG^2}\varphi^{j-1}G^{4+2i+2}C(\varphi t^2)\|\palt n_\nq-\eta_\nq\|_2\|\pa_x^{i+2}(\pa_y^t)^{j-1}\pa_z^kn_\nq(0)\|_2e^{-\delta t/A^{1/3}}...
\end{align}
But $\frac{1}{A}\int_{T}^\infty t^2 e^{-\delta t/A^{1/3}}dt\leq C e^{-\delta T/(2A^{1/3})}$. We might tune the number in the functional to get decay? But before the $\pa_y^t$, we need two copies of $\pa_x?$ 
\fi
For the $\ell\leq j-2$ case, we invoke the estimate already derived before. Here we first recall the Gagliardo-Nirenberg inequality
\begin{align}
\|\palt f\|_{L^4}\leq C\|\palt f\|_2^{1/4}(\|\pa_x\palt f\|_2+\|\pa_y^t\palt f\|_2+\|\pa_z\palt f\|_2 )^{3/4}.
\end{align}
Here, in order to rigorously justify the inequality, we can consider the new coordinate we use before. In the new coordinate system, the $L^p$-norms are preserved. But the $v$-derivative ($\pa_v$) of the system is the profile derivative $\pa_y^t$ here. The Gagliardo-Nirenberg inequality in the new coordinate system is the inequality above. 
There the $L^p$ norms are preserved, and $\pa_y^t$ is the $\pa_v$. 
Now we estimate the mixed term as follows
\begin{align}
T_{11}'=&-2\frac{G^4G^{2i}\varphi^{j}(t)}{A}\sum_{\ell=0}^{j-2}C^j_\ell B^{(j-\ell+1)}\int (\palt n_\nq-S_{t_\star}^{t_\star+\tau}\pa^t_{ijk} n_\nq)\pa_x^{i+1}(\pa_y^t)^{\ell+1}\pa_z^k n_\nq dV\\
\leq &\frac{C(u_y)G^{4+2i}\varphi^j}{A}t\sum_{\ell=0}^{j-2}\|\palt n_{\neq}-S_{t_\star}^{t_\star+\tau}\pa^t_{ijk} n_\nq\|_2\|\pa_x^{i+1}(\pa_y^t)^{\ell+1}\pa_z^kn_\nq\|_2\\
=&\frac{C(u_y)\varphi^{1/2}}{GA}t(\varphi^{j/2}G^{2+i}\|\palt n_\nq-S_{t_\star}^{t_\star+\tau}\pa^t_{ijk} n_\nq\|_2)(G^{2+i+1}\varphi^{j/2-1/2}\|\pa_x^{i+1}(\pa_y^t)^{j-1}\pa_z^kn_\nq\|_2)\\
&+\frac{C(u_y)G^{2+i}\varphi^{j/2}}{A}t(\varphi^{j/2}G^{2+i}\|\palt n_\nq-S_{t_\star}^{t_\star+\tau}\pa^t_{ijk} n_\nq\|_2)\sum_{i'+j'+k'\leq |i,j,k|-1}\|\pa_x^{i'}(\pa_y^t)^{j'}\pa_z^{k'}n_{\neq}(0)\|_2e^{-\delta t/A^{1/3}}.
\end{align}

The more dangerous term is the $T_{12}'$, it has the $t^2$, we estimate as follows ({\bf\color{red} From this term, we see that we might need the $\varphi$?})
\begin{align}
T_{12}'\leq& \frac{C\varphi^{1}t^2}{G^2A}(G^{2+i}\varphi^{j/2}(t)\|\palt n_\nq-S_{t_\star}^{t_\star+\tau}\pa^t_{ijk} n_\nq\|_2)(G^{2+i+2}\varphi^{j/2-1}\|\pa_x^{i+2}(\pa_y^t)^{j-2}\pa_z^kn_\nq\|_2+...)\\
\leq&\frac{C t^2}{G^2A(1+t^2A^{-2/3})}(G^{2+i}\varphi^{j/2}(t)\|\palt n_\nq-S_{t_\star}^{t_\star+\tau}\pa^t_{ijk} n_\nq\|_2)(G^{2+i+2}\varphi^{j/2-1}\|\pa_x^{i+2}(\pa_y^t)^{j-2}\pa_z^kn_\nq\|_2+...)\\
\leq & \frac{C}{G^2A^{1/3}}(G^{2+i}\varphi^{j/2}(t)\|\palt n_\nq-S_{t_\star}^{t_\star+\tau}\pa^t_{ijk} n_\nq\|_2)(G^{2+i+2}\varphi^{j/2-1}\|\pa_x^{i+2}(\pa_y^t)^{j-2}\pa_z^kn_\nq\|_2)....
\end{align}
 \fi
 Here we distinguish between two cases, i.e., $j=M$ and $j<M$. In the $j=M$ case, we estimate the top order contribution as follows:
\begin{align}
\frac{G^{4}}{A}&\int \pa_y( (\pa_y^t)^M n_\nq-S_{t_\star}^{t_\star+\tau}(\pa_y^t)^M n_\nq)((\pa_y^t)^M\pa_y c_\nq) n_\nq dV-\frac{G^{4+2(i+k)}}{A}\int((\pa_y^t)^Mn_\nq-S_{t_\star}^{t_\star+\tau}(\pa_y^t)^M n_\nq)([\pa_y^M,\pa_y]\pa_y c_{\neq}) n_\nq dV\\
=&\frac{G^{4}}{A}\int \pa_y ((\pa_y^t)^M n_\nq-S_{t_\star}^{t_\star+\tau}(\pa_y^t)^M n_\nq)\left([(\pa_y^t)^M,\pa_y]c_\nq +(\pa_y^t)^{M+1}c_\nq-B^{(1)}\pa_x( \pa_y^t)^M c_\nq\right) n_\nq dV\\
&+\frac{G^{4}}{A}\int((\pa_y^t)^Mn_\nq-S_{t_\star}^{t_\star+\tau}(\pa_y^t)^M n_\nq)([[(\pa_y^t)^M,\pa_y],\pa_y] c_{\neq}+\pa_y^t[(\pa_y^t)^M,\pa_y]c_\nq-B^{(1)}\pa_x[(\pa_y^t)^M, \pa_y]c_\nq)) n_\nq dV\\
=: &\sum_{i=1}^6 T_{2i}.
\end{align}
The main term is the $T_{22}$. For other term, we have the $A^{1/3}$ weight in front of $\|\pa_x(\pa_y^t)^jc_\nq\|_2$, hence the $B^{(i)}$ growth is OK. The term $T_{22}$ will be controlled as follows:
\begin{align}
T_{22}\leq &\frac{G^{4}}{A}\|\na((\pa_y^t)^Mn_\nq-S_{t_\star}^{t_\star+\tau}(\pa_y^t)^M n_\nq)\|_2\|{(\pa_y^t)}^{M+1}c_\nq\|_2\|n_\nq\|_\infty\\
\leq&\frac{G^{4}}{8A}\|\na((\pa_y^t)^Mn_\nq-S_{t_\star}^{t_\star+\tau}(\pa_y^t)^M n_\nq)\|_2^2+\frac{CG^4}{A}(1+\frac{t^2}{A^{2/3}})\varphi(t)\|(\pa_y^t)^{M+1}c_\nq\|_2^2\|n_\nq\|_\infty^2.
\end{align}
Some other terms are estimated in the following way ( We should already have some information about the ED in lower regularity?)
\begin{align}
T_{2.}\leq &\frac{ C(u_y)G^{4}}{A}\|(\pa_y^t)^Mn_\nq-S_{t_\star}^{t_\star+\tau}(\pa_y^t)^M n_\nq\|_2\frac{t^2}{A^{1/3}}(A^{1/3}\|\pa_x(\pa_y^t)^{M...}c_\nq\|_2)\|n_\nq\|_\infty
\end{align} 
For the $T_2'$ term, I first estimate the $ n_\nq\na c_\nq$ term as follows
\begin{align}
T_2'&\\
=&\frac{G^{2i}\varphi^j}{A}\sum_{(i',j',k')\leq (i,j,k)}C_{i',j',k'}^{i,j,k}
\int \na(\palt n_\nq-S_{t_\star}^{t_\star+\tau}\palt n_\nq)\cdot \left(\na^t(\pa_{i',j',k'}^t)c_{\nq}-\left(\begin{array}{cc}0\\ B^{(1)}\pa_x\\ 0\end{array}\right)\pa_{i',j',k'}^t c_\nq \right)\pa_{i-i',j-j',k-k'}^t n_\nq dV\\
\leq&\frac{CG^{2i}\varphi^j}{A}\|\na(\palt n_\nq-S_{t_\star}^{t_\star+\tau}\palt n_\nq)\|_2(\|\na^t \palt c_{\neq}\|_2+\frac{t}{A^{1/3}}A^{1/3}\|\pa_x\palt c_\nq\|_2)\|n_\nq\|_\infty\\
&+\frac{CG^{2i}\varphi^j}{A}\|\na(\palt n_\nq-S_{t_\star}^{t_\star+\tau}\palt n_\nq)\|_2\|\na c_\nq\|_\infty\|\palt n_\nq\|_2\\
&+\frac{CG^{2i}\varphi^j}{A}\sum_{1\leq|i',j',k'|\leq |i,j,k|-1\atop (i',j',k')\leq (i,j,k)}
\|\na(\palt n_\nq-S_{t_\star}^{t_\star+\tau}\palt n_\nq)\|_2\|\na^t \pa_{i',j',k'}^t c_\nq\|_2^{1/4}\|(\na^t)^2\pa_{i',j',k'}^t c_\nq\|_2^{3/4} \\
&\times\|\pa^t_{i-i',j-j',k-k'}n_\nq\|_2^{1/4}\|\na^t\pa_{i-i',j-j',k-k'}^t n_\nq\|_2^{3/4} \\
&+\frac{CG^{2i}\varphi^j}{A}\sum_{\substack{1\leq |i',j',k'|\leq |i,j,k|-1\\ (i',j',k')\leq(i,j,k)}}\|\na(\palt n_\nq-S_{t_\star}^{t_\star+\tau}\palt n_\nq)\|_2 \frac{t}{A^{1/3}}(A^{1/3}\|\pa_{i'+1,j',k'}^t c_\nq\|_2^{1/4}\|\na^t \pa_{i'+1,j',k'}^tc_\nq\|_2^{3/4})\\
&\times \|\pa^t_{i-i',j-j',k-k'}n_\nq\|_2^{1/4}\|\na^t\pa_{i-i',j-j',k-k'}^t n_\nq\|_2^{3/4}\\
\leq &\frac{G^{2i}\varphi^{j}}{A 12}\|\na(\palt n_\nq-S_{t_\star}^{t_\star+\tau}\palt n_\nq)\|_2^2+(?)
\end{align}
Here I think we can use the estimate $\|fg\|_{\dot H^s}\leq C\|f\|_{H^s}\|g\|_\infty+C\|g\|_{H^s}\|f\|_\infty $ (here the norm should be the $\pa_x,\pa_y^t,\pa_z$ norm). Because for $\na f,\na g$, their average is zero, so we have $\|\na f\|_{H^s}\leq C\|\na f\|_{\dot H^s}$. ({\bf check whether the constant is the same as/similar to  $\Torus^3$.} The $G,\varphi^j$ power OK? The $H^s$ norm might contain a lot of combinations?) The main problem might comes from the $(\na_t)^2$ case. 
{\color{blue}There, we might need to use the nonlinear blow-up type ODE to estimate the regularity of the solutions on the time interval $[t_\star,t_\star+\delta^{-1} A^{1/3}]$, then use the regularity bound to show that the deviation is tinny. To estimate that $|i,j,k|=2$ and $i'+j'+k'=1$ case, we do the following
\begin{align}
&T_2'\\
\leq &....\frac{G^{2i+4}\varphi^j}{A}\|\na (\palt n_\nq)\|_2(\|\na^t\pa^t_{i',j',k'} c_\nq\|_4+Ct\|\pa_{i'+1,j',k'}^t c_\nq\|_4)\|\pa_{i-i',j-j',k-k'}^t n_\nq\|_4\\
\leq&...\frac{CG^{2i+4}\varphi^j}{A}\|\na (\palt n_\nq)\|_2(\|(\na^t)^2\pa^t_{i',j',k'} c_\nq\|_2^{3/4}\|\na^t\pa^t_{i',j',k'}c_\nq\|_2^{1/4}+C\frac{A^{1/3}t}{A^{1/3}}\|\na^t\pa_{i'+1,j',k'}^t c_\nq\|_2^{3/4}\|\pa_{i'+1,j',k'}^t c_\nq\|_2^{1/4})\\
&\times\|\na^t\pa_{i-i',j-j',k-k'}^t n_\nq\|_2^{3/4}\|\pa_{i-i',j-j',k-k'}n_\nq\|_2^{1/4}\\
\leq&...\frac{G^{2i+4}\varphi^j}{12A}\|\na (\palt n_\nq)\|_2+\frac{CG^{4+2i}\varphi^j}{A}\|(\na^t)^2\pa^t_{i',j',k'}c_\nq\|_2^{3/2}\|\na^t\pa^t_{i-i',j-j',k-k'}n_\nq\|_2^{3/2}C_{2;\infty}^3 
\end{align}\begin{align}+\frac{CG^{4+2i}\varphi^j}{A}\frac{t^2}{A^{2/3}}(A^{1/3}\|\na^t\pa_{i'+1,j',k'}^tc_\nq\|_2)^{3/2}\|\na^t \pa_{i-i',j-j',k-k'} ^t n_\nq\|_2^{3/2} F_{1}[n_\nq,c_\nq]^{3/2}(t). 
\end{align}
Now we get regularity estimate. \textcolor{red}{Maybe similar argument with $A$ large yields $\|\palt n_\nq-S_{t_\star}^{t_\star+\tau}n_\nq\|_2\ll F_M(t_\star)$. }
 For the $\|\palt n_\nq-S_{t_\star}^{t_\star+\tau}n_\nq\|_2^2$, we should use the $A$ to control. } Or we need to estimate the $L^4$ norm? 
Next we comment on the estimate of  the $\varphi\|(\pa_y^t)^{M+1}c_\nq\|_2^2$ and $A^{2/3}\|\pa_x(\pa_y^t)^{M} c_\nq\|_2^2$. For the first term, we estimate it as follows
\begin{align}
\frac{d}{dt}&\frac{1}{2}\varphi^{M+1}\|(\pa_y^t)^{M+1}c_\nq-S_{t_\star}^{t_\star+\tau}(\pa_y^t)^{M+1}c_\nq\|_2^2\\
=&\frac{(M+1){\varphi^{M}\varphi'}}{2}\|(\pa_y^t)^{M+1}c_\nq-S_{t_\star}^{t_\star+\tau}(\pa_y^t)^{M+1}c_\nq\|_2^2-\frac{\varphi^{M+1}}{A}\|\na((\pa_y^t)^{M+1}c_\nq-S_{t_\star}^{t_\star+\tau}(\pa_y^t)^{M+1}c_\nq)\|_2^2\\
&+\frac{1}{A}\varphi^{M+1}\int ((\pa_y^t)^{M+1}c_\nq-S_{t_\star}^{t_\star+\tau}(\pa_y^t)^{M+1}c_\nq) [(\pa_y^t)^{M+1},\pa_{yy}]c_\nq d V\\
&+\frac{\varphi^{M+1}}{A}\int ((\pa_y^t)^{M+1} c_\nq-S_{t_\star}^{t_\star+\tau}(\pa_y^t)^{M+1}c_\nq)\pa_y^t(\pa_y^t)^M n_\nq dV\\
\leq&...+\frac{\varphi^{M+1}}{10A}\|\na ((\pa_y^t)^{M+1} c_\nq-S_{t_\star}^{t_\star+\tau}(\pa_y^t)^{M+1}c_\nq)\|_2^2+\frac{1}{A}\varphi^M\frac{t^2}{1+t^2/A^{2/3}}C\|(\pa_y^t)^M n_\nq\|_2^2.
\end{align} 
\begin{align}
\frac{d}{dt}&A^{2/3}G^{2}\varphi^M\|\pa_x(\pa_y^t)^M c_\nq-S_{t_\star}^{t_\star+\tau}\pa_x(\pa_y^t)^{M}c_\nq\|_2^2\\
\leq&-\frac{G^{2}\varphi^M}{A^{1/3}}\|\na(\pa_x(\pa_y^t)^Mc_\nq-S_{t_\star}^{t_\star+\tau}\pa_x(\pa_y^t)^{M}c_\nq)\|_2^2+\frac{G^{2}\varphi^M}{A^{1/3}}\int (\pa_x(\pa_y^t)^Mc_\nq-S_{t_\star}^{t_\star+\tau}\pa_x(\pa_y^t)^{M}c_\nq)\pa_x (\pa_y^t)^M n_\nq dV\\
\leq&-\frac{1}{8A^{1/3}}G^{2}\varphi^M\|\na(\pa_x(\pa_y^t)^Mc_\nq-S_{t_\star}^{t_\star+\tau}\pa_x(\pa_y^t)^{M}c_\nq)\|_2^2+\frac{1}{G^2A^{1/3}}\varphi^MG^{4}\|(\pa_y^t)^Mn_\nq\|_2^2. 
\end{align}

{\bf $G$ should be independent of data? We might use $A, \ep$ to control nonlinear term.} Now we estimate the general terms. They should be similar to previous estimates?

For the $A^{2/3}G^2\|\pa_x\palt c_\nq\|_2^2$, we can use the previous argument to estimate\begin{align}\frac{A^{2/3}G^4}{AG^2}\|\palt n_\nq\|_2^ 2.
\end{align}
 For the $\|\pa_y\palt c_\nq\|_2^2$ term, there will be an extra term 
\begin{align*}
\|u_y\|_\infty\|\pa_x \palt c_\nq\|_2\|\pa_y \palt c_\nq\|_2\leq \frac{1}{A^{1/3}G}\|u_y\|_\infty A^{1/3}G\|\pa_x^{}\palt c_\nq\|_2\|\pa_y\palt c_\nq\|_2 .
\end{align*}\fi
  
{\color{blue}
\noindent
\textbf{The proof of Proposition \ref{Pro:2}.} We set the time $t_0$ to be $t_0:=A^{1/3+\te/2}$. At this time instance, the estimate \eqref{Glu_con} holds. Moreover, we decompose the chemical density into the following two components
\begin{align}
\cc_\nq(t_0+\tau)= S_{t_0,t_0+\tau} \cc_\nq(t_0)+c_\nq(t_0+\tau)=:d_\nq(t_0+\tau)+c_\nq(t_0+\tau).
\end{align}
Thanks to the constraint \eqref{Glu_con} and the enhanced dissipation estimate \eqref{Gld_Rg_ED}, we have that 
\begin{align}
\sum_{|i,j,k|=0}^{M}G^{2i}\Phi^{2j}(t_0+\tau)\|\pa_x^i\Gamma_{y;t_0+\tau}^j\pa_z^kd_\nq(t_0+\tau)\|_2^2\leq \frac{C}{A^{2/3}}e^{-2\delta_{\mathcal Z}\frac{\tau}{A^{1/3}}}.\label{d_nq_est_t_0+tau}
\end{align}
Next we comment on the necessary adjustments to the arguments in the proof of Proposition \ref{Pro:main}. The main adjustments come from the estimate of the $T_{24}^{NL;R}$-term \eqref{T_24R} and the $T_{34}^{NL;R}$-term \eqref{T_34R} ({\bf Also need to double check the regularity part}). For the $T_{24}^{NL;R}$-term, we estimate it  as follows. First, by the estimate \eqref{Gld_Reg_grd}, the smallness condition \eqref{d_nq_est_t_0+tau}, and the enhanced dissipation of the linear passive scalar equation \eqref{Gld_Rg_ED}, we have that
\begin{align}
\sum_{|i,j,k|=0}^{M}\|\pa_{x}^i\Gamma_{y;t_0+\tau}^j\pa_{z}^k \na d_\nq(t_0+\tau)\|_2\leq& \sum_{|i,j,k|=0}^{M+1}\|\pa_{x}^i\Gamma_{y;t_0+\tau}^j\pa_z^k d_\nq\|_2+C(t_0+\tau)\sum_{|i,j,k|=0}^{M}\|\pa_x^{i+1}\Gamma_{y;t_0+\tau}^j\pa_z^k d_\nq\|_2\\
\leq&C(G)\frac{(t_0+\tau)}{A^{1/3}}\Phi(t_0+\tau)^{-M} e^{- \frac{\delta_{\mathcal Z}}{A^{1/3}}(\tau)}.\label{post_gluing_d_nq_est}
\end{align}
Now we implement the same estimate as in \eqref{T_24R} to obtain that for $\te$ chosen small enough,
\begin{align}
T_{24}^{NL;R} \leq &\sum_{|i,j,k|=0}^{M}\frac{G^{4+2i}\Phi^{2j}}{12A}\|\na \pa_{x}^i\Gamma_{y;t_0+\tau}^j\pa_{z}^k  n_\nq\|_2^2\\
&+ \frac{ C(G )}{A }\left(\sum_{|i,j,k|=0}^{M}G^{4+2i}\Phi^{2j}\|\pa_{x}^i\Gamma_{y;t_0+\tau}^j\pa_{z}^k  n_\nq\|_{2} ^2 +\|\lan n \ran\|_{H^{M}}^2\right)\frac{(t_0+\tau)^2}{A^{2/3}} e^{-\frac{2\delta_{\mathcal Z}  }{A^{1/3}}\tau} \Phi^{\myr{-4M-2 ?}}\\
\leq &\sum_{|i,j,k|=0}^{M}\frac{G^{4+2i}\Phi^{2j}}{12A}\|\na\pa_{x}^i\Gamma_{y;t_0+\tau}^j\pa_{z}^k  n_\nq\|_2^2 + \frac{ C(G) }{A^{2/3} }\|\lan n\ran\|_{H^M}^2  +\frac{ C( G )}{A^{2/3}} F_M.
\end{align}
Next, we invoke the estimate of $d_\nq$  \eqref{post_gluing_d_nq_est}, choose $\te,\ A^{-1}$ small enough and estimate the $T_{34}^{NL,R}$ term with the commutator relation \eqref{cm_yt_j_y} as in \eqref{T_34R},
\begin{align}
|T_{34}^{NL;R}|\leq &\sum_{|i,j,k|=0}^{M}\frac{G^{4+2i}\Phi^{2j}}{G^2A^{1/2}}\| \pa_{x}^i\Gamma_{y;t_0+\tau}^j\pa_{z}^k  n_\nq\|_2^2\\
& + \frac{C(G)}{A^{1/2}}\sum_{|i,j,k|=0}^{M}G^{4+2i}\Phi^{2j} \frac{ (t_0+t)^4}{A^{3/2}}  \bigg(\sum_{|i,j,k|=0}^{M}\|\pa_{x}^i\Gamma_{y;t_0+\tau}^j\pa_{z}^k n_\nq\|_2^2+\| \lan n\ran\|_{H^M}^2\bigg)\\
\leq &C(G)\frac{F_M}{ A^{1/2}}   +\frac{C(G) }{A^{1/2}}   \|\lan n\ran\|_{H^M}^2 .
\end{align}
Now we observe that the time integral of these two terms are small on the time interval of size $O(A^{1/3})$. 

Next we consider the term $T_{0\nq;d_\nq}$ in \eqref{T_0nqcd}. The treatment is similar to the estimate \eqref{T_0nq_dnq}. Here we apply the product estimate \eqref{Prd_est_gld}, the fact that $\pa_y^j\lan f\ran^x=\lan \Gamma_y^jf\ran^x$, the estimate of the gradient \eqref{Gld_Reg_grd}, the enhanced dissipation of the $d_\nq$ \eqref{d_nq_est_t_0+tau},  and 
 bootstrap hypothesis \eqref{HypED} to derive that
\begin{align}
|T_{0\nq;d_\nq}|\leq&\frac{1}{8A}\sum_{|j,k|=0}^ {M}\|\na\pa_y^j\pa_z^k \nz\|_{L_{y,z}^2}^2+\frac{C}{A}\left(\sum_{|i,j,k|=0}^{M}\|\pa_x^i\Gamma_{y;t_0+\tau}^{j}\pa_z^k  \na_{y,z} d_\nq \|_{L^2_{x,y,z}}^2\right)\left(\sum_{|i,j,k|=0}^{M}\|\pa_x^i\Gamma_{y;t_0+\tau}^{j}\pa_z^k  n_\nq\|_{L^{2}_{x,y,z}}^2\right)\\
\leq&  \frac{1}{8A}\sum_{|j,k|=0}^ {M}\|\na\pa_y^j\pa_z^k \nz\|_{L_{y,z}^2}^2\\
&+\frac{C }{A}\bigg(\sum_{|i,j,k|=0}^{M+1}\|\pa_x^i\Gamma_{y;t_0+\tau}^j\pa_z^k d_\nq\|_{L_{x,y,z}^2}+ ({t_0+\tau}) \sum_{ |i,j,k|\leq M }  \|\pa_x^{i+1}\Gamma_{y;t_0+\tau}^j\pa_z^k d_\nq\|_{L_{x,y,z}^2}\bigg)^2 \\
&\quad\quad\times \left(\sum_{|i,j,k|=0}^{M}\|\pa_x^i\Gamma_{y;t_0+\tau}^{j}\pa_z^k  n_\nq\|_{L^{2}_{x,y,z}}^2\right)\\
\leq&\frac{1}{8A}\sum_{|j,k|=0}^ {M}\|\na\pa_y^j\pa_z^k \nz\|_{L^2_{y,z}}^2 +\frac{C(  G)}{A^{1/2}}F_M.
\end{align}
Application of a similar argument as in the proof of Proposition \ref{Pro:main} should yield the result. }

In the last section, we estimate the solutions on the transient period. The goal is to show that the $F_{M+3}$ functional is  bounded independent of $A$ on $[0, A^{1/3+\te}]$. Even though the top regularity $\Phi^{2M+10}\|\Gamma_{y}^{M+4}c_\nq\|_2^2$ might not be small, we can use the next level regularity. And there is a competition between $\Phi^{2M}$ and $A^{2/3}$. The goal is to show that there exists a time $A^{1/3+\te}$ such that $\sum_{|i,j,k|\leq M+3}\|\palt  c_\nq\|_2\leq A^{-1/12}$. Then together with the enhanced dissipation of $d_\nq$, we have that the chemical gradient is small $\sum_{|i,j,k|\leq M+3}\|\palt \cc_{\nq}\|_2\leq \frac{1}{A^{12}}$.   
\ifx
In this section, we extend the main result Theorem \ref{thm_1}. The main idea is that for more general initial condition $(n_{\text{in}},\cc_{\text{in}})\times H^{M+3}\times H^{M+4}$, we can derive the smallness of $\cc$ in $H^{M+1}$ at time $T_{11}=3A^{1/3+\te}$  if we tune the shear more carefully.  
 The main adjustment is in the estimate of \eqref{T_34R}. There $\ep$ depends on $G$ and $C_{\lan n\ran}$. But does  $C_{\lan n\ran}$ depends on $\ep$? This dependence will restrict the size of the data?? Can we use the delay in the $\lan \cc\ran$ equation. $\ep$ can be small compared to the initial data, but the contribution from the later $\|\na d_\nq\|_2$ might have less impact? \fi
{\color{red}\textcolor{blue}{\ifx 
We need to use different arguments in the transient growth period $[0,A^{1/3+\te/2}]$ and the decaying period $[A^{1/3+\te/2}, A^{1/3+\te}]. $ In the transient growth periods, by the structure of the nonlinearity, we have that
\begin{align}
\frac{d}{dt}\|n_\nq\|_{H^{M+3}}^2\sim&...+\frac{1}{A}\|\na n_\nq\|\|\lan n\ran\|_{H^{M+3}}\|\na d_\nq\|_{\mathcal{Z}^{M+3}}\\
\leq &...\frac{1}{A}t^2 e^{-\delta_d \frac{t}{A^{1/3}}}\|\cc_{\nq}(T_{01})\|_{\mathcal{Z}^{M+4}}^2\|\lan n\ran\|_{H^{M+3}}^2\\
\frac{d}{dt}\|\lan n\ran\|_{H^{M+3}}^2\sim&...+\frac{1}{A}\|\na \lan n\ran\|_{...}\| n_\nq \|_{H^{M+3}} \|\na c_\nq\|_{\mathcal{Z}^{M+3}}+\frac{1}{A}\|\na \lan n\ran\|_{...}\| n_\nq \|_{H^{M+3}}(\|\na d_\nq\|_{\mathcal{Z}^{M+3}})\\
\leq &...\frac{1}{A}\| n_\nq \|_{H^{M+3}}^2\|\na c_\nq\|_{\mathcal{Z}^{M+3}}^2+\frac{1}{A}t^2 e^{-\delta_d \frac{t}{A^{1/3}}}\|\cc_{\nq}(T_{01})\|_{\mathcal{Z}^{M+4}}^2\| n_\nq \|_{H^{M+3}}^2.
\end{align}For the first term, we can write everything in terms of $F_{M+3}$ and gain from the time scale. From the argument we have for $\|\na c_\nq\|_\infty$, we see that the $\na c_\nq$ is small. And the $\|\na\palt c_\nq\|$ should also be small for $[0, A^{1/3+\te/2}$. Hence the first term can be reduced to something like $\frac{1}{A}\|n_\nq,\lan n\ran\|_{...}^2$. The time integral on $[0, A^{1/3+\te/2}]$ will be fine.\fi
As a result of the arguments in the last two sections, we have that
\begin{align}
\frac{d}{dt}&(F_{M+3}+\|\lan n\ran\|_{H^{M+3}}^2)\\
\leq&\frac{1}{GA^{1/3}(1+t^3/A)} F_{M+3}+ \frac{1}{A^{5/6}}C(M,\myr {G})   \Phi^{-4M-16} (F_{M+3}+\|\lan n\ran\|_{H^{M+3}}^2)^2\\
&+\frac{1}{A}t^2e^{-\delta_d t/A^{1/3}}\|\cc_{\nq}(T_{01})\|_{H^{M+4}}^2(F_{M+3}+\|\lan n\ran\|_{H^{M+3}}^2)
\end{align}
We define the following quantity
\begin{align}
\mathcal{G}(t)=\int_0^t \frac{1}{GA^{1/3}(1+s^3/A)}+\frac{1}{A}s^2e^{-\delta_d s/A^{1/3}}\|\cc_{\nq}(T_{01})\|_{H^{M+4}}^2 ds\leq C(1+\|\cc_{\nq}(T_{01})\|_{H^{M+4}}^2).  
\end{align}
Hence we can estimate the time evolution as follows
\begin{align}
\frac{d}{dt}(F_{M+3}+\|\lan n\ran \|_{H^{M+3}}^2)e^{-\mathcal{G} (t)}\leq \frac{1}{A^{5/6}}C(M,\myr {G})   \Phi^{-4M-16} e^{\|\mathcal{G}\|_{L^\infty_t}} \left(F_{M+3}+\|\lan n\ran \|_{H^{M+3}}^2\right)^2e^{-2\mathcal G(t)}.
\end{align}
Solving the differential inequality yields that on the time interval $[0,A^{1/3+\te/2}]$
\begin{align}
\sup_{t\in[0, A^{1/3+\te/2}]}F_{M+3}+\|\lan n\ran \|_{H^{M+3}}^2\leq Ce^{\|\mathcal G\|_{L^\infty_t}} (F_{M+3}(0)+\|\lan n\ran (0)\|_{H^{M+3}}^2)\leq Ce^{\|\cc_{\nq}(T_{01})\|_{H^{M+4}}^2} (F_{M+3}(0)+\|\lan n\ran (0)\|_{H^{M+3}}^2). 
\end{align}
}
}

\ifx
\begin{align}
\Rightarrow&\\
\frac{d}{dt}&[(F_{M+3}+\|\lan n\ran\|_{H^{M+3}}^2)e^{-\int_0^t \frac{s^2e^{-\delta_d\frac{s	}{A^{1/3}}}}{A}\|\cc_{\nq}(T_{01})\|_{H^{M+4}}^2 ds}]\\
\leq &\frac{1}{A^{3/4}}C[(F_{M+3}+\|\lan n\ran\|_{H^{M+3}}^2)e^{-\int_0^t \frac{s^2e^{-\delta_d\frac{s}{A^{1/3}}}}{A}\|\cc_{\nq}(T_{01})\|_{H^{M+4}}^2 ds}]^2e^{\int_0^t \frac{s^2e^{-\delta_d\frac{s}{A^{1/3}}}}{A}\|\cc_{\nq}(T_{01})\|_{H^{M+4}}^2 ds}.
\end{align}
Now we have that on the time interval $[0,A^{1/3+\te/2}]$, the following quantities are bounded:
\begin{align}
\sum_{|i,j,k|\leq M+3}\|\palt n_\nq\|_{L^2}^2+\sum_{|i,j,k|\leq M+4}\varphi^{j}G^{2i}(A^{2/3}\mathbbm{1}_{j<M+4}+\mathbbm{1}_{j=M+4})\|\palt c_\nq\|_{L^2}^2 \leq C(\|n_{\text{in}}\|_{H^{M+3}}, \|\cc_\text{in}\|_{H^{M+4}}).
\end{align} 
Now we have that
\begin{align}
\sum_{|i,j,k|\leq M+3}G^{2i}A^{2/3}\|\palt c_\nq\|_{L^2}^2\leq C(\|n_\text{in}\|_{H^{M+3}}, \|\cc_{\text{in}}\|_{H^{M+4}}) \varphi^{-M-4}(t). 
\end{align}
\fi
Hence we have smallness
\begin{align}
\sum_{|i,j,k|\leq M+3}& \|\palt \cc_\nq(T_{01}+A^{1/3+\te/2})\|_{L^2}^2\leq C
\sum_{|i,j,k|\leq M+3} \|\palt c_\nq\|_{L^2}^2+
\sum_{|i,j,k|\leq M+3} \|\palt d_\nq\|_{L^2}^2\\
\leq& C(\frac{1}{A^{2/3}}\Phi(A^{1/3+\te/2})^{-4M-16}+e^{-A^{\te/2}/C}).
\end{align}
On the interval $[A^{1/3+\te/2}, A^{1/3+\te}]$, we can use the same argument before to prove the exponential decay. On $[T_{02}, T_{03}]$ and $[T_{03}, T_{11}]$, we pay extra derivative to gain that at $T_{11}$, $\|\cc\|_{H^{M+1}}\leq C A^{-\gamma}$. \fi
\ifx 
\subsection{Previous Argument}
\begin{lem}[Short time estimate is enough]
Assume  the entropy and chemical gradient bound \eqref{PPKSwCDEntropyBound} and the bootstrap hypotheses \eqref{Hypotheses}. If the magnitude $A$ is chosen large enough compared to the constants in the bootstrapping hypothses, then the following estimates on the $L^4 $ norm of $n_0$ and $L^\infty$ norm of the chemical gradient $\na_y c_0$ hold
\begin{align}
\|n_0(t)\|_{L_{y}^4}\leq& C_{n_0;L^4}(M_0,\|n_{\mathrm{in};0}\|_{L_y^4}, E[n_{\mathrm{in};0}, c_{\mathrm{in};0}])<\infty,\quad \forall t\in [0,T_\star];\\
\|\na_y c_0(t)\|_{L_y^\infty}\leq& C_{\na c_0;L^\infty}^\star(M_0,\|n_{\mathrm{in};0}\|_{L_y^4},\| \na_y c_{\mathrm{in};0}\|_{  L_y^\infty},E[n_{\mathrm{in};0}, c_{\mathrm{in};0}])<\infty, \quad \forall t\in[0,T_\star].
\end{align}
\begin{rmk}
By choosing the constant $C_{\na_y c_0;L^\infty}$ in the bootstrapping hypothesis \eqref{Hyp_<c>} large enough compared to $C_{\na c_0;L^\infty}^\star$ in the lemma, we have obtained the improvement \eqref{Con_<grd_c>}. 
\end{rmk}
\end{lem}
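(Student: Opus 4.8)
The plan is to prove the two bounds as an inner bootstrap nested inside the outer bootstrap \eqref{Hypotheses}: the outer hypotheses already hand us a crude control $\|\na_y c_0\|_{L^\infty_y}\le 2C_{\na c_0;L^\infty}$ on $[0,T_\star]$ together with the entropy/chemical-gradient bound \eqref{PPKSwCDEntropyBound}, and the task is to produce the \emph{self-improving} constants $C_{n_0;L^4}$ and $C^\star_{\na c_0;L^\infty}$, the latter strictly below the outer constant so that the Remark closes the loop. Throughout I use that the time window has length $T_\star\le A^{1/3+\te}$, so that any surviving term carrying an explicit factor $A^{-1}$ (or a better power) contributes $o(1)$ to the accumulated estimate once $A$ is large.

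\textbf{Step 1: the $L^4$ bound for $n_0$.} I would run an $L^4_y$ energy estimate directly on the $x$-averaged cell equation \eqref{ppPKS_<n>}. The diffusion produces the good term $-\tfrac{c}{A}\|\na_y(n_0^2)\|_{L^2_y}^2$. In the aggregation term one cannot use an elliptic identity for $\de_y c_0$ (this is the parabolic-parabolic difficulty), so instead I integrate by parts and estimate $\tfrac1A\int n_0^3\,\na_y n_0\cdot\na_y c_0$ by $\tfrac{C}{A}\|\na_y c_0\|_{L^\infty_y}\|n_0\|_{L^4_y}^2\|\na_y(n_0^2)\|_{L^2_y}$; a Young split absorbs the gradient factor into the diffusion and leaves $\tfrac{C}{A}\|\na_y c_0\|_{L^\infty_y}^2\|n_0\|_{L^4_y}^4$, a Gr\"onwall term with coefficient $O(A^{-1})$ by the outer bound on $\|\na_y c_0\|_{L^\infty_y}$. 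The remaining critical contribution is a quintic term $\tfrac{c}{A}\int n_0^5$; by a two-dimensional Gagliardo-Nirenberg interpolation it is bounded by $\tfrac{C\,M_0}{A}\|\na_y(n_0^2)\|_{L^2_y}^2$ up to lower-order terms controlled by the entropy bound, and since \eqref{PPKSwCDEntropyBound} confines the mass $M_0=\|n_0\|_{L^1_y}$ to the subcritical range, $CM_0<c$ and this term is absorbed. Equivalently one may bootstrap $L^p\to L^{2p}$ starting from the $L\log L$ control encoded in the entropy bound in the Trudinger-Moser spirit of \cite{CalvezCorrias}; I would quote that route if the direct $L^4$ computation is awkward.

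\textbf{Step 2: the remainder forcing is negligible.} The cross term $\tfrac1A\int\na_y(n_0^3)\cdot\langle n_{\neq}\na_y c_{\neq}\rangle$ (and its analogues) is handled with the product and gradient inequalities already used in the proof of Theorem \ref{thm:<n>_est}, which express the Sobolev norm of $\langle n_{\neq}\na_y c_{\neq}\rangle$ through the gliding-regularity norms of $n_{\neq}$ and $c_{\neq}$; by the outer enhanced-dissipation hypothesis these decay like $e^{-2\delta t/A^{1/3}}$ up to polynomial-in-$t$ factors, so with the prefactor $A^{-1}$ and the short horizon $T_\star\le A^{1/3+\te}$ their total time integral is $O(A^{-1/6+\te})=o(1)$. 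Feeding Steps 1--2 into Gr\"onwall over $[0,T_\star]$, the amplification factor is $\exp\{O(A^{-1}\|\na_y c_0\|_{L^\infty_y}^2\,T_\star)\}=1+o(1)$, hence $\sup_{[0,T_\star]}\|n_0\|_{L^4_y}^4\le 2\|n_{\mathrm{in};0}\|_{L^4_y}^4$ plus an $O\big(E[n_{\mathrm{in};0},c_{\mathrm{in};0}]\big)$ contribution from the absorbed nonlinearity; this is the asserted finite constant $C_{n_0;L^4}$.

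\textbf{Step 3: the $L^\infty$ bound for $\na_y c_0$, and the main obstacle.} With $n_0$ now controlled in $L^4_y$, pass to the two-dimensional heat Duhamel representation of \eqref{ppPKS_<c>},
\begin{align*}
\na_y c_0(t)=\na_y e^{\frac tA\de_y}c_{\mathrm{in};0}+\frac1A\int_0^t \na_y e^{\frac{t-s}{A}\de_y}\big(n_0(s)-\overline n\big)\,ds,
\end{align*}
and use the two-dimensional heat-kernel bound $\|\na_y e^{\sigma\de_y}\|_{L^4_y\to L^\infty_y}\le C\sigma^{-3/4}$ with $\sigma=(t-s)/A$. The first term is dominated by $\|\na_y c_{\mathrm{in};0}\|_{L^\infty_y}$ (contractivity of the heat semigroup), while the Duhamel integral is $\le CA^{-1/4}t^{1/4}\sup_{[0,t]}\|n_0-\overline n\|_{L^4_y}\le CA^{-1/6+\te/4}\,C_{n_0;L^4}=o(1)$, so $\|\na_y c_0(t)\|_{L^\infty_y}\le \|\na_y c_{\mathrm{in};0}\|_{L^\infty_y}+o(1)=:C^\star_{\na c_0;L^\infty}$, strictly below the outer bootstrap constant once $A$ is large, which is the improvement \eqref{Con_<grd_c>}. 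The genuinely delicate point is Step~1: because $\de_y c_0$ is not proportional to $-n_0$ here, the critical quintic term must be tamed solely through the entropy-encoded subcriticality and the Gagliardo-Nirenberg absorption, and one must verify that every non-absorbed term carries a genuine $A^{-1}$ (or better), so that over a window of length only $A^{1/3+\te}$ it cannot inflate the constants; the secondary, logical subtlety is that the $L^4$ estimate for $n_0$ may invoke only the as-yet-unimproved bound on $\|\na_y c_0\|_{L^\infty_y}$, and it is essential that the Gr\"onwall coefficient it generates is $o(1)$ rather than merely finite, which is exactly what the $A^{-1}$-smallness and the short time horizon provide.
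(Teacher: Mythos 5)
Your main route is correct under the lemma's hypotheses, but it is genuinely different from the paper's. The paper does not run a linear Gr\"onwall off the bootstrap bound for $\|\na_y c_0\|_{L^\infty}$: it substitutes the chemical equation ($\de_y c_0=A\pa_t c_0-(n_0-\overline n)$) into the aggregation term, so the cost is measured by $A\|\pa_t c_0\|_{L^2}$, whose time integral is controlled by the free-energy/entropy bound (this is exactly where the dependence on $E[n_{\mathrm{in};0},c_{\mathrm{in};0}]$ comes from), and it combines the diffusion with the Nash inequality and the previously established $L^2$ bound on $n_0$ to produce a superlinear damping term of the form $-\|n_0\|_{L^4}^{8}/(CA)$; the remainder forcing is integrated in time using the enhanced-dissipation hypothesis and $A$ large, and a nonlinear ODE/barrier argument closes the $L^4$ bound. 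What that buys is robustness: the bound is uniform in the length of $[0,T_\star]$ and does not consume the bootstrap constant $C_{\na_y c_0;L^\infty}$ in the aggregation term. Your argument instead leans on the shortness of the horizon ($T_\star\le A^{1/3+\te}$, so the Gr\"onwall exponent $A^{-1}C_{\na c_0;L^\infty}^2 T_\star$ is $o(1)$ once $A$ is large compared to the bootstrap constants); this is simpler and is legitimate here because the lemma allows exactly that choice of $A$, but it would not survive a longer bootstrap window. Your final step for $\|\na_y c_0\|_{L^\infty}$ (Duhamel plus the $L^4\to L^\infty$ gradient heat-kernel bound, i.e.\ the estimate behind \eqref{na_c_0_est}) is the same as the paper's.

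One passage of your Step 1 is confused and, as written, wrong. If you bound the aggregation term by $\tfrac{C}{A}\|\na_y c_0\|_{L^\infty}\|n_0\|_{L^4}^2\|\na_y(n_0^2)\|_{L^2}$ and split by Young, no quintic term $\tfrac{c}{A}\int n_0^5$ remains; that term only appears on the paper's route, after substituting the chemical equation for $\de_y c_0$. Moreover, the justification you offer for it — Gagliardo--Nirenberg absorption with ``$CM_0<c$'' because the mass is subcritical — does not work: subcriticality of the mass (the $8\pi$-type condition) does not make $M_0$ small relative to an absolute interpolation constant, and a scaling check shows the GNS bound for $\int n_0^5$ brings in $\|n_0\|_{L^2}$ (or a superlinear power of $\|n_0\|_{L^4}^4$), not just $M_0$. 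The paper avoids this entirely: the quintic term is dominated by the Nash-derived damping $-\|n_0\|_{L^4}^{8}/(CA)$ keyed to the prior $L^2$ bound, with no smallness of mass required. Since your primary (crude) route never generates this term, the slip does not invalidate your proof, but the fallback you sketch there should not be relied upon.
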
 
\begin{proof}
By the same type of energy estimate as in the $L^2$-case, we apply the Nash  inequality, the $L^2$-bound \eqref{L2case}, and the boostrap hypotheses \eqref{Hypotheses}  to obtain that 
\begin{align}
\frac{d}{dt}\|n_0\|_4^4\leq& -\frac{1}{12C_{N}A(C_{n_0;L^2}^\star)^{4}}\|n_0\|_4^{8}+\frac{{C(M_0, C_{n_0;L^2}^\star)}}{A}(1+A^2\|\pa_t c_0 \|_2^2)\|n_0\|_4^4+\frac{ {C(M_0,C_{n_0;L^2}^\star)}}{A}(1+A^2\|\pa_t c_0 \|_2^2)\\
&+\frac{C_{ED}C(C_{n;L^\infty}, C_{\na_y c_0;L^\infty}, C_{\na c_{\neq};L^\infty})}{A^{1-4\eta}}(\|n_{\mathrm{in};\neq}\|_2^2+\norm{\na c_{\mathrm{in};\neq}}_{L^2}^2+1)e^{-\frac{\delta t}{A^{1/3}}}.
\end{align}
Now we define 
\begin{align}
G_2(t):=\int_0^t\frac{C_{ED}C(C_{n;L^\infty}, C_{\na_y c_0;L^\infty}, C_{\na c_{\neq};L^\infty})}{A^{1-4\eta}}(\|n_{\mathrm{in};\neq}\|_2^2+\norm{\na c_{\mathrm{in};\neq}}_{L^2}^2+1)e^{-\frac{\delta s}{A^{1/3}}}ds,\quad\forall t\in[0,T_\star].
\end{align}
By taking $A$ large enough, we see that $G_2(t)\leq 1$. Now apply similar ODE calculation as in the proof of previous lemma, we obtain that $L^4$ norm of the cell density $n_0$ is bounded on the time interval $[0,T_\star]$. 

The estimate on the chemical density follows from the $L^4$ estimation of $n_0$ and the estimate \eqref{na_c_0_est}. 
\end{proof}
\begin{lem}
Assume the entropy and chemical gradient bound \eqref{PPKSwCDEntropyBound}, and the bootstrapping hypotheses \eqref{Hypotheses}. If the magnitude $A$ is chosen large enough, then the following estimate on the $\dot H_{y,z}^1 $ norm of $\nz_x$ holds
\begin{align}
\|\na_y \nz_x(t)\|_{L_y^2}\leq C^\star_{n_0;\dot H^1}(\CC)<\infty,\quad \forall t\in [0,A^{1/3+\ep}].
\end{align}
\end{lem}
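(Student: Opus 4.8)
The plan is to run a direct $\dot H^1$ energy estimate on the $x$-average $n_0:=\nz$, which solves the two-dimensional modified system \eqref{ppPKS_<n>}--\eqref{ppPKS_<c>}, and to exploit that the relevant time window $[0,A^{1/3+\ep}]$ is much shorter than the natural dissipative scale $\mathcal O(A)$. Differentiating $\tfrac12\|\na n_0\|_{L^2}^2$ in time and integrating by parts once in the diffusion and the (divergence-form) nonlinear terms, I would obtain the identity
\begin{align}
\frac{d}{dt}\frac12\|\na n_0\|_{L^2}^2=-\frac1A\|\de n_0\|_{L^2}^2+\frac1A\int \de n_0\,\na\cdot(n_0\na c_0)\,dV+\frac1A\int \de n_0\,\big\lan\na\cdot(n_\nq\na\cc_\nq)\big\ran\,dV,
\end{align}
where $c_0:=\cz$. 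First I would dispose of the last (remainder) term: Young's inequality bounds it by $\tfrac1{8A}\|\de n_0\|_{L^2}^2+\tfrac CA\|n_\nq\|_{H^2}^2\|\cc_\nq\|_{H^3}^2$, and the bootstrap enhanced-dissipation hypotheses \eqref{HypED} control the last product by a quantity of the form $C(\CC)\,e^{-\delta t/A^{1/3}}$; since $\frac1A\int_0^\infty e^{-\delta t/A^{1/3}}\,dt=\mathcal O(A^{-2/3})$, this is a time-integrable, vanishingly small source.

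\textbf{The aggregation term.} The main work is the middle term. I would expand $\na\cdot(n_0\na c_0)=\na n_0\cdot\na c_0+n_0\de c_0$ and substitute $\de c_0=A\pa_t c_0-(n_0-\overline n)$ from \eqref{ppPKS_<c>}, splitting into three pieces: (i) $\tfrac1A\int\de n_0\,\na n_0\cdot\na c_0\le\tfrac1{8A}\|\de n_0\|_{L^2}^2+\tfrac CA\|\na c_0\|_{L^\infty}^2\|\na n_0\|_{L^2}^2$, using the uniform $L^\infty$ bound on $\na c_0$ from the preceding lemma; (ii) the genuinely quadratic aggregation piece $-\tfrac1A\int\de n_0\,n_0(n_0-\overline n)$, bounded by $\tfrac1{8A}\|\de n_0\|_{L^2}^2+\tfrac CA\|n_0\|_{L^4}^4\|n_0\|_{L^\infty}^2+(\text{l.o.t.})$, where $\|n_0\|_{L^\infty}$ is interpolated by Gagliardo--Nirenberg against $\|\de n_0\|_{L^2}$ so that, after absorption, the residual coefficient of $\|\na n_0\|_{L^2}^2$ is $\tfrac CA\,\mathfrak g(t)$ with $\mathfrak g$ built only from the already-controlled $\|n_0\|_{L^4}$ and the entropy dissipation $\tfrac1A\int|\na n_0|^2/n_0$; (iii) the piece $\int\de n_0\,n_0\,\pa_t c_0\le\tfrac1{8A}\|\de n_0\|_{L^2}^2+CA\|n_0\|_{L^\infty}^2\|\pa_t c_0\|_{L^2}^2$, where $A^2\|\pa_t c_0\|_{L^2}^2=\|\de c_0+n_0-\overline n\|_{L^2}^2$ is controlled by the $L^2$/$H^2$ bounds on $n_0,c_0$ already in hand — this is exactly the factor $A^2\|\pa_t c_0\|_{L^2}^2$ that appeared in the $L^4$-estimate lemma.

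\textbf{Closing the estimate.} Collecting everything and absorbing all $\|\de n_0\|_{L^2}^2$ contributions into the dissipation, I arrive at
\begin{align}
\frac{d}{dt}\|\na n_0\|_{L^2}^2\le-\frac1{2A}\|\de n_0\|_{L^2}^2+\mathcal H_1(t)\,\|\na n_0\|_{L^2}^2+\mathcal H_2(t),
\end{align}
where $\mathcal H_1,\mathcal H_2$ carry an overall $\tfrac1A$ apart from exponentially decaying pieces, hence $\int_0^{A^{1/3+\ep}}(\mathcal H_1+\mathcal H_2)\,dt\le \tfrac{C(\CC)}{A}A^{1/3+\ep}+\mathcal O(A^{-2/3})\le1$ once $A$ is large. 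Setting $G(t):=\int_0^t\mathcal H_1$, the comparison/Gr\"onwall argument used in the two preceding lemmas gives $\|\na n_0(t)\|_{L^2}^2\le e^{\|G\|_{L^\infty}}\big(\|\na n_{0,\mathrm{in}}\|_{L^2}^2+\int_0^t\mathcal H_2\big)\le C^\star_{n_0;\dot H^1}(\CC)$ on $[0,A^{1/3+\ep}]$, which is the claim.

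\textbf{Main obstacle.} The hard part will be steps (ii)--(iii): the aggregation nonlinearity must be handled by an interpolation placing a high enough power on the dissipation term to absorb it, \emph{and} one must check that the leftover time-dependent coefficients — those involving $\|n_0\|_{L^\infty}$, $\|n_0\|_{L^4}$ and $\|\pa_t c_0\|_{L^2}$ — are integrable over the short window $[0,A^{1/3+\ep}]$ uniformly in $A$. This is precisely where the subcritical entropy/chemical-gradient bound \eqref{PPKSwCDEntropyBound} and the fact that $A^{1/3+\ep}\ll A$ are used essentially.
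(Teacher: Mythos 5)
Your overall architecture---a $\dot H^1$ energy estimate on $\nz$, exploiting that the window $[0,A^{1/3+\ep}]$ is short compared to the heat time scale $\mathcal O(A)$, absorbing the remainder contribution through the enhanced-dissipation bootstrap, and closing with a Gr\"onwall-type argument---is the same as the paper's. The step that does not hold up as written is your (iii). After substituting $\de \cz = A\pa_t \cz-(\nz-\overline n)$ you assert that $A^2\|\pa_t \cz\|_{L^2}^2=\|\de \cz+\nz-\overline n\|_{L^2}^2$ is ``controlled by the $L^2$/$H^2$ bounds already in hand.'' At this stage of the argument chain no $H^2$ bound on $\cz$ is available: $\|\de \cz\|_{L^2}$ is exactly the quantity that must be produced, and producing it reintroduces the unknown. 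The paper does this via the heat-semigroup estimate \eqref{na_c_0_est} (Lemma \ref{Lem:gc0Lpest}), which gives $\|\de \cz(t)\|_{L^2}\lesssim \sup_{s\le t}\|\na \nz(s)\|_{L^2}+\|\na^2 \cz_{\mathrm{in}}\|_{L^2}$; the resulting differential inequality then carries $\sup_{s\le t}\|\na \nz(s)\|_{L^2}$ inside its coefficients, so the closure is a self-consistent ODE/barrier argument over the short window, not a Gr\"onwall estimate with known, integrable coefficients as you set it up. Alternatively, your $\pa_t \cz$ route can be rescued, but only through the time-integrated dissipation bound obtained by testing \eqref{ppPKS_<c>} with $\pa_t \cz$, namely $\int_0^T\|\pa_t \cz\|_{L^2}^2\,dt\lesssim A^{-1}\|\na \cz_{\mathrm{in}}\|_{L^2}^2+A^{-2}\int_0^T\|\nz-\overline n\|_{L^2}^2\,dt$, which is what makes the analogous factor $A^2\|\pa_t \cz\|_{L^2}^2$ harmless in the $L^4$ lemma you cite; a pointwise-in-time bound on $A\|\pa_t \cz\|_{L^2}$ by quantities already established is not available. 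You need to commit explicitly to one of these two mechanisms; as stated, the argument is circular at this point.

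A smaller bookkeeping issue: you bound the remainder source by $\|n_\nq\|_{H^2}\|\cc_\nq\|_{H^3}$ and invoke the bootstrap hypotheses \eqref{Hypotheses} directly for exponential decay, but those hypotheses control gliding-regularity norms built on $\Gamma_{y;t}$ with the weights $\Phi$ and $A^{2/3}$, not plain Sobolev norms. Converting $\pa_y$ into $\Gamma_{y;t}$ costs factors of $t\le A^{1/3+\ep}$ and of $\Phi^{-1}$, and the paper's proof writes the $x$-average as $\lan \Gamma_{y;t}(\cdot)\ran$ and tracks exactly these factors (terms of the form $t^2A^{-2/3}\cdot A^{2/3}\|\pa_x\Gamma_{y;t}\na \cc_\nq\|_{L^2}^2$) before the exponential decay can beat the polynomial growth. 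This is routine, but it must be carried out for the claimed integrability of your source term to be legitimate.
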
 
\begin{rmk}
By choosing the constant $C_{n_0;\dot H^1}$ in the bootstrapping hypothesis \eqref{Hyp_<c>} large enough compared to $C_{\na c_0}$ in the lemma, we have obtained the improvement \eqref{Con_<grd_c>}. 
\end{rmk}
\begin{proof}
Now we estimate the $\|\na_y n_0\|_2$. Estimating the time evolution of $\|\na_y n_0\|_2$ using Gagliardo-Nirenberg-Sobolev inequality, Young's inequality, Minkowski inequality, Lemma \ref{Lem:gc0Lpest} and the time integral estimate of $\|\na^2 c_{\neq}\|_2^2$ \eqref{Hyp1_c}, we have that 
\begin{align}
\frac{1}{2}\frac{d}{dt}\|\na_{y,z} \nz_x\|_2^2
\leq &-\frac{\|\na_{y,z}^2 \nz\|_2^2}{2A}+\frac{4\|\de_{y,z} \cz\|_2^2\|\nz\|_\infty^2}{A}+\frac{4\|\na_{y,z} \cz\|_\infty^2\|\na_{y,z} \nz\|_2^2}{A}\\
&+\frac{4}{A}\int \na_{y,z}\nz\cdot \na_{y,z}\lan(\pa_y^t,\pa_z)\cdot((\pa_y,\pa_z) \cc_{\neq}n_{\neq})\ran_xdydz\nonumber\\
\leq&-\frac{\|\na_{y,z}^2 \nz\|_2^2}{4A}+\frac{4(\sup_{0\leq s\leq t}\|\na_{y,z} \nz(s)\|_2+C(\na \cz_{in}) )^2\|\nz\|_\infty^2}{A}+\frac{C(C_{n,\infty})\|\na_{y,z} \nz\|_2^{2}}{A}\nonumber\\
&+\frac{4}{A}\left(\|(\na_{y,z}^t)^2 c_{\neq}\|_{L_{x,y}^2}^2+\frac{Ct^2}{A^{2/3}}A^{2/3}\|\pa_x\na_{y,z}^t c_\nq\|_2^2+Ct^2 \|\pa_x\na_{y,z}^t d_\nq\|_2^2+C\|(\na_{y,z}^t)^2d_\nq\|_2^2\right)\|n_{\neq}\|_{L_{x,y}^\infty}^2\\
&+\frac{4}{A}(\|\na_{y,z} c_{\neq}\|_{L^\infty}^2+Ct^2\|\pa_x d_\nq\|_{L^\infty}^2+\|\na_{y,z}^t d_\nq\|_{L^\infty}^2)\|\na_{y,z}^t n_{\neq}\|_{L^2}^2\nonumber\\
\leq&-\frac{\|\na_{y,z}^2 \nz\|_2^2}{4A}+\frac{4(\sup_{0\leq s\leq t}\|\na_{y,z} \nz(s)\|_2+C(\na \cz_{in}) )^2\|\nz\|_\infty^2}{A}+\frac{C(C_{n,\infty})\|\na_{y,z} \nz\|_2^{2}}{A}\nonumber\\
&+NZ.\label{time evolution of na f L2}
\end{align}
We define the function 
\begin{equation}\label{G3}
G_3(t):=\int_0^t NZ(s)ds.
\end{equation}
Note that by the hypotheses \eqref{Hyp1}, \eqref{Hyp1_c}, \eqref{Hyp_n_Linf}, \eqref{Hyp_na_c_neq_Linf}, we have that
\begin{align}
G_3(t)\leq \frac{1}{A^{1/3}}+C\ep^2 , \quad \forall t\in[0,T_\star].
\end{align} By an  ODE argument (Similar to the 2d case?), we obtain that 
\begin{align}\label{f 0 H1}
\|\na_{y,z} \nz_x\|_2\leq C  ( n_{in},\cc_{in}).
\end{align}
This concludes the proof of the lemma.
\end{proof}
\fi

\ifx
\subsection{Previous Version}
We would like to prove that the following functional decay with fast rate $\nu^{1/3}$ {\textcolor{blue} In the following, $L^4=L_1^2,\, L^2=L_2^2$.} 
\begin{align}
F=L^4\|  n_\nq\|_2^2+L^2\| \pa_x c_\nq\|_2^2+{\color{red}(1+\frac{t^2}{A^{2/3}})}^{-1}\| \pa_y^t c_\nq\|_2^2+\| \pa_z c_\nq\|_2^2+\ep^{-2}\| \na \cc _{0;\nq}\|_2^2e^{-\frac{1}{2}\delta t/A^{1/3}}.
\end{align}
Here we define $\varphi(t)=\frac{1}{1+(\frac{t}{A^{1/3}})^2}$. The functional can be applied till time $t=A^{1/3+\zeta}, \quad \zeta<\frac{2}{15}$.
Here $\|\na d_\nq\|_2\leq C\|\na\cc_{0;\nq}\|_2(1+t)e^{-\delta t/A^{1/3}}$. The parameter $L$ will be chosen to compensate for various constants appear during the proof. In particular, it depends on the $\delta$ in the enhanced dissipation estimate of the passive scalar solutions. 

We further note that the vector field
\begin{align}
\pa_y^t=\pa_y+\int_0^tu_y(y,s)ds\pa_x, \ \
\pa_y^{t_\star,t_\star+t}=\pa_y+\int_{t_\star}^{t_\star+t}u_y(y,s)ds\pa_x
\end{align}
commutes with the transport part of the equation. We might use this to derive the ED for the $\pa_y c_\nq$ part of the functional, i.e., 
\begin{align}
[\pa_t +u(y,t_\star+t)\pa_x,\pa_y+\int_{t_\star}^{t_\star+t}u_y(y,s)ds\pa_x]=u_y(y,t_\star+t)\pa_x-u_y(y,t_\star+t)\pa_x=0.
\end{align} 
Now we estimate the term $A^{2/3}\|\pa_x c_\nq\|_2^2$. We compare it with the solution to the passive scalar equation $ S_{t_\star}^{t_\star}\pa_x c_\nq,\ S_{t_\star}^{t_\star}\pa_x c_\nq=\pa_x c_\nq(t_\star)$: 
\begin{align}
\frac{d}{dt}\frac{1}{2}A^{2/3}L^2\|\pa_x c_\nq\|_2^2\leq &-\frac{1}{4A}L^2\|\na \pa_x c_\nq\|_2^2+\frac{1}{A^{1/3}L^2}L^4\|n_\nq\|_2^2;\label{Reg_pa_x_c_nq}\\
\frac{d}{dt}\frac{1}{2}A^{2/3}L^2\|\pa_x c_\nq-S_{t_\star}^{t_\star+\tau}\pa_x c_\nq\|_2^2\leq&-\frac{1}{4A^{1/3}}L^2\|\na (\pa_x c_\nq-S_{t_\star}^{t_\star+\tau}\pa_x c_\nq)\|_2^2+\frac{1}{A^{1/3}L^2}L^4\|n_\nq\|_2^2. 
\end{align}
Hence when we integrate from $t_\star $ to $t_\star+\delta^{-1}A^{1/3}$. The deviation is less than 
\begin{align}
A^{2/3}L^2\|\pa_x c_\nq-S_{t_\star}^{t_\star+\tau}\pa_x c_\nq\|_2^2(t_\star+t)\leq \frac{1}{180}\left(L^4\sup_{t\in[0, \delta^{-1}A^{1/3}]}\|n_{\nq}(t_\star+t)\|_2^2\right),\quad t\leq\delta^{-1}A^{1/3}.
\end{align}
Now we can tune the parameter in front of $\|n_\nq\|_2^2$ to get the decay of the functional. Now we estimate the time evolution of the norm $\|\pa_y^t c_\nq\|_2^2$. We consider the following quantity
\begin{align}
\|\pa_y^{\tau} c_\nq\|_2^2=\|(\pa_y +\int_{0}^{\tau}u_y(y, s)ds\pa_x)c_\nq\|_2^2.
\end{align}
The equation for $\pa_y^\tau$ reads as follows:
\begin{align}
\pa_\tau \pa_y^\tau c_\nq+u(\tau, y)\pa_x \pa_y^\tau c_\nq=&\frac{1}{A}\de \pa_y^\tau c_\nq +\frac{1}{A}(-2 B^{(2)}\pa_x\pa_y^\tau+\pa_y(B^{(1)})^2\pa_{xx}-B^{(3)}\pa_x)c_\nq+\frac{1}{A}\pa_y^\tau n_\nq\\
=&\frac{1}{A}\de \pa_y^\tau c_\nq+\frac{1}{A}(-2 \pa_y (B^{(2)}\pa_x c _\nq)+ B^{(3)}\pa_x c_\nq)+\frac{1}{A}\pa_y^\tau n_\nq,\\
\quad B^{(k)}:=&\int_0^t u^{(k)}(s,y)ds=\int_0^t\underbrace{\pa_{y...y}}_{\text{ n derivatives}}u(s,y)ds.
\end{align}
Now let $\eta_\nq(t_\star)=\pa_y c_{\nq}(t_\star)$. Now we have that 
\begin{align}
\frac{d}{dt}&\frac{1}{2}\varphi(t_\star+t)\|\pa_y^{t_\star+t} c_\nq\|_2^2\\
=&\frac{\varphi'(t_\star+t)}{2}\|\pa_y^{t_\star+t}c_\nq\|_2^2-\frac{1}{2A}\varphi(t_\star+t)\|\na (\pa_y^{t_\star+t} c_\nq)\|_2^2\\
&+\frac{1}{A}C(u)\varphi(t_\star+t)((t_\star+t)^4\|\pa_x c_\nq\|_2^2+(t_\star+t)^2\|\pa_x c_\nq\|_2^2)+\frac{1}{A}(C\varphi(t_\star+t)\|n_\nq\|_2^2(t+t_\star)^2)\\
=&\frac{\varphi'(t_\star+t)}{2}\|\pa_y^{t_\star+t}c_\nq\|_2^2-\frac{1}{2A}\varphi(t_\star+t)\|\na (\pa_y^{t_\star+t} c_\nq)\|_2^2\\
&+\frac{1}{L^2A^{1/3}}C(u)\frac{(t_\star+t)^4+(t_\star+t)^2}{{A^{4/3}}+{(t_\star+t)^4}}(\|\pa_x c_\nq\|_2^2A^{2/3}L^2)+\frac{C}{A^{1/3}L^4}\frac{(t+t_\star)^2}{A^{2/3}+\frac{(t_\star+t)^4}{A^{2/3}}}(L^4\|n_\nq\|_2^2)\\
\leq&\frac{1}{A^{1/3}L^2}C(L^4\|n_\nq\|_2^2+L^2A^{2/3}\|\pa_x c_\nq\|_2^2);\\ 
\frac{d}{dt}&\frac{1}{2}\varphi(t_\star+t)\|\pa_y^{t_\star+t} c_\nq-\eta_\nq\|_2^2\\
=&\frac{\varphi'(t_\star+t)}{2}\|\pa_y^{t_\star+t}c_\nq-\eta_\nq\|_2^2-\frac{1}{2A}\varphi(t_\star+t)\|\na (\pa_y^{t_\star+t} c_\nq-\eta_\nq)\|_2^2\\
&+\frac{1}{A}C(u)\varphi(t_\star+t)\big((t_\star+t)^2\|\pa_x c_\nq\|_2^2\big)+\frac{1}{A}(C\varphi(t_\star+t)\|n_\nq\|_2^2(t+t_\star)^2)\\
\leq&\frac{1}{L^2 A^{1/3}}C\frac{(t_\star+t)^2}{A^{2/3}+(t_\star+t)^4/A^{2/3}}(A^{2/3}L^2\|\pa_xc_\nq\|_2^2)+\frac{1}{A^{1/3}L^4}\frac{(t+t_\star)^2}{A^{2/3} +\frac{(t_\star+t)^{4}}{A^{2/3}}}C(L^4\|n_\nq\|_2^2)\\
\leq&\frac{1}{A^{1/3}L^2}\sup_{t\in[0,\delta^{-1}A^{1/3}]}(L^4\|n_\nq(t_\star+t)\|_2^2+L^2A^{2/3}\|\pa_x c_\nq(t_\star+t)\|_2^2)).
\end{align}
\ifx We also note that 
\begin{align}
\|\pa_y c_\nq(t_\star+A^{1/3})\|_2^2\leq 4\|\pa_y^{t_\star,t_\star+A^{1/3}}c_\nq(t_\star+A^{1/3})\|_2^2+C_u A^{2/3}\|\pa_x c_\nq(t_\star+A^{1/3})\|_2^2\\\leq \frac{1}{L^4}CL^4\|n_\nq\|_2^2+\frac{1}{12}\|\pa_y c_\nq(t_\star)\|_2^2+\frac{1}{12}A^{2/3}L^2\|\pa_x c_\nq(t_\star)\|_2^2.....
\end{align}\fi
The last term in the functional should be similar? Maybe?

To estimate the $\|n_\nq\|_2^2$ term, we use similar ideas. Application of the hypothesis and the estimate $\|\na d_{\neq}\|_2\leq C(1+t)\|\na \cc_{0;\nq}\|_2e^{-\delta t/A^{1/3}}$ yields that

Next we derive the regularity estimates. The estimates are similar to the approximation estimates carried out before. First, we estimate the $L^2$ norm of the $\|n_\nq\|_2^2$,
\begin{align}
\frac{d}{dt}\frac{1}{2}(L^4\|n_\nq\|_2^2)=&-\frac{1}{A}L^4\|\na n_\nq\|_2^2+\frac{1}{A}L^4\int n_\nq\left(-\na \cdot(\na \cc_\nq n_{\neq})-\na \cdot(\na \cc_\nq \lan n\ran_x))-\na \cdot(\na \lan \cc\ran_x n_{\neq})\right)dV\\
=&-\frac{1}{A}L^4\|\na n_\nq\|_2^2+\frac{1}{A}L^4\int\na n_{\nq}\cdot(\na \cc_\nq n_\nq+\na \cc_\nq \lan n\ran_x+\na \lan \cc\ran _x n_\nq)dV. 
\end{align}
We further recall that the $\cc_{\nq}=c_\nq+d_\nq$ and the bootstrap hypothesis on the $x$-average. Now we can estimate the time evolution as follows
\begin{align}
\frac{d}{dt}&\frac{1}{2}L^4\|n_\nq(t_\star+t)\|_2^2\\
=&-\frac{1}{2A}L^4\|\na n_\nq\|_2^2+\frac{C}{A}L^4(\|\na c_\nq\|_2^2+\|\na d_\nq\|_2^2)\|n\|_\infty^2+\frac{C}{A}L^4\|\na \lan \cc\ran_x\|_\infty^2\|n_{\neq}\|_2^2\\
\leq &\frac{CL^4}{A}(\|\pa_x c_\nq\|_2^2+\|\pa_z c_\nq\|_2^2)C_{n;\infty}^2+\frac{CL^4}{A}((1+\frac{(t_\star+t)^4}{A^{4/3}})\varphi(t_\star+t)\|\pa_y^t c_\nq\|_2^2+\frac{C(u)(t_\star+t)^2}{L^2A^{2/3}}(A^{2/3}\|\pa_xc_\nq\|_2^2L^2))\\
&+\frac{CL^4}{A}(\ep^{-2}\|\na \cc_{0;\nq}\|_2^2e^{-\delta t_\star/A^{1/3}})\ep^2C_{n;\infty}^2(1+t_\star+t)^2e^{-\delta (t+t_\star)/A^{1/3}}+\frac{CL^4}{A}C_{\na \lan c\ran ; \infty}^2\|n_\nq\|_2^2.
\end{align} 
For $t_\star+t\leq A^{1/3+\ep},\ep<\frac{2}{15}$, we can take the $A$ compared to $L$ large such that for $\forall t\leq C\delta^{-1}A^{1/3},$
\begin{align}
L^4\|n_\nq\|_2^2(t_\star+t)+L^2A^{2/3}\|\pa_x c_\nq\|_2^2(t_\star+t)+\varphi(t_\star+t)\|\pa_y^{t_\star+t} c_\nq\|_2^2(t_\star+t)+\|\pa_z c_\nq\|_2^2(t_\star+t)+\ep^{-2}\| \na \cc _{0;\nq}\|_2^2e^{-\frac{1}{2}\delta (t_\star+t)/A^{1/3}}\\
\leq 2(L^4\|n_\nq(t_\star)\|^2_2+L^2A^{2/3}\|\pa_x c_\nq\|_2^2(t_\star)+\varphi(t_\star)\|\pa_y^{t_\star} c_\nq\|_2^2(t_\star)+\|\pa_z c_\nq\|_2^2(t_\star)+\ep^{-2}\| \na \cc _{0;\nq}\|_2^2e^{-\frac{1}{2}\delta t_\star/A^{1/3}}). 
\end{align} 
Similar estimate ? yields that 
\begin{align}
\frac{d}{dt}&\frac{1}{2}L^4\|n_\nq-\eta_\nq\|_2^2(t_\star+t)\\
\leq&-\frac{1}{2A}\|\na (n_\nq-\eta_\nq)\|_2^2+\frac{C}{A}(\|\na c_{\neq}\|_2^2+\|\na d_{\neq}\|_2^2)\|n\|_\infty^2+\frac{C}{A}\|\na \lan c\ran_x\|_\infty^2\|n_{\neq}\|_2^2\\
\leq &-\frac{1}{2A}\|\na (n_\nq-\eta_\nq)\|_2^2+\frac{C}{A}(\|\na c_{\neq}\|_2^2+C(1+t+t_\star)^2\|\na \cc_{0;\neq}\|_2^2e^{-2\delta (t_\star+t)/A^{1/3}})\|n\|_\infty^2+\frac{C}{A}\|\na \lan \cc\ran\|_\infty^2\|n_{\neq}\|_2^2\\
\leq&\frac{CL^4}{A}(\|\pa_x c_\nq\|_2^2+\|\pa_z c_\nq\|_2^2)C_{n;\infty}^2+\frac{CL^4}{A}((1+\frac{(t_\star+t)^4}{A^{4/3}})\varphi(t_\star+t)\|\pa_y^t c_\nq\|_2^2+\frac{C(u)(t_\star+t)^2}{L^2A^{2/3}}(A^{2/3}\|\pa_xc_\nq\|_2^2L^2))\\
&+\frac{CL^4}{A}(\ep^{-2}\|\na \cc_{0;\nq}\|_2^2e^{-\delta t_\star/A^{1/3}})\ep^2C_{n;\infty}^2(1+t_\star+t)^2e^{-\delta (t+t_\star)/A^{1/3}}+\frac{CL^4}{A}C_{\na \lan c\ran ; \infty}^2\|n_\nq\|_2^2\\
\leq&\frac{C}{A^{2/3-5\ep}}F[n_\nq, c_\nq](t_\star).
\end{align} 
Now we should have decay for all??\ifx
By the regularity estimate, we should have that 
\begin{align}
\|\na c_\nq(t_\star+t)\|_2^2+\|n_\nq (t_\star+t)\|_2^2\leq 2F_{i,j}(t_\star).
\end{align}
Recall the hypothesis, we have that for $t\leq A^{1/3}$, we can pick the $A$ and $\ep$ small to get
\begin{align}
\|n_{\neq}-\eta_{\nq}\|_2^2(t_\star+t)\leq& \frac{Ct}{A}C_{n;\infty}^2F_{i,j}(t_\star)+\epsilon^{-2}\|\na \cc_{0;\nq}\|_2^2e^{-\delta t_\star/A^{1/3}}\int_0^{t}\frac{(1+s+t_\star)^2}{A}C e^{-\delta (t_\star+s)/A^{1/3}}\ep^2 C_{n;\infty}^2ds\\
&+\frac{Ct}{A	}C_{\na c_0;\infty}^2F_{i,j}(t_\star)\\
\leq&\frac{1}{18}F_{i,j}(t_\star).
\end{align}

Next we estimate the time evolution of the $A^{2/3}\|\pa_x c_\nq\|_2^2$,
\begin{align}
\frac{d}{dt}\frac{1}{2}A^{2/3}\|\pa_x c_\nq\|_2^2=&-\frac{1}{A}\|\na \pa_x c_\nq\|_2^2+\frac{1}{A^{1/3}}\int \pa_x c_\nq\pa_x n_\nq\\
\leq&-\frac{1}{2A^{1/3}}\|\na \pa_x c_\nq\|_2^2+\frac{1}{A^{1/3}}\|n_\nq\|_2^2.
\end{align}
Now we should be okay?

For the $\|\pa_y c_\nq\|_2^2$ term, we apply similar idea as before. Instead of considering the gradient itself, we consider the vector field norm
$\|(\pa_y +\int_{t_\star}^{t_\star+t}\pa_yu(y,s)ds\pa_x)c_\nq(t_\star+t)\|_2^2.$
The time evolution of the quantity is bounded as follows
\begin{align}
\frac{d}{dt}\frac{1}{2}\|\pa_y^{t_\star,t_\star+t}c_\nq\|_2^2=&-\frac{1}{A}\|\na \pa_y^{t_\star,t_\star+t}c_\nq\|_2^2+\frac{1}{A}\int \pa_y^{t_\star,t_\star+t} c_\nq \left(\pa_y+\int_{t_\star}^{t_\star+t}\pa_y u(y,s)ds\pa_x \right)n_\nq dV\\ 
\leq&-\frac{1}{2A}\|\na \pa_y^{t_\star,t_\star+t}c_\nq(t_\star+t)\|_2^2+\frac{C}{A}\left(1+\|\int_{t_\star}^{t_\star+t}\pa_yu(s,y)ds\|_\infty^2\right)\|n_\nq\|_2^2\\
\leq&-\frac{1}{2A}\|\na \pa_y^{t_\star,t_\star+t}c_\nq(t_\star+t)\|_2^2+\frac{C}{A}(1+t^2)\|\pa_yu\|_{L^\infty}^2\|n_\nq\|_2^2.
\end{align}
Now we have that 
\begin{align}
\frac{d}{dt}&(L^4\|n_\nq(t_\star+t)\|_2^2+L^2A^{2/3}\|\pa_x c_\nq(t_\star+t)\|_2^2+\|\pa_y^{t_\star,t_\star+t}c_\nq(t_\star+t)\|_2^2+\|\pa_z c_\nq(t_\star+t)\|_2^2)\\
\leq &\frac{CL^4}{A}\bigg(\frac{(1+\|\pa_y u\|_\infty^2t^2)}{L^2A^{2/3}}(L_2^2A^{2/3}\|\pa_x c_\nq\|_2^2)+\|\pa_z c_\nq\|_2^2+\|\pa_{y}^{t_\star,t_\star+t} c_\nq\|_2^2\bigg)C_{n;\infty}^2\\
&+\frac{CL^4}{A}(\ep^{-2}\|\na \cc_{0;\nq}\|_2^2e^{-\delta (t_\star+t)/A^{1/3}})\ep^2C_{n;\infty}^2(1+t_\star+t)^2e^{-\delta (t+t_\star)/A^{1/3}}\\
&+\frac{C}{A}C_{\na \lan c\ran ; \infty}^2(L^4\|n_\nq\|_2^2)\\
&+\frac{4L^2}{L^4A^{1/3}}(L^4 \|n_\nq\|_2^2)+\frac{C}{AL^4}(1+t^2)\|\pa_y u\|_\infty^2(L^4\|n_\nq\|_2^2)\\
\leq & \left(\frac{CL^4(1+\|\pa_y u\|_\infty^2 t^2)}{L^2A^{5/3}}C_{n;\infty}^2+\frac{C}{A}C^2_{\na \lan c\ran;\infty}+\frac{4L^2}{L^4 A^{1/3}}+\frac{C}{AL_1^2}(1+t^2)\|\pa_yu\|_\infty^2\right)\times\\
&\times(L^4\|n_\nq(t_\star+t)\|_2^2+L^2A^{2/3}\|\pa_x c_\nq(t_\star+t)\|_2^2+\|\pa_y^{t_\star,t_\star+t}c_\nq(t_\star+t)\|_2^2+\|\pa_z c_\nq(t_\star+t)\|_2^2)\\
&+\frac{CL^4}{A}(\ep^{-2}\|\na \cc_{0;\nq}\|_2^2e^{-\delta (t_\star+t)/A^{1/3}})\ep^2C_{n;\infty}^2(1+t_\star+t)^2e^{-\delta (t+t_\star)/A^{1/3}}.
\end{align}
We focus on the interval $[t_\star,t_\star+2\delta^{-1}A^{1/3}]$ and observe that $t\leq 2\delta^{-1}A^{1/3}$. As a result, we choose $L\geq 24\delta^{-2}(1+\|\pa_yu\|_{L_{t,y}^\infty})$ and $A$ large compared to the bootstrap bounds $C_{\na \lan c\ran_x;\infty},\ C_{n;\infty}$ and $\delta^{-1}, L$ to obtain that\begin{align}\exp\bigg\{\int _0^t \frac{CL^4(1+\|\pa_y u\|_\infty^2 s^2)}{L^2A^{5/3}}C_{n;\infty}^2+\frac{C}{A}C^2_{\na \lan c\ran;\infty}+\frac{4L^2}{L^4 A^{1/3}}+\frac{C}{AL^4}(1+s^2)\|\pa_yu\|_\infty^2ds \bigg\}\leq 4.
\end{align} 
 Now we integrate on the time interval and end up with the following
\begin{align}
L^4&\|n_\nq(t_\star+t)\|_2^2+L^2A^{2/3}\|\pa_x c_\nq(t_\star+t)\|_2^2+\|\pa_y^{t_\star,t_\star+t}c_\nq(t_\star+t)\|_2^2+\|\pa_z c_\nq(t_\star+t)\|_2^2\\
\leq &4\left(L^4\|n_\nq(t_\star)\|_2^2+L^2A^{2/3}\|\pa_x c_\nq(t_\star)\|_2^2+\|\pa_y c_\nq(t_\star)\|_2^2+\|\pa_z c_\nq(t_\star)\|_2^2\right)\\
&+4\int_0^{2\delta^{-1}A^{1/3}} \frac{CL_1^2}{A}(\ep^{-2}\|\na \cc_{0;\nq}\|_2^2e^{-\delta t_\star/A^{1/3}})\ep^2C_{n;\infty}^2(1+t_\star+s)^2e^{-\delta (s+t_\star)/A^{1/3}}ds\\
\leq &4F(t_\star)(1+\frac{CL^4}{\delta^3}\ep^2 C_{n;\infty}^2),\forall t\in[0, 2\delta^{-1}A^{1/3}].
\end{align}If the $\ep$ is chosen small enough, we have that the sum is less than $12F(t_\star)$. Note that $\|\pa_y c_{\neq}(t_\star+t)\|_2^2\leq 4L^2 A^{2/3}\|\pa_x c_\nq(t_\star+t)\|_2^2+ 4\|\pa_y^{t_\star,t_\star+t}c_{\neq}(t_\star+t)\|_2^2$,  and we  have the regularity estimate. \fi
 \fi
 \section{{Alternating Construction}} \label{Sec:Alt}
In this section, we prove Theorem \ref{thm:alt_1} and Theorem \ref{thm:alt_2}. For the sake of notation simplicity, the implicit  constants $C$ in this section can depend on $\delta_{\mathcal Z}^{-1}, \, \delta_0^{-1}, \, C_0, \, M,\, \|u_A\|_{L^\infty_t W^{\mathbb{M}+6, \infty}}.$

To prove Theorem \ref{thm:alt_1}, we focus on Phase \# 1 \eqref{ph_1}, i.e., $t\in[0, 3A^{1/3+\te}]$. As explained in Section 2.3, the main achievement in Phase \# 1 is to guarantee that at time $T_{1\mathfrak a}=3A^{1/3+\te}$, the chemical gradient is small, i.e., $\|\cc(T_{1\mathfrak a})\|_{H^{\mathbb{M}+1}}\ll 1$. Once the smallness is reached, application of  Proposition \ref{Pro:main} yields the enhanced dissipation of the solution in Phase \# 2 ($t\geq 3A^{1/3+\te}$). We decompose the chemical density in the following way 
\begin{align}
\mathfrak {C}=\lan \lan \cc\ran\ran^{y,z}+(\lan  \cc\ran^y)_\nq^z+ \cc_\nq^y,\quad \overline{\mathfrak C}\equiv0.
\end{align}
To visualize this decomposition, we can apply a Fourier transform and see that 
\begin{align}
\wh{\lan \lan \cc\ran\ran^{y,z}}=&\mathbbm{1}_{R_1}\wh{\cc},\qquad R_1:=\{\al\neq 0, \beta =0, \gamma=0\};\\
\wh{(\lan \cc\ran^{y})_\nq^z}=&\mathbbm{1}_{R_2}\wh{\cc},\qquad R_2:=\{ \beta= 0, \gamma \neq0\};\\
\wh{\cc_\nq^{y}}=&\mathbbm{1}_{R_3}\wh{\cc},\qquad R_3:=\{ \beta\neq 0\}.
\end{align}
The explicit positions of the three Fourier domains can be found in Figure \ref{Fig:R123}. 
\begin{figure}[h]
\centering
\includegraphics[scale=0.8]{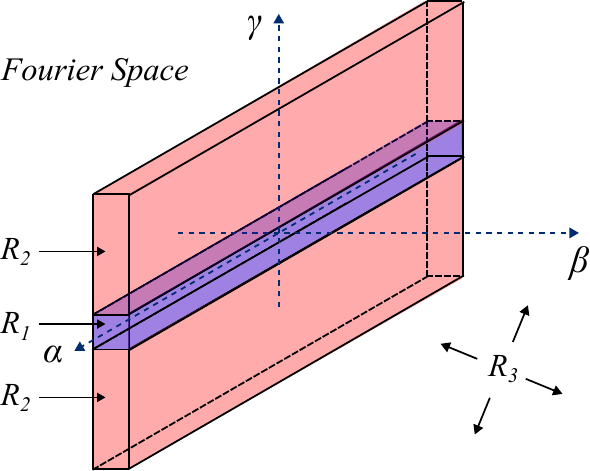}
\caption{Region $R_1, \, R_2,\, R_3$} \label{Fig:R123}
\end{figure} 
Now the plan is to show significant decay of the chemical density information in the three Fourier  domains $R_1,\ R_2, \ R_3.$ We will show that heuristically, the information stored in Fourier domains $R_1$, $R_2$ and $R_3$ will undergo significant decay in phase $\mathfrak a,$ $\mathfrak b$ and $\mathfrak c$, respectively. In Phase $\mathfrak a$ ($t\in[0, A^{1/3+\te}]$), the shear flow is in the $x$-direction, and it will efficiently damp all the chemical information in Fourier domain $\{\al\nq0\}\supset R_1$. As a result, we obtain the following lemma, which captures the main characteristics of the system at the end of Phase $\mathfrak a$, i.e., $t=T_{1\mathfrak b}$.
\ifx 
Let us start by decomposing phase $\mathfrak a$ into two sub-intervals:
\begin{align}
[T_{0\mathfrak a}, T_{0\mathfrak b}]=[0, A^{1/3+\zeta/2})\cup [A^{1/3+\zeta/2}, A^{1/3+\zeta}]=:\mathcal{I}_1\cup \mathcal{I}_2.
\end{align}
We recall that in the $\mathfrak{a}$-phase, the shear is in the $x$-direction and we can directly apply Proposition \ref{Pro:main} and Proposition \ref{pro:prp_FM}. Application of Proposition \ref{pro:prp_FM} yields that 
\begin{align}
\cc_\nq^x=&c_\nq^x+d_\nq^x;\\
\sum_{|i,j,k|\leq M+3}\|\pa_x^i\Gamma_{y;A^{1/3+\zeta/2}}^j\pa_z^k c_{\nq}^x(A^{1/3+\te/2})\|_{L^2}^2\leq &\frac{1}{A^{2/3}}C(\mathbb{F}_{G,Q}^{0;M+3}[0]+\|\lan n_{\text{in}}\ran\|_{H^{M+3}}^2+\|\cc_{\text{in}}\|_{H^{M+4}}^2);\\
\sum_{|i,j,k|\leq M+3}\|\pa_x^i\Gamma_{y;A^{1/3+\zeta/2}}^j\pa_z^k d_\nq^x(A^{1/3+\te/2})\|_{L^2}^2\leq& C\|\cc_{\text{in}}\|_{H^{M+3}}^2 e^{-\frac{\delta_{\mathcal Z}}{2}A^{\te/2}}.
\end{align}
Combining the estimate above yields that if $A$ is chosen large enough, then
\begin{align}
\sum_{|i,j,k|\leq M+3}\|\pa_x^i\Gamma_{y;A^{1/3+\te/2}}^j\pa_z^k \cc_\nq^x\|_{L^2}^2\leq \frac{1}{A^{1/2}}.
\end{align}
\fi
\begin{lem}\label{lem:Cnfg_T0b}
Consider the solution to the equation \eqref{ppPKS}, initiated from data $n_{\mathrm{in}}\in H^{\mathbb{M}+3}, \, \cc_{\mathrm{in}}\in H^{\mathbb{M}+4}$.  Recall the definitions of $f_\nq^x,\, \lan f\ran^x$ \eqref{f_iota}.  If the parameter $A$ is chosen large enough, then the following estimates hold
\begin{align} \label{conf_T0b}
\| n_\nq^x(T_{0\mathfrak b})\|_{H^{\mathbb{M}+2}}^2+\|\cc_\nq^x(T_{0\mathfrak b})\|_{H^{\mathbb{M}+3}}^2\leq& {A}^{-1},\quad \ 
 \| \lan n\ran^x(T_{0\mathfrak b})\|_{H^{\mathbb{M}+2}}^2+\| \lan \cc\ran^x (T_{0\mathfrak b})\|_{H^{ \mathbb{M}+3}}^2\leq \BB(\|n_{\mathrm{in}}\|_{H^{\mathbb{M}+3}},\|\cc_{\mathrm{in}}\|_{H^{\mathbb{M}+4}}).
\end{align}
As a consequence, the following estimate holds at $T_{0\mathfrak b}$,
\begin{align}\label{in_cnstr_bc}
\|(\lan \lan n\ran\ran^{y,z} -\overline{n})(T_{0\mathfrak b})\|_{H_{y,z}^{\mathbb{M}+2}}^2+\|\lan \lan \cc\ran\ran^{y,z}(T_{0\mathfrak b})\|_{H_{y,z}^{\mathbb{M}+3}}^2\leq {A}^{-1}. 
\end{align}
Here we recall that $\dss\lan \lan f\ran\ran^{y,z}(x)=\frac{1}{|\Torus|^2}\iint f(x,y,z)dydz.$
\end{lem}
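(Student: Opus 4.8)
\textbf{Proof proposal for Lemma \ref{lem:Cnfg_T0b}.}

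The plan is to recognize that Phase $\mathfrak a$ is exactly the time horizon $[0, A^{1/3+\te}]$ during which the ambient flow is the $x$-shear $(u_A(t,y),0,0)$, so that the full nonlinear machinery of Section \ref{Sec_s:nl} applies verbatim. First I would split $[0, A^{1/3+\te}]$ at $T_h=A^{1/3+\te/2}$ as in \eqref{tm_hrzn_dcm}, run Theorem \ref{thm:short_t} (equivalently Proposition \ref{pro:prp_FM} followed by Proposition \ref{Pro:2}) with regularity level $\mathbb{M}+3$ in place of $\mathbb M$ — this is legitimate because the hypotheses of those results only require $n_{\mathrm{in}}\in H^{M}$, $\cc_{\mathrm{in}}\in H^{M+1}$ with $M\geq 5$, and here $n_{\mathrm{in}}\in H^{\mathbb{M}+3}$, $\cc_{\mathrm{in}}\in H^{\mathbb{M}+4}$ with $\mathbb M\geq 5$. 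Taking $\te(\mathbb{M}+3)$ from \eqref{te_M_chc}, the conclusion \eqref{est_short_t} at time $A^{1/3+\te}\geq T_{0\mathfrak b}$ (note $T_{0\mathfrak b}=A^{1/3+\te}$) gives
\[
\|n_\nq(T_{0\mathfrak b})\|_{H^{\mathbb{M}+2}}^2+\|\cc_\nq(T_{0\mathfrak b})\|_{H^{\mathbb{M}+3}}^2\leq C\exp\{-A^{\te}/C\},
\]
which is far smaller than $A^{-1}$ for $A$ large; this is the first inequality in \eqref{conf_T0b} once one notes $n_\nq^x=n_\nq$, $\cc_\nq^x=\cc_\nq$ in Phase $\mathfrak a$ since the relevant shearing/averaging direction is $x$. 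The bound on the $x$-averages in \eqref{conf_T0b} is exactly \eqref{est_shrt_t_0}, giving the claimed $\mathfrak B(\|n_{\mathrm{in}}\|_{H^{\mathbb{M}+3}},\|\cc_{\mathrm{in}}\|_{H^{\mathbb{M}+4}})$.

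Second, I would derive \eqref{in_cnstr_bc} from \eqref{conf_T0b}. The key observation is that the double average $\lan\lan n\ran\ran^{y,z}-\overline n$ and $\lan\lan\cc\ran\ran^{y,z}$ live in the Fourier region $R_1=\{\al\neq 0,\beta=0,\gamma=0\}$, which is contained in the support of $\cc_\nq^x$ (resp. $n_\nq^x$), i.e. in $\{\al\neq0\}$. Since projecting onto a sub-region of frequency space does not increase any $H^s$ norm, one has
\[
\|(\lan\lan n\ran\ran^{y,z}-\overline n)(T_{0\mathfrak b})\|_{H^{\mathbb{M}+2}_{y,z}}\leq \|n_\nq^x(T_{0\mathfrak b})\|_{H^{\mathbb{M}+2}},\qquad
\|\lan\lan\cc\ran\ran^{y,z}(T_{0\mathfrak b})\|_{H^{\mathbb{M}+3}_{y,z}}\leq \|\cc_\nq^x(T_{0\mathfrak b})\|_{H^{\mathbb{M}+3}},
\]
so \eqref{in_cnstr_bc} follows immediately from the first half of \eqref{conf_T0b} (in fact with the much stronger exponential bound, but $A^{-1}$ suffices).

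The only genuinely delicate point is bookkeeping rather than analysis: one must check that the gliding-regularity norms $\mathcal Z^{\mathbb{M}+3}$ and $\FM$ used in Section \ref{Sec_s:nl}, which involve the $\Gamma_{y;t}$ vector fields and the weights $G^{2i},\Phi^{2j}$, translate back into the flat Sobolev norms $H^{\mathbb{M}+2}$, $H^{\mathbb{M}+3}$ at the terminal time with the losses absorbed by the enhanced-dissipation factor. This is precisely the mechanism discussed after \eqref{Z_implc} and in the remark following Theorem \ref{thm:alt_2}: the factors $\Phi(A^{1/3+\te})^{-k}\leq CA^{3\te k}$ and the polynomial-in-$A$ discrepancy between $\|\Gamma_{y;t}^{ijk}f\|$ and $\|\pa_x^i\pa_y^j\pa_z^k f\|$ (which is $\lesssim A^{\mathrm{poly}(\mathbb M)}$ because $\|\int_0^t\pa_y u_A\|_{L^\infty}\lesssim t\lesssim A^{1/3+\te}$) are all dominated by $\exp\{-A^{\te}/C\}$, since $\te$ from \eqref{te_M_chc} was chosen so that $12\te(\mathbb M+2)\leq 1/9$. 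I expect this reconciliation of norms — carried out carefully so the derivative count $\mathbb{M}+3\to\mathbb{M}+2$ (for $n$) and $\mathbb{M}+4\to\mathbb{M}+3$ (for $\cc$) is honest, losing one derivative exactly as in \eqref{est_short_t} — to be the main (though routine) obstacle; everything else is a direct citation of Theorem \ref{thm:short_t} plus the trivial frequency-localization inequality.
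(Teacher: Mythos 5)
Your proposal is correct and follows essentially the same route as the paper: the paper proves \eqref{conf_T0b} by combining Proposition \ref{pro:prp_FM} and Proposition \ref{Pro:2} (which is exactly the content of Theorem \ref{thm:short_t} at the shifted regularity level), and then obtains \eqref{in_cnstr_bc} from the elementary bound $\|(\lan\lan n\ran\ran^{y,z}-\overline n)(T_{0\mathfrak b})\|_{H^{\mathbb M+2}}^2+\|\lan\lan\cc\ran\ran^{y,z}(T_{0\mathfrak b})\|_{H^{\mathbb M+3}}^2\leq \|n_\nq^x(T_{0\mathfrak b})\|_{H^{\mathbb M+2}}^2+\|\cc_\nq^x(T_{0\mathfrak b})\|_{H^{\mathbb M+3}}^2$, which is precisely your frequency-localization observation. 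Your additional remarks on reconciling the gliding norms with flat Sobolev norms and on the choice of $\te$ are consistent with the paper's bookkeeping, so no changes are needed.
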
 

 \begin{proof}[Proof of Lemma \ref{lem:Cnfg_T0b}]
 Combining Proposition \ref{pro:prp_FM}, Proposition \ref{Pro:2}, we have that at time $T_{0\mathfrak{b}}$, \eqref{conf_T0b} holds. Moreover, we have that 
 \begin{align}
 \|(\lan \lan n\ran\ran^{y,z} -\overline{n})(T_{0\mathfrak b})\|_{H_{y,z}^{\mathbb{M}+2}}^2+\|\lan \lan \cc\ran\ran^{y,z}(T_{0\mathfrak b})\|_{H_{y,z}^{\mathbb{M}+3}}^2\leq \| n_\nq^x(T_{0\mathfrak b})\|_{H^{\mathbb{M}+2}}^2+\|\cc_\nq^x(T_{0\mathfrak b})\|_{H^{\mathbb{M}+3}}^2\leq& {A}^{-1}.
\end{align}   
This is estimate \eqref{in_cnstr_bc}.
 \end{proof}
 
\ifx
Through measuring the distance between the nonlinear solutions to the solutions to the passive scalar equations, we have that in phase $\mathfrak{a}$, the $x$-remainder $n_\nq^x,\, \cc_\nq^x$ undergoes enhanced dissipation, and the $x$-average $\lan n\ran^x,\, \lan \cc\ran^x$ do not grow too fast. As a result, we expect the following estimates at time $t=A^{1/3+\te}$
\begin{align}
\sum_{|i,j,k|=0}^{M}\|\pa_{x}^i\Gamma_{y;t}^j\pa_z^{k}n_\nq^x(A^{1/3+\te})\|_{L^2}+\sum_{|i,j,k|=0}^{M+1}\|\pa_{x}^i\Gamma_{y;t}^j\pa_z^{k}\cc_\nq^x (A^{1/3+\te})\|_{L^2}\leq \frac{C}{A^{1/12}};\label{Phase_1_nq}\\
\sum_{|i,j,k|=0}^{M}\|\pa_{ijk}\lan n\ran^x(A^{1/3+\te})\|_{L^2}+\sum_{|i,j,k|=0}^{M+1}\|\pa_{ijk}\lan \cc\ran^x(A^{1/3+\te})\|_{L^2}\leq C+C\ep.\label{Phase_1_average}
\end{align} 
We note that the $x$-average part of the solution does not decay in general. If nothing is changed at this moment, after time scale $\mathcal{O}(A)$, the nonlinear effect of the system will drive the growth of the chemical gradient $\na \lan \cc\ran^x$, which might create blow-up. Hence, we introduce the second phase of the control.
By rotating the shear direction by $\pi/2$, we obtain the time-dependent shear in the $z$-direction.
\fi
In phase $\mathfrak{b}$, we obtain similar estimates for $n_\nq^z,\, \lan n\ran^z,\, \cc_\nq^z,\, \lan \cc\ran^z$ as in phase $\mathfrak{a}$. Moreover, we can propagate the smallness estimate on $\lan \lan\cc \ran\ran^{y,z}=\mathbb P_{R_1}\cc$ \eqref{in_cnstr_bc}. These estimates  are summarized in the following lemma:
{
\begin{lem}\label{lem:Cnfg_T0c}
Consider the solution to the equation \eqref{ppPKS}. If the parameter $A$ is chosen large enough, then the following estimates hold
\begin{align} \label{conf_T0c_1}
\| n_\nq^z(T_{0\mathfrak c})\|_{H^{\mathbb{M}+1}}^2+\|\cc_\nq^z(T_{0\mathfrak c})\|_{H^{\mathbb{M}+2}}^2\leq& A^{-1};\quad
 \| \lan n\ran^z(T_{0\mathfrak c})\|_{H^{\mathbb{M}+1}}^2+\| \lan \cc\ran^z (T_{0\mathfrak c})\|_{H^{ \mathbb{M}+2}}^2\leq \BB(\|n_{\mathrm{in}}\|_{H^{\mathbb{M}+3}},\|\cc_{\mathrm{in}}\|_{H^{\mathbb{M}+4}}).
\end{align}
Moreover, the following estimates hold at $T_{0\mathfrak c}$,
\begin{align} \label{Ach_ab}
\| \lan \lan \cc\ran\ran^{y,z}(T_{0\mathfrak c})\|_{H^{\mathbb{M}+2}}^2\leq \| \lan \cc\ran^y(T_{0\mathfrak c})\|_{H^{\mathbb{M}+2}}^2\leq &\frac{\BB}{A^{1/3}}.
%
\end{align}
\end{lem}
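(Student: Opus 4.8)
\textbf{Proof proposal for Lemma \ref{lem:Cnfg_T0c}.}

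The plan is to rerun, in Phase $\mathfrak b$, exactly the argument that produced Lemma \ref{lem:Cnfg_T0b} in Phase $\mathfrak a$, but with the shear direction rotated so that the role of $x$ is played by $z$. Concretely, in Phase $\mathfrak b$ the flow is $\wt u_A = (0,0,u_A(t,x))^\top$, so the relevant gliding vector fields are $\Gamma_{x;t}, \pa_y, \pa_z$, and the ``remainder'' direction for the enhanced-dissipation machinery is $z$ — the $z$-average $\lan\cdot\ran^z$ is invariant under the flow and solves a (nonlinearly forced) two-dimensional PKS system, while $n_\nq^z, \cc_\nq^z$ are damped. Since the initial data for Phase $\mathfrak b$ are $(n(T_{0\mathfrak b}),\cc(T_{0\mathfrak b}))$, Lemma \ref{lem:Cnfg_T0b} provides precisely the hypotheses needed: $\|n_\nq^x(T_{0\mathfrak b})\|_{H^{\mathbb M+2}}^2 + \|\cc_\nq^x(T_{0\mathfrak b})\|_{H^{\mathbb M+3}}^2 \le A^{-1}$ is a smallness-of-chemical-gradient-type bound, and the $x$-averages are merely bounded. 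First I would apply the appropriately rotated version of Proposition \ref{pro:prp_FM} on the growth subphase $[T_{0\mathfrak b}, T_{0\mathfrak b}+A^{1/3+\te/2}]$ and the rotated Proposition \ref{Pro:2} on the decay subphase $[T_{0\mathfrak b}+A^{1/3+\te/2}, T_{0\mathfrak c}]$, with $z$ in place of $x$. This yields \eqref{conf_T0c_1}: the $z$-remainder decays to size $O(A^{-1})$ (the $A^{2/3}$-weight in the functional plus the linear enhanced dissipation \eqref{Z_implc} converts boundedness of the functional into this smallness after the $A^{1/3+\te/2}$ decay window), and the $z$-averages stay bounded by a constant depending only on $\|n_\mathrm{in}\|_{H^{\mathbb M+3}}, \|\cc_\mathrm{in}\|_{H^{\mathbb M+4}}$ (one pays two derivatives against the loss in Phase $\mathfrak a$, which is why the regularity index drops from $\mathbb M+2$ to $\mathbb M+1$ in $n$ and from $\mathbb M+3$ to $\mathbb M+2$ in $\cc$).

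For the second conclusion \eqref{Ach_ab}, the point is that $\lan\lan\cc\ran\ran^{y,z} = \mathbb P_{R_1}\cc$ lives in the Fourier region $\{\al\ne0,\beta=\gamma=0\}$, which is contained in $\{\al\ne 0\}$ — and in Phase $\mathfrak b$ the flow shears in $x$, hence damps everything in $\{\al\ne 0\}$. So I would track $\lan\lan\cc\ran\ran^{y,z}$ (equivalently, a component of $\lan\cc\ran^y$) through Phase $\mathfrak b$: it satisfies an advection-diffusion equation driven by the $x$-average of the aggregation nonlinearity projected onto $R_1$, and since the full $x$-remainder enhanced dissipation in Phase $\mathfrak b$ controls this mode, combining the smallness $\|\lan\lan\cc\ran\ran^{y,z}(T_{0\mathfrak b})\|_{H^{\mathbb M+3}}^2 \le A^{-1}$ from \eqref{in_cnstr_bc} with the $A^{-1/3}$-enhanced dissipation over the Phase-$\mathfrak b$ window $[T_{0\mathfrak b},T_{0\mathfrak c}]$ of length $A^{1/3+\te}$ yields an extra factor $\exp\{-c A^{\te}\}$ or at worst a polynomial-in-$A$ gain; the statement asks only for the crude bound $\BB/A^{1/3}$, which is comfortably implied. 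One also uses the elementary inclusion $\|\lan\lan\cc\ran\ran^{y,z}\|_{H^{\mathbb M+2}} \le \|\lan\cc\ran^y\|_{H^{\mathbb M+2}}$ (projection onto a sub-frequency-set is norm-nonincreasing) to get the first inequality in \eqref{Ach_ab}.

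The bookkeeping obstacle — and the step I expect to require the most care — is the change-of-norm between the $\Gamma$-adapted (gliding) Sobolev norms used by Propositions \ref{pro:prp_FM}, \ref{Pro:2} and the plain $H^s$ norms appearing in the statement of the lemma. The gliding norms and the flat norms differ by polynomial powers of $A$ (coming from factors like $B^{(m)}(t,y)=\int_0^t\pa_y^m u\,ds$, which grow like $t\sim A^{1/3+\te}$), so translating ``functional bounded'' into ``$\|n_\nq^z\|_{H^{\mathbb M+1}}^2\le A^{-1}$'' needs the enhanced-dissipation factor $\exp\{-\delta_{\mathcal Z}A^{\te/2}/C\}$ accumulated over the decay subphase to overpower the $A^{C(\mathbb M)}$ from the norm conversion; the choice of $\te=\te(\mathbb M)$ in \eqref{te_M_chc} together with the $A^{2/3}$ weight in $\FM$ is exactly what makes this work, as it does in the proof of Proposition \ref{Pro:2}. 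Additionally I must be careful that the hypothesis needed to invoke the rotated Proposition \ref{Pro:main}/\ref{Pro:2} in Phase $\mathfrak b$ — smallness of the relevant initial chemical remainder (cf. Remark \ref{rmk:sml_c}) — is genuinely supplied by the $A^{-1}$ bound from Lemma \ref{lem:Cnfg_T0b}, which it is once $A$ is taken large relative to the $\ep_0$ threshold in \eqref{chc_GAQ}. The rest is a direct transcription of the Phase $\mathfrak a$ arguments with the obvious relabeling of coordinates, so I would not spell out the energy estimates again.
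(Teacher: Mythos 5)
Your treatment of \eqref{conf_T0c_1} is essentially the paper's: run the rotated Proposition \ref{pro:prp_FM} on the growth subphase and the rotated Proposition \ref{Pro:2} on the decay subphase of $[T_{0\mathfrak b},T_{0\mathfrak c}]$, with the $z$-average playing the role of the $x$-average. But your argument for \eqref{Ach_ab} rests on a false mechanism. In Phase $\mathfrak b$ the velocity is $\wt u_A=(0,0,u_A(t,x))^\top$, so the advection term is $u_A(t,x)\pa_z$; the enhanced dissipation it produces acts on the modes with nonzero $z$-frequency, i.e.\ on $\cc_\nq^z\supset \mathbb P_{R_2}\cc$, not on $\{\al\neq0\}$. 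The region $R_1=\{\al\neq0,\beta=\gamma=0\}$ has $\gamma=0$, so $\mathbb P_{R_1}\cc=\lan\lan\cc\ran\ran^{y,z}$ is completely invisible to the Phase-$\mathfrak b$ shear (the transport term vanishes on $z$-independent functions), and there is no factor $\exp\{-cA^{\te}\}$, nor any $A^{-1/3}$-enhanced dissipation, available for it during this phase. This is exactly the point of the paper's Fourier-region heuristic: $R_1$, $R_2$, $R_3$ are damped in phases $\mathfrak a$, $\mathfrak b$, $\mathfrak c$ respectively; in Phase $\mathfrak b$ the smallness of $\mathbb P_{R_1}\cc$ can only be \emph{propagated}, not created or improved by mixing.

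The paper's actual route for \eqref{Ach_ab} is: (i) control $\lan\lan n\ran\ran^{y,z}$ in $H^{\mathbb M+1}$ uniformly on Phase $\mathfrak b$ via the (rotated) double-average estimate of Theorem \ref{thm:avg_yz} together with an ODE/bootstrap argument, using the $z$-remainder enhanced dissipation and the functional bounds to tame the nonlinear feedback; (ii) observe that $\lan\lan\cc\ran\ran^{y,z}$ solves a one-dimensional heat equation in $x$ forced by $\tfrac1A(\lan\lan n\ran\ran^{y,z}-\overline n)$, so Lemma \ref{Lem:gc0Lpest} gives growth at most $C(t/A)^{1/2}\le CA^{-1/3+\te/2}$ over the window of length $A^{1/3+\te}$, which, added to the initial smallness \eqref{in_cnstr_bc} inherited from Phase $\mathfrak a$, yields $\|\lan\lan\cc\ran\ran^{y,z}(T_{0\mathfrak c})\|_{H^{\mathbb M+2}}^2\le CA^{-1/3}$; (iii) conclude via $\|\lan\cc\ran^y\|_{H^{\mathbb M+2}}\le\|\lan\lan\cc\ran\ran^{y,z}\|_{H^{\mathbb M+2}}+\|\cc_\nq^z\|_{H^{\mathbb M+2}}$ and \eqref{conf_T0c_1}. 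Your proposal never invokes Theorem \ref{thm:avg_yz} or Lemma \ref{Lem:gc0Lpest}, which are precisely the ingredients that replace the (nonexistent) damping of $R_1$. A secondary inaccuracy: the smallness $\|\cc_\nq^x(T_{0\mathfrak b})\|^2\le A^{-1}$ from Lemma \ref{lem:Cnfg_T0b} does \emph{not} supply the hypothesis \eqref{smll_c_nq} of Proposition \ref{Pro:main} for Phase $\mathfrak b$, since the relevant remainder there is $\cc_\nq^z(T_{0\mathfrak b})$, which contains $(\lan\cc\ran^x)_\nq^z$ and is only bounded, not small; this is exactly why the paper routes through Proposition \ref{pro:prp_FM} and the gluing condition of Proposition \ref{Pro:2} instead of Proposition \ref{Pro:main} in this phase.
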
 
 \begin{proof}[Proof of Lemma \ref{lem:Cnfg_T0c}]
 Combining Proposition \ref{pro:prp_FM}, Proposition \ref{Pro:2}, we have that at time $T_{0\mathfrak{c}}$, \eqref{conf_T0c_1}  holds. Next we combine Proposition  \ref{pro:prp_FM}, Proposition \ref{Pro:2}, and Theorem \ref{thm:avg_yz}. As a result of an ODE argument \myc{(?) The explicit argument is as follows: Without loss of generality, we set $T_{0\mathfrak{b}}=0$ and $M=\mathbb{H}+1$, and invoke Theorem \ref{thm:avg_yz} to obtain that
\begin{align}
\frac{d}{dt}&\left(\sum_{i=0}^{ M}\|\pa_x^i  \lan \lan n\ran\ran^{y,z} (t)\|_{L_{y}^2}^2\right)\\
\leq &-\frac{1}{A}\sum_{i=0}^{M}\|\pa_x^{i+1}\lan\lan n\ran\ran^{y,z} \|_{L_y^2}^2+\frac{C}{A}{\|\lan n\ran^{z}(t)\|_{H_{x,y}^M}^2\left(\sup_{s\in[0, A^{1/3+\te}]}\|\lan  n \ran^{z}(s)\|_{H_{x,y}^{M}}^2 +\|\pa_x  \lan \cc_{\mathrm{in}}\ran^{z}\|_{H_{x,y}^M}^2\right)}\\
&+\frac{C}{A}\lf(\sum_{|i,j,k|=0}^{M+1}\|\Gamma_{x;t}^{ijk} \cc_\nq\|_{L_{x,y,z}^2}^2+   t^2  \sum_{ |i,j,k|=0}^{ M }\|\Gamma_{x;t}^{ij(k+1)} \cc_\nq\|_{L_{x,y,z}^2}^2\rg) \left(\sum_{|i,j,k|=0}^{M}\|\Gamma_{x;t}^{ijk}  n_\nq\|_{L^{2}_{x,y,z}}^2\right).
\end{align} Now we invoke the Proposition \ref{pro:prp_FM} and the linear enhanced dissipation Theorem \ref{thm:ED_gldrg} to obtain that 
\begin{align}
\frac{d}{dt} &\left(\sum_{i=0}^{ M}\|\pa_x^i  \lan \lan n\ran\ran^{y,z} (t)\|_{L_{y}^2}^2\right) \\
\leq &-\frac{1}{A}\sum_{i=0}^{M}\|\pa_x^{i+1}\lan\lan n\ran\ran^{y,z} \|_{L_y^2}^2+\frac{C}{A}\BB_{\lan n\ran^z, H^M}^2\left(\BB_{\lan n\ran^z, H^M}^2 +\|\pa_x  \lan \cc_{\mathrm{in}}\ran^{z}\|_{H_{x,y}^M}^2\right)\\
&+\frac{C(G)}{A\Phi^{4M+4}}\lf(\sum_{|i,j,k|=0}^{M+1}G^{2k}\Phi^{2i+2}\|\Gamma_{x;t}^{ijk} c_\nq\|_{L_{x,y,z}^2}^2+\frac{t^2}{A^{2/3}}\sum_{ |i,j,k|=0}^{ M } A^{2/3}G^{2k}\Phi^{2i+2} \|\Gamma_{x;t}^{ij(k+1)} c_\nq\|_{L_{x,y,z}^2}^2\rg) \mathbb{H}_{G}^M\\ 
&+\frac{C(G)}{A^{1/3}\Phi^{4M+6}}\lf(\sum_{|i,j,k|=0}^{M+1}G^{2k}\Phi^{2i}\|\Gamma_{x;t}^{ijk} S_0^t \cc_\nq(0)\|_{L_{x,y,z}^2}^2+ \sum_{ |i,j,k|=0}^{ M }  G^{2k}\Phi^{2i} \|\Gamma_{x;t}^{ij(k+1)} S_0^t\cc_\nq(0)\|_{L_{x,y,z}^2}^2\rg) \mathbb{H}_{G}^M\\
\lesssim&-\frac{1}{A}\sum_{i=0}^{M}\|\pa_x^{i+1}\lan\lan n\ran\ran^{y,z} \|_{L_y^2}^2+\frac{C}{A}\BB_{\lan n\ran^z, H^M}^2\left(\BB_{\lan n\ran^z, H^M}^2 +\|\pa_x  \lan \cc_{\mathrm{in}}\ran^{z}\|_{H_{x,y}^M}^2\right) +\frac{C(G)}{A\Phi^{4M+4}}(\mathbb{H}_{G}^M)^2\\ 
&+\frac{C(G)}{A^{1/3}\Phi^{4M+6}}\exp\lf\{-\frac{\delta_{\mathcal Z} t}{A^{1/3}} \rg\} \mathbb{H}_{G}^M\sum_{|i,j,k|=0}^{M+1}G^{2k} \|\Gamma_{x;0}^{ijk} \cc_\nq(0)\|_{L_{x,y,z}^2}^2.
\end{align}
Now recalling the bound \eqref{est_pro}, we integrate the above expression in time and take $A$ large to gain the result.  
 }, we observe that 
 if $A$ is chosen large enough,
\begin{align*}
\|\lan \lan n\ran\ran^{y,z}(T_{0\mathfrak c})\|_{H^{\mathbb M+1}}\leq C(\|n_{\mathrm{in}}\|_{H^{\mathbb M+3}},\|\cc_{\mathrm{in}}\|_{H^{\mathbb M+4}}).
\end{align*} Now Lemma \ref{Lem:gc0Lpest} yields the  estimate 
\begin{align} \label{conf_T0c}
\| \lan \lan \cc\ran\ran^{y,z}(T_{0\mathfrak c})\|_{H^{\mathbb M+2}}^2\leq &C\| \lan \lan \cc\ran\ran^{y,z}(T_{0\mathfrak b})\|_{H^{\mathbb M+2}}^2 +{A^{-1/3}}\leq {C}{A^{-1/3}}.
\end{align}
Since $$\|\lan \cc\ran^y\|_{H^{\mathbb M+2}}\leq \|\lan\lan \cc\ran\ran^{y,z}\|_{H^{\mathbb M+2}}+\|(\lan \cc\ran^y)_\nq^z\|_{H^{\mathbb M+2}}\leq \|\lan\lan \cc\ran\ran^{y,z}\|_{H^{\mathbb M+2}}+\|\cc_\nq^z\|_{H^{\mathbb M+2}}, $$
 the estimates \eqref{conf_T0c_1} and \eqref{conf_T0c} imply that \eqref{Ach_ab} holds.
\ifx
Combining it with the estimates,  \eqref{conf_T0c_2}, we have that
\begin{align}
\| (\lan n\ran^y-\overline n)(T_{0\mathfrak c})\|_{H^{M+1}}\leq C(\|n_{\mathrm{in}}\|_{H^{M+3}},\|\cc_{\mathrm{in}}\|_{H^{M+4}}).
\end{align}
 and.\fi
 \end{proof}
The main achievement in the phases $\mathfrak a, \mathfrak b$ is that we obtain smallness in the $y$-average component at the beginning of phase $\mathfrak c$, i.e., $\|\lan \cc\ran^y(T_{0\mathfrak c})\|_{H^{\mathbb{M}+2}}=\|\mathbb{P}_{R_1\cup R_2}\cc(T_{0\mathfrak c})\|_{H^{\mathbb{M}+2}}\ll 1$.
}
Hence we have nice estimate at time instance $T_{1\mathfrak a}$:
\begin{lem}\label{lem:Cnfg_T1a}
Consider the solution to the equation \eqref{ppPKS}. If the parameter $A$ is chosen large enough, then the following estimates hold at time instance $T_{1\mathfrak a}$
\begin{align} \label{conf_T1a}
\|(n-\overline {n})(T_{1\mathfrak a})\|_{H^\mathbb{M}}^2\leq&\BB(\|n_{\mathrm{in}}\|_{H^{\mathbb{M}+3}},\|\cc_{\mathrm{in}}\|_{H^{\mathbb{M}+4}});\quad \
\|\cc(T_{1\mathfrak a})\|_{H^{\mathbb{M}+1}}^2\leq \frac{\BB(\|n_{\mathrm{in}}\|_{H^{\mathbb{M}+3}},\|\cc_{\mathrm{in}}\|_{H^{\mathbb{M}+4}})}{A^{1/3}}.
\end{align}
\end{lem}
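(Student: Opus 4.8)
This final Lemma \ref{lem:Cnfg_T1a} is essentially a bookkeeping step that collects the outputs of the three phase-wise lemmas into a single statement about the full solution at $T_{1\mathfrak a}$. The plan is to trace the chemical information through all three Fourier regions $R_1,R_2,R_3$ and argue that each region has been damped to size $O(A^{-1/3})$ by the end of the appropriate phase, while the cell density stays bounded by an $A$-independent constant $\mathfrak B$ depending only on $\|n_{\mathrm{in}}\|_{H^{\mathbb M+3}}$ and $\|\cc_{\mathrm{in}}\|_{H^{\mathbb M+4}}$.

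First I would apply Lemma \ref{lem:Cnfg_T0c}, which gives at $t=T_{0\mathfrak c}$ the bounds $\|n_\nq^z\|_{H^{\mathbb M+1}}^2 + \|\cc_\nq^z\|_{H^{\mathbb M+2}}^2 \leq A^{-1}$ (this controls $R_3$ for the $z$-remainder, hence $\mathbb P_{R_3}\cc$) together with $\|\lan\cc\ran^y(T_{0\mathfrak c})\|_{H^{\mathbb M+2}}^2 \leq \mathfrak B/A^{1/3}$ (this controls $R_1\cup R_2$). Since $\cc = \lan\cc\ran^y + \cc_\nq^y$ and $\|\cc_\nq^y\|_{H^{\mathbb M+2}} \le \|\cc_\nq^z\|_{H^{\mathbb M+2}}$ is not quite right — rather, decomposing $\cc = \lan\lan\cc\ran\ran^{y,z} + (\lan\cc\ran^y)^z_\nq + \cc_\nq^y$ and noting $\mathbb P_{R_2}\cc = (\lan\cc\ran^y)^z_\nq$ has Fourier support in $\{\beta=0,\gamma\neq 0\}\subset\{\beta=0\}$ while $\mathbb P_{R_3}\cc = \cc_\nq^y$ — one gets $\|\cc(T_{0\mathfrak c})\|_{H^{\mathbb M+2}}^2 \leq \|\lan\cc\ran^y(T_{0\mathfrak c})\|_{H^{\mathbb M+2}}^2 + \|\cc_\nq^y(T_{0\mathfrak c})\|_{H^{\mathbb M+2}}^2 \leq \mathfrak B/A^{1/3} + A^{-1} \leq \mathfrak B'/A^{1/3}$, and similarly $\|(n-\overline n)(T_{0\mathfrak c})\|_{H^{\mathbb M+1}}^2 \leq \mathfrak B$ after using the $x$-average bounds propagated through Theorem \ref{thm:<n>_est}/\ref{thm:avg_yz} and the fact that the remainders are small.

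Next I would run phase $\mathfrak c$, $t\in(T_{0\mathfrak c},T_{1\mathfrak a}]$, over a time window of length $A^{1/3+\zeta}$. Here the shear is in the $y$-direction, and the key input is that $\|\cc(T_{0\mathfrak c})\|_{H^{\mathbb M+2}}$ is already small, so Proposition \ref{Pro:main} (with $M=\mathbb M$) is applicable with $\epsilon \sim A^{-1/6}$, which after checking $\epsilon^{-1}\ge\epsilon_0(n_{\mathrm{in}},\cc_{\mathrm{in}})$ for $A$ large, yields the enhanced dissipation estimate \eqref{ConED} for the $y$-remainders and the uniform bounds \eqref{Con_<n>}, \eqref{Con_<grd_c>} for the $y$-averages. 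Combining the decay of the $y$-remainder with a short-time bound on $\lan\cc\ran^y$ (which can only grow by $O(A^{-1/5})$ over this window by \eqref{Con_<grd_c>}, and whose $H^{\mathbb M+2}$ norm was already $O(A^{-1/6})$) gives $\|\cc(T_{1\mathfrak a})\|_{H^{\mathbb M+1}}^2 \leq \mathfrak B/A^{1/3}$, while $\|(n-\overline n)(T_{1\mathfrak a})\|_{H^{\mathbb M}}^2 \leq \mathfrak B$ follows from \eqref{Con_<n>} plus the enhanced-dissipation bound on the remainder. The losses of derivatives (starting with $\mathbb M+3$ data, ending with $\mathbb M$ control on $n$ and $\mathbb M+1$ on $\cc$) are exactly accounted for by the three applications of the phase lemmas, each of which pays a derivative or two.

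The main obstacle I expect is not any single estimate but the careful orchestration of the smallness and norm losses across the three phases: one must verify that at each handoff $T_{0\mathfrak b}$, $T_{0\mathfrak c}$, $T_{1\mathfrak a}$ the hypotheses of the next proposition (the smallness condition \eqref{smll_c_nq}, the gluing condition \eqref{Glu_con}, and the thresholds on $G$, $A$, $Q$, $\epsilon$) are genuinely met with the available regularity, and that the constant $A_0$ in the statement can be chosen uniformly. In particular one needs that the $\Gamma$-adapted norms at the end of a phase can be converted to standard $H^s$ norms (they differ by polynomial powers of $A$, but after the $O(A^{1/3+\zeta})$ enhanced-dissipation window the exponential decay $e^{-A^\zeta/C}$ beats any $A^{\mathbb M}$), and that the rotation of the shear direction between phases does not spoil the estimates — this is where the symmetry of the construction under permuting $(x,y,z)$ is used to invoke Lemmas \ref{lem:Cnfg_T0b}, \ref{lem:Cnfg_T0c} in the first place. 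Once these consistency checks are in place, Lemma \ref{lem:Cnfg_T1a} is immediate, and indeed its proof can be stated in one or two lines as ``combine Lemma \ref{lem:Cnfg_T0c} with Proposition \ref{Pro:main} applied on phase $\mathfrak c$, choosing $A$ large enough.''
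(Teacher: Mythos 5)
There is a genuine gap, and it sits exactly at the step you flagged as ``not quite right'' and then used anyway. At $T_{0\mathfrak c}$ the available smallness is only for the $z$-remainder and the $y$-average: Lemma \ref{lem:Cnfg_T0c} gives $\|\cc_\nq^z(T_{0\mathfrak c})\|_{H^{\mathbb M+2}}^2\le A^{-1}$ \eqref{conf_T0c_1} and $\|\lan\cc\ran^y(T_{0\mathfrak c})\|_{H^{\mathbb M+2}}^2\le \BB A^{-1/3}$ \eqref{Ach_ab}. This does \emph{not} control $\cc_\nq^y(T_{0\mathfrak c})=\mathbb{P}_{R_3}\cc(T_{0\mathfrak c})$: Fourier modes with $\beta\neq 0$ and $\gamma=0$ (for instance modes depending on $y$ alone, $\al=\gamma=0$, $\beta\neq0$) are invariant in neither damped set of phases $\mathfrak a$ (which damps $\al\neq0$) nor $\mathfrak b$ (which damps $\gamma\neq0$), so at $T_{0\mathfrak c}$ they are still of order $\BB$, not $O(A^{-1/2})$. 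Hence your chain $\|\cc(T_{0\mathfrak c})\|_{H^{\mathbb M+2}}^2\le\|\lan\cc\ran^y\|^2+\|\cc_\nq^y\|^2\le \BB A^{-1/3}+A^{-1}$ is false, and with it the claim that Proposition \ref{Pro:main} is applicable on phase $\mathfrak c$ with $\ep\sim A^{-1/6}$: the smallness hypothesis \eqref{smll_c_nq} in that phase concerns precisely the phase-adapted remainder $\cc_\nq^y$, which is the large quantity that phase $\mathfrak c$ is designed to damp (cf.\ Remark \ref{rmk:sml_c}). In effect you are assuming the conclusion of the lemma (smallness of the full chemical density) in order to prove it; Proposition \ref{Pro:main} only becomes usable at $T_{1\mathfrak a}$, which is the content of Theorem \ref{thm:alt_2}, not of this lemma.

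The paper's route avoids this by running phase $\mathfrak c$ with the machinery that needs no smallness of the chemical remainder: Proposition \ref{pro:prp_FM} propagates the higher functional through the transient-growth sub-window (paying derivatives, which is why the data are taken in $H^{\mathbb M+3}\times H^{\mathbb M+4}$), Proposition \ref{Pro:2} then gives enhanced dissipation of $n_\nq^y,\cc_\nq^y$ on the decay sub-window via the gluing condition \eqref{Glu_con}, and Theorem \ref{thm:avg_yz} together with Lemma \ref{Lem:gc0Lpest} shows that the components already made small in phases $\mathfrak a,\mathfrak b$ (i.e.\ $\lan\cc\ran^y$) only grow by $O(t/A)\le A^{-2/3+\te}$ in square norm during phase $\mathfrak c$, since their source is $\tfrac1A(\lan n\ran-\overline n)$ with $\lan n\ran$ bounded. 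Combining these yields both bounds in \eqref{conf_T1a}. Your treatment of phases $\mathfrak a$ and $\mathfrak b$ (Lemmas \ref{lem:Cnfg_T0b}, \ref{lem:Cnfg_T0c}) and the bookkeeping of derivative losses are fine; the proof is repaired by replacing the appeal to Proposition \ref{Pro:main} in phase $\mathfrak c$ with Propositions \ref{pro:prp_FM} and \ref{Pro:2} plus the average estimates just described.
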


 \begin{proof}[Proof of Lemma \ref{lem:Cnfg_T1a}]
 Combining Proposition \ref{pro:prp_FM}, Proposition \ref{Pro:2}, Theorem \ref{thm:avg_yz} and Lemma \ref{Lem:gc0Lpest}
 , we have that at time $T_{1\mathfrak{a}}$, \eqref{conf_T1a} holds.  \end{proof}
 
At this point ($t=T_{1\mathfrak a}$), the norm of the chemical density is small, and we can derive Theorem \ref{thm:alt_2}.\ifx apply similar estimate to that
\begin{lem}\label{lem:I_geq_1}
Consider the solution to the equation \eqref{PKS_rsc}. If the parameter $A$ is chosen large enough, then the following estimates hold 
\begin{align}
\|(n-\overline {n})(T_{1\mathfrak a}+t)\|_{H^\mathbb{M}}^2+\|\cc(T_{1\mathfrak a}+t)\|_{H^{\mathbb{M}+1}}^2\leq& C(\|n_{\mathrm{in}}\|_{H^{\mathbb{M}+3}}^2+\|\cc_{\mathrm{in}}\|_{H^{\mathbb{M}+4}}^2)e^{-\delta t/A^{1/3+\te}},\quad\forall t\geq 0.
\end{align}Here $C,\, \delta$ are universal constants. 
As a result, the solution is globally regular.
\end{lem}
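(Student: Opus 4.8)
The plan is to prove Theorem \ref{thm:alt_2} by iterating Proposition \ref{Pro:main} over the periods $[T_{I\mathfrak a},T_{(I+1)\mathfrak a}]$, $I\geq1$, exploiting the orthogonality of the three shearing directions in phases $\mathfrak a,\mathfrak b,\mathfrak c$ — the ``Rubik's cube'' effect. By the hypothesis \eqref{Goal_Phase_1} (the content of Lemma \ref{lem:Cnfg_T1a}) we have at $t=T_{1\mathfrak a}$ that $\|(n-\overline n)(T_{1\mathfrak a})\|_{H^{\mathbb M}}^2\leq\mathfrak B$ and $\|\cc(T_{1\mathfrak a})\|_{H^{\mathbb M+1}}^2\leq\mathfrak B A^{-1/3}$; in particular, for $A$ large the in-direction chemical remainder is below the threshold $\epsilon$ of Proposition \ref{Pro:main}. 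Hence on the first phase $\mathfrak a$ of period $I=1$ (shear $(u_A(t,y),0,0)$) Proposition \ref{Pro:main} applies and gives: (i) the $x$-remainders $n_\nq^x,\na\cc_\nq^x$ obey the enhanced dissipation \eqref{ConED}, which over a window of length $A^{1/3+\te}$ yields a contraction factor $\exp(-\delta A^\te)$; (ii) $\|\lan n\ran^x\|_{H^{\mathbb M}}$ stays bounded and $\|\na\lan\cc\ran^x\|_{H^{\mathbb M}}$ stays $\lesssim A^{-1/5}+2\|\na\lan\cc\ran^x(T_{1\mathfrak a})\|_{H^{\mathbb M}}$, which remains small. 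Rotating the shear to the $z$-direction in phase $\mathfrak b$ and to the $y$-direction in phase $\mathfrak c$, the same proposition — after relabelling axes, and invoking Theorem \ref{thm:avg_yz} to carry the double-average smallness \eqref{Ach_ab} — enhanced-dissipates the $z$-remainders in phase $\mathfrak b$ and the $y$-remainders in phase $\mathfrak c$.

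The combinatorial core is to track, on the Fourier side $(x,y,z)\leftrightarrow(\al,\beta,\gamma)$, which slab is damped in which phase: phase $\mathfrak a$ contracts all modes with $\al\neq0$, phase $\mathfrak b$ all modes with $\gamma\neq0$, phase $\mathfrak c$ all modes with $\beta\neq0$. Any frequency $(\al,\beta,\gamma)\neq(0,0,0)$ has a nonzero coordinate, hence is contracted at least once per period; once a slab has been reduced to size $\exp(-\delta A^\te)$, the only mechanism that can regenerate it in a later phase of the same period is the aggregation nonlinearity $\tfrac1A\na\cdot(n\na\cc)$, whose cumulative effect over a window of length $A^{1/3+\te}$ is bounded by $A^{-2/3+\te}$ times the (uniformly bounded, by Proposition \ref{Pro:main}) $H^{\mathbb M+1}$-size — negligible for $A$ large. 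The $(0,0,0)$ mode of $n$ is the conserved mass $\overline n$ and that of $\cc$ is $0$. Consequently, choosing $A$ large,
\begin{align}
\|(n-\overline n)(T_{(I+1)\mathfrak a})\|_{H^{\mathbb M}}^2+\|\cc(T_{(I+1)\mathfrak a})\|_{H^{\mathbb M+1}}^2\leq\tfrac12\Big(\|(n-\overline n)(T_{I\mathfrak a})\|_{H^{\mathbb M}}^2+\|\cc(T_{I\mathfrak a})\|_{H^{\mathbb M+1}}^2\Big),\qquad I\geq1.
\end{align}

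Since after one full period \emph{all} of $\cc$ except $\overline\cc=0$ is tiny, the smallness hypothesis \eqref{smll_c_nq} of Proposition \ref{Pro:main} is re-established at the start of every subsequent period, so the induction closes for all $I$. Inside each period the quantity stays (by the uniform-in-time bounds of Proposition \ref{Pro:main}, made quantitative as in the discussion above) within a bounded multiple of its value at the period's start, which upgrades the geometric decay $2^{-(I-1)}$ to the stated exponential estimate $e^{-\delta t/A^{1/3+\te}}$; combining with the local well-posedness Theorem \ref{thm:lcl_exst} gives $\|n(t)\|_{H^{\mathbb M}}+\|\cc(t)\|_{H^{\mathbb M+1}}<\infty$ for all $t\geq T_{1\mathfrak a}$, which together with Theorem \ref{thm:alt_1} on $[0,T_{1\mathfrak a}]$ yields the global regularity of Theorem \ref{thm_main}.

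The main obstacle I anticipate is bookkeeping the interaction between the three directional decompositions across a period: one must verify that a slab damped in phase $\mathfrak a$ is not appreciably regenerated in phases $\mathfrak b$ and $\mathfrak c$, which requires quantifying the nonlinear cross-talk between $x$-, $y$- and $z$-remainders and the corresponding averages — in particular controlling the transient growth of the chemical gradient in a \emph{different} direction than the one being sheared (the same difficulty as in Remark \ref{rmk:sml_c}, now at the level of the alternating construction). Closely related, one must check that the additive ``$+O(1)$'' and ``$A^{-1/5}$'' slack in the $x$-average bounds of Proposition \ref{Pro:main} does not accumulate over infinitely many periods; this means re-running the $x$-average estimates of Theorem \ref{thm:<n>_est} and Theorem \ref{thm:avg_yz} while keeping track of the (geometrically decaying) smallness of $\na\lan\cc\ran$, so that these slack terms are themselves controlled by the remainder contributions rather than being fixed constants.
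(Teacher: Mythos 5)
Your plan is essentially the paper's own argument: once the chemical density is small at $T_{1\mathfrak a}$, one repeatedly applies Proposition \ref{Pro:main} phase by phase (with the axes relabelled) and upgrades the per-block contraction to exponential decay via the iteration scheme of \eqref{ED_argument}, exactly as in the paper's (very terse) proof of Theorem \ref{thm:alt_2}; your Fourier-slab bookkeeping is the same mechanism the paper makes explicit in Phase \#1 through Lemmas \ref{lem:Cnfg_T0b}--\ref{lem:Cnfg_T1a}. The obstacles you flag (cross-directional regeneration by the nonlinearity, non-accumulation of the additive slack in the average bounds, and the gliding-versus-Sobolev norm conversion being absorbed by the $e^{-\delta A^{\te}}$ factor) are precisely the points the paper's quantitative estimates handle, so no change of approach is needed.
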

This is \fi
 

 
 \begin{proof}[Proof of Theorem \ref{thm:alt_2}]
Application of  Proposition \ref{Pro:main} and the argument \eqref{ED_argument} yields the result. 
 \end{proof}
\ifx
\textbf{Previous argument:} 
We have the following favorable constraint:
\begin{align}
\|\lan n\ran^y(2A^{1/3+\te})\|_{H^{m}}\leq \|\lan\lan n\ran^y\ran^z(2A^{1/3+\te})\|_{H^m}+\|n_\nq^z(2A^{1/3+\te})\|_{H^m}\leq\frac{C}{A^{1/12}}+C(\|n_{\mathrm{in}}\|_{H^{M+3}},\|\cc_{\mathrm{in}}\|_{H^{M+4}});\\
\|\lan \cc\ran^y(2A^{1/3+\te})\|_{H^{m+1}}\leq\|\lan \lan \cc\ran^y\ran^z(2A^{1/3+\te})\|_{H^{m+1}}+\| \cc_{\nq}^z(2A^{1/3+\te}) \|_{H^{m+1}}\leq\frac{C}{A^{1/12}}.
\end{align}
Now we can use the same argument as in Phase $\mathfrak{a}$ to estimate the solutions. At the end of phase $\mathfrak{a},\mathfrak{b},\mathfrak{c}$, we have the following
\begin{align}
\|n-\overline{n}\|_{H^m}\leq\frac{C}{A^{1/12}}+C( \|\cc_{\mathrm{in}}\|_{H^{M+4}});\\
\|\cc\|_{H^{m+1}}\leq\frac{C}{A^{1/12}}.
\end{align}\fi
\appendix
\section{General Lemmas}

\begin{lem}\label{lem:F_0andF}
Let $F$ be in $C^m(\Torus^3)$, and define $
\lan F\ran(y,z):=\frac{1}{|\mathbb{T}|}\int_{\mathbb{T}}F(x,y,z)dx,\ 
\lan\lan F\ran\ran(z):=\frac{1}{|\mathbb{T}|^2}\int_{\mathbb{T}^2}F(x,y,z)dxdy.$ The following estimates hold:
\begin{align}\label{F_0 and F}
\|\lan\lan F\ran\ran\|_{L_z^p(\Torus)}\leq&  \|\lan F\ran\|_{L^p_{y,z}(\Torus^2)}\leq\|F\|_{L^p_{x,y,z}(\mathbb{T}^3)},\quad 1\leq p\leq\infty,\\
\sum_{k=0}^m\|\pa_z^k \lan\lan F\ran\ran\|_{L^2_z(\Torus)}\leq&
\sum_{k=0}^m\|\pa_z^k\lan F\ran\|_{L^2_{y,z}(\Torus^2)}\leq \sum_{k=0}^m\|\pa_z^k F\|_{L_{x,y,z}^2(\mathbb{T}^3)}.
\end{align}
\end{lem}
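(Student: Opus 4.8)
The statement is a pair of elementary inequalities relating the full function $F$ on $\mathbb{T}^3$, its $x$-average $\lan F\ran$ on $\mathbb{T}^2$, and its double average $\lan\lan F\ran\ran$ on $\mathbb{T}$. The plan is to obtain everything from two basic facts: first, that averaging in a periodic variable is an $L^p$-contraction (a consequence of Jensen's inequality, or H\"older's inequality), and second, that partial differentiation in a variable commutes with averaging in a \emph{different} variable. Both facts are routine, so the proof will be short.

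First I would prove the $L^p$ contraction $\|\lan F\ran\|_{L^p_{y,z}(\mathbb{T}^2)}\leq \|F\|_{L^p_{x,y,z}(\mathbb{T}^3)}$ for $1\leq p\leq\infty$. For $p<\infty$ this follows by writing $\lan F\ran(y,z)=\frac{1}{|\mathbb{T}|}\int_{\mathbb{T}}F(x,y,z)\,dx$, applying Jensen's inequality (or H\"older with exponents $p$ and $p'$ against the constant $1$) in the $x$-variable to get $|\lan F\ran(y,z)|^p\leq \frac{1}{|\mathbb{T}|}\int_{\mathbb{T}}|F(x,y,z)|^p\,dx$, and then integrating in $(y,z)$ and using Fubini. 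For $p=\infty$ it is immediate from $|\lan F\ran(y,z)|\leq \|F(\cdot,y,z)\|_{L^\infty_x}$. Applying the same argument once more, now averaging the function $\lan F\ran$ over $y$, gives $\|\lan\lan F\ran\ran\|_{L^p_z(\mathbb{T})}\leq \|\lan F\ran\|_{L^p_{y,z}(\mathbb{T}^2)}$, which chains to yield the first display.

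For the second display, the key observation is that $\pa_z^k$ commutes with $x$-averaging and with $y$-averaging: since $F\in C^m$, differentiation under the integral sign is justified, so $\pa_z^k\lan F\ran=\lan \pa_z^k F\ran$ and $\pa_z^k\lan\lan F\ran\ran=\lan\lan \pa_z^k F\ran\ran$. Then one simply applies the already-established $L^2$ contraction ($p=2$ case) to each function $\pa_z^k F$ in place of $F$, obtaining $\|\pa_z^k\lan\lan F\ran\ran\|_{L^2_z}=\|\lan\lan \pa_z^k F\ran\ran\|_{L^2_z}\leq \|\lan \pa_z^k F\ran\|_{L^2_{y,z}}=\|\pa_z^k\lan F\ran\|_{L^2_{y,z}}\leq \|\pa_z^k F\|_{L^2_{x,y,z}}$ for each $k$, and summing over $0\leq k\leq m$ gives the claim. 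Entirely analogous statements hold for the other coordinate directions, which is why the lemma is invoked in the text for $\pa_x$-derivatives of $y,z$-averages as well.

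There is no real obstacle here; the only point requiring a word of care is the justification of differentiation under the integral sign, which is standard given $F\in C^m(\mathbb{T}^3)$ and the compactness of the torus, and the correct handling of the $p=\infty$ endpoint (where one uses the sup bound directly rather than Jensen). The normalization $\frac{1}{|\mathbb{T}|}$ in the definition of the averages is exactly what makes the contraction constant equal to $1$.
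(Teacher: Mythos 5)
Your proposal is correct and follows essentially the same route as the paper: the $L^p$ contraction is proved by H\"older/Jensen in the averaged variable (the paper writes out the H\"older argument with exponents $p,p'$), and the derivative estimates follow by commuting $\pa_z^k$ with the averages and applying the contraction to $\pa_z^k F$. No gaps.
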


\begin{proof}

Applying the H\"{o}lder's inequality yields that for $p\in(1,\infty)$,
\begin{align*}
\|\lan\lan F\ran\ran\|_{L^p_z(\Torus)}=&\left(\int_{\Torus} \bigg|\frac{1}{|2\pi| }\int_{\mathbb{T}}\lan F\ran dy\bigg|^p dz\right)^{1/p}\leq\left(\int_\Torus \left(\left(\int_{\mathbb{T}}|\lan F\ran|^pdy\right)^{1/p}\left(\int_{\mathbb{T}}|2\pi|^{-p'} dy\right)^{1/p'}\right)^p dz\right)^{1/p}\\
\leq&\left(\int_{\Torus^2} |\lan F\ran|^pdydz\right)^{1/p}=\|\lan F\ran\|_{L_{y,z}^p(\mathbb{T}^2)}= \left(\int_{\Torus^2} \bigg|\frac{1}{|2\pi| }\int_{\mathbb{T}} F dx\bigg|^p dydz\right)^{1/p}\\\leq&\left(\int_{\Torus^2} \left(\left(\int_{\mathbb{T}}| F|^pdx\right)^{1/p}\left(\int_{\mathbb{T}}|2\pi|^{-p'} dx\right)^{1/p'}\right)^p dydz\right)^{1/p}\leq\|F\|_{L_{x,y,z}^p}.
\end{align*}
Here $\frac{1}{p}+\frac{1}{p'}=1$. The $p=1$ case is a simple variant of the argument above. The estimate in the $p=\infty$ case is a  direct consequence of definition. We observe that $\pa_z^k\lan \lan F\ran\ran=\lan \pa_z^k \lan F\ran\ran =\lan\lan \pa_z^k F\ran\ran$. Hence, the estimate above yields that
\begin{align}
\sum_{k=0}^m\|\pa_z^k\lan\lan F\ran\ran\|_{L^p_z(\Torus)}=\sum_{k=0}^m\|\lan\pa_z^k\lan  F\ran\ran\|_{L_z^p(\Torus)}\leq \sum_{k=0}^m\| \pa_z^k\lan F\ran\|_{L^p_{y,z}(\Torus^2)}=\sum_{k=0}^m\|\lan \pa_z^k F\ran\|_{L^p_{y,z}(\Torus^2)}\leq \sum_{k=0}^m\| \pa_z^k F\|_{L^p_{x,y,z}(\Torus^3)}.
\end{align}
\ifx
Applying the Fourier transform and the Plancherel equality yields
\be\ba
\|\lan \pa_y\ran^s F_0\|_{L^2(\rr)}^2=C\int_\rr \lan\eta\ran^{2s}|\widehat{F_0}|^2(\eta)d\eta
\leq \sum_k \int_\rr \lan k,\eta\ran^{2s}|\widehat{F}|^2(k,\eta)d\eta=C\|\lan \pa_z,\pa_y\ran^s F\|_{L^2(\mathbb{T}\times\rr)}^2.
\ea\ee
\fi
This finishes the proof of the lemma.
\end{proof}
\begin{lem}\label{Lem:gc0Lpest} 
Consider solution to the heat equation on $\Torus^d,\, d=1,2$,
\begin{align}
\pa_t \mathfrak{c}=\frac{1}{A}\de\mathfrak{c}+\frac{1}{A}\lf(\mathfrak{n}-\overline{\mathfrak n}\rg),\quad \mathfrak{c}(t=0, X)= \mathfrak{c}_{\mathrm{in}}(X),\quad \int_{\Torus^d} \mathfrak{c}_{\mathrm{in}}dX=0.\label{Heat_eq}
\end{align}
\myc{(Double check whether the formulation is correct for $\lan \cc\ran, \lan\lan\cc\ran\ran$!) Answer: Here the key is to check that $\overline{n}=\frac{1}{|\Torus|}\int \lan\lan n \ran\ran^{x,z} dy,\overline{n}=\frac{1}{|\Torus|^2}\int \lan n \ran^{x} dydz.$ They are true.} Then the following estimate holds \myc{for$\quad \frac{1}{q}<\frac{1}{d}+\frac{1}{p},\,1\leq q\leq p\leq \infty$},
\begin{align}\label{na_c_0_est}
\|\na^{m+1}  \mathfrak c(t)\|_{L^2(\Torus^d)}\leq  C \lf(\frac{t}{A}\rg)^{\frac{1}{2}}\sup_{0\leq\tau\leq t}\|\na^m(\mathfrak n(\tau)-\overline{\mathfrak n})\|_{L^2(\Torus^d)}+\|\na^{m+1}\mathfrak c_{\mathrm{in}}\|_{L^2(\Torus^d)}.
\end{align}\myc{
\begin{align}
\|\na^{m+1}  \mathfrak c(t)\|_{L^p(\Torus^d)}\leq&\frac{C}{A}\int_{0}^t\lf (\frac{t-\tau}{A}\rg)^{-\frac{1}{2}-\frac{d}{2}(\frac{1}{q}-\frac{1}{p})} e^{-\lambda \frac{t-\tau}{3A}}\|\na^m(\mathfrak n(\tau)-\overline{\mathfrak n})\|_{L^q(\Torus^d)}d\tau+\|\na^{m+1}\mathfrak c_{\mathrm{in}}\|_{L^p(\Torus^d)}\\
\leq & C\min\lf\{1,\lf(\frac{t}{A}\rg)^{\frac{1}{2}-\frac{d}{2}(\frac{1}{q}-\frac{1}{p})}\rg\}\sup_{0\leq\tau\leq t}\|\na^m(\mathfrak n(\tau)-\overline{\mathfrak n})\|_{L^q(\Torus^d)}+\|\na^{m+1}\mathfrak c_{\mathrm{in}}\|_{L^p(\Torus^d)}.
\end{align}
Here $\lambda\myc{\,=1?}>0$ is the first nonzero eigenvalue of $-\de_{\Torus^d}$. }

\myc{Both the 1 D and 2 D cases are needed. Moreover, we need the $H^M$-version.  }
\end{lem}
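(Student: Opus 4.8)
The statement to prove is Lemma \ref{Lem:gc0Lpest}, the parabolic smoothing estimate \eqref{na_c_0_est} for the drift-free heat equation \eqref{Heat_eq} on $\Torus^d$, $d=1,2$.

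\textbf{Overall approach.} The plan is to write the solution by Duhamel's formula against the heat semigroup $e^{\frac{t}{A}\de_{\Torus^d}}$ restricted to the mean-zero subspace, and then exploit the two standard facts about this semigroup: (i) the spectral gap, i.e.\ $-\de_{\Torus^d}$ has first nonzero eigenvalue $\lambda>0$, so on mean-zero functions $\|e^{\frac{\tau}{A}\de}\|_{L^2\to L^2}\leq e^{-\lambda\tau/A}$; and (ii) the parabolic gain of one derivative, $\|\na e^{\frac{\tau}{A}\de}g\|_{L^2}\leq C(\tau/A)^{-1/2}\|g\|_{L^2}$, which for mean-zero $g$ can be combined with (i) to give $\|\na e^{\frac{\tau}{A}\de}g\|_{L^2}\leq C(\tau/A)^{-1/2}e^{-\lambda\tau/(2A)}\|g\|_{L^2}$. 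Since the semigroup commutes with $\na^m$, and since $\mathfrak n-\overline{\mathfrak n}$ and $\mathfrak c_{\mathrm{in}}$ are mean-zero (and each $\na^m$ of a mean-zero quantity is mean-zero), we may apply $\na^{m+1}=\na\cdot\na^m$ to Duhamel and estimate term by term.

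\textbf{Key steps, in order.} First I would record the semigroup bounds (i), (ii) above on the mean-zero subspace $L^2_0(\Torus^d)$ — these follow from Fourier series on the torus, since $e^{\frac{\tau}{A}\de}$ acts on the $k$-th mode by $e^{-|k|^2\tau/A}$ and $|k|e^{-|k|^2\tau/A}\leq C(\tau/A)^{-1/2}$, while $|k|\geq\sqrt\lambda$ on the support. Second, write
\begin{align}
\mathfrak c(t)=e^{\frac{t}{A}\de}\mathfrak c_{\mathrm{in}}+\frac{1}{A}\int_0^t e^{\frac{t-\tau}{A}\de}\big(\mathfrak n(\tau)-\overline{\mathfrak n}\big)\,d\tau,
\end{align}
apply $\na^{m+1}$, commute derivatives past the semigroup, and split off one $\na$ from the Duhamel integrand to land the smoothing bound on it:
\begin{align}
\|\na^{m+1}\mathfrak c(t)\|_{L^2}\leq \|\na^{m+1}e^{\frac{t}{A}\de}\mathfrak c_{\mathrm{in}}\|_{L^2}+\frac{1}{A}\int_0^t \big\|\na e^{\frac{t-\tau}{A}\de}\na^m\big(\mathfrak n(\tau)-\overline{\mathfrak n}\big)\big\|_{L^2}\,d\tau.
\end{align}
Third, bound the first term by $\|\na^{m+1}\mathfrak c_{\mathrm{in}}\|_{L^2}$ using contractivity of the semigroup on $L^2$, and bound the integrand by $C\big(\tfrac{t-\tau}{A}\big)^{-1/2}\|\na^m(\mathfrak n(\tau)-\overline{\mathfrak n})\|_{L^2}$ using (ii). Fourth, pull the supremum over $\tau\in[0,t]$ out of the integral and compute $\frac1A\int_0^t(\tfrac{t-\tau}{A})^{-1/2}d\tau = \frac{1}{A}\cdot 2\sqrt{A}\,t^{1/2}\cdot A^{-1/2+1/2}$; more cleanly, substituting $s=(t-\tau)/A$ gives $\int_0^{t/A}s^{-1/2}ds=2(t/A)^{1/2}$, so the whole integral term is $\leq C(t/A)^{1/2}\sup_{0\leq\tau\leq t}\|\na^m(\mathfrak n(\tau)-\overline{\mathfrak n})\|_{L^2}$, which is exactly the claimed bound \eqref{na_c_0_est}. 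For the $L^p$ refinement and the remark with the $e^{-\lambda(t-\tau)/(3A)}$ factor (if one wants the uniform-in-$t$ version), one instead uses the $L^q\to L^p$ smoothing of the torus heat semigroup on mean-zero functions, $\|\na e^{\frac{\tau}{A}\de}g\|_{L^p}\leq C(\tau/A)^{-1/2-\frac d2(1/q-1/p)}e^{-\lambda\tau/(3A)}\|g\|_{L^q}$, valid for $\frac1q<\frac1d+\frac1p$ so the time exponent is $>-1$ and the $d\tau$-integral converges; then $\min\{1,(t/A)^{\frac12-\frac d2(1/q-1/p)}\}$ emerges from splitting the integral or from the exponential damping.

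\textbf{Main obstacle.} There is no serious obstacle here — this is a textbook parabolic regularity estimate — so the only thing requiring a little care is the bookkeeping of the mean-zero condition: one must check that $\overline{\mathfrak n}$ is chosen precisely so that $\mathfrak n-\overline{\mathfrak n}$ has zero average (it does by construction), hence $e^{\frac{\tau}{A}\de}$ enjoys the spectral-gap decay, and that applying $\na^m$ preserves mean-zero so the smoothing estimate with the decay factor is legitimate at every derivative level (needed for the $H^M$ versions used throughout Section \ref{Sec:ED}). A secondary point of care is verifying the exponent condition $\frac1q<\frac1d+\frac1p$ in the $L^p$ statement, which is exactly what keeps $-\frac12-\frac d2(1/q-1/p)>-1$ so that the Duhamel time integral is finite; for the $L^2$ statement used in \eqref{na_c_0_est} this is automatic. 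Both the $d=1$ and $d=2$ cases, and the versions with $\lan\cc\ran$ and $\lan\lan\cc\ran\ran$ in place of $\mathfrak c$, follow verbatim once one notes (via Lemma \ref{lem:F_0andF}) that averaging in a subset of the variables commutes with $\de$ and with taking averages of the source term.
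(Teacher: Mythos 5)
Your proposal is correct and follows essentially the same route as the paper: Duhamel's formula, the Fourier-side smoothing bound $\|\na e^{\frac{\tau}{A}\de}\|_{L^2\to L^2}\leq C(\tau/A)^{-1/2}$, contractivity for the initial-data term, and pulling the supremum out before integrating $((t-s)/A)^{-1/2}$ to produce the $(t/A)^{1/2}$ factor. The spectral-gap and $L^p$/$L^q$ refinements you discuss are not needed for the stated $L^2$ estimate (they correspond to material the paper left in comments) but do no harm.
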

\begin{proof}
By applying the Fourier transform, and the Plancherel equality, we have that
\begin{align}\label{Winklersgp2}
\|\na e^{\frac{t}{A}\de_{\Torus^d }}f\|_2\leq C \left(\frac{t}{A}\right)^{-\frac{1}{2} } \|f\|_2,\quad \forall t>0.  
\end{align} 
Now we have that
\begin{align}
  \|\na^{m+1} \mathfrak c(t)\|_{2}
\leq &
  \lf\|\exp\lf\{\frac{t}{A}\de_{\Torus^d}\rg\}\na ^{m+1} \mathfrak c(0)\rg\|_2+\frac{1}{A }
\lf\| \int_0^t \na \exp\lf\{\frac{t-s}{A}\de_{\Torus^d} \rg \}\na^m   (\mathfrak n(s)-\overline{\mathfrak n}) ds\rg\|_2\\
\leq& \|\na^{m+1}\mathfrak c_{\text{in}}\|_2+\frac{1}{A}
\int_0^{t }\lf\|\na\exp\lf\{\frac{t-s}{A}\de_{\Torus^d} \rg \}\rg\|_{L^2\rightarrow L^2}\sup_{s\in[0,t)}\|\na^{m} (\mathfrak n(s)-\overline {\mathfrak n})\|_{2} ds\\
\leq & \|\na^{m+1} \mathfrak c_{\text{in}}\|_2+C \lf(\frac{t}{A}\rg)^{\frac{1}{2}}\sup_{0\leq s\leq t}\|\na^m(\mathfrak n(s)-\overline{\mathfrak n})\|_{2}.
\end{align}
\end{proof}
\myc{
\begin{proof}The proof is similar to the proof of heat semigroup bounds in \cite{Winkler10}. 
Since the spacial derivatives commute with the differential equation \eqref{Heat_eq}, it is enough to derive the estimate \eqref{na_c_0_est_1} for $m=0$. Consider the heat equation 
\begin{align}
\pa_t h=\frac{1}{A}\de h, \quad h(t=0,X)=h_{\text{in}}(X),\quad  \overline{h_{\text{in}}}=0,\quad X\in\Torus^d.
\end{align}
Our first goal is to derive the following estimates

\noindent
a) If $d=1,2,\quad 1\leq q\leq p\leq \infty$, then 
\begin{align}\label{Winklersgp1}
\|e^{\frac{t}{A}\de_{\Torus^d }}h_{\text{in}}\|_p\leq C \left(\frac{t}{A}\right)^{-\frac{d}{2}(\frac{1}{q}-\frac{1}{p})} e^{- \frac{t}{2A}}\|h_{\text{in}}\|_q,\quad \forall t>0,
\end{align} 
holds for all $h_{\text{in}}\in L^q$ with $\overline{h_{\text{in}}}=0$;

\noindent
b) If $d=1,2,\quad 1\leq q\leq p \leq \infty$, then 
\begin{align}\label{Winklersgp2}
\|\na e^{\frac{t}{A}\de_{\Torus^d }}h_{\text{in}}\|_p\leq C \left(\frac{t}{A}\right)^{-\frac{1}{2}-\frac{d}{2}(\frac{1}{q}-\frac{1}{p})} e^{-\frac{t}{3A}}\|h_{\text{in}}\|_q,\quad \forall t>0.
\end{align}  

Application of a standard energy estimate yields that 
\begin{align}
\|e^{\frac{t}{A}\de}h_{\text{in}}\|_{L^2(\Torus^d)}\leq e^{-\frac{ t}{A} }\|h_{\text{in}}\|_{L^2(\Torus^d)},\quad \forall t\geq 0.\label{heat_L2L2}
\end{align} 

On the other hand, we recall the explicit formula for the Green's function of the heat semigroup
\begin{align}e^{\frac{t}{A}\de}h_\text{in}(X)=&\sum_{m_1\in \mathbb{Z}}\frac{1}{\sqrt{4\pi t/A} }\int_{X'\in \Torus}e^{-\frac{|X'+2m_1 \pi-X|^2}{4 t/A}}h_{\text{in}}(X')dX',\quad d=1;\\
e^{\frac{t}{A}\de}h_{\text{in}}(X)=&\sum_{(m_1, m_2)\in \mathbb{Z}^2}\frac{1}{4\pi t/A}\int_{X'\in \Torus^2}e^{-\frac{|X'+(2m_1 \pi, 2m_2 \pi)-X|^2}{4 t/A}}h_{\text{in}}(X')dX',\quad d=2.\label{heat_ker}
\end{align}Direct estimate yields that for $1\leq q\leq p\leq \infty$, 
\begin{align}\label{heat_Lp_Lq}
\|e^{\frac{t}{A}\de} h_{\text{in}}\|_{L_X^p(\Torus^d)}\leq& C \left(\frac{t}{A}\right)^{-\frac{d}{2}(\frac{1}{q}-\frac{1}{p})}\| h_{\text{in}}\|_{L_X^q(\Torus^d)},\quad \forall t\in[0, 2A];\\
\label{heatgradLpLq}\|\na e^{\frac{t}{A}\de} h_{\text{in}}\|_{L_X^p(\Torus^d)}\leq & C\left(\frac{t}{A}\right)^{-\frac{1}{2}}\|h_{\text{in}}\|_{L_X^p(\Torus^d)},\quad \forall t\in[0, 2A] .
\end{align}\myc{\textcolor{red}{(Check the index $??$!)}  {\bf(Check the second inequality!)} Method 1: We consider the function $|\na e^{\frac{t}{A}\de}h_{\text{in}}(X)|$ and it is bounded by the following expression. To justify the expression of the $\na e^{t\de/A}$ convergence, we can think of periodizing the $h_{\text{in}}\in L^\infty$ function to the whole $\rr^2$, then since the heat kernel decays fast near infinity, we can take the derivative on the $X$ variable, then we rewrite the expression as a sum over $(m_1,m_2)\in \mathbb{Z}^2$.)
\begin{align}
|\na e^{\frac{t}{A}\de}h_{\text{in}}(X)|\leq &\sum_{(m_1, m_2)\in \mathbb{Z}^2}\frac{1}{4\pi t/A}\int_{X'\in \Torus^2}\bigg|\na_{X} e^{-\frac{|(X'-X)+(2m_1 \pi, 2m_2 \pi)|^2}{4 t/A}}\bigg||h_{\text{in}}(X')|dX'\\
\underbrace{=}_{Fubini-Tonelli}&\frac{1}{4\pi t/A}\int_{X'\in \Torus^2}\underbrace{\sum_{(m_1, m_2)\in \mathbb{Z}^2}\bigg|\frac{(X'-X)+(2m_1 \pi, 2m_2 \pi)}{2t/A} e^{-\frac{|(X'-X)+(2m_1 \pi, 2m_2 \pi)|^2}{4 t/A}}\bigg|}_{=:F(t,A,X-X')}|h_{\text{in}}(X')|dX';\\
\|\na e^{\frac{t}{A}\de}h_{\text{in}}(\cdot)\|_{L^p(\Torus^d)}\underbrace{\leq}_{Young}&\frac{C}{\sqrt{t/A}}\lf\|\frac{1}{\pi (2 t/A)^{3/2}}F(t,A,\cdot)\rg\|_{L^1}\|h_{\text{in}}\|_{L^p(\Torus^d)}\leq \frac{C}{\sqrt{t/A}} \|h_{\text{in}}\|_{L^p}.
\end{align}
Method 2: 
In this paper, we just use the $L^2$ version of the theorem. Hence we can directly implement a Fourier proof:
\begin{align}
\|\na e^{\frac{t}{A}\de} h_{\mathrm{in}}\|_{L^2(\Torus^d)}^2\leq& C\sum_{k\in \mathbb{Z}^d\backslash 0^d}|k|^2 e^{-\frac{t}{A}|k|^2}|\wh{h_{\mathrm{in}}}(k)|^2\leq \frac{CA }{t}\sum_{k\in \mathbb{Z}^d\backslash 0^d}\frac{t|k|^2}{A} e^{-\frac{t}{A}|k|^2}|\wh{h_{\mathrm{in}}}(k)|^2\\
\leq&  \frac{CA }{t}\sum_{k\in \mathbb{Z}^d\backslash 0^d}|\wh{h_{\mathrm{in}}}(k)|^2\leq C\lf(\frac{t}{A}\rg)^{-1}\|h_{\mathrm{in}}\|_{L^2(\Torus^d)}^2.
\end{align} Now $\|\na e^{t\de/A}\|_{L^2\rightarrow L^2}\leq C(t/A)^{-1/2}$. Then we can say that from the expression of $\na e^{t\de/A}$, the $L^\infty-L^\infty$ semigroup estimate has bound $C(\frac{t}{A})^{-1/2}$ (No convolution structure is needed). Now an operator interpolation estimate yields the semigroup estimate for $p\in [2,\infty].$ Then we can use the duality to get the $p\in[1,2)$: Let $p^{-1}+(p')^{-1}=1, \, p'>2$ and $f,g\in C^\infty(\Torus^d), \, \overline{f}=\overline{g}=0$, then 
\begin{align}\|\na e^{t\de/A} f\|_{L^p}=\sup_{\|g\|_{p'}=1}
\int_{\Torus^d} \na e^{t\de/A}f  g dV=\sup_{\|g\|_{p'}=1}
\int_{\Torus^d}f   \na e^{t\de/A}g dV\leq \|f\|_{p}\sup_{\|g\|_{p'}=1}\| \na e^{t\de/A}g\|_{p'}\leq C\|f\|_{p}(t/A)^{-1/2}.
\end{align} 
Hence we have obtained the estimate.}
For $p\in[1,2),\, q\in[1, p]$, application of the H\"older inequality and then $\eqref{heat_Lp_Lq}_{p=2}$ yields that
\begin{align}
\|h(t)\|_{L^p(\Torus^d)}\leq C\|h(t)\|_{L^2(\Torus^d)}\leq C e^{-\frac{t-A}{A}}\|h(A)\|_{L^2}\leq Ce^{-(\frac{t}{A}-1)}\|h_{\text{in}}\|_{L^q},\quad\forall t\geq 2A. 
\end{align}
If $p\in [2,\infty],\,  q\in[1, p]$, we have that by the semigroup property of $e^{\frac{t}{A}\de}$ and the estimates $\eqref{heat_Lp_Lq}_{q=2}$, \eqref{heat_L2L2}, 
\begin{align}
\|h(t)\|_{L^p(\Torus^d)}=&\|e^{\frac{A}{A}\de}h(t-A)\|_{L^p(\Torus^d)}\leq C\|h(t-A)\|_{L^2(\Torus^d)}=C\|e^{\frac{t-2A}{A}\de}h(A)\|_{L^2(\Torus^d)}\\
\leq &Ce^{-\frac{t-2A}{A}}\|h(A)\|_{L^2(\Torus^d)},\quad \forall t\geq 2A.
\end{align}
Now we apply the H\"older inequality ($q\leq 2$) or the estimate $\eqref{heat_Lp_Lq}_{p=2}$ ($q> 2$) to obtain
\begin{align}
\|h(t)\|_{L^p(\Torus^d)}\leq Ce^{- \frac{t}{A} }\|h_{\text{in}}\|_{L^q(\Torus^d)},\quad \forall t\geq 2A.
\end{align} Combining this estimate with \eqref{heat_Lp_Lq}, we have that for $1\leq q\leq p\leq \infty$, 
\begin{align}
\|e^{\frac{t}{A}\de}h_{\text{in}}\|_p\leq C\left(\left(\frac{t}{A}\right)^{-\frac{d}{2}(\frac{1}{q}-\frac{1}{p})}\mathbbm{1}_{t\leq 2A} +e^{- \frac{t}{A}}\right)\|h_{\text{in}}\|_q\leq C\left(\frac{t}{A}\right)^{-\frac{d}{2}(\frac{1}{q}-\frac{1}{p})}e^{- \frac{t}{2A}}\|h_{\text{in}}\|_q,\quad \forall t>0.
\end{align}
Hence the proof of \eqref{Winklersgp1} is completed. 

We proceed to prove \eqref{Winklersgp2}. 
For $t\leq 2A$, we combine the semigroup property and the estimates \eqref{Winklersgp1}, 
\eqref{heatgradLpLq}  to obtain the following gradient bound  
\begin{align}
\|\na e^{\frac{t}{A}\de}h_{\text{in}}\|_{L^p(\Torus^d)}=&\|\na h(t)\|_{L^p(\Torus^d)}=\lf\|\na e^{\frac{t}{2A}\de}\lf( h\lf(\frac{t}{2}\rg)\rg)\rg\|_{L^p(\Torus^d)}\leq C\left(\frac{t}{2A}\right)^{-1/2}\lf\|h\lf(\frac{t}{2	}\rg)\rg\|_{L^p(\Torus^d)}\\
\leq& C\left(\frac{t}{2A}\right)^{-\frac{1}{2}}\left(\frac{t}{2A}\right)^{-\frac{d}{2}(\frac{1}{q}-\frac{1}{p})}\|h_{\text{in}}\|_{L^q(\Torus^d)}, \quad 1\leq q\leq p\leq\infty. \label{Winklersgp21}
\end{align}
For $t\geq 2A$, we estimate the gradient with estimates  \eqref{Winklersgp1},  \eqref{heatgradLpLq} as follows:
\begin{align}
\|\na e^{\frac{t}{A}\de}h_{\text{in}}\|_{L^p(\Torus^d)}=&\|\na h(t)\|_{L^p(\Torus^d)}=\|\na e^{\frac{A}{A}\de} (h(t-A))\|_{L^p(\Torus^d)}\leq C\|h({t}-{A})\|_{L^p(\Torus^d)}\\
\leq&C\left(\frac{t}{A}\right)^{-\frac{d}{2}(\frac{1}{q}-\frac{1}{p})}e^{-\frac{1}{2} (\frac{t}{A}-1)}\|h_{\text{in}}\|_{L^q(\Torus^d)}. \label{Winklersgp22}
\end{align}
Combinbing \eqref{Winklersgp21} and \eqref{Winklersgp22} yields that
\begin{align*}
\|\na e^{\frac{t}{A}\de}h_{\text{in}}\|_{L^p(\Torus^d)}\leq& C\lf(\left(\frac{t}{A}\right)^{-\frac{1}{2}}\mathbbm{1}_{t\leq 2A}+e^{-\frac{1}{2} \frac{t}{A}}\mathbbm{1}_{t>2A}\rg)\left(\frac{t}{A}\right)^{-\frac{d}{2}(\frac{1}{q}-\frac{1}{p})}\|h_{\text{in}}\|_{L^q(\Torus^d)}\\
\leq& C\left(\frac{t}{A}\right)^{-\frac{1}{2}-\frac{d}{2}(\frac{1}{q}-\frac{1}{p})}e^{-\frac{t}{3A}}\|h_{\text{in}}\|_{L^q(\Torus^d)}.
\end{align*}
This completes the proof of \eqref{Winklersgp2}.

Next we observe that from \eqref{Heat_eq}, 
\begin{align}
\pa_t \pa_X^m\mathfrak{c}=\frac{1}{A}\de\pa_X^m\mathfrak{c}+\frac{1}{A}\pa_X^m\lf(\mathfrak{n}-\overline{\mathfrak n}\rg).
\end{align}
The Duhamel formulation yields that
\begin{align*}
\|\pa_X^{m+1}\mathfrak{c}(t)\|_{L^p}\leq& \|e^{\frac{t}{A}\de}\pa_X^{m+1}\mathfrak{c}_{\text{in}}\|_{L^p}+\frac{1}{A}\lf\|\int_0^t e^{\frac{t-s}{A}\de}\lf(\pa_X^{m+1}(\mathfrak{n}(\tau)-\overline{\mathfrak n})\rg)d\tau \rg\|_{L^p}\\
=&\|e^{\frac{t}{A}\de}\pa_X^{m+1}\mathfrak{c}_{\text{in}}\|_{L^p}+\frac{1}{A}\lf\|\int_0^t \pa_X e^{\frac{t-s}{A}\de}\lf(\pa_X^{m}(\mathfrak{n}(\tau)-\overline{\mathfrak n})\rg)d\tau \rg\|_{L^p}.
\end{align*}
Now we invoke the estimates \eqref{Winklersgp1} and \eqref{Winklersgp2} to obtain that
\begin{align}
\|\pa_X^{m+1}\mathfrak{c}(t)\|_{L^p}\leq C\|\pa_X^{m+1}\mathfrak{c}_{\text{in}}\|_{L^p}+\frac{C}{A}\int_0^t\lf (\frac{t-\tau}{A}\rg)^{-\frac{1}{2}-\frac{d}{2}(\frac{1}{q}-\frac{1}{p})} e^{- \frac{t-\tau}{3A}}\|\pa_X^m(\mathfrak n(\tau)-\overline{\mathfrak n})\|_{L^q(\Torus^d)}d\tau
\end{align}
If $ \frac{1}{q}<\frac{1}{d}+\frac{1}{p}$, then,
\begin{align}
\|\pa_X^{m+1}\mathfrak{c}(t)\|_{L^p}\leq& C\|\pa_X^{m+1}\mathfrak{c}_{\text{in}}\|_{L^p}+C\sup_{0\leq\tau\leq t}\|\pa_X^m(\mathfrak n(\tau)-\overline{\mathfrak n})\|_{L^q(\Torus^d)}\int_0^t\lf (\frac{t-\tau}{A}\rg)^{-\frac{1}{2}-\frac{d}{2}(\frac{1}{q}-\frac{1}{p})} e^{- \frac{t-\tau}{3A}}\frac{1}{A}d\tau\\
\leq &C\|\pa_X^{m+1}\mathfrak{c}_{\text{in}}\|_{L^p}+C\sup_{0\leq\tau\leq t}\|\pa_X^m(\mathfrak n(\tau)-\overline{\mathfrak n})\|_{L^q(\Torus^d)}\int_{\max\{0,t-A\}}^t\lf (\frac{t-\tau}{A}\rg)^{-\frac{1}{2}-\frac{d}{2}(\frac{1}{q}-\frac{1}{p})} \frac{1}{A}d\tau\\
&+C\sup_{0\leq\tau\leq t}\|\pa_X^m(\mathfrak n(\tau)-\overline{\mathfrak n})\|_{L^q(\Torus^d)}\int_0^{\max\{0,t-A\}} e^{- \frac{t-\tau}{3A}}\frac{1}{A}d\tau\\
\leq&C\|\pa_X^{m+1}\mathfrak{c}_{\text{in}}\|_{L^p}+C\sup_{0\leq\tau\leq t}\|\pa_X^m(\mathfrak n(\tau)-\overline{\mathfrak n})\|_{L^q(\Torus^d)}.\label{na_c_0_est_1}
\end{align}Another way to estimate the $L^p$-norm is that, 
\begin{align}
\|\pa_X^{m+1}\mathfrak{c}(t)\|_{L^p}\leq& C\|\pa_X^{m+1}\mathfrak{c}_{\text{in}}\|_{L^p}+C\sup_{0\leq\tau\leq t}\|\pa_X^m(\mathfrak n(\tau)-\overline{\mathfrak n})\|_{L^q(\Torus^d)}\int_0^t\lf (\frac{t-\tau}{A}\rg)^{-\frac{1}{2}-\frac{d}{2}(\frac{1}{q}-\frac{1}{p})} \frac{1}{A}d\tau\\
\leq& C\|\pa_X^{m+1}\mathfrak{c}_{\text{in}}\|_{L^p}+C\sup_{0\leq\tau\leq t}\|\pa_X^m(\mathfrak n(\tau)-\overline{\mathfrak n})\|_{L^q(\Torus^d)} \lf(\frac{t}{A}\rg)^{\frac{1}{2}-\frac{d}{2}(\frac{1}{q}-\frac{1}{p})}, \quad\frac{1}{q}<\frac{1}{d}+\frac{1}{p} .\label{na_c_0_est_2}
\end{align}
Combining \eqref{na_c_0_est_1} and \eqref{na_c_0_est_2} yields \eqref{na_c_0_est}.
\ifx
Previous argument:
\textcolor{red}{The theorem requires reproving! The idea is that we can periodically extend the $c_0$ to $\rr^2$ and then use the heat semigroup on $\rr^2$ to get the $L^p$ to $L^q$ estimate. Then we can use the spectral gap of the $L^2\rightarrow L^2$-estimate of the $e^{t\de_{\Torus^2}/A}$.}

The estimates of the $L^q$-norm of the chemical gradient $\|\na \lan c\ran^\iota\|_q$ \eqref{na_c_0_est} are natural consequences of the following two estimates
\noindent
a) If $1\leq q\leq p\leq \infty$, then 
\begin{align}\label{Winkler_semigroup_1}
\|e^{\frac{t}{A}\de}h_{\text{in}}\|_p\leq C_1\left(1+\left(\frac{t}{A}\right)^{-\frac{2}{2}(\frac{1}{q}-\frac{1}{p})}\right)e^{-\lambda_1 \frac{t}{A}}\|h_{\text{in}}\|_q,\quad \forall t>0,
\end{align} 
holds for all $w\in L^q$ with $\int w=0$;

\noindent
b) If $1\leq q\leq p \leq \infty$, then 
\begin{align}\label{Winkler_semigroup_2}
\|\na e^{\frac{t}{A}\de}h_{\text{in}}\|_p\leq C_2\left(1+\left(\frac{t}{A}\right)^{-\frac{1}{2}-\frac{n}{2}(\frac{1}{q}-\frac{1}{p})}\right)e^{-\lambda_1 t/A}\|h_{\text{in}}\|_q,\quad \forall t>0.
\end{align} 
Here $\lambda_1>0$ is the first nonzero eigenvalue of the negative Laplacian $-\de$. 

Combining the Duhamel formulation and the estimates \eqref{Winkler_semigroup_1}, \eqref{Winkler_semigroup_2}, one can integrate in time to derive \eqref{na_c_0_est}. 
The proof of the two inequalities can be found in  Lemma 1.3 in \cite{Winkler10}. In the paper, the estimates are proved that for bounded domain with Neumann boundary condition. The Green's function estimate is crucial. However, on the torus, we can explicitly write down the Green' function of the heat kernel and check all the conditions in the proof of Winkler. A standard bookkeeping yields  \eqref{Winkler_semigroup_1}, \eqref{Winkler_semigroup_2}.

{\color{blue}Consider the heat equation 
\begin{align}
\pa_t h=\frac{1}{A}\de h, \quad h(t=0,X)=h_{\text{in}}(X),\, \overline{h_{\text{in}}}=0,\,\, X\in\Torus^2.
\end{align} Through standard energy estimate, we have that 
\begin{align}
\|h(t)\|_{L^2(\Torus^2)}\leq e^{-\frac{\lambda_1t}{A} }\|h_{\text{in}}\|_{L^2(\Torus^2)},\quad \forall t\geq 0.
\end{align}
Here $\lambda_1>0$ is the first nonzero eigenvalue of the negative Laplacian $-\de$. 

On the other hand, the Green's function of the heat equation on the torus can be explicitly spelled out
\begin{align}\label{heat_kernel}
h(t,X)=\sum_{(m, n)\in \mathbb{Z}^2}\frac{1}{4\pi t/A}\int_{Y\in \Torus^2}e^{-\frac{|X'+(m L, n L)-X|^2}{4 t/A}}h_{\text{in}}(X')dX'.
\end{align}
Direct estimate of the Green's function yields that 
\begin{align}\label{heat_LpLq_decay}
\|h(t)\|_{L_X^p(\Torus^2)}\leq C \left(\frac{t}{A}\right)^{-\frac{2}{2}(\frac{1}{q}-\frac{1}{p})}\|h_{\text{in}}\|_{L_X^q(\Torus^2)},\quad ( \forall t\in[0, 2A]??).
\end{align}\textcolor{red}{(Check!)}

For $p\in[1,2)$, application of the H\"older inequality yields that
\begin{align}
\|h(t)\|_{L^p(\Torus^2)}\leq |\Torus|^{\frac{4}{2-p}}\|h(t)\|_{L^2(\Torus^2)}\leq C e^{-\lambda_1\frac{(t-1)}{A}}\|h(A)\|_{L^2}\leq Ce^{-\lambda_1\frac{(t-A)}{A}}\|h_{\text{in}}\|_{L^q},\quad\forall t\geq 2A. 
\end{align}
If $p\in [2,\infty]$, we have
\begin{align}
\|h(t)\|_{L^p(\Torus^2)}\leq C\|h(t-A)\|_{L^2(\Torus^2)}\leq Ce^{-\lambda_1(\frac{t-2A}{A})}\|h(A)\|_{L^2(\Torus^2)}\leq Ce^{-\lambda_1\left(\frac{t}{A}-2\right)}\|h_{\text{in}}\|_{L^q(\Torus^2)},\quad \forall t\geq 2A.
\end{align}
For $t\leq 2A$, the estimate  \eqref{Winkler_semigroup_1} is direct from \eqref{heat_LpLq_decay}. Hence the proof of \eqref{Winkler_semigroup_1} is completed. 

In order to prove \eqref{Winkler_semigroup_2}, we first note that {\bf(Check!)}
\begin{align}
\|\na e^{\frac{t}{A}\de} h_{\text{in}}\|_{L^p(\Torus^2)}\leq C\left(\frac{t}{A}\right)^{-1/2}\|h_{\text{in}}\|_{L^p(\Torus^2)},\quad \forall t\leq A.
\end{align} 
For $t\leq 2A$, we can estimate the gradient with \eqref{Winkler_semigroup_1} as follows
\begin{align}
\|\na e^{\frac{t}{A}\de}h_{\text{in}}\|_{L^p(\Torus^2)}=&\|(\na e^{\frac{t}{2A}\de})e^{\frac{t}{2A}\de} h_{\text{in}}\|_{L^p(\Torus^2)}\leq C\left(\frac{t}{2A}\right)^{-1/2}\|e^{\frac{t}{2	A}\de}h_{\text{in}}\|_{L^p(\Torus^2)}\\
\leq& C\left(\frac{t}{2A}\right)^{-1/2}C_1\left(1+\left(\frac{t}{2A}\right)^{-\frac{2}{2}(\frac{1}{q}-\frac{1}{p})}\right)\|h_{\text{in}}\|_{L^q(\Torus^2)}.
\end{align}
{\bf I am not sure whether $\na e^{\frac{t}{A}\de}=(\na e^{\frac{t}{2A}\de})e^{\frac{t}{2A}\de}$ is justified. It feels like there is a $`2'$ factor?}
For $t\geq 2A$, we estimate it as follows:
\begin{align}
\|\na e^{\frac{t}{A}\de}h_{\text{in}}\|_{L^p(\Torus^2)}=&\|(\na e^{\de})e^{\frac{t-A}{A}\de} h_{\text{in}}\|_{L^p(\Torus^2)}\leq C\|h({t}-{A})\|_{L^p(\Torus^2)}\\
\leq&C\left(1+\left(\frac{t}{A}\right)^{-\frac{2}{2}(\frac{1}{q}-\frac{1}{p})}\right)e^{-\lambda_1 (\frac{t}{A}-1)}\|h_{\text{in}}\|_{L^q(\Torus^2)}.
\end{align}
This completes the proof.
 }\fi
\end{proof}}

\section{Technical Lemmas in Linear Theory}\label{sec:tchlem_l}
In this subsection, we collect some technical lemmas/corollaries applied in Section \ref{Sec:Lnr}.

\begin{lem}\label{lem:cm_AB}
Let $\mathcal{A}$ and $\mathcal{B}$ be two linear operators. Define the operation 
\begin{align}
\mathrm{ad}_\mathcal{A}^n (\mathcal{B}):=\underbrace{[\mathcal{A},[\mathcal{A},[\mathcal{A},...[\mathcal{A}}_{n},\mathcal{B}]]...],\quad  n\in\{1,2, 3, ...\}.
\end{align} 
Then
\begin{align}
[\mathcal{A}^n, \mathcal{B}]
=\sum_{\ell=0}^{n-1}\binom{n}{\ell} \mathrm{ad}_\mathcal{A}^{n-\ell} (\mathcal{B})\mathcal{A}^\ell. \label{[An,B]}
\end{align}
\end{lem}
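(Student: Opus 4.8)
\textbf{Proof proposal for Lemma \ref{lem:cm_AB}.} The plan is to prove the identity \eqref{[An,B]} by induction on $n$. The base case $n=1$ reads $[\mathcal{A},\mathcal{B}] = \mathrm{ad}_\mathcal{A}^1(\mathcal{B})\mathcal{A}^0 = \mathrm{ad}_\mathcal{A}(\mathcal{B})$, which holds by the very definition of $\mathrm{ad}_\mathcal{A}$. For the inductive step, I would assume \eqref{[An,B]} for some fixed $n\geq 1$ and compute $[\mathcal{A}^{n+1},\mathcal{B}]$ using the Leibniz-type rule for commutators, namely $[\mathcal{A}^{n+1},\mathcal{B}] = [\mathcal{A}\cdot\mathcal{A}^n,\mathcal{B}] = \mathcal{A}[\mathcal{A}^n,\mathcal{B}] + [\mathcal{A},\mathcal{B}]\mathcal{A}^n$.

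The first step after that is to substitute the inductive hypothesis into $\mathcal{A}[\mathcal{A}^n,\mathcal{B}]$, obtaining $\sum_{\ell=0}^{n-1}\binom{n}{\ell}\mathcal{A}\,\mathrm{ad}_\mathcal{A}^{n-\ell}(\mathcal{B})\,\mathcal{A}^\ell$. Here one must rewrite $\mathcal{A}\,\mathrm{ad}_\mathcal{A}^{n-\ell}(\mathcal{B})$ in terms of quantities of the form $\mathrm{ad}_\mathcal{A}^{m}(\mathcal{B})\,\mathcal{A}^{k}$: the key algebraic observation is that $\mathcal{A}\, \mathcal{C} = \mathcal{C}\,\mathcal{A} + [\mathcal{A},\mathcal{C}]$ for any operator $\mathcal{C}$, applied with $\mathcal{C} = \mathrm{ad}_\mathcal{A}^{n-\ell}(\mathcal{B})$, which gives $\mathcal{A}\,\mathrm{ad}_\mathcal{A}^{n-\ell}(\mathcal{B}) = \mathrm{ad}_\mathcal{A}^{n-\ell}(\mathcal{B})\mathcal{A} + \mathrm{ad}_\mathcal{A}^{n-\ell+1}(\mathcal{B})$. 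Adding the second term $[\mathcal{A},\mathcal{B}]\mathcal{A}^n = \mathrm{ad}_\mathcal{A}^1(\mathcal{B})\mathcal{A}^n$ and collecting all contributions according to the power of $\mathcal{A}$ appearing on the right and the order of the iterated bracket, one is left with a sum $\sum_{\ell=0}^{n}c_{n+1,\ell}\,\mathrm{ad}_\mathcal{A}^{n+1-\ell}(\mathcal{B})\mathcal{A}^\ell$ where the coefficient $c_{n+1,\ell}$ is a sum of two binomial coefficients coming from the two ways (shift the bracket order up, or carry an $\mathcal{A}$ to the right) of producing the term with $\mathcal{A}^\ell$.

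The final step is the combinatorial bookkeeping: verifying that $c_{n+1,\ell} = \binom{n}{\ell} + \binom{n}{\ell-1} = \binom{n+1}{\ell}$ via Pascal's rule, with the boundary cases $\ell=0$ and $\ell=n$ handled by the convention $\binom{n}{-1}=\binom{n}{n}=\cdots$ as appropriate (note $\mathrm{ad}_\mathcal{A}^{n+1}(\mathcal{B})$ comes only from the $\ell=0$ slot and $\mathrm{ad}_\mathcal{A}^1(\mathcal{B})\mathcal{A}^n$ only from the $\ell=n$ slot). I do not expect a genuine obstacle here; the only mildly delicate point is keeping the index shifts in the two sums aligned so that Pascal's rule applies cleanly, and making sure the extreme terms ($\ell=0$ and $\ell=n$) are not double-counted or dropped. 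Since all manipulations are purely formal identities in the (possibly noncommutative, unital) algebra generated by $\mathcal{A}$ and $\mathcal{B}$, no analytic hypotheses on the operators are needed, and the induction closes.
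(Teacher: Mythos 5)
Your proof is correct: the induction closes exactly as you describe, with the Leibniz rule $[\mathcal{A}^{n+1},\mathcal{B}]=\mathcal{A}[\mathcal{A}^n,\mathcal{B}]+[\mathcal{A},\mathcal{B}]\mathcal{A}^n$, the exchange identity $\mathcal{A}\,\mathrm{ad}_\mathcal{A}^{m}(\mathcal{B})=\mathrm{ad}_\mathcal{A}^{m}(\mathcal{B})\mathcal{A}+\mathrm{ad}_\mathcal{A}^{m+1}(\mathcal{B})$, and Pascal's rule handling the coefficients (the boundary slots work out: the $\ell=0$ term carries $\binom{n}{0}=\binom{n+1}{0}$, and at $\ell=n$ the contribution $\binom{n}{n-1}$ from the shifted sum plus the single term $[\mathcal{A},\mathcal{B}]\mathcal{A}^n$ gives $n+1=\binom{n+1}{n}$). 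However, your route differs from the paper's. The paper avoids induction on the commutator identity altogether: it introduces the left and right multiplication operators $\mathcal{L}_\mathcal{A}\mathcal{B}=\mathcal{A}\mathcal{B}$, $\mathcal{R}_\mathcal{A}\mathcal{B}=\mathcal{B}\mathcal{A}$, observes that $\mathrm{ad}_\mathcal{A}=\mathcal{L}_\mathcal{A}-\mathcal{R}_\mathcal{A}$ and that $\mathrm{ad}_\mathcal{A}$ commutes with $\mathcal{R}_\mathcal{A}$, and then expands $\mathcal{A}^n\mathcal{B}=\mathcal{L}_\mathcal{A}^n(\mathcal{B})=(\mathrm{ad}_\mathcal{A}+\mathcal{R}_\mathcal{A})^n(\mathcal{B})$ by the binomial theorem for commuting operators; subtracting the $\ell=n$ term $\mathcal{R}_\mathcal{A}^n(\mathcal{B})=\mathcal{B}\mathcal{A}^n$ yields \eqref{[An,B]} in one line. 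The paper's argument is shorter and conceptually packages the combinatorics into a known identity, at the cost of introducing the auxiliary operators $\mathcal{L}_\mathcal{A},\mathcal{R}_\mathcal{A}$ and checking their commutation; your induction is more elementary and self-contained, with the Pascal bookkeeping playing the role that the binomial theorem plays in the paper. Either proof is acceptable for the purposes of Lemma \ref{cor:cm}, where the identity is applied.
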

\begin{proof}\footnote{This proof is kindly suggested to me by Yiyue Zhang.}
To prove the lemma, we need two extra definitions, i.e., the left multiplication $\mathcal{L}_\mathcal{A}$ and $\mathcal{R}_\mathcal{A}$
\begin{align}
\mathcal{L}_\mathcal{A} \mathcal{B}=\mathcal{AB},\quad \mathcal{R}_{\mathcal{A}}\mathcal{B}=\mathcal {BA}.
\end{align} 
Standard computation yields the following three relations: 
\begin{align}
\mathrm{ad}_\mathcal{A}\mathcal{B}=(\mathcal{L}_{\mathcal{A}}-\mathcal{R}_{\mathcal{A}})(\mathcal B),\quad 
\mathcal{L}_\mathcal{A}\mathcal{R}_\mathcal{A} \mathcal{B}=\mathcal{ABA}=\mathcal{R}_\mathcal{A}\mathcal{L}_\mathcal{A} \mathcal{B},\quad
\mathrm{ad}_\mathcal{A} \mathcal{R}_\mathcal{A}(\mathcal B)=\mathcal{R}_\mathcal{A}\mathrm{ad}_\mathcal{A}(\mathcal B).
\end{align}
Now we have that \begin{align}
[\mathcal{A}^n, \mathcal{B}]=&\mathcal{A}^n \mathcal{B}-\mathcal{BA}^n=(\mathrm{ad}_\mathcal{A}+\mathcal{R}_\mathcal{A})^n (\mathcal{B})-\mathcal{R}_\mathcal{A}^n (\mathcal{B})=\sum_{\ell=0}^{n}\binom{n}{\ell} \mathrm{ad}_\mathcal{A}^{n-\ell}\mathcal{R}_\mathcal{A}^{\ell } (\mathcal{B})-\mathcal{R}_\mathcal{A}^n (\mathcal{B})\\
=&\sum_{\ell=0}^{n-1}\binom{n}{ \ell}\mathcal{R}_\mathcal{A}^{\ell }\mathrm{ad}_\mathcal{A}^{n-\ell} (\mathcal{B})=\sum_{\ell=0}^{n-1}\binom{n}{\ell} \mathrm{ad}_\mathcal{A}^{n-\ell} (\mathcal{B})\mathcal{A}^{\ell }  .
\end{align}
\end{proof} 

The follow corollary of Lemma \ref{lem:cm_AB} is useful when we compute the commutators involving $\Gamma_y$ and $\de$.
\begin{lem}\label{cor:cm}Consider the vector field $\Gamma_y=\Gamma_{y;t}$ \eqref{Gamma}, and recall the notation \eqref{B_m_mt}: $B^{(m)}(t,y):=\int_0^t \pa_y^{m}u(s,y)ds$. 
The following commutator relations hold
\begin{subequations}\label{cm_est}
\begin{align}
[\Gamma_y,\pa_y]=&-\int_0^t \pa_{yy} u(s,y)ds\pa_x=-B^{(2)}\pa_x,\label{cm_G_y}\\
[\Gamma_y,\pa_{yy}]=&-2 B^{(2)}\pa_x\Gamma_y+\pa_y(B^{(1)})^2\pa_{xx}-B^{(3)}\pa_x,\label{cm_G_yy}\\
 [\Gamma_y^j,\pa_y]=&-\sum_{\ell=0}^{j-1}\binom{j}{\ell}B^{(j-\ell+1)}\pa_x\Gamma_y^{\ell},\label{cm_yt_j_y}\\
 [\Gamma_y^j,\pa_{yy}]=&\sum_{\ell=0}^{j-1}\binom{j}{\ell}\lf(-2B^{(j-\ell+1)}\pa_x\Gamma_y+\pa_y^{(j-\ell)}(B^{(1)})^2 \pa_{xx}-B^{(j-\ell+2)}\pa_x\rg)\Gamma_y^\ell.\label{cm_yt_j_yy}
\end{align}\end{subequations}
\end{lem}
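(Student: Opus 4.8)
The plan is to reduce all four identities to the two single-bracket relations \eqref{cm_G_y}--\eqref{cm_G_yy} and then feed those into the combinatorial identity \eqref{[An,B]} of Lemma \ref{lem:cm_AB}. The starting observation is that $\Gamma_y=\partial_y+B^{(1)}\partial_x$ with $B^{(m)}=B^{(m)}(t,y)$ independent of $x$, so the following elementary bracket rules hold: $[\Gamma_y,\partial_x]=0$, $[\Gamma_y,\Gamma_y]=0$, and $[\Gamma_y,g]=\partial_y g$ for any multiplier $g=g(t,y)$ (in particular $[\Gamma_y,B^{(m)}]=B^{(m+1)}$). With these, \eqref{cm_G_y} is immediate: $[\Gamma_y,\partial_y]=[B^{(1)}\partial_x,\partial_y]=[B^{(1)},\partial_y]\partial_x=-(\partial_y B^{(1)})\partial_x=-B^{(2)}\partial_x$. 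For \eqref{cm_G_yy} I would write $[\Gamma_y,\partial_{yy}]=[B^{(1)},\partial_{yy}]\partial_x$ and use the product rule $[f,\partial_{yy}]=-(\partial_{yy}f)-2(\partial_y f)\partial_y$ to get $(-B^{(3)}-2B^{(2)}\partial_y)\partial_x$; a short rearrangement using $\partial_y(B^{(1)})^2=2B^{(1)}B^{(2)}$ and $\Gamma_y=\partial_y+B^{(1)}\partial_x$ then converts this to $-2B^{(2)}\partial_x\Gamma_y+\partial_y(B^{(1)})^2\partial_{xx}-B^{(3)}\partial_x$.

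Next I would compute the iterated adjoint actions in closed form by induction on $m$. For $\partial_y$ the claim is $\mathrm{ad}_{\Gamma_y}^m(\partial_y)=-B^{(m+1)}\partial_x$ for all $m\geq1$: the base case is \eqref{cm_G_y}, and the inductive step is $[\Gamma_y,-B^{(m+1)}\partial_x]=-[\Gamma_y,B^{(m+1)}]\partial_x=-B^{(m+2)}\partial_x$. For $\partial_{yy}$ the claim is
\[
\mathrm{ad}_{\Gamma_y}^m(\partial_{yy})=-2B^{(m+1)}\partial_x\Gamma_y+\partial_y^{(m)}(B^{(1)})^2\,\partial_{xx}-B^{(m+2)}\partial_x,\qquad m\geq1,
\]
with base case \eqref{cm_G_yy}. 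The step follows by bracketing $\Gamma_y$ against each of the three summands: $[\Gamma_y,B^{(m+1)}\partial_x\Gamma_y]=[\Gamma_y,B^{(m+1)}]\partial_x\Gamma_y=B^{(m+2)}\partial_x\Gamma_y$ (since $\partial_x$ and $\Gamma_y$ commute with $\Gamma_y$), $[\Gamma_y,\partial_y^{(m)}(B^{(1)})^2\partial_{xx}]=\partial_y^{(m+1)}(B^{(1)})^2\partial_{xx}$, and $[\Gamma_y,B^{(m+2)}\partial_x]=B^{(m+3)}\partial_x$; collecting these gives the formula with $m$ replaced by $m+1$. The only thing to watch here is that the three coefficient operators $B^{(m+1)}$, $\partial_y^{(m)}(B^{(1)})^2$, $B^{(m+2)}$ are all $x$-independent multipliers, so $\Gamma_y$ acts on them just by $\partial_y$, while $\partial_x\Gamma_y$, $\partial_{xx}$, $\Gamma_y$ pass through the outer $\Gamma_y$ untouched — no uncontrolled lower-order pieces are generated because we only ever commute with bare $\partial_x$, $\partial_{xx}$, $\Gamma_y$.

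Finally I would invoke Lemma \ref{lem:cm_AB} with $\mathcal A=\Gamma_y$ and $\mathcal B\in\{\partial_y,\partial_{yy}\}$: identity \eqref{[An,B]} gives $[\Gamma_y^j,\mathcal B]=\sum_{\ell=0}^{j-1}\binom{j}{\ell}\mathrm{ad}_{\Gamma_y}^{j-\ell}(\mathcal B)\,\Gamma_y^\ell$, and substituting the two closed forms derived above reproduces \eqref{cm_yt_j_y} and \eqref{cm_yt_j_yy} verbatim, reading $\partial_y^{(j-\ell)}(B^{(1)})^2$ as the $(j-\ell)$-th $y$-derivative of $(B^{(1)})^2$. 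I expect the bulk of the work, and the only place a sign or a binomial coefficient could go astray, to be the induction establishing $\mathrm{ad}_{\Gamma_y}^m(\partial_{yy})$; the remaining steps are direct transcriptions.
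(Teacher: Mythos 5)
Your proposal is correct and follows essentially the same route as the paper: verify \eqref{cm_G_y}--\eqref{cm_G_yy} by direct computation, establish the closed forms $\mathrm{ad}_{\Gamma_y}^m(\pa_y)=-B^{(m+1)}\pa_x$ and $\mathrm{ad}_{\Gamma_y}^m(\pa_{yy})=-2B^{(m+1)}\pa_x\Gamma_y+\pa_y^{m}(B^{(1)})^2\pa_{xx}-B^{(m+2)}\pa_x$ by induction, and conclude with identity \eqref{[An,B]} of Lemma \ref{lem:cm_AB}. The only difference is cosmetic: you spell out the product-rule derivation of \eqref{cm_G_yy}, which the paper leaves as a direct computation.
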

\begin{proof}
The first two relations \eqref{cm_G_y}, \eqref{cm_G_yy} are direct consequences of computation. Next we use the general relation \eqref{[An,B]} to derive \eqref{cm_yt_j_y}, \eqref{cm_yt_j_yy}. First, we observe that 
\begin{align}\label{adG_Bm_pax}
\mathrm{ad}_{\Gamma_y}(-{B}^{(m)}\pa_x)=[\pa_y+B^{(1)}\pa_x,-{B}^{(m)}\pa_x]=-B^{(m+1)}\pa_x.
\end{align} 
As a result, we have the general formula
\begin{align}
\mathrm{ad}_{\Gamma_y}^{m}(\pa_y)=-B^{(m+1)}\pa_x,\quad \forall m\in\mathbb{Z}_+.
\end{align}
Hence by \eqref{[An,B]},
\begin{align}
[\Gamma_y^j, \pa_y ]=\sum_{\ell=0}^{j-1}\binom {j}{\ell}\mathrm{ad}_{\Gamma_y}^{j-\ell}(\pa_y)\Gamma_y^\ell=-\sum_{\ell=0}^{j-1}\binom{j}{\ell}B^{(j-\ell+1)}\pa_x\Gamma_y^\ell.
\end{align}
This concludes the proof of \eqref{cm_yt_j_y}. 

Finally, we derive \eqref{cm_yt_j_yy}. By \eqref{[An,B]}, it is enough to compute $\mathrm{ad}_{\Gamma_y}^m(\pa_{yy})$. We apply the induction to prove that
\begin{align}\label{ad_G_m_yy}
\mathrm{ad}_{\Gamma_y}^m(\pa_{yy})=-2B^{{(m+1)}}\pa_x\Gamma_y +\pa_y^{m}(B^{(1)})^2\pa_{xx}-B^{(m+2)}\pa_x,\quad \forall m\in\mathbb{Z}_+.
\end{align} 
This relation, when combined with \eqref{[An,B]}, implies \eqref{cm_yt_j_yy}. The relation \eqref{ad_G_m_yy} holds when $m=1$ (\eqref{cm_G_yy}). Assume that the relation holds on the $m-1(\geq 1)$ level, then
\begin{align}
\mathrm{ad}_{\Gamma_y}^{m}(\pa_{yy})=&\mathrm{ad}_{\Gamma_y}(\mathrm{ad}_{\Gamma_y}^{m-1}(\pa_{yy})) 
= \mathrm{ad}_{\Gamma_y}\lf(-B^{(m+1)}\pa_x-2B^{{(m)}}\pa_x\Gamma_y +\pa_y^{m-1}(B^{(1)})^2\pa_{xx}\rg).
\end{align}  
Recalling \eqref{adG_Bm_pax}, we have 
\begin{align*}
\mathrm{ad}_{\Gamma_y}^{m}(\pa_{yy})=&-B^{(m+2)}\pa_x-\Gamma_y(2B^{(m)})\pa_x\Gamma_y+\Gamma_y(\pa_y^{m-1}(B^{(1 )})^2)\pa_{xx}\\
=&-B^{(m+2)}\pa_x-2B^{(m+1)}\pa_x\Gamma_y+\pa_y^m(B^{(1)})^2\pa_{xx}.
\end{align*}
This concludes the proof of \eqref{ad_G_m_yy}. Hence the proof of the lemma is complete.
\end{proof}

We also use the following lemma.\myc{(I didn't found a reason to include the $(\cdot)_\nq$ restriction. So I remove them. Please double check!)}
\begin{lem}\label{lem:gliding}
Consider the gliding vector fields $\Gamma_y=\pa_y+\int_0^t\pa_y u(s,y)ds\pa_x$, and functions $f,g\in H^m(\Torus^3)$. Then the following two estimate hold \myc{(Check!)}
\begin{align}
\|f\|_{L_{x,y,z}^\infty}\leq& C \sum_{|i,j,k|\leq 2}\|\pa_x^i\Gamma_y^j\pa_z^k f\|_{L_{x,y,z}^2}.\label{Linfty_estgld}
\end{align}
Moreover, we have that for $m\geq 3$,
\begin{align}\label{Prd_est_gld}\qquad\lf(
\sum_{|i,j,k|=0}^{M}\|\pa_x^i \Gamma_y^j\pa_z^k (f g)\|_{L_{x,y,z}^2}^{2}\rg)^{1/2}\leq &C\left(\sum_{|i,j,k|=0}^{M}\|\pa_x^i \Gamma_y^j\pa_z^k f\|_{L_{x,y,z}^2}^2\right)^{1/2} \left(\sum_{|i,j,k|=0}^{M}\|\pa_x^i \Gamma_y^j\pa_z^k g\|_{L_{x,y,z}^2}^2\right)^{1/2}.
\end{align}
\myc{Previous\begin{align}
\sum_{|i,j,k|=0}^{M}\|\pa_x^i \Gamma_y^j\pa_z^k (f g)\|_{L_{x,y,z}^2}\leq &C\left(\sum_{|i,j,k|=0}^{M}\|\pa_x^i \Gamma_y^j\pa_z^k f\|_{L_{x,y,z}^2}\right) \left(\sum_{|i,j,k|=0}^{M}\|\pa_x^i \Gamma_y^j\pa_z^k g\|_{L_{x,y,z}^2}\right).
\end{align}
But the two norms are equivalent, i.e.,
\begin{align}
\sum_{i=1}^N |f_i|\approx (\sum_{i=1}^N |f_i|^2)^{1/2}
.\end{align} So there should be no problem in the main text. }
\end{lem}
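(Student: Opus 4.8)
The plan is to change variables to straighten out the gliding vector field, reducing both claims to the standard Sobolev embedding and Moser product estimate on $\Torus^3$. Concretely, at a fixed time $t$, introduce the (volume-preserving, measure-preserving) change of variables
\[
\Psi_t: (x,y,z)\mapsto (v,y,z), \qquad v:=x-\int_0^t u(s,y)\,ds,
\]
so that $\Psi_t$ is a smooth diffeomorphism of $\Torus^3$ (periodicity in $x$ is preserved since $\int_0^t u(s,\cdot)\,ds$ is a function of $y$ only), with unit Jacobian, hence $\|F\|_{L^p_{x,y,z}}=\|F\circ\Psi_t^{-1}\|_{L^p_{v,y,z}}$ for all $p$. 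The key algebraic point is that under this change of coordinates the gliding derivatives become the coordinate derivatives: $\pa_x \mapsto \pa_v$, $\Gamma_{y;t}=\pa_y+\int_0^t\pa_y u(s,y)\,ds\,\pa_x \mapsto \pa_y$, and $\pa_z\mapsto\pa_z$. Therefore
\[
\|\pa_x^i\Gamma_{y;t}^j\pa_z^k f\|_{L^2_{x,y,z}} = \|\pa_v^i\pa_y^j\pa_z^k (f\circ\Psi_t^{-1})\|_{L^2_{v,y,z}},
\]
and similarly for products, since $\Psi_t$ is a pointwise substitution so $(fg)\circ\Psi_t^{-1}=(f\circ\Psi_t^{-1})(g\circ\Psi_t^{-1})$.

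With this reduction in hand, \eqref{Linfty_estgld} is exactly the classical Sobolev embedding $H^2(\Torus^3)\hookrightarrow L^\infty(\Torus^3)$ applied to $f\circ\Psi_t^{-1}$: $\|f\|_{L^\infty_{x,y,z}}=\|f\circ\Psi_t^{-1}\|_{L^\infty_{v,y,z}}\leq C\|f\circ\Psi_t^{-1}\|_{H^2_{v,y,z}}=C\sum_{|i,j,k|\leq 2}\|\pa_x^i\Gamma_{y;t}^j\pa_z^k f\|_{L^2_{x,y,z}}$, where the constant $C$ is the universal Sobolev constant on $\Torus^3$ and in particular is independent of $t$ (and of $u$). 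For \eqref{Prd_est_gld}, one invokes the standard Moser/Kato–Ponce product estimate $\|FG\|_{H^M(\Torus^3)}\leq C(\|F\|_{H^M}\|G\|_{L^\infty}+\|F\|_{L^\infty}\|G\|_{H^M})$ together with $H^M\hookrightarrow L^\infty$ for $M\geq 3$, which gives $\|FG\|_{H^M}\leq C\|F\|_{H^M}\|G\|_{H^M}$; applying this to $F=f\circ\Psi_t^{-1}$, $G=g\circ\Psi_t^{-1}$ and translating back via the identity above yields the claimed bound, after noting that the $H^M$-norm $\big(\sum_{|i,j,k|\leq M}\|\pa_v^i\pa_y^j\pa_z^k\cdot\|_{L^2}^2\big)^{1/2}$ is exactly the left-hand (and right-hand) quantity in \eqref{Prd_est_gld}.

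There is essentially no serious obstacle here; the only points requiring a line of care are: (i) checking that $\Psi_t$ is genuinely a diffeomorphism of the torus — this uses that $\int_0^t\pa_y u(s,y)\,ds$ is bounded and smooth so that $v=x-\int_0^t u(s,y)ds$ remains $2\pi$-periodic and the map is invertible with smooth inverse $x=v+\int_0^t u(s,y)ds$; and (ii) confirming the chain-rule computation $\Gamma_{y;t}(f)= \pa_y(f\circ\Psi_t^{-1})\circ\Psi_t$, which is immediate from $\pa_y(f\circ\Psi_t^{-1}) = (\pa_y f)\circ\Psi_t^{-1}\cdot 1 + (\pa_x f)\circ\Psi_t^{-1}\cdot\big(\!-\!\pa_y\!\int_0^t u\big)$ read in the right direction, i.e. $\pa_y$ in $(v,y,z)$-coordinates pulls back to $\pa_y+(\int_0^t\pa_y u\,ds)\pa_x$ in $(x,y,z)$-coordinates. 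Once these two bookkeeping facts are in place, the lemma follows directly from the flat-torus Sobolev embedding and product rule, with all constants depending only on $M$ and the dimension — in particular independent of $A$, $t$, and the shear profile, as required for the applications in Sections \ref{Sec:Lnr} and \ref{Sec:ED}.
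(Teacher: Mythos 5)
Your proposal is correct and follows essentially the same route as the paper: the paper also straightens the gliding field via the shear change of variables $X=x-\int_0^t u(s,y)\,ds$, $Y=y$, $Z=z$ (unit Jacobian, $\pa_X=\pa_x$, $\pa_Y=\Gamma_y$, $\pa_Z=\pa_z$), then invokes the classical Sobolev embedding $H^2(\Torus^3)\hookrightarrow L^\infty$ for \eqref{Linfty_estgld} and the Moser product estimate combined with \eqref{Linfty_estgld} for \eqref{Prd_est_gld}. No gaps; your extra remarks on periodicity and the chain rule are exactly the bookkeeping the paper leaves implicit.
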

\begin{proof}\myc{There is an alternative simpler proof: By the Fourier transform in $x,z$-variable, the Minkowski's integral inequality and the Gagliardo-Nirenberg inequality, 
\begin{align}
\|f&\|_{L_{x,y,z}^\infty}= C\lf\|\sum_{\al,\gamma\in \mathbb{Z}}\wh f(\al,y,\gamma)e^{i\al x}e^{i\gamma z}\rg\|_{L_{x,y,z}^\infty}\leq C \sum_{\al,\gamma\in \mathbb{Z}}\lf\|\wh f(\al,y,\gamma)e^{i\al x}e^{i\gamma z}\rg\|_{L_{x,y,z}^\infty}=C \sum_{\al,\gamma\in \mathbb{Z}}\lf\|\wh f(\al,\cdot,\gamma)\rg\|_{L_{y}^\infty}\\
\leq &C \sum_{\al,\gamma\in \mathbb{Z}}\lf\|\wh f(\al,\cdot, \gamma)\rg\|_{L_y^2}^{1/2}\lf\|\pa_y\lf(\wh f(\al,\cdot, \gamma)\exp\lf\{i\al\int_0^t u(s,\cdot) ds\rg\}\rg)\rg\|_{L_y^2}^{1/2}+C \sum_{\al,\gamma\in \mathbb{Z}}\lf\|\wh f(\al,\cdot, \gamma)\rg\|_{L_y^2}.
\end{align}
Now we focus on the case where $\al,\gamma\in \mathbb{Z}\backslash \{0\}$, and the other cases are similar. By invoking the H\"older inequality, we have that
\begin{align}
&\sum_{\al,\gamma\in \mathbb{Z}\backslash\{0\}}\lf\|\wh f(\al,\cdot,\gamma)\rg\|_{L_{y}^\infty}\\
&\leq C\lf(\sum_{\al,\gamma\in \mathbb{Z}\backslash\{0\}}\lf\|(|\al|+|\gamma|)^2\wh f\rg \|_{L^2}^2\rg)^{1/4}\lf(\sum_{\al,\gamma\in \mathbb{Z}\backslash\{0\}}\lf\|(|\al|+|\gamma|)\lf(\pa_y+i\al \int_0^t\pa_y uds \rg)\wh f\rg \|_{L^2}^2\rg)^{1/4}\\
&\qquad+C\lf(\sum_{\al,\gamma\in \mathbb{Z}\backslash\{0\}}\lf\|(|\al|+|\gamma|)^2\wh f\rg \|_{L^2}^2\rg)^{1/2}\\ 
&\leq C\sum_{|i,j,k|\leq 2}\lf\|\pa_x^i\Gamma_k^j\pa_z^k f\rg\|_{L^2}. 
\end{align}}
 Consider the following change of variables:
\begin{align}
X=x-\int_0^t u(s,y )ds,\quad Y=y ,\quad Z=z .
\end{align}
The Jacobian of this change of coordinate is $1$. We also check that it is one-one and onto. Define
\begin{align}
F(X,Y,Z)=f(x,y,z),\quad G(X,Y, Z)=g(x,y,z). 
\end{align} Now by classical Sobolev embedding, we have 
\begin{align}
\|f\|_{L^\infty_{x,y,z}}=\|F\|_{L^{\infty}_{X,Y,Z}}\leq C\sum_{|i,j,k|\leq 2}\|\pa_X^i\pa_Y^j\pa_Z^kF\|_{L^2_{X,Y,Z}}.
\end{align}\myc{(Check! $\|F\|_\infty\leq C\|\pa_X\pa_Y\pa_ZF\|_1$ not quite work for $F(X,Y,Z)=F(Y,Z)$. Therefore we need lower order term. $\|F\|_\infty\leq C\sum_{i,j,k\in\{0,1\}}\| \pa_X^i\pa_Y^j\pa_Z^k F\|_{L^1_{X,Y,Z}}?$)}
It is direct to check that 
\begin{align}
\pa_X=\pa_x, \, \pa_Y=\Gamma_y, \,\pa_Z =\pa_z. 
\end{align}
Hence,
\begin{align}
\|f\|_{L^\infty_{x,y,z}}\leq  C\lf(\sum_{|i,j,k|\leq 2}\|\pa_X^i\pa_Y^j\pa_Z^kF\|_{L^2_{X,Y,Z}}^2\rg)^{1/2} = C\lf(\sum_{|i,j,k|\leq 2}\|\pa_x^i\Gamma_y^j\pa_z^k f \|_{L^2_{x,y,z}} ^2\rg)^{1/2}.
\end{align}
This is \eqref{Linfty_estgld}.
Next we recall the product estimate in the $(X,Y,Z)$-coordinate
\begin{align}
\|FG\|_{H^m_{X,Y,Z}}\leq C
\|F\|_{H^m_{X,Y,Z}}\|G\|_{L_{X, Y, Z}^\infty}+C
\|G\|_{H^m_{X,Y,Z}}\|F\|_{L_{X, Y, Z}^\infty}.
\end{align}
As a result,
\begin{align*}
\bigg(\sum_{|i,j,k|=0}^{M}\|\pa_x^i  &\Gamma_y^j\pa_z^k (f g)\|_{L_{x,y,z}^2}^2\bigg)^{1/2}
\leq C\|FG\|_{H^m_{X,Y,Z}} 
\leq C\|F\|_{H^m_{X,Y,Z}}\|G\|_{L_{X, Y, Z}^\infty}+C
\|G\|_{H^m_{X,Y,Z}}\|F\|_{L_{X, Y, Z}^\infty}\\
\leq &C\bigg(\sum_{|i,j,k|=0}^{M}\|\pa_x^i \Gamma_y^j\pa_z^k f \|_{L_{x,y,z}^2}^2\bigg)^{1/2}\|g\|_{L^\infty_{x,y,z}}+C\bigg(\sum_{|i,j,k|=0}^{M}\|\pa_x^i \Gamma_y^j\pa_z^k g\|_{L_{x,y,z}^2}^2\bigg)^{1/2}\|f\|_{L^\infty_{x,y,z}}.&
\end{align*}
Combining this with \eqref{Linfty_estgld} yields \eqref{Prd_est_gld}.
\end{proof}

To conclude this section, we present a lemma.
\begin{lem} Recall the vector field $\Gamma_y=\Gamma_{y;t}$ \eqref{Gamma}. The gliding regularity norms of the gradient of functions $f\in H^{M+1}$ are bounded as follows, 
\begin{align}\label{Gld_Reg_grd}
\sum_{|i,j,k|=0}^{M}\lf\|\Gamma_{y;t}^{ijk}\na f\rg\|_{L^p}
\leq& {\sum_{|i,j,k|=0}^{ M+1}\lf\|\Gamma_{y;t}^{ijk} f\rg\|_{L^p}+C t  \sum_{ |i,j,k|=0}^ { M} \lf \|  \Gamma_{y;t}^{(i+1)jk} f\rg\|_{L^p},\quad\ \Gamma_{y;t}^{ijk}=\pa_x^i\Gamma_{y;t}^j\pa_z^k.}  
\end{align}
{Here the constant $C$ depends only on $M,\, \|\pa_y u\|_{L_t^\infty   W^{M,\infty}}$ and $p\in[1,\infty]$.}
\end{lem}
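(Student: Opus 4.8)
The idea is to express $\na f = (\pa_x f, \pa_y f, \pa_z f)$ in terms of the gliding vector fields $\pa_x,\, \Gamma_{y;t},\, \pa_z$ and then commute these fields past the $\Gamma_{y;t}^{ijk}$ prefactor. The only component requiring work is $\pa_y f$, since $\pa_x$ and $\pa_z$ are themselves among the gliding fields. First I would use the defining identity $\Gamma_{y;t} = \pa_y + B^{(1)}\pa_x$, where $B^{(1)}(t,y) = \int_0^t \pa_y u(s,y)\,ds$ (the notation from \eqref{B_m_mt}), to write
\[
\pa_y f = \Gamma_{y;t} f - B^{(1)}\pa_x f.
\]
Thus for each multi-index with $|i,j,k|\le M$,
\[
\Gamma_{y;t}^{ijk}\pa_y f = \Gamma_{y;t}^{ijk}\Gamma_{y;t} f - \Gamma_{y;t}^{ijk}\big(B^{(1)}\pa_x f\big) = \pa_x^i \Gamma_{y;t}^{j+1}\pa_z^k f - \Gamma_{y;t}^{ijk}\big(B^{(1)}\pa_x f\big),
\]
and the first term on the right is already of the form $\Gamma_{y;t}^{i(j+1)k} f$ with $|i,j+1,k|\le M+1$, which is controlled by the first sum on the right of \eqref{Gld_Rg_ED} — wait, rather, by $\sum_{|i,j,k|\le M+1}\|\Gamma_{y;t}^{ijk} f\|_{L^p}$.

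The second step is to handle the commutator term $\Gamma_{y;t}^{ijk}(B^{(1)}\pa_x f)$. Here I would move the coefficient $B^{(1)}$ to the outside using the Leibniz rule. Since $\pa_x$ and $\pa_z$ annihilate $B^{(1)}$ (it depends only on $(t,y)$) and $\Gamma_{y;t}$ acts on $B^{(1)}$ by $\Gamma_{y;t}(B^{(1)}) = \pa_y B^{(1)} = B^{(2)}$ (more generally $\Gamma_{y;t}^\ell B^{(1)} = B^{(\ell+1)}$, a fact used repeatedly in Lemma~\ref{cor:cm}), the Leibniz expansion gives
\[
\Gamma_{y;t}^{ijk}\big(B^{(1)}\pa_x f\big) = \sum_{\ell=0}^{j}\binom{j}{\ell} B^{(\ell+1)}\, \pa_x^i \Gamma_{y;t}^{j-\ell}\pa_z^k (\pa_x f) = \sum_{\ell=0}^{j}\binom{j}{\ell} B^{(\ell+1)}\, \Gamma_{y;t}^{(i+1)(j-\ell)k} f.
\]
Now I would use the uniform bound $\|B^{(m)}\|_{L^\infty_{t,y}} \le t\,\|\pa_y u\|_{L^\infty_t W^{m-1,\infty}} \le C(M)\, t$ valid for $1\le m \le M+1$, so that each term is bounded by $C t\,\|\Gamma_{y;t}^{(i+1)(j-\ell)k} f\|_{L^p}$, and since $|i+1, j-\ell, k| \le M$ whenever $|i,j,k|\le M$, the full double sum is controlled by $C t \sum_{|i,j,k|\le M}\|\Gamma_{y;t}^{(i+1)jk} f\|_{L^p}$. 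Summing the contributions of $\pa_x f$, $\pa_y f$, $\pa_z f$ over all $|i,j,k|\le M$ and absorbing the (finite, $M$-dependent, $p$-independent) combinatorial constants into $C$ yields \eqref{Gld_Rg_ED}—that is, \eqref{Gld_Reg_grd}.

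There is no genuine obstacle here; the lemma is essentially bookkeeping. The only point requiring a little care is the growth of the coefficients $B^{(m)}$: one must check that all orders $m$ appearing (namely $m\le j+1 \le M+1$) are controlled by $\|\pa_y u\|_{L^\infty_t W^{M,\infty}}$, which is exactly the stated hypothesis, and that the linear-in-$t$ bound is the correct power (it comes from $B^{(m)}(t,y) = \int_0^t \pa_y^m u(s,y)\,ds$ and the uniform-in-$A$ bound $\|\pa_y u_A\|_{L^\infty_t W^{M,\infty}}\le C_M$). One should also note the endpoint cases $j=0$ (no commutator term, trivial) and track that the binomial coefficients $\binom{j}{\ell}$ with $j\le M$ are bounded by $2^M$, so the constant $C$ depends only on $M$, $\|\pa_y u\|_{L^\infty_t W^{M,\infty}}$, and not on $p$ or $A$.
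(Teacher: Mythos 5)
Your proof is correct and is essentially the paper's own argument: the paper commutes the gradient to the inside first and then invokes the prepared commutator identity \eqref{cm_yt_j_y} together with $\pa_y=\Gamma_{y;t}-B^{(1)}\pa_x$, whereas you substitute $\pa_y f=\Gamma_{y;t}f-B^{(1)}\pa_x f$ at the start and re-derive the same identity via the Leibniz rule, so the resulting terms, the coefficient bounds $\|B^{(m)}\|_{L^\infty_{t,y}}\leq Ct$ for $m\leq M+1$, and the dependence of $C$ are identical. One cosmetic slip: the claim ``$|i+1,j-\ell,k|\leq M$'' is false for $\ell=0$; what you actually need (and have) is that $(i+1,j-\ell,k)$ is of the form $(i'+1,j',k')$ with $i'=i$, $j'=j-\ell$, $k'=k$ and $|i',j',k'|\leq M$, which places each term in the sum $Ct\sum_{|i,j,k|=0}^{M}\|\Gamma_{y;t}^{(i+1)jk}f\|_{L^p}$ as required in \eqref{Gld_Reg_grd}.
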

\begin{proof}Recalling the definition of commutators, we have that
\begin{align*}
\sum_{|i,j,k|=0}^{M}\|\pa_x^i\Gamma_{y;t}^j\pa_z^k\na f_\nq\|_{L^p}\leq &\sum_{|i,j,k|=0}^{M}\|\na \pa_x^i\Gamma_{y;t}^j\pa_z^k f_\nq\|_{L^p}+\sum_{|i,j,k|=0}^{M}\mathbbm{1}_{j\geq 1}\|[\Gamma_{y;t}^{j},\pa_y]\pa_{x}^{i}\pa_z^{k} f_\nq\|_{L^p}.
\end{align*}
Direct applications of \eqref{cm_yt_j_y} and the relation $\pa_y=\Gamma_{y;t}-\int_0^t \pa_yu(s,y) ds \pa_x$ yield that
\begin{align}
\sum_{|i,j,k|=0}^ M\|\pa_x^i\Gamma_{y;t}^j\pa_z^k\na f_\nq\|_{L^p}\leq &\sum_{|i,j,k|=0}^ { M+1}\|\pa_x^i\Gamma_{y;t}^j\pa_z^k f_\nq\|_{L^p}+C\sum_{ |i,j,k|=0 }^  M\int_0^t\| \pa_y u(s,\cdot)\|_{ L^{\infty}_y}ds  \|\Gamma_{y;t}^{(i+1)jk} f_\nq\|_{L^p}\\
&+C\sum_{|i,j,k|=0}^{ M}\mathbbm{1}_{j\geq 1}\sum_{j'=0}^{j-1}\binom{j}{j'}\left(\int_0^t\|\pa_y^{j-j'+1}u(s,\cdot)\|_{L_y^\infty}ds\right) \|\pa_{x}^{i+1}\Gamma_{y;t}^{j'}\pa_z^k f_\nq\|_{L^p}.
\end{align}
Now direct estimation yields the result.
\end{proof}

\noindent
\thanks{\textbf{Acknowledgment.} SH was supported in part by NSF grants DMS 2006660, DMS 2304392, DMS 2006372. The author would like to thank Tarek Elgindi for providing the key construction for the time-dependent shear flows and many crucial suggestions,  discussions and guidance. The author would also like to thank Yiyue Zhang for his suggestion of a clean proof of Lemma  \ref{lem:cm_AB} and Zhongtian Hu for finding out typos in the first version of the manuscript. }

\small
\bibliographystyle{abbrv}
\bibliography{nonlocal_eqns,JacobBib,SimingBib}

\def\cprime{$'$}
\begin{thebibliography}{10}

\bibitem{AlbrittonBeekieNovack21}
D.~Albritton, R.~Beekie, and M.~Novack.
\newblock Enhanced dissipation and {H\"ormander's} hypoellipticity.
\newblock {\em J. Funct. Anal.}, 283(3):Paper No. 109522, 38, 2022.

\bibitem{AlbrittonOhm22}
D.~Albritton and L.~Ohm.
\newblock On the stabilizing effect of swimming in an active suspension.
\newblock {\em arXiv:2205.04922}, 2022.

\bibitem{Bedrossian17}
J.~Bedrossian.
\newblock Suppression of plasma echoes and {L}andau damping in {S}obolev spaces
  by weak collisions in a {V}lasov-{F}okker-{P}lanck equation.
\newblock {\em Ann. PDE}, 3(2):Paper No. 19, 66, 2017.

\bibitem{BedrossianBlumenthalPunshonSmith192}
J.~Bedrossian, A.~Blumenthal, and S.~Punshon-Smith.
\newblock The {Batchelor} spectrum of passive scalar turbulence in stochastic
  fluid mechanics.
\newblock {\em arXiv:1911.11014}, 2019.

\bibitem{BCZ15}
J.~Bedrossian and M.~Coti~Zelati.
\newblock Enhanced dissipation, hypoellipticity, and anomalous small noise
  inviscid limits in shear flows.
\newblock {\em Arch. Ration. Mech. Anal.}, 224(3):1161--1204, 2017.

\bibitem{BGM15II}
J.~Bedrossian, P.~Germain, and N.~Masmoudi.
\newblock Dynamics near the subcritical transition of the {3D Couette flow II:
  Above} threshold.
\newblock {\em arXiv:1506.03721}, 2015.

\bibitem{BGM15III}
J.~Bedrossian, P.~Germain, and N.~Masmoudi.
\newblock On the stability threshold for the {3D Couette} flow in {Sobolev}
  regularity.
\newblock {\em Ann. of Math. (2)}, 185(2):541--608, 2017.

\bibitem{BGM15I}
J.~Bedrossian, P.~Germain, and N.~Masmoudi.
\newblock Dynamics near the subcritical transition of the {3D Couette flow I:
  Below} threshold.
\newblock {\em Mem. Amer. Math. Soc.}, 266(1294):v+158, 2020.

\bibitem{BedrossianHe19}
J.~Bedrossian and S.~He.
\newblock Inviscid damping and enhanced dissipation of the boundary layer for
  2d {Navier-Stokes} linearized around couette flow in a channel.
\newblock {\em arXiv:1909.07230}.

\bibitem{BedrossianHe16}
J.~Bedrossian and S.~He.
\newblock Suppression of blow-up in {Patlak-Keller-Segel} via shear flows.
\newblock {\em SIAM Journal on Mathematical Analysis}, 50(6):6365--6372, 2018.

\bibitem{BMV14}
J.~Bedrossian, N.~Masmoudi, and V.~Vicol.
\newblock Enhanced dissipation and inviscid damping in the inviscid limit of
  the {Navier-Stokes} equations near the {2D Couette} flow.
\newblock {\em Arch. Rat. Mech. Anal.}, 216(3):1087--1159, 2016.

\bibitem{BRB10}
J.~Bedrossian, N.~Rodr\'iguez, and A.~Bertozzi.
\newblock Local and global well-posedness for aggregation equations and
  {Patlak-Keller-Segel} models with degenerate diffusion.
\newblock {\em Nonlinearity}, 24(6):1683--1714, 2011.

\bibitem{BVW16}
J.~Bedrossian, V.~Vicol, and F.~Wang.
\newblock The {S}obolev stability threshold for {2D} shear flows near
  {C}ouette.
\newblock {\em J. Nonlinear Sci.}, 28(6):2051--2075, 2018.

\bibitem{BedrossianCotiZelatiDolce22}
J.~Bedrossian, M.~C. Zelati, and M.~Dolce.
\newblock {Taylor} dispersion and phase mixing in the non-cutoff {Boltzmann}
  equation on the whole space.
\newblock {\em arXiv:2211.05079}.

\bibitem{Biler95}
P.~Biler.
\newblock The {Cauchy} problem and self-similar solutions for a nonlinear
  parabolic equation.
\newblock {\em Studia Math.}, 114(2):181--192, 1995.

\bibitem{Blanchet09}
A.~Blanchet, J.~Carrillo, and P.~Lauren\c{c}ot.
\newblock Critical mass for a {Patlak-Keller-Segel} model with degenerate
  diffusion in higher dimensions.
\newblock {\em Calc. Var.}, 35:133--168, 2009.

\bibitem{BlanchetCarrilloMasmoudi08}
A.~Blanchet, J.~Carrillo, and N.~Masmoudi.
\newblock Infinite time aggregation for the critical {Patlak-Keller-Segel}
  model in $\mathbb{R}^2$.
\newblock {\em Comm. Pure Appl. Math.}, 61:1449--1481, 2008.

\bibitem{BlanchetEJDE06}
A.~Blanchet, J.~Dolbeault, and B.~Perthame.
\newblock Two-dimensional {Keller-Segel} model: Optimal critical mass and
  qualitative properties of the solutions.
\newblock {\em E. J. Diff. Eqn}, 2006(44):1--32, 2006.

\bibitem{BlumenthalCotiZelatiGvalani22}
A.~Blumenthal, M.~C. Zelati, and R.~S. Gvalani.
\newblock Exponential mixing for random dynamical systems and an example of
  {P}ierrehumbert.
\newblock {\em arXiv:2204.13651}.

\bibitem{BrueColomboCrippaDeLellisSorella22}
E.~Bru{\`e}, M.~Colombo, G.~Crippa, C.~De~Lellis, and M.~Sorella.
\newblock Onsager critical solutions of the forced navier-stokes equations.
\newblock {\em arXiv preprint arXiv:2212.08413}, 2022.

\bibitem{BrueDeLellis23}
E.~Bru{\`e} and C.~De~Lellis.
\newblock Anomalous dissipation for the forced 3d navier--stokes equations.
\newblock {\em Communications in Mathematical Physics}, 400(3):1507--1533,
  2023.

\bibitem{CalvezCarrillo06}
V.~Calvez and J.~Carrillo.
\newblock Volume effects in the {Keller-Segel} model: energy estimates
  preventing blow-up.
\newblock {\em J. Math. Pures Appl.}, 86:155--175, 2006.

\bibitem{CalvezCorrias}
V.~Calvez and L.~Corrias.
\newblock The parabolic-parabolic {K}eller-{S}egel model in {$\mathbb R^2$}.
\newblock {\em Commun. Math. Sci.}, 6(2):417--447, 2008.

\bibitem{CarrapatosoMischler17}
K.~Carrapatoso and S.~Mischler.
\newblock Uniqueness and long time asymptotic for the parabolic-parabolic
  {Keller-Segel} equation.
\newblock {\em Comm. Partial Differential Equations 42 (2017), no. 2}.

\bibitem{ChaeKangLee13}
M.~Chae, K.~Kang, and J.~Lee.
\newblock Existence of smooth solutions to coupled chemotaxis-fluid equations.
\newblock {\em Discrete Contin. Dyn. Syst.}, 33(6):2271--2297, 2013.

\bibitem{ChenLiWeiZhang18}
Q.~Chen, T.~Li, D.~Wei, and Z.~Zhang.
\newblock Transition threshold for the 2-{D} {C}ouette flow in a finite
  channel.
\newblock {\em Arch. Ration. Mech. Anal.}, 238(1):125--183, 2020.

\bibitem{CollotGhoulMasmoudiNguyen192}
C.~Collot, T.-E. Ghoul, N.~Masmoudi, and V.~T. Nguyen.
\newblock Spectral analysis for singularity formation of the two dimensional
  {K}eller-{S}egel system.
\newblock {\em arXiv:1911.10884}, 2019.

\bibitem{CollotGhoulMasmoudiNguyen19}
C.~Collot, T.-E. Ghoul, N.~Masmoudi, and V.~T. Nguyen.
\newblock Refined description and stability for singular solutions of the 2{D}
  {K}eller-{S}egel system.
\newblock {\em Comm. Pure Appl. Math.}, 75(7):1419--1516, 2022.

\bibitem{CKR00}
P.~Constantin, A.~Kiselev, and L.~Ryzhik.
\newblock Quenching of flames by fluid advection.
\newblock {\em Comm. Pure Appl. Math.}, 54(11):1320--1342, 2001.

\bibitem{CKRZ08}
P.~Constantin, A.~Kiselev, L.~Ryzhik, and A.~Zlato{\v{s}}.
\newblock Diffusion and mixing in fluid flow.
\newblock {\em Ann. of Math. (2)}, 168:643--674, 2008.

\bibitem{CorriasEscobedoMatos14}
L.~Corrias, M.~Escobedo, and J.~Matos.
\newblock Existence, uniqueness and asymptotic behavior of the solutions to the
  fully parabolic keller–segel system in the plane.
\newblock {\em Journal of Differential Equations}, 257(6):1840--1878, 2014.

\bibitem{ElgindiCotiZelatiDelgadino18}
M.~Coti~Zelati, M.~G. Delgadino, and T.~M. Elgindi.
\newblock On the relation between enhanced dissipation timescales and mixing
  rates.
\newblock {\em Comm. Pure Appl. Math.}, 73(6):1205--1244, 2020.

\bibitem{CotiZelatiDolce20}
M.~Coti~Zelati and M.~Dolce.
\newblock Separation of time-scales in drift-diffusion equations on
  {$\Bbb{R}^2$}.
\newblock {\em J. Math. Pures Appl. (9)}, 142:58--75, 2020.

\bibitem{CotiZelatiDolceFengMazzucato}
M.~Coti-Zelati, M.~Dolce, Y.~Feng, and A.~L. Mazzucato.
\newblock Global existence for the two-dimensional {Kuramoto-Sivashinsky}
  equation with a shear flow.
\newblock {\em arXiv:2103.02971}, 2021.

\bibitem{CotiZelatiDrivas19}
M.~Coti-Zelati and T.~D. Drivas.
\newblock A stochastic approach to enhanced diffusion.
\newblock {\em arXiv:1911.09995v1}, 2019.

\bibitem{DengShiWang24}
S.~Deng, B.~Shi, and W.~Wang.
\newblock Suppression of blow-up in {3-D Keller-Segel model} via {Couette} flow
  in whole space.
\newblock {\em arXiv:2311.18590}, 2024.

\bibitem{DuanLorzMarkowich10}
R.-J. Duan, A.~Lorz, and P.~Markowich.
\newblock Global solutions to the coupled chemotaxis-fluid equations.
\newblock {\em Comm. Partial Differential Equations, Vol. 35}, pages
  1635--1673, 2010.

\bibitem{EganaMischler16}
G.~Egaña and S.~Mischler.
\newblock Uniqueness and long time asymptotic for the keller-segel equation:
  the parabolic-elliptic case.
\newblock {\em Arch. Ration. Mech. Anal. 220 (2016), no. 3}.

\bibitem{Elgindi20}
T.~Elgindi.
\newblock Mixing, enhanced dissipation, and stationary euler states.
\newblock 2000.

\bibitem{FengFengIyerThiffeault20}
Y.~Feng, Y.~Feng, G.~Iyer, and J.-L. Thiffeault.
\newblock Phase separation in the advective {C}ahn-{H}illiard equation.
\newblock {\em J. Nonlinear Sci.}, 30(6):2821--2845, 2020.

\bibitem{FengIyer19}
Y.~Feng and G.~Iyer.
\newblock Dissipation enhancement by mixing.
\newblock {\em Nonlinearity}, 32(5):1810--1851, 2019.

\bibitem{FrancescoLorzMarkowich10}
M.~D. Francesco, A.~Lorz, and P.~Markowich.
\newblock {Chemotaxis}-fluid coupled model for swimming bacteria with nonlinear
  diffusion: global existence and asymptotic behavior.
\newblock {\em Discrete Contin. Dyn. Syst. Ser. A}, 28:1437--1453, 2010.

\bibitem{GhoulMasmoudi18}
T.-E. Ghoul and N.~Masmoudi.
\newblock Minimal mass blowup solutions for the {P}atlak-{K}eller-{S}egel
  equation.
\newblock {\em Comm. Pure Appl. Math.}, 71(10):1957--2015, 2018.

\bibitem{GongHeKiselev21}
Y.~Gong, S.~He, and A.~Kiselev.
\newblock Random search in fluid flow aided by chemotaxis.
\newblock {\em arXiv:2107.02913}, 2021.

\bibitem{ElgindiHe22II}
S.~He.

\bibitem{He}
S.~He.
\newblock Suppression of blow-up in parabolic-parabolic {Patlak-Keller-Segel}
  via strictly monotone shear flows.
\newblock {\em Nonlinearity}, 31(8):3651--3688, 2018.

\bibitem{He21}
S.~He.
\newblock Enhanced dissipation, hypoellipticity for passive scalar equations
  with fractional dissipation.
\newblock {\em J. Funct. Anal.}, 282(3):Paper No. 109319, 28, 2022.

\bibitem{HeKiselev21}
S.~He and A.~Kiselev.
\newblock Stirring speeds up chemical reaction.
\newblock {\em Nonlinearity}, 35(8):4599, 2022.

\bibitem{HeTadmor172}
S.~He and E.~Tadmor.
\newblock Suppressing chemotactic blow-up through a fast splitting scenario on
  the plane.
\newblock {\em Arch. Ration. Mech. Anal.}, 232(2):951--986, 2019.

\bibitem{Hormander67}
L.~H\"{o}rmander.
\newblock Hypoelliptic second order differential equations.
\newblock {\em Acta Math.}, 119:147--171, 1967.

\bibitem{IbrahimMaekawaMasmoudi19}
S.~Ibrahim, Y.~Maekawa, and N.~Masmoudi.
\newblock On pseudospectral bound for non-selfadjoint operators and its
  application to stability of {K}olmogorov flows.
\newblock {\em Ann. PDE}, 5(2):Paper No. 14, 84, 2019.

\bibitem{IyerXuZlatos}
G.~Iyer, X.~Xu, and A.~Zlato\v{s}.
\newblock Convection-induced singularity suppression in the {K}eller-{S}egel
  and other non-linear {PDE}s.
\newblock {\em Trans. Amer. Math. Soc.}, 374(9):6039--6058, 2021.

\bibitem{JagerLuckhaus92}
W.~J\"ager and S.~Luckhaus.
\newblock On explosions of solutions to a system of partial differential
  equations modelling chemotaxis.
\newblock {\em Trans. Amer. Math. Soc.}, 329(2):819--824, 1992.

\bibitem{JeongYoneda21}
I.-J. Jeong and T.~Yoneda.
\newblock Vortex stretching and enhanced dissipation for the incompressible 3d
  navier--stokes equations.
\newblock {\em Mathematische Annalen}, 380(3-4):2041--2072, 2021.

\bibitem{JeongYoneda22}
I.-J. Jeong and T.~Yoneda.
\newblock Quasi-streamwise vortices and enhanced dissipation for incompressible
  3d navier--stokes equations.
\newblock {\em Proceedings of the American Mathematical Society},
  150(3):1279--1286, 2022.

\bibitem{Jia22}
H.~Jia.
\newblock Uniform linear inviscid damping and enhanced dissipation near
  monotonic shear flows in high {Reynolds} number regime {(I)}: the whole space
  case.
\newblock {\em arXiv:2207.10987}, 2022.

\bibitem{KS}
E.~F. Keller and L.~Segel.
\newblock Model for chemotaxis.
\newblock {\em J. Theor. Biol.}, 30:225--234, 1971.

\bibitem{KiselevXu15}
A.~Kiselev and X.~Xu.
\newblock Suppression of chemotactic explosion by mixing.
\newblock {\em Arch. Ration. Mech. Anal.}, 222(2):1077--1112, 2016.

\bibitem{KiselevZlatos}
A.~Kiselev and A.~Zlato{s}.
\newblock Quenching of combustion by shear flows.
\newblock {\em Duke Math. J.}, 132(1):49--72, 2006.

\bibitem{KozonoMiuraSugiyama}
H.~Kozono, M.~Miura, and Y.~Sugiyama.
\newblock Time global existence and finite time blow-up criterion for solutions
  to the {K}eller-{S}egel system coupled with the {N}avier-{S}tokes fluid.
\newblock {\em J. Differential Equations}, 267(9):5410--5492, 2019.

\bibitem{LiMasmoudiZhao22}
H.~Li, N.~Masmoudi, and W.~Zhao.
\newblock Asymptotic stability of two-dimensional {Couette} flow in a viscous
  fluid.
\newblock {\em arXiv:2208.14898}, 2022.

\bibitem{LiZhao21}
H.~Li and W.~Zhao.
\newblock Metastability for the dissipative quasi-geostrophic equation and the
  non-local enhancement.
\newblock {\em arXiv:2107.10594}, 2021.

\bibitem{LiWeiZhang20}
T.~Li, D.~Wei, and Z.~Zhang.
\newblock Pseudospectral bound and transition threshold for the 3{D}
  {K}olmogorov flow.
\newblock {\em Comm. Pure Appl. Math.}, 73(3):465--557, 2020.

\bibitem{LiuLorz11}
J.-G. Liu and A.~Lorz.
\newblock A coupled chemotaxis-fluid model: global existence.
\newblock {\em Annales de l'Institut Henri Poincar\'{e}. Analyse Non
  Lin\'{e}aire, Vol. 28}, pages 643--652, 2011.

\bibitem{Lorz10}
A.~Lorz.
\newblock Coupled chemotaxis fluid model.
\newblock {\em Math. Models Methods Appl. Sci., Vol. 20}, 2010.

\bibitem{Lorz12}
A.~Lorz.
\newblock A coupled {Keller-Segel-Stokes} model: global existence for small
  initial data and blow-up delay.
\newblock {\em Communications in Mathematical Sciences, Vol. 10}, pages
  555--574, 2012.

\bibitem{MasmoudiZhao20}
N.~Masmoudi and W.~Zhao.
\newblock Enhanced dissipation for the 2{D} {C}ouette flow in critical space.
\newblock {\em Comm. Partial Differential Equations}, 45(12):1682--1701, 2020.

\bibitem{MasmoudiZhao22}
N.~Masmoudi and W.~Zhao.
\newblock Stability threshold of two-dimensional {C}ouette flow in {S}obolev
  spaces.
\newblock {\em Ann. Inst. H. Poincar\'{e} C Anal. Non Lin\'{e}aire},
  39(2):245--325, 2022.

\bibitem{MouhotVillani11}
C.~Mouhot and C.~Villani.
\newblock On {L}andau damping.
\newblock {\em Acta Math.}, 207:29--201, 2011.

\bibitem{Patlak}
C.~S. Patlak.
\newblock Random walk with persistence and external bias.
\newblock {\em Bull. Math. Biophys.}, 15:311--338, 1953.

\bibitem{Schweyer14}
R.~Schweyer.
\newblock Stable blow-up dynamic for the parabolic-parabolic
  {Patlak-Keller-Segel} model.
\newblock {\em arXiv:1403.4975 [math.AP]}, 2014.

\bibitem{Shvydkoy21}
R.~Shvydkoy.
\newblock Global hypocoercivity of kinetic {F}okker-{P}lanck-alignment
  equations.
\newblock {\em Kinet. Relat. Models}, 15(2):213--237, 2022.

\bibitem{TaoWinkler}
Y.~Tao and M.~Winkler.
\newblock Locally bounded global solutions in a three-dimensional
  chemotaxis-{S}tokes system with nonlinear diffusion.
\newblock {\em Ann. Inst. H. Poincar\'{e} Anal. Non Lin\'{e}aire},
  30(1):157--178, 2013.

\bibitem{Tuval05}
I.~Tuval, L.~Cisneros, C.~Dombrowski, C.~W. Wolgemuth, J.~O. Kessler, and R.~E.
  Goldstein.
\newblock Bacterial swimming and oxygen transport near contact lines.
\newblock {\em Proceedings of the National Academy of Sciences},
  102(7):2277--2282, 2005.

\bibitem{villani2009}
C.~Villani.
\newblock {\em Hypocoercivity}.
\newblock American Mathematical Soc., 2009.

\bibitem{Wei181}
D.~Wei.
\newblock Global well-posedness and blow-up for the 2-{D}
  {P}atlak-{K}eller-{S}egel equation.
\newblock {\em J. Funct. Anal.}, 274(2):388--401, 2018.

\bibitem{Wei18}
D.~Wei.
\newblock Diffusion and mixing in fluid flow via the resolvent estimate.
\newblock {\em Science China Mathematics}, pages 1--12, 2019.

\bibitem{WeiZhangZhao20}
D.~Wei, Z.~Zhang, and W.~Zhao.
\newblock Linear inviscid damping and enhanced dissipation for the {K}olmogorov
  flow.
\newblock {\em Adv. Math.}, 362:106963, 103, 2020.

\bibitem{Winkler12}
M.~Winkler.
\newblock Global large-data solutions in a chemotaxis-({N}avier-){S}tokes
  system modeling cellular swimming in fluid drops.
\newblock {\em Comm. Partial Differential Equations}, 37(2):319--351, 2012.

\bibitem{Winkler13}
M.~Winkler.
\newblock Finite-time blow-up in the higher-dimensional parabolic-parabolic
  {Keller-Segel} system.
\newblock {\em Journal de Mathématiques Pures et Appliquees}, 100
  (5):748--767, 2013.

\bibitem{CotiZelatiDietertGerardVaret22}
M.~C. Zelati, H.~Dietert, and D.~G\'{e}rard-Varet.
\newblock Orientation mixing in active suspensions.
\newblock {\em arXiv:2207.08431}.

\bibitem{ZengZhangZi21}
L.~Zeng, Z.~Zhang, and R.~Zi.
\newblock Suppression of blow-up in {P}atlak-{K}eller-{S}egel-{N}avier-{S}tokes
  system via the {C}ouette flow.
\newblock {\em J. Funct. Anal.}, 280(10):Paper No. 108967, 40, 2021.

\end{thebibliography}

\end{document}